\setlist[itemize]{noitemsep}
\newtheoremstyle{saetze} 
    {5pt}                    
    {5pt}                    
    {\itshape}                   
    {12pt}                           
    {\bfseries}                   
    {.}                          
    {.5em}                       
    {}  
\theoremstyle{saetze}
\newtheorem{thm}{Theorem}[section]
\newtheorem{lem}[thm]{Lemma}
\newtheorem{cor}[thm]{Corollary}
\newtheorem{prop}[thm]{Proposition}
\theoremstyle{definition}
\newtheorem{definition}[thm]{Definition}
\newtheorem{example}[thm]{Example}
\newtheorem{remark}[thm]{Remark}
\newcommand{\g}{{\mathfrak{g}}}
\newcommand{\frakg}{{\mathfrak g}}
\newcommand{\calF}{{\mathcal F}}
\newcommand{\calN}{{\mathcal N}}
\newcommand{\calT}{{\mathcal T}}
\newcommand{\Rep}{{\mathrm{Rep}}}
\newcommand{\End}{{\mathit{End}}}
\newcommand{\one}{{\mathbf{1}}}
\newcommand{\Z}{{\mathbf{Z}}}
\def\N{\mathbb{N}}
\DeclareMathOperator{\sdim}{sdim}
\newcommand{\Tmnss}{\mathcal{T}_{m|n}^+/\mathcal{N}}
\newcommand{\Tmn}{\mathcal{T}_{m|n}}
\newcommand{\T}{\mathcal{T}}
\patchcmd{\l@section}{1.0em}{\z@}{}{}
\long\def\comment#1{}
\DeclareRobustCommand{\gobblefive}[5]{}
\def\part{\@startsection{part}{1}%
\z@{.7\linespacing\@plus\linespacing}{.5\linespacing}%
{\large\scshape\centering}}
\begin{document}

\title[Classical tensor categories II]{Semisimplifications and representations of the General Linear Supergroup}

\subjclass[2020]{17B10, 17B20, 17B55, 18D10, 20G05.}

\keywords{Representations of supergroups, Tensor categories, Super tannakian categories, Semisimplification, Fusion rules, Tensor product decomposition}

\author{{\rm Th. Heidersdorf, R. Weissauer}}

\address{T. H.: School of Mathematics, Statistics and Physics, Newcastle University}
\email{heidersdorf.thorsten@gmail.com} 
\address{R. W.: Mathematisches Institut, Universit\"at Heidelberg}
\email{weissaue@mathi.uni-heidelberg.de}

\date{}

\begin{abstract} We study the semisimplification of the full karoubian subcategory generated by the irreducible finite dimensional representations of the algebraic supergroup $GL(m|n)$ over an algebraically closed field of characteristic zero. This semisimplification is equivalent to the representations of a pro-reductive group $H_{m|n}$. We show that there is a canonical decomposition $H_{m|n} \cong GL(m\!-\! n) \times H_{n|n}$, thereby reducing the determination of $H_{m|n}$ to the equal rank case $m\! =\! n$ which was treated in a previous paper. 
\end{abstract}

\subjclass[2020]{17B10, 17B20, 17B55, 18M05, 18M25,  20G05.}

\maketitle

\setcounter{tocdepth}{1}

\section{Introduction}

The categories of algebraic finite dimensional representations $\mathcal{T}_{m|n}$ of the general linear supergroups
$GL(m\vert n)$ over an algebraically closed field $k$ of characteristic zero are abelian tensor categories, but in contrast to the classical $GL(m) = GL(m,0)$-case they are
not semisimple unless $m$ or $n$ is zero. In the following we may assume $r=m-n \geq 0$. The structure of $\mathcal{T}_{m|n}$ as abelian categories is now reasonably well-understood \cite{Serganova-blocks} \cite{Brundan-Stroppel-4} and may be best studied via Khovanov-algebras, certain diagram algebras inspired by constructions of Khovanov in his categorification of tangle invariants. 

\smallskip
The monoidal structure however remains elusive. Partial results, in particular the decomposition of tensor products between any two Kostant modules (covering all irreducible modules of $GL(m|1)$) and all projective modules, were obtained in \cite{Heidersdorf-mixed-tensors} based on earlier work of Brundan-Stroppel \cite{Brundan-Stroppel-5} and Comes-Wilson \cite{Comes-Wilson}. 

\smallskip
Yet the monoidal structure is of crucial interest, foremost for applications in physics, but also with regards to possible versions of geometric super Satake \cite{BF}, applications to Reshetikhin-Turaev type link invariants \cite{Qu} or the hypothetical construction of modular tensor categories from quantized versions of Lie superalgebras.

\smallskip
In this article we will study this problem \emph{up to superdimension zero}, following the approach of the  $m=n$-case in \cite{H-ss} \cite{HW-pieri} \cite{HW-tannaka}.

\subsection{Tensor product decomposition up to superdimension $0$} A very efficient way to think about irreducible representations of $GL(m|n)$ is the language of weight and cup diagrams of \cite{Brundan-Stroppel-4}. We show that each irreducible maximal atypical representation $L(\lambda)$, contained in a block of $ \Tmn$ that is indexed by a central character $\chi_{\lambda}$ with weight/cup diagram say

\begin{center}
\smallskip
 
 \scalebox{0.7}{
\begin{tikzpicture}
\foreach \x in {-2,-1,5,6,8} 
     \draw[very thick] (\x-.1, .1) -- (\x,-0.1) -- (\x +.1, .1);
\foreach \x in {-3,0,7,9,11,12} 
     \draw[very thick] (\x-.1, -.1) -- (\x,0.1) -- (\x +.1, -.1);
\foreach \x in {1,3,4,10} 
     \draw[very thick] (\x-.1, .1) -- (\x +.1, -.1) (\x-.1, -.1) -- (\x +.1, .1);

\draw[very thick] [-,black,out=270, in=270](-2,-0.2) to (2,-0.2);
\draw[very thick] [-,black,out=270, in=270](-1,-0.2) to (0,-0.2);
\draw[very thick] [-,black,out=270, in=270](5,-0.2) to (11,-0.2);
\draw[very thick] [-,black,out=270, in=270](6,-0.2) to (7,-0.2);
\draw[very thick] [-,black,out=270, in=270](8,-0.2) to (9,-0.2);


\end{tikzpicture} } \

\end{center}

gives rise to two associated irreducible representations: The first one is an irreducible representation $L(\chi_{\lambda})$, the "classical core" $L(\lambda_{cl})$ of the group $GL(r)$ determined by the position of the $r=m-n$ crosses (the core symbols),

\begin{center}
\smallskip
 
 \scalebox{0.7}{
\begin{tikzpicture}
\foreach \x in {} 
     \draw[very thick] (\x-.1, .1) -- (\x,-0.1) -- (\x +.1, .1);
\foreach \x in {} 
     \draw[very thick] (\x-.1, -.1) -- (\x,0.1) -- (\x +.1, -.1);
\foreach \x in {1,3,4,10} 
     \draw[very thick] (\x-.1, .1) -- (\x +.1, -.1) (\x-.1, -.1) -- (\x +.1, .1);


 node[pos=(-3, 0)]{};
  node[pos=(12, 0)]{};
\path (-3,0)--(12,0);




\end{tikzpicture} }
\smallskip

\end{center}

which depends only on the block of $L(\lambda)$. The second is a representation in ${\mathcal T}_{n\vert n}$,
the "principal core"  $L(\lambda_{pr})$,  obtained under Serganova's block equivalence with the principal block of ${\mathcal T}_{n|n}$ by taking the corresponding irreducible representation  with the cup diagram

\begin{center}
 
 \scalebox{0.7}{
\begin{tikzpicture}
\foreach \x in {0,-1,3,4,6} 
     \draw[very thick] (\x-.1, .2) -- (\x,0) -- (\x +.1, .2);
\foreach \x in {1,2,8,5,7} 
     \draw[very thick] (\x-.1, -.2) -- (\x,0) -- (\x +.1, -.2);
\foreach \x in {} 
     \draw[very thick] (\x-.1, .1) -- (\x +.1, -.1) (\x-.1, -.1) -- (\x +.1, .1);

\draw[very thick] [-,black,out=270, in=270](0,-0.2) to (1,-0.2);
\draw[very thick] [-,black,out=270, in=270](-1,-0.2) to (2,-0.2);
\draw[very thick] [-,black,out=270, in=270](3,-0.2) to (8,-0.2);
\draw[very thick] [-,black,out=270, in=270](4,-0.2) to (5,-0.2);
\draw[very thick] [-,black,out=270, in=270](6,-0.2) to (7,-0.2); 


\end{tikzpicture} } \ .

\end{center}

For $r=m-n$ and given $L(\lambda)$ this defines as associated object the external tensor product \[ L(\lambda_{cl}) \boxtimes L(\lambda_{pr}) \ \in \ Rep(GL(r)) \times {\mathcal T}_{n|n} \ .\] Here, for algebraic groups $G$ over $k$, $Rep(G)$ 
denotes
 the tensor category of the algebraic representations of $G$ on finite dimensional
$k$-vector spaces. Similarly, $sRep(G)\!=\! Rep(G)\!\otimes\! k^{1\vert 1}$ will denote the tensor category of algebraic representations of $G$ on finite dimensional
super vector spaces over $k$.

\smallskip
As shown in  \cite{HW-tensor}, $(L({\lambda}_{cl}), L(\lambda_{pr}))$ determines the superdimension of $L(\lambda)$ given by \[ \sdim L(\lambda) = \pm m(\lambda) \dim L(\lambda_{cl})\ , \] where $m(\lambda)$ only depends on the cup diagram of $L(\lambda_{pr})$ and $\dim L(\chi_{cl})$ is given by the Weyl dimension formula for $GL(r)$. If the atypicality of $L(\lambda)$ is not maximal, i.e. if it is strictly smaller than $n$, then $\sdim L(\lambda) = 0$.  This assertion however is only the first layer of the following  stronger statement: 
For irreducible maximal atypical representations $L(\lambda)$ and $L(\mu)$, we show in this article that, up to indecomposable modules of superdimension zero, 
the associated
objects $L({\lambda}_{cl}), L(\lambda_{pr})$ and $L({\mu}_{cl}), L(\mu_{pr})$
determine the decomposition of the tensor product $L(\lambda)\otimes L(\mu)$ into indecomposable summands in ${\mathcal T}_{m\vert n}$.

In fact, it reduces this 
to a computation of $L(\lambda_{cl})\otimes L(\mu_{cl})$ and $L(\lambda_{pr})\otimes L(\mu_{pr})$
in the tensor categories $Rep(GL(r))$  and ${\mathcal T}_{n\vert n}$ respectively, i.e. to the case $m=n$ that was  studied in \cite{HW-tannaka}.

\subsection{The main theorems}

\subsubsection{The universal semisimple quotient} To talk about computations up to superdimension 0  in a meaningful way, we divide the tensor category $\mathcal{T}_{m|n}$ by the tensor ideal $\mathcal{N}$ \cite{Andre-Kahn} \cite{BW} of negligible morphisms. The quotient is a semisimple abelian tensor category. 
Furthermore, an indecomposable object is sent to zero if and only if its superdimension is 0. 
Direct sums of such objects are called \emph{negligible}. As observed in \cite{H-ss}, the quotient is a semisimple super-tannakian category, hence isomorphic to the (semisimple) representation category of a certain affine supergroup scheme. We refer to \cite{Andre-Kahn} for the signifance of such semisimplifications in algebraic geometry, to \cite{EAS} for applications in super representation theory and to \cite{Etingof-Ostrik} for further examples coming from representation theory.

\smallskip
Since we are interested in tensor products of irreducible representations, we consider instead of $\mathcal{T}_{m|n}$ the full karoubian subcategory generated by irreducible representations whose objects are all (direct sums of) indecomposable modules which arise in iterated tensor products of irreducible representations. For technical reasons it wil be conveniant and harmless to replace it by the full subcategory $\mathcal{T}_{m|n}^+$ generated by irreducible representations of superdimension $\geq 0$. So, let $\overline{\mathcal{T}}_{m|n}$ denote the corresponding quotient $\mathcal{T}_{m|n}^+/\mathcal{N}$, refered  to as the semisimplification of $\mathcal{T}_{m|n}^+$. Notice that, by Tannakian arguments,  
$\overline{\mathcal{T}}_{m|n}$ itself is the representation category of an a priori unknown supergroup scheme $H_{m|n}$.

\subsubsection{The tannakian tower} The crucial ingredient to determine the groups $H_{m|n}$ is the Duflo-Serganova cohomology functor $DS\!:\!\mathcal{T}_{m|n}\! \to\! \mathcal{T}_{m-1|n-1}$ \cite{Duflo-Serganova} \cite{HW-tensor}.  As shown in \cite{HW-tannaka}, $DS$ induces a symmetric monoidal functor $DS\!:\! \mathcal{T}_{m|n}^+ \!\to\! \mathcal{T}_{m-1|n-1}^+$. This functor gives rise to a symmetric monoidal functor $$d=d_{m|n} \!: \ \overline{\mathcal{T}}_{m|n} \to \overline{\mathcal{T}}_{m-1|n-1}\ .$$ 
Analogous to \cite{HW-tannaka} we have:

\begin{thm} [Theorem \ref{thm:tannaka} and Corollary \ref{cor:fibre}] The categories $\overline{\mathcal{T}}_{m|n}$ are semisimple tannakian categories, hence their Tannaka groups $H_{m|n}$ are projective limits of reductive algebraic groups over $k$. A fibre functor \[\omega: Rep(H_{m|n}) \to vec_k\] is provided by the composite of functors $d_{m|n}$. By Tannakian duality the functor $d_{m|n}$ induces  a closed embedding of affine group schemes $H_{m-1|n-1} \hookrightarrow H_{m|n}$ over $k$ such that $d_{m|n}: Rep(H_{m|n}) \to Rep(H_{m-1|n-1})$ can be identified  with the restriction functor that restricts representations to $H_{m-1|n-1}$ with respect to this group scheme embedding. The composite of functors $d_{m|n}$ induces a functor
 \[\omega: Rep(H_{m|n}) \to Rep(H_{r\vert 0})=Rep(GL(r))\]
\end{thm}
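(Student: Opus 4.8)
The plan is to follow the strategy of the equal rank case in \cite{HW-tannaka}: first build a genuine (non-super) fibre functor out of the Duflo--Serganova tower, then let the Tannakian formalism take over. From \cite{H-ss} (together with \cite{Andre-Kahn,BW}) we may take for granted that $\overline{\mathcal{T}}_{m|n}=\mathcal{T}_{m|n}^+/\mathcal{N}$ is a semisimple $k$-linear abelian rigid symmetric monoidal category with $\End(\one)=k$, whose simple objects are the images of the indecomposables of $\mathcal{T}_{m|n}^+$ of nonzero superdimension, the categorical dimension of such a simple object equalling its superdimension. In particular every additive functor out of $\overline{\mathcal{T}}_{m|n}$ is automatically exact, so the only thing that still has to be produced is a faithful symmetric monoidal $k$-linear functor to $vec_k$.

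To build it I would iterate $d=d_{m|n}$. The composite
\[ \overline{\mathcal{T}}_{m|n}\xrightarrow{\,d_{m|n}\,}\overline{\mathcal{T}}_{m-1|n-1}\xrightarrow{\,d_{m-1|n-1}\,}\cdots\xrightarrow{\,d_{r+1|1}\,}\overline{\mathcal{T}}_{r|0} \]
is symmetric monoidal and $k$-linear, and since $\mathcal{T}_{r|0}=Rep(GL(r))$ is semisimple one has $\mathcal{N}=0$ there, so $\overline{\mathcal{T}}_{r|0}=Rep(GL(r))$. Postcomposing with the forgetful fibre functor $Rep(GL(r))\to vec_k$ gives $\omega\colon\overline{\mathcal{T}}_{m|n}\to vec_k$. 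The one non-formal input is faithfulness of $\omega$, and it rests on the fact that $DS$ preserves superdimension, $\sdim DS(M)=\sdim M$ (\cite{Duflo-Serganova,HW-tensor}): if $\bar X$ is a nonzero simple object, coming from an indecomposable $X$ with $\sdim X\neq 0$, then $\sdim DS(X)=\sdim X\neq 0$, so $DS(X)$ has a non-negligible summand and $d_{m|n}(\bar X)\neq 0$; inductively the composite sends $\bar X$ to a nonzero representation of $GL(r)$ (one of superdimension $\sdim X\neq 0$), which the forgetful functor does not kill. Since $\overline{\mathcal{T}}_{m|n}$ is semisimple, $\omega$ is then automatically faithful on all Hom-spaces.

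Once $\omega$ is in place, Deligne's Tannakian reconstruction theorem gives an equivalence $\overline{\mathcal{T}}_{m|n}\simeq Rep(H_{m|n})$ with $H_{m|n}=\underline{\mathrm{Aut}}^{\otimes}(\omega)$ an affine group scheme over $k$ — a genuine, not super, group scheme, precisely because $\omega$ has values in ordinary vector spaces — and semisimplicity of $Rep(H_{m|n})$ in characteristic zero forces $H_{m|n}$ to be pro-reductive, i.e. a projective limit of reductive groups. By construction $\omega$ factors through the tower and then through the forgetful functor of $Rep(GL(r))$, so the chosen fibre functors on the successive categories $\overline{\mathcal{T}}_{m-k|n-k}$ are compatible with the $d$'s, each $d_{m|n}$ being exact (automatically) and faithful (since its composite with the fibre functor on $\overline{\mathcal{T}}_{m-1|n-1}$ is the faithful $\omega$ on $\overline{\mathcal{T}}_{m|n}$). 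Tannakian functoriality then identifies $d_{m|n}\colon Rep(H_{m|n})\to Rep(H_{m-1|n-1})$ with the restriction functor along a homomorphism $H_{m-1|n-1}\to H_{m|n}$, and likewise identifies the full composite of the $d$'s with the restriction functor along a homomorphism $GL(r)=H_{r|0}\to H_{m|n}$; this composite functor is the $\omega\colon Rep(H_{m|n})\to Rep(GL(r))$ of the statement.

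It remains to see that the homomorphism $H_{m-1|n-1}\to H_{m|n}$ is a \emph{closed} immersion. By the standard criterion — a homomorphism of affine group schemes over $k$ is a closed immersion iff every representation of the source is a subquotient of the restriction of a representation of the target — this holds iff every object of $\overline{\mathcal{T}}_{m-1|n-1}$ is a subquotient, hence (by semisimplicity) a direct summand, of $d_{m|n}(\bar X)$ for some $\bar X\in\overline{\mathcal{T}}_{m|n}$. Unwinding the semisimplification, the task becomes: every maximal atypical irreducible $L(\nu)$ of $\mathcal{T}_{m-1|n-1}$ of nonzero superdimension occurs, up to negligible summands, as a direct summand of $DS(M)$ for some $M\in\mathcal{T}_{m|n}^+$. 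This surjectivity of $DS$ up to negligibles is where the real work lies, and I would carry it out as in \cite{HW-tannaka}: using the explicit description of $DS$ on maximal atypical irreducibles via weight and cup diagrams (\cite{HW-tensor}), one lifts the diagram of $\nu$ — concretely by inserting a matched $\vee\wedge$ pair — to a diagram of a weight $\lambda$ for which $L(\nu)$, or its parity shift, is visibly a direct summand of $DS(L(\lambda))$, and one arranges that $L(\lambda)\in\mathcal{T}_{m|n}^+$. Everything else is formal Tannakian bookkeeping; I expect this analysis of $DS$ on irreducibles, together with the verification that it is onto up to negligibles, to be the main obstacle.
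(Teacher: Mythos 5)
Your proposal follows the same strategy as the paper, which itself defers to \cite[Section 5]{HW-tannaka}: construct the fibre functor to $vec_k$ by iterating the $DS$-tower and postcomposing with the forgetful functor on $Rep(GL(r))$, prove faithfulness via preservation of superdimension, invoke Tannakian duality (and reductivity from semisimplicity in characteristic zero), identify $d_{m|n}$ with restriction along a group homomorphism, and establish closedness via the subquotient criterion, which reduces to the surjectivity of $DS$ up to negligibles handled by the weight/cup diagram analysis of Theorem~\ref{mainthm}. The one step you treat as given but which the paper explicitly flags as a core input is the verification that $DS$ actually maps $\mathcal{T}_{m|n}^+$ into $\mathcal{T}_{m-1|n-1}^+$, i.e.\ that the parity shifts $n_i \equiv \varepsilon(\lambda)-\varepsilon(\lambda_i)$ in Theorem~\ref{mainthm} make every summand $\mbox{\textsf{par}}^{n_i}L(\lambda_i)$ of $DS(L(\lambda))$ have nonnegative superdimension whenever $L(\lambda)$ does — without this ``sign rule'' the functor $d_{m|n}$ on the quotients, and hence your fibre functor, is not even well-defined.
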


This theorem allowed us in \cite{HW-tannaka} to determine the groups $H_{n|n}$ inductively. The images of indecomposable objects $L$ of $\mathcal{T}_{m|n}^+$  either become an irreducible representation of $\overline{\mathcal{T}}_{m|n}$
or become zero. In the following we consider this for irreducible objects $L$ only. If $\sdim(L(\lambda)) <0$, we will simply replace it by its parity shift. We denote the irreducible representation of positive superdimension with highest weight $\lambda$ by $X_{\lambda}$. Such $X_{\lambda}$ in $\mathcal{T}_{m|n}^+$  generate a tensor subcategory in the quotient isomorphic to $Rep(H_{\lambda})$ for some reductive group $H_{\lambda}$, and the image of $L(\lambda)$ defines a faithful representation $\overline{X}_{\lambda}$ of $H_{\lambda}$. The Tannaka group $H_{m|n}$ is a subgroup of the product of Tannaka groups $H_{\lambda}$
$$ H_{m\vert n} \hookrightarrow \prod_{\lambda/\sim} H_{\lambda} \ ,$$
where $\lambda$ runs over a certain set of equivalence classes $\lambda/\sim$ of maximal atypical highest weights $\lambda$. 
In general $H_\lambda$ may be considered as an algebraic subgroup  of the general linear group $GL(V_\lambda)$ of the finite dimensional $k$-vectorspace $V_\lambda=\omega(X_{\lambda})$ defined by the fibre functor $\omega$. Note that $\dim(V_\lambda) = \sdim(X_{\lambda})$.

\subsubsection{Stabilization} The computation of $H_{m|n}$ and the various groups $H_{\lambda}$ is done by a reduction to the case $m=n$. In the first step we pass to the case where $L(\lambda)$ is \emph{stable} maximal atypical (all atypical hooks $\vee$ are to the left of all $\times$ in the weight diagram). 

Any stable representation $L(\lambda)$ is of the form \[L(\lambda) = L(\lambda_1,\ldots,\lambda_r,\mu_1, \ldots, \mu_n \, | \, - \mu_n, \ldots,-\mu_1)\] where $r=m-n$. We say it is \emph{negatively stable} if $\mu_1 \leq 0$. The Picard group of $\mathcal{T}_{m|n}^+/\mathcal{N}$ contains the determinants of the representations $L(\lambda)$, and in particular $det(V):=\Lambda^r(V)$ for the standard representation $V = k^{m|n}$ of $GL(m|n)$. For $r=m-n$ we define  \[ \Pi := \Lambda^{r}(V) \otimes Ber_{m|n}^{-1} \cong L(0,\ldots,0,-1,\ldots,-1 \, | \, 1,\ldots,1) \ ,\] where we follow the convention of Section  \ref{sec:irreducible} that $Ber_{m|n}$ in $\Tmn$ denotes the Berezin determinant resp. its parity shift, 
depending on whether the superdimension of the Berezin determinant is positive resp. negative.

\begin{thm} (Shifting) For any irreducible representation $L(\lambda)$ in $\mathcal{T}_{m|n}$ the representation $\Pi^\ell \otimes L(\lambda)$ is negatively stable for $\ell >>0$ up to negligible summands.
\end{thm}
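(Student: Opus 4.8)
The plan is to compute $\Pi^\ell\otimes L(\lambda)$ inside the semisimplification $\overline{\calT}_{m|n}$, where $\Pi$ becomes invertible, and then to read off negative stability of the resulting highest weight from the cup‑diagram combinatorics.

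First I would dispose of the non–maximally atypical case: then $\sdim L(\lambda)=0$, so $\Pi^\ell\otimes L(\lambda)$ is negligible and the statement is vacuous; so assume $L(\lambda)$ maximally atypical. Although $\Lambda^r(V)$, hence $\Pi$, is not invertible in $\calT_{m|n}$ itself, the identity $\sum_k\sdim(\Lambda^k(k^{m|n}))\,t^k=(1+t)^{m-n}$ gives $\sdim(\Lambda^r(V))=\binom{r}{r}=1$, so $\sdim(\Pi)=\pm1$ and $\Pi$ lies in $\Pic(\overline{\calT}_{m|n})$; thus $-\otimes\Pi^\ell$ is a monoidal auto‑equivalence of $\overline{\calT}_{m|n}$ and carries the simple object $\overline X_\lambda$ to a simple object. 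Concretely, $\Pi^\ell\otimes L(\lambda)$ has (up to a parity shift) a unique non‑negligible indecomposable summand. To identify it, note that the top weight of the $GL(m|n)$–module $\Pi^\ell\otimes L(\lambda)$ is $\lambda+\ell\pi$ with $\pi=(0,\dots,0,-1,\dots,-1\,|\,1,\dots,1)$ the highest weight of $\Pi$; this $\lambda+\ell\pi$ is again dominant (the only inequality to check is $\lambda_r\ge\lambda_{r+1}-\ell$) and again maximally atypical (it lies in the $\Pi^{\otimes\ell}$–twist of the block of $\lambda$). Hence, up to negligible summands and parity, $\Pi^\ell\otimes L(\lambda)\cong L(\lambda+\ell\pi)$. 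The precise fact that $L(\lambda+\ell\pi)$ occurs with multiplicity one and that all other composition factors of $\Lambda^r(V)^{\otimes\ell}\otimes L(\lambda)$ are negligible is a Pieri/translation‑functor computation of the type carried out in \cite{HW-pieri} and \cite{Heidersdorf-mixed-tensors}; one may also take the description of $-\otimes\det(V)$ on blocks from there as a black box.

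It then remains to show $\lambda+\ell\pi$ is negatively stable for $\ell\gg0$. Written out, the shift keeps $\lambda_1,\dots,\lambda_r$, sends $\lambda_{r+j}\mapsto\lambda_{r+j}-\ell$ and $\lambda_{m+j}\mapsto\lambda_{m+j}+\ell$ for $1\le j\le n$. In the weight/cup‑diagram dictionary of \cite{Brundan-Stroppel-4} the first $r$ entries determine the positions of the $r$ core crosses, equivalently the classical core $L(\lambda_{cl})\in\Rep(GL(r))$ (which depends only on the block), while the last $2n$ entries encode the $n$ cups, equivalently the principal core $L(\lambda_{pr})\in\calT_{n|n}$. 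Adding $\ell\pi$ leaves the $r$ crosses in place and slides each of the $n$ cups $\ell$ steps to the left relative to them; equivalently, on a stable weight $L(\lambda_1,\dots,\lambda_r,\mu_1,\dots,\mu_n\,|\,-\mu_n,\dots,-\mu_1)$ the effect is simply $\mu_j\mapsto\mu_j-\ell$. Therefore, once $\ell$ is large enough that all $n$ cups lie strictly to the left of all $r$ crosses, $L(\lambda+\ell\pi)$ is stable, say $L(\lambda+\ell\pi)=L(\lambda_1,\dots,\lambda_r,\mu_1^{(\ell)},\dots,\mu_n^{(\ell)}\,|\,-\mu_n^{(\ell)},\dots,-\mu_1^{(\ell)})$ with $\mu_1^{(\ell)}$ strictly decreasing in $\ell$; enlarging $\ell$ further so that $\mu_1^{(\ell)}\le0$ makes $L(\lambda+\ell\pi)$ negatively stable. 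Combined with the previous step, this proves the theorem.

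The main obstacle is the middle step: verifying that, modulo $\calN$, tensoring by $\Pi$ is exactly the weight shift $\lambda\mapsto\lambda+\pi$, and that this shift slides the cups past the fixed core crosses in the direction that produces stability rather than away from it. The first part requires controlling and discarding all the non‑leading composition factors of $\Lambda^r(V)\otimes L(\lambda)$; the second requires care with the Brundan–Stroppel conventions so as not to get the direction of the slide backwards.
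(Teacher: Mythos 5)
Your high-level strategy --- pass to the semisimplification where $\Pi$ becomes invertible, then read off stability from cup-diagram combinatorics --- is exactly the paper's. But the central identification you use, namely that up to negligible summands $\Pi^\ell\otimes L(\lambda)\cong L(\lambda+\ell\pi)$ for $\pi=(0,\dots,0,-1,\dots,-1\,|\,1,\dots,1)$, is false, and the failure is structural. Adding $\pi$ moves every $\vee$ one step left \emph{uniformly}, whereas the non-negligible summand of $\Pi\otimes L(\lambda)$ is obtained by moving each $\vee$ to the next \emph{free} vertex on the left, i.e.\ by sliding it past any intervening core $\times$ (Proposition \ref{moving-lemma}). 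When a $\vee$ sits immediately to the right of a $\times$, the shift $\lambda\mapsto\lambda+\pi$ makes it collide with the cross, so $\lambda+\pi$ has strictly smaller atypicality and lies in a different block, contrary to what you assert. Concretely, in $\mathcal{T}_{2|1}$ take $\lambda=(2,2\,|\,{-3})$, whose diagram has $\vee$ at $2$ and $\times$ at $1$. Then $\lambda+\ell\pi=(2,\,2-\ell\,|\,{-3}+\ell)$ has $I_\times\cap I_\circ=\emptyset$ for every $\ell\geq 1$, so $L(\lambda+\ell\pi)$ is typical and vanishes in $\overline{\mathcal{T}}_{m|n}$. Your argument would then conclude that $\Pi^\ell\otimes L(2,2\,|\,{-3})$ is entirely negligible, which contradicts the invertibility of $\Pi$ in the quotient that you yourself invoke. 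In fact the non-negligible summand of $\Pi\otimes L(2,2\,|\,{-3})$ is $L(1,1\,|\,{-1})$ (the $\vee$ jumps from $2$ to $0$, skipping the cross at $1$): it is a \emph{strictly lower} composition factor of the tensor product, so ``discard all non-leading composition factors'' is exactly the wrong move. The citation to Pieri rules does not repair this, because what you need from them is not that they produce the top weight, but that they produce the skip-over-crosses weight.

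The way to close the gap is the paper's actual computation. Realise $\Pi\otimes L(\lambda)$ as a direct summand of $V^{\otimes r}\otimes Ber_{m|n}^{-1}\otimes L(\lambda)$, expand $-\otimes V$ through the translation functors $\mathcal{F}_i$ of Lemma \ref{translation}, and use two structural inputs: the Tannakian fact that $\overline\Pi$ is invertible in $\overline{\mathcal{T}}_{m|n}$, which forces a unique non-negligible summand, and the block-preservation of $-\otimes\Pi$, proved by applying $DS^n$ and using $DS^n(\Pi)\cong\one$. Together these pin down which length-$r$ composition of $\mathcal{F}_i$'s can contribute a non-negligible summand: each of the $r$ core crosses must be moved exactly once, and after the compensating left shift by $Ber_{m|n}^{-1}$ the net effect is ``crosses fixed, each $\vee$ to the next free vertex on the left.'' Iterating that rule does give a negatively stable weight for $\ell\gg0$ --- indeed the $\vee$'s drift left at least as fast as your $\ell\pi$ prediction, and faster whenever crosses are skipped --- so the theorem's conclusion is correct; but the intermediate step your proof rests on is not.
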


Since $\sdim(\Pi)=1$, $\Pi$ defines a character of $H_{m|n}$. Hence tensoring with $\Pi$ is harmless.

\subsubsection{Splicing} We may now assume without loss of generality that $L(\lambda)$ is negatively stable. We separate this into two parts: The classical part and the principal part; irreducible representations in $\mathcal{T}_{m|n}$ which "represent" the classical and principal core. For given $L(\lambda) = L(\lambda_1,\ldots,\lambda_r,\mu_1,\ldots,\mu_n  | \! -\mu_n,\ldots,-\mu_1)$ with $\lambda_r \geq 0$ and $\mu_1 \leq 0$ let \[ \lambda_{pr} = (0,\ldots,0,\mu_1,\ldots,\mu_n  | \! - \mu_n,\ldots,-\mu_1)\] and let \[ \lambda_{cl} = (\lambda_1,\ldots,\lambda_r,0,\ldots,0 \, | \, 0,\ldots,0).\] 

The key statement (that enables the reduction to the case $r=0$, i.e. $m=n$) is the following theorem.

\begin{thm} (Splicing theorem) Any negatively stable irreducible $L(\lambda)$ in ${\mathcal T}_{m\vert n}$ with $\lambda_r \geq 0$ is the tensor product of its classical part with its principal part up to negligible objects of superdimension 0: \[ L(\lambda_{cl})  \otimes L(\lambda_{pr}) \cong 
 L(\lambda_1,\ldots,\lambda_r,\mu_1, \ldots, \mu_n \, | \! - \mu_n, \ldots,-\mu_1) \ \ \text{mod } \mathcal{N}.\]
\end{thm}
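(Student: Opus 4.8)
The plan is to reduce the assertion to a dimension count in $\overline{\mathcal{T}}_{m|n}$ together with the statement that the Cartan constituent survives to the semisimplification, and to establish the latter by induction on $n$ via the functor $d_{m|n}$.

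Since $\overline{\mathcal{T}}_{m|n}=\mathrm{Rep}(H_{m|n})$ is semisimple and every object of it has dimension $\ge 0$ (being a representation of the reductive group $H_{m|n}$), it suffices to prove: (i) $\dim\bigl(\overline{L(\lambda_{cl})}\otimes\overline{L(\lambda_{pr})}\bigr)=\dim\overline{L(\lambda)}$ in $\overline{\mathcal{T}}_{m|n}$; and (ii) $\overline{L(\lambda)}$ occurs as a direct summand of $\overline{L(\lambda_{cl})}\otimes\overline{L(\lambda_{pr})}$. Indeed, granting (ii) the complementary summand has dimension $\dim\overline{L(\lambda_{cl})}\cdot\dim\overline{L(\lambda_{pr})}-\dim\overline{L(\lambda)}$, which is $0$ by (i), hence it vanishes.

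For (i) I would compute directly from the superdimension formula $\sdim L(\nu)=\pm\,m(\nu)\dim L(\nu_{cl})$. The weight $\lambda_{cl}$ has trivial principal core, and since $m(\nu)$ depends only on the principal cup diagram, $m(\lambda_{cl})=m(\one)=\sdim\one=1$; moreover $\lambda_r\ge 0$ forces $\lambda_{cl}$ to be a partition with at most $r\le m$ rows, so $L(\lambda_{cl})=\mathbb{S}_{\lambda_{cl}}(V)$, the Schur functor applied to the standard representation $V=k^{m|n}$ (a fact I will re-use below). Hence $\dim\overline{L(\lambda_{cl})}=|\sdim L(\lambda_{cl})|=\dim L(\lambda_{cl})$, the Weyl dimension of the $GL(r)$-module of highest weight $(\lambda_1,\dots,\lambda_r)$. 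On the other hand $\lambda_{pr}$ has trivial classical core, so $\dim L((\lambda_{pr})_{cl})=1$, while $\lambda$ and $\lambda_{pr}$ share the same $\mu$-part and hence the same principal cup diagram, so $m(\lambda_{pr})=m(\lambda)$; thus $\dim\overline{L(\lambda_{pr})}=|\sdim L(\lambda_{pr})|=m(\lambda)$. Multiplicativity of the superdimension under $\otimes$ then gives $\dim\overline{L(\lambda_{cl})}\cdot\dim\overline{L(\lambda_{pr})}=m(\lambda)\dim L(\lambda_{cl})=|\sdim L(\lambda)|=\dim\overline{L(\lambda)}$, which is (i).

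For (ii) the plan is induction on $n$ using the symmetric monoidal functor $d=d_{m|n}\colon\overline{\mathcal{T}}_{m|n}\to\overline{\mathcal{T}}_{m-1|n-1}$; the case $n=0$ is trivial since then $\lambda_{pr}$ is the trivial weight and $L(\lambda_{cl})=L(\lambda)$. For the inductive step I would first record two compatibilities of $DS$ with the splicing data: (a) $DS(L(\lambda_{cl}))=L(\lambda_{cl}')$, which follows from $L(\lambda_{cl})=\mathbb{S}_{\lambda_{cl}}(V)$, from $DS(V)=k^{m-1|n-1}$, and from monoidality of $DS$; and (b) the indecomposable constituents $L(\nu_j)$ of $DS(L(\lambda))$ (up to parity shifts and negligibles) are again negatively stable with $(\nu_j)_r\ge 0$, all with $(\nu_j)_{cl}=\lambda_{cl}'$, and with the multiset $\{(\nu_j)_{pr}\}_j$ equal to the constituents of $DS(L(\lambda_{pr}))$ — this I would extract from the explicit description of $DS$ on maximal atypical irreducibles, which modifies only the cup part of the weight diagram, together with the compatibility of $DS$ with Serganova's block equivalence. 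Granting (a) and (b), the inductive hypothesis applied to each $\nu_j$ yields
\[
d(\overline{L(\lambda)})=\bigoplus_j\overline{L(\lambda_{cl}')}\otimes\overline{L((\nu_j)_{pr})}=\overline{L(\lambda_{cl}')}\otimes\Bigl(\bigoplus_j\overline{L((\nu_j)_{pr})}\Bigr)=d(\overline{L(\lambda_{cl})})\otimes d(\overline{L(\lambda_{pr})})=d\bigl(\overline{L(\lambda_{cl})}\otimes\overline{L(\lambda_{pr})}\bigr).
\]
To promote this equality of $d$-images to (ii), decompose $\overline{L(\lambda_{cl})}\otimes\overline{L(\lambda_{pr})}=\bigoplus_k\overline{L(\sigma_k)}$ in the semisimple category $\overline{\mathcal{T}}_{m|n}$ and apply $d$; since $\sdim$ is unchanged by $DS$, $d$ annihilates no maximal atypical simple, so comparing with the decomposition of $d(\overline{L(\lambda)})$ (which is known explicitly and has a distinguished leading term) should pin down that $\lambda$ occurs among the $\sigma_k$, after which (i) forces that it occurs with multiplicity one and that there are no further summands.

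The main obstacle is exactly this last step: controlling the competing constituents $\sigma_k$ precisely enough for the induction to close, i.e. ruling out that the $d$-images of several $\overline{L(\sigma_k)}$ with $\sigma_k\ne\lambda$ conspire to reproduce the summands of $d(\overline{L(\lambda)})$. Equivalently, working in $\mathcal{T}_{m|n}$ itself, one must show that the Cartan component $L(\lambda_{cl}+\lambda_{pr})=L(\lambda)$, which occurs with multiplicity one as a composition factor of $L(\lambda_{cl})\otimes L(\lambda_{pr})$, is not absorbed into a negligible indecomposable summand; this is where the Brundan–Stroppel combinatorics of $\mathcal{T}_{m|n}$ and of tensor products would enter, since in the negatively stable regime the weight diagram of $\lambda$ is literally the concatenation of the $r$ cross symbols of $\lambda_{cl}$ with the cups of $\lambda_{pr}$, and one expects that, because these two pieces of the diagram do not interact, $L(\lambda)$ splits off — which, combined with (i), again yields the theorem. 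A purely diagrammatic proof of this splitting inside $\mathcal{T}_{m|n}$, bypassing the $DS$-induction, might in the end be the cleaner route.
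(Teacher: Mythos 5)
Your part (i), the dimension count in $\overline{\mathcal{T}}_{m|n}$, is correct and is essentially the final paragraph of the paper's proof. The genuine gap is exactly where you place it: part (ii), showing that $\overline{L(\lambda)}$ actually occurs as a direct summand. Your proposed $d$-induction does not close, for the reason you concede — nothing rules out that the $d$-images of several $\overline{L(\sigma_k)}$ with $\sigma_k\neq\lambda$ reproduce the $d$-image of $\overline{L(\lambda)}$, and since $d$ is not injective on isomorphism classes this cannot be fixed without extra input. Your concluding remark, however, correctly guesses the missing ingredient.

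The paper supplies (ii) by a translation-functor argument, not by induction through $d$. Since $L(\lambda_{cl})$ is a direct summand of some $V^{\otimes d}$, it suffices to control $V^{\otimes d}\otimes L(\lambda_{pr})$. Using $V\otimes(-)=\bigoplus_i \mathcal{F}_i(-)$ and Lemma \ref{translation}, on a maximal atypical irreducible $L$ each $\mathcal{F}_i(L)$ is either an irreducible maximal atypical module (cases (i), (ii)), zero, or a module of strictly smaller atypicality (case (vi)); case (v) never occurs as maximal atypical diagrams carry no $\circ$'s. Modules of atypicality $<n$ are negligible and remain so after further tensoring with $V$, since $\mathcal{N}$ is a tensor ideal. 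Iterating from $L(\lambda_{pr})$, one obtains $V^{\otimes d}\otimes L(\lambda_{pr})=\bigoplus_i L(\nu_i)\oplus\text{negligible}$ with all $L(\nu_i)$ irreducible maximal atypical, and the same then holds for its direct summand $L(\lambda_{cl})\otimes L(\lambda_{pr})$. In particular every indecomposable maximal-atypical direct summand is a single irreducible, so the unique highest-weight constituent $L(\lambda)=L(\lambda_{cl}+\lambda_{pr})$ cannot hide inside an indecomposable of superdimension zero and must itself be one of the $L(\nu_i)$. Your part (i) then kills all the other $L(\nu_i)$. This is precisely the ``purely diagrammatic proof of the splitting'' you speculate about at the end; your instinct to abandon the $d$-induction in favour of the Brundan--Stroppel combinatorics was right, but the proposal as written does not carry it out, and the whole argument hinges on Lemma \ref{moving-lemma} (translation functors preserve irreducibility or lower atypicality), which your write-up does not invoke.
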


This readily generalizes to  maximal atypical $L(\lambda)$ with arbitrary $\lambda_r$ in $\mathbb Z$.

\begin{thm} \label{introthm:dec}(Canonical decomposition) Let  $r=m\!-\! n$ and let $H_{m|n}$ denote the Tannaka group
of the semisimplification of the karoubian tensor envelope of the irreducible representations of superdimension $\geq 0$ in ${\mathcal T}_{m\vert n}$. Then there exists a canonical decomposition of tensor categories \[ Rep (H_{m|n}) \cong Rep (GL(r)) \times \overline{\mathcal T}_{n\vert n} \quad , \quad  \overline{\mathcal T}_{n\vert n} = \mathcal{T}_{n|n}^+/\mathcal{N}.\] For stable $L(\lambda) = L(\lambda_1,\ldots,\lambda_r, \mu \ | \ -\mu)$ and $\mu \in \mathbb{Z}^n$ the image $\overline{X}_{\lambda}$ of $X_{\lambda}$
in the semisimplification, considered as a representation of $H_{m|n} \cong GL(r) \times H_{n|n}$, is of the form \[ \overline{X}_{\lambda} =  L(\lambda_1,\ldots,\lambda_r) \boxtimes \overline{X}_{\mu} \] for the irreducible highest weight representation   $L(\lambda_1,\ldots,\lambda_r)$ of $GL(r)$ and 
the irreducible faithful maximal atypical representation $X_{\mu}\cong L(\mu \, |  \! -\mu)$ of $GL(n|n)$. In particular, we have  $H_{\lambda} \cong GL(r) \times H_{\mu}$.
\end{thm}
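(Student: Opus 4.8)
The plan is to derive the Canonical Decomposition Theorem (Theorem~\ref{introthm:dec}) from the Splicing Theorem together with the Shifting Theorem and the tannakian formalism of the first theorem. First I would observe that, by the Shifting Theorem, tensoring with the invertible object $\Pi$ (a character of $H_{m|n}$ of superdimension $1$) carries every irreducible $L(\lambda)$ to a negatively stable one up to negligibles, and $\Pi$ itself corresponds to a character of $H_{m|n}$; hence the subcategory of $\overline{\mathcal T}_{m|n}$ generated by the $\overline X_\lambda$ with $L(\lambda)$ negatively stable already generates everything after adjoining this character, and it suffices to analyze the stable case. For a stable $L(\lambda) = L(\lambda_1,\dots,\lambda_r,\mu\,|\,-\mu)$ the Splicing Theorem gives, modulo $\mathcal N$,
\[
\overline X_\lambda \;\cong\; \overline{L(\lambda_{cl})}\,\otimes\,\overline{L(\lambda_{pr})},
\]
so in the semisimplification every generator factors as a product of a ``classical'' factor coming from $L(\lambda_{cl})$ and a ``principal'' factor coming from $L(\lambda_{pr})\in\mathcal T_{n|n}^+$.

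Next I would identify the two resulting tensor subcategories of $\overline{\mathcal T}_{m|n}$. Let $\mathcal C_{cl}$ be the full karoubian tensor subcategory generated by the images $\overline{L(\lambda_{cl})}$ and $\mathcal C_{pr}$ the one generated by the images $\overline{L(\lambda_{pr})}$. The principal cores are, by Serganova's block equivalence, exactly the irreducibles of $\mathcal T_{n|n}^+$, and one checks that the functor $\mathcal T_{n|n}^+\to\overline{\mathcal T}_{m|n}$ (insert a stable weight with $\lambda_{cl}=0$, then pass to the quotient) is a tensor functor which is fully faithful on the semisimplification; this gives $\mathcal C_{pr}\simeq\overline{\mathcal T}_{n|n}$. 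For the classical factor, the composite fibre functor $\omega\colon Rep(H_{m|n})\to Rep(GL(r))$ of the first theorem restricts the classical cores to their ``classical core'' $L(\lambda_{cl})$ of $GL(r)$ — this is precisely the content of the superdimension formula $\sdim L(\lambda) = \pm m(\lambda)\dim L(\lambda_{cl})$ recalled in the introduction and of \cite{HW-tensor} — so $\mathcal C_{cl}$ maps isomorphically onto $Rep(GL(r))$, i.e.\ $\mathcal C_{cl}\simeq Rep(GL(r))$.

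The third step is to show that these two subcategories are ``independent'' and together generate: concretely, that the multiplication tensor functor
\[
\mathcal C_{cl}\boxtimes\mathcal C_{pr}\;\longrightarrow\;\overline{\mathcal T}_{m|n}
\]
is an equivalence. Surjectivity (essential image) follows from the Splicing Theorem plus the Shifting reduction. For full faithfulness, I would pass to Tannaka groups: $\mathcal C_{cl}$ and $\mathcal C_{pr}$ correspond to quotient groups $H_{m|n}\twoheadrightarrow GL(r)$ and $H_{m|n}\twoheadrightarrow H_{n|n}$, and the claim is that the induced map $H_{m|n}\to GL(r)\times H_{n|n}$ is an isomorphism. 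Using the embeddings $H_{n|n}\hookrightarrow H_{m|n}$ from the tannakian tower (the first theorem) and the fact that the composite $H_{n|n}\hookrightarrow H_{m|n}\twoheadrightarrow GL(r)$ is trivial while $H_{n|n}\hookrightarrow H_{m|n}\twoheadrightarrow H_{n|n}$ is the identity, one gets that $H_{m|n}\supseteq H_{n|n}$ is a normal complement to the kernel of the $GL(r)$-projection, and a dimension/character count via the splicing factorization pins down that there are no further relations — so the product is direct. Finally, reading the splicing isomorphism through this group decomposition yields $\overline X_\lambda = L(\lambda_1,\dots,\lambda_r)\boxtimes\overline X_\mu$ and hence $H_\lambda\cong GL(r)\times H_\mu$.

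The main obstacle I anticipate is the full faithfulness / ``no hidden relations'' step: one must rule out that the tensor products $\overline{L(\lambda_{cl})}\otimes\overline{L(\lambda_{pr})}$ satisfy extra isomorphisms in the quotient beyond those forced separately in $Rep(GL(r))$ and $\overline{\mathcal T}_{n|n}$ — equivalently, that the Tannaka group does not collapse onto a proper subgroup of the product. I expect this to require combining the superdimension formula (which controls the $GL(r)$-factor exactly, since $\dim V_\lambda=\sdim X_\lambda$ detects $\dim L(\lambda_{cl})$) with the behavior of the $DS$-functor on stable weights, so that the classical and principal data can be recovered independently from an object of $\overline{\mathcal T}_{m|n}$; the stability hypothesis ($\lambda_r\ge 0$, all $\vee$ left of all $\times$) is exactly what decouples these two pieces and makes the argument go through. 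Everything else is bookkeeping with tensor ideals and Tannakian duality.
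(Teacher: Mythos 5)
Your high-level outline — shift to negatively stable via $\Pi$, splice into a classical and a principal factor, identify the two tensor subcategories, and show they multiply out to the whole thing — follows the paper's architecture. But there is a genuine gap at exactly the step you label ``one checks'': the claim that the ``insertion'' functor $\mathcal{T}_{n|n}^+ \to \overline{\mathcal{T}}_{m|n}$ (put $\lambda_{cl}=0$, then pass to the quotient) is a tensor functor and fully faithful on the semisimplification is not a check, it is the technical heart of the result, and it does not hold at the level you state it. Before semisimplification, the insertion is not monoidal: the principal block $\mathcal{B}_{\one}$ is not closed under tensor products (the paper flags this explicitly), so $L(0,\ldots,0,\mu\,|\,-\mu)\otimes L(0,\ldots,0,\nu\,|\,-\nu)$ has constituents scattered across blocks with nontrivial classical core. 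Even after semisimplification, it is not automatic that the multiplicities of principal-block summands reproduce the $\mathcal{T}_{n|n}$ fusion — knowing they do is equivalent to the statement being proven, so asserting it is circular. The paper instead constructs the functor in the opposite direction, $\eta = (\cdot)^{GL(r)} \circ res : \mathcal{T}_{m|n}^{ev} \to \mathcal{T}_{n|n}^{ev}$, shows it coincides with Serganova's block equivalence on simples (Cor.~\ref{nice-too}), proves by a weight argument that $\eta$ is monoidal only on the negative half $\mathcal{T}_{m|n}^{\leq 0}$ (Lemma~\ref{nicer}), lifts this to a tensor equivalence $\overline{\eta}: \mathcal{M}^+_{m|n}\to\mathcal{M}^+_{n|n}$ between semisimplified negative halves using the André--Kahn monoidal section machinery (Thm.~\ref{negative-half-equivalence}, Prop.~\ref{negative-half-equivalence-+}), and then recovers the full Tannaka group of $\mathcal{P}_{m|n}$ from that of the negative half via the Reconstruction Lemma~\ref{reconstruction}, with $\mathcal{L}_{core}$ playing the role of the invertible object. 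None of these mechanisms appear in your sketch.

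Two smaller points. First, for the product decomposition $H_{m|n}\cong GL(r)\times H_{pr}$, the composite you should use is $H_{cl}\xrightarrow{s}H\xrightarrow{pr}H_{pr}$ being trivial (because $\omega$ kills $\mathcal{T}_{pr}$ down to trivial $GL(r)$-representations), as in Lemma~\ref{product-decomposition}; the composite $H_{n|n}\hookrightarrow H_{m|n}\twoheadrightarrow GL(r)$ you invoke is not obviously trivial and the ``dimension/character count'' is vague where the paper has a clean group-theoretic splitting argument. Second, you correctly flag the ``no hidden relations'' step as the obstacle, but the paper does not resolve it by superdimension bookkeeping; it resolves it by the reconstruction lemma plus the explicit construction of $\overline{\eta}$. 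So the proposal identifies the right shape of the argument, but the load-bearing step — $H_{pr}\cong H_{n|n}$ — is missing.
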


The various  groups $H_{\mu}$ that can occur have been (almost) determined in \cite{HW-tannaka}. They are given by classical groups such as $GL(d)$, or $SO(d)$ and $Sp(d)$ for $d=\sdim(X_{\mu})$, where $\overline{X}_{\mu}$ is isomorphic either to the standard representation, its dual or a character twist of these, except for certain exceptional cases. 
Here the caveat is that for certain selfdual representations $L(\mu  |\!  -\mu)$ in $\mathcal{T}_{n|n}^+$ the groups $H_{\mu}$ were not fully determined, the smallest example being the representation $L(3,2,1,0 \, |\,  0,-1,-2,-3)$ of $GL(4|4)$. In these exceptional cases (that conjecturally should not exist) the derived connected component
of $H_{\mu}$ could also be the group $SL(\frac{d}{2})$.

\medskip
Since the fusion rules are known for $GL(r)$, and are also known for the various $H_{\lambda}$ that can occur, we have indeed obtained a (partial) description of the decomposition law of tensor products of irreducible representations into indecomposable modules up to negligible indecomposable summands; and moreover a classification of the indecomposable modules of non-vanishing superdimension in $\mathcal{T}_{m|n}^+$. In down to earth terms, the theorem says that the fusion rules are up to superdimension zero given by a mixture of the Littlewood-Richardson rule (which determines the blocks of the appearing summands) and the fusion rules of the $H_{\lambda}$ (which depend only on the $m\!=\! n$-case). It is intriguing to ask whether one can iterate this construction in some way and understand the category $\Tmn$ successively by considering its atypical layers.

\subsubsection{Proof of the main theorem} Recall $r=m-n$. We can realize the classical group $H_{cl}= GL(r)$ in two ways: as the image of the enriched fibre functor \[ \omega: \overline{\mathcal{T}}_{m|n}^+ \to Rep(GL(r)) \] and as the full tensor subcategory \[i: Rep(GL(r)) \to \overline{\mathcal{T}}_{m|n}^+\] generated by the image of the standard representation in the semisimplification. The composition $\omega \circ i$ defines an autoequivalence of $Rep(GL(r))$ which induces a splitting \[ H_{m|n} \cong GL(r) \times H_{pr}\] with some unknown group $H_{pr}$. The canonical decomposition (Theorem \ref{introthm:dec}) implies $H_{pr} = H_{n|n}$. We can easily reduce this theorem to the equivalent claim that the images ${\mathcal P}_{m\vert n}$ resp. ${\mathcal P}_{n\vert  n}$ of the principal blocks in the semisimplifications are equivalent as tensor categories. We first consider the negative halfs of the principal blocks: the intersection of $\mathcal{T}_{m|n}^+$ with the full abelian subcategory generated by irreducible objects \[ X_{\lambda}, \ \lambda = (0,\ldots,0,\mu_1,\ldots,\mu_n  | \! - \mu_n,\ldots,-\mu_1)\] (the $m\! >\! n$-case) or the full abelian subcategory generated by irreducible objects $X_{\mu}, \ \mu=(\mu_1,\ldots,\mu_n \ | \ - \mu_n,\ldots,-\mu_1)$ with $\mu_1 \leq 0$. To show ${\mathcal P}_{m\vert n} \cong {\mathcal P}_{n\vert  n}$, it is enough to show that the images of the negative halfs are equivalent. Indeed the Tannakian Reconstruction Lemma \ref{reconstruction} shows that the Tannaka group $H_{pr}$ of ${\mathcal P}_{m\vert n}$ can be reconstructed from the Tannaka monoid of the negative half. To finally show that the images of the two negative halfs are tensor equivalent, we consider the functor $\eta = ()^{GL(r)} \circ res$ $$ \eta:  \mathcal{T}_{m|n}^+ \xrightarrow{res}  Rep(GL(r) \times GL(n|n)) \xrightarrow{(\ )^{GL(r)}}  Rep(GL(n|n))\ , $$ $$ \eta(X)   = Hom_{GL(r)}(\one, res(X))\ . $$ We show that this functor coincides with Serganova's block equivalence if restricted to the principal block ${\mathcal B}_{\one}$ in $\Tmn$, and it is monoidal when restricted to the negative half of the principal block. This allows us to show  ${\mathcal P}_{m\vert n} \cong {\mathcal P}_{n\vert  n}$ in Theorem \ref{thm:principal}.

\subsubsection{Determinants}

Independent of the exact determination of the groups $H_{\mu}$, we can always compute the determinant $\det(X_{\lambda}) := \Lambda^{\sdim(X_{\lambda})} (X_{\lambda})$ (up to negligible summands) of a maximal atypical irreducible representation. One possible proof uses the Canonical Decomposition Theorem \ref{introthm:dec} to reduce the problem to the principal block, where an argument similar to the one given in \cite{HW-tannaka} expresses the determinant as a power of the first ground state of the block. However, the canonical decomposition in fact allows to reduce the problem to the $m=n$-case in a straightforward manner.

\begin{thm} Let $X_{\lambda} \in \mathcal{T}_{m|n}^+$ with image \[ L(\chi_{\lambda}) \boxtimes \overline{X}_{\mu} \in \Rep(GL(r) \times H_{n|n}).\] Then \[ \det(X_{\lambda}) = \det( L(\chi_{\lambda}))^{\dim \overline{X}_{\mu} } \boxtimes \det(\overline{X}_{\mu})^{\dim L(\chi_{\lambda})}\] where $\det(\overline{X}_{\mu})$ is represented by a Berezin power in $\mathcal{T}_{n|n}^+$ (which can be expressed in terms of the cup diagram) by \cite[Theorem 14.3]{HW-tannaka}. 

\end{thm}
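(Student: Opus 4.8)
The plan is to combine the Canonical Decomposition Theorem with the multiplicativity of $\det$ under external tensor products. By Theorem \ref{introthm:dec}, once we write $X_\lambda$ with image $L(\chi_\lambda) \boxtimes \overline{X}_\mu \in \Rep(GL(r) \times H_{n|n})$, computing $\det(X_\lambda)$ in the semisimplification is the same as computing the top exterior power of $L(\chi_\lambda) \boxtimes \overline{X}_\mu$ as a representation of $GL(r) \times H_{n|n}$. So the first step is purely a statement about a product of (pro)reductive groups: for representations $A$ of $G_1$ and $B$ of $G_2$, one has $\Lambda^{top}(A \boxtimes B) \cong (\Lambda^{top} A)^{\otimes \dim B} \boxtimes (\Lambda^{top} B)^{\otimes \dim A}$ as a character of $G_1 \times G_2$. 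This is the standard identity $\det(A \otimes B) = (\det A)^{\dim B}(\det B)^{\dim A}$ for tensor products of vector spaces, upgraded to an isomorphism of one-dimensional representations by naturality; I would just record it and cite it or give the one-line proof via highest weights (the highest weight of $\Lambda^{top}(A\boxtimes B)$ is $(\dim B)\cdot(\text{hw of }\Lambda^{top}A) \oplus (\dim A)\cdot(\text{hw of }\Lambda^{top}B)$).

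The second step is to translate the dimensions. Under the identification $\omega(X_\lambda) = \dim \overline{X}_\lambda = \sdim X_\lambda$ and the decomposition $\overline{X}_\lambda = L(\chi_\lambda) \boxtimes \overline{X}_\mu$, we get $\sdim X_\lambda = \dim L(\chi_\lambda) \cdot \dim \overline{X}_\mu$, and the exponents appearing in the formula, namely $\dim \overline{X}_\mu$ attached to the $GL(r)$-factor and $\dim L(\chi_\lambda)$ attached to the $H_{n|n}$-factor, are exactly the complementary dimensions in the product. Here $\dim L(\chi_\lambda)$ is finite and given by the Weyl dimension formula for $GL(r)$, and $\dim \overline{X}_\mu = \sdim X_\mu$ by the same fibre-functor bookkeeping applied in the $m=n$ case; no subtlety beyond keeping track of which factor carries which exponent.

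The third step is to identify $\det(\overline{X}_\mu)$ concretely. This is where one invokes \cite[Theorem 14.3]{HW-tannaka}: in $\mathcal{T}_{n|n}^+$ the determinant of the maximal atypical irreducible $X_\mu \cong L(\mu \,|\! -\mu)$ is represented, modulo $\mathcal{N}$, by a specific power of the Berezin $Ber_{n|n}$, the exponent being read off from the cup diagram of $L(\mu_{pr})$. Substituting this into the product formula and recalling that $\det(L(\chi_\lambda))$ is itself a power of the determinant character of $GL(r)$, one obtains the stated identity, with all terms living in $\Rep(GL(r) \times H_{n|n})$ and pulled back to $\overline{\mathcal{T}}_{m|n}$ via the canonical decomposition. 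The one point needing care, and the likely main obstacle, is the compatibility of the two realizations of $\det$: that the top exterior power computed \emph{inside the semisimple quotient} $\overline{\mathcal{T}}_{m|n}$ really does correspond, under the tensor equivalence of Theorem \ref{introthm:dec}, to the top exterior power computed in $\Rep(GL(r)\times H_{n|n})$ — i.e. that passing to the semisimplification and then taking $\Lambda^{\sdim}$ commutes with the decomposition functor. Since the equivalence of Theorem \ref{introthm:dec} is an equivalence of \emph{tensor} categories it preserves exterior powers of objects automatically, so this reduces to checking that $\Lambda^{\sdim(X_\lambda)}(X_\lambda)$ taken in $\mathcal{T}_{m|n}^+$ and then reduced mod $\mathcal{N}$ agrees with $\Lambda^{\sdim}$ taken after reduction; this is the usual fact that negligible morphisms form a tensor ideal so the Schur functor $\Lambda^k$ descends to the quotient, together with the vanishing $\Lambda^k(X_\lambda) \equiv 0 \bmod \mathcal{N}$ for $k > \sdim X_\lambda$ which pins down $\sdim X_\lambda$ as the correct top degree. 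I would state this as a short lemma (or cite the analogous statement from the earlier determinant discussion) and then the computation above is immediate.
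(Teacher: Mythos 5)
Your proposal matches the paper's argument essentially step for step: both pass to the semisimplification (a symmetric monoidal functor, hence commuting with exterior powers), apply the canonical decomposition $\overline{X}_\lambda = L(\chi_\lambda)\boxtimes \overline{X}_\mu$, invoke the multiplicativity $\det(X\otimes Y)\cong \det(X)^{\sdim Y}\otimes\det(Y)^{\sdim X}$ (the paper cites Deligne, Prop.\ 1.11, where you give the highest-weight argument), and then identify $\det(\overline{X}_\mu)$ via \cite[Theorem 14.3]{HW-tannaka}. Your closing caveat about compatibility of the two realizations of $\det$ is exactly what the paper disposes of with the one-line remark that the quotient functor is symmetric monoidal, so there is nothing further to add.
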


\subsubsection{Physical motivation} As explained in \cite[Section 1, Section 15]{HW-tannaka} the supergroups $GL(m|4)$ naturally occur  in supersymmetric field theories. The real supergroups $G=SU(2,2\vert N)$ are covering groups of
the super conformal groups $SO(2,4\vert N)$, and the complexification ${\frak g}$ of $Lie(G)$ is isomorphic to the complex Lie superalgebras $\mathfrak{sl}(4\vert N)$. Hence the fusion rules for representations of $GL(N|4)$ are required for the computation of Feynman integrals if we consider  supersymmetric fields $\psi$ on Minkowski space (or a covering space of a conformal invariant compactification) with values in a finite dimensional representation $L$ of ${\frak g}$. Similarly to \cite{HW-tannaka} we  raise the question whether the groups $H_{m|n}$ or $H_{\lambda}$ may be interpreted as internal symmetry groups of such theories in an approximate sense, i.e. whether  physical observers might come up with the impression that the underlying rules of symmetry are imposed by the hidden invariant theory of the quotient tensor category $\overline{\mathcal T}_{m|n}\! =\! Rep(H_{m|n})$ instead of the invariant theory of $\mathcal T_{m|n}$ (a phenomen that should be closely related to supersymmetric cancellations).

\subsection*{Acknowledgements}
The research of T.H. was partially funded by the Deutsche Forschungsgemeinschaft (DFG, German Research
Foundation) under Germany's Excellence Strategy – EXC-2047/1 – 3270685813.



\section{The category $\mathcal{T}_{m|n}^+$}

\subsection{The category $\mathcal{T} = \mathcal{T}_{m|n}$} Let $k$ be an algebraically closed field of characteristic zero. We adopt the notations of \cite{HW-tensor}. With $GL(m|n)$ we denote the general linear supergroup and by $\mathfrak{g} = \mathfrak{gl}(m|n)$ its Lie superalgebra. A representation $\rho$ of $GL(m|n)$ is a representation of $\mathfrak{g}$ on a finite dimensional $k$ vector space such that its restriction to $\frakg_{\bar{0}}$ comes from an algebraic representation of $G_{\bar{0}} = GL(m) \times GL(n)$. Let $\mathcal{T} = \mathcal{T}_{m|n}$ denote the category of all finite dimensional representations with parity preserving morphisms.

\medskip
As an abelian category $\Tmn$ splits into a direct sum $\Tmn\!=\! \bigoplus_{\mathcal B} {\mathcal B}$ of abelian subcategories $\mathcal B$, the so called blocks of $\Tmn$. The principal block, the block containing the trivial representation $\mathbf 1$, will be denoted ${\mathcal B}_{\mathbf 1}$.
Although not stable under the tensor product, ${\mathcal B}_{\mathbf 1}$ will be of particular importance later.

\subsection{Irreducible representations} \label{sec:irreducible} 
Let $\mbox{\textsf{par}}$ denote the parity shift in $\Tmn$.  
Up to parity shift the irreducible representations  in $\Tmn$ are  parametrized by their highest weight with respect to the Borel subalgebra of upper triangular matrices. A weight $\lambda=(\lambda_1,...,\lambda_m \ | \ \lambda_{m+1}, \cdots, \lambda_{m+n})$ of an irreducible representation in $\Tmn$ satisfies $\lambda_1 \geq \ldots \geq \lambda_m$, $\lambda_{m+1} \geq \ldots \geq \lambda_{m+n}$ with integer entries. In particular the irreducible representations in $\mathcal{T}_{n}$ are given by the $\{L(\lambda), \mbox{\textsf{par}}\, L(\lambda) \ | \ \lambda \in X^+ \}$ where $X^+$ denotes the set of dominant integral weights and $\mbox{\textsf{par}}$ denotes the parity shift. The Berezin determinant of the supergroup $GL(m|n)$ defines a one dimensional representation. Its weight is given by $\lambda_i=1$ and $\lambda_{m+i}=-1$ for $i=1,..,n$. Its superdimension is $(-1)^n$.

\begin{definition} For maximal atypical $\lambda$ we write $X_{\lambda} = \mbox{\textsf{par}}^{p(\lambda)} L(\lambda)$ for $p(\lambda) = \sum_{i=1}^n \lambda_{m+i}$. In other words, \[ X_{\lambda} = \begin{cases} L(\lambda) & \sdim(L(\lambda)) > 0 \\ \mbox{\textsf{par}} L(\lambda) & \sdim(L(\lambda)) < 0.\end{cases}\] The only exception to this rule is the Berezin determinant where we will use the notation $Ber_{m\vert n}$ to denote the $(-1)^n$-fold parity shift of the Berezin determinant 
in ${\mathcal T}_{m\vert n}$. Thus $Ber_{m\vert n}$ has always superdimension $1$ and $Ber_{m\vert n}=L(1,...,1\vert -1,...,-1)$.
\end{definition}

\subsection{Dualities} There are two different dualities on $\mathcal{T}_{m|n}$, the usual dual $V^\vee=(V^\vee,\rho^\vee)$ and the twisted dual $V^* =(V^{\vee},\rho^\vee \circ \tau) $ where $\tau(x)=-x^T$ is the automorphism of $\g$ defined by the supertranspose on $\g$. The twisted dual stabilizes irreducible and projective modules.

\subsection{The karoubian category $\mathcal{T}_{m|n}^+$}

Let $\mathcal{T}_{m|n}^+$ denote the full subcategory of $\mathcal{T}_{m|n}$, whose objects consist of all retracts of iterated tensor products of irreducible representations in $\mathcal{T}_n$ that are not maximal atypical and of  maximal atypical irreducible representations $L(\lambda)$ of positive superdimension. The category $\mathcal{T}_{m|n}^+$ is a rigid symmetric monoidal  idempotent complete $k$-linear category closed under the $*$-involution. It contains all irreducible objects of $\mathcal{T}_{m|n}$ up to a parity shift.

\begin{remark} We could also work with the karoubian subcategory generated by all irreducible representations.  However it is more convenient for tannakian arguments to work with a category where all objects have categorical dimensions $\geq 0$.
\end{remark}

\begin{example} Kac modules $V(\lambda)$ of 
${\mathcal T}_{m|n}$ are not contained in ${\mathcal T}_{m|n}^+$ for atypical $\lambda$. More generally, if $X \in \mathcal{T}_{m|n}^+$ and $X$ has a filtration by Kac or anti-Kac objects, then $X$ is projective (see \cite{HW-tannaka}).
\end{example}

\subsection{Weight diagrams and cups} \label{section: weights} 

Let $\lambda \in X^+$. Following \cite{Brundan-Stroppel-4} to each highest weight $\lambda$ in $X^+(m|n)$  we associate  two subsets of cardinality $m$ respectively $n$ of the numberline $\mathbb Z$
\begin{align*} I_\times(\lambda)\ & =\ \{ \lambda_1  , \lambda_2 - 1, ... , \lambda_m - m +1 \} \\
 I_\circ(\lambda)\ & = \ \{ 1 - m - \lambda_{m+1}  , 2 - m - \lambda_{m+2} , ... ,  n-m - \lambda_{m+n}  \} \ ,  \end{align*}
and define a labeling of the numberline $\mathbb Z$:
The integers in $ I_\times(\lambda) \cap I_\circ(\lambda) $ are labeled by $\vee$, those in $I_\times(\lambda)$ respectively $I_\circ(\lambda)$ are labeled by $\times$ respectively $\circ$. The remaining integers are labeled by $\wedge$. 
This labeling of the numberline uniquely characterizes the weight vector $\lambda$. The block of $L(\lambda)$ is uniquely characterized by the position of the crosses and circles. The number of times the label $\vee$ occurs in the labeling is called the {\it degree of atypicality} $atyp(\lambda)$ of $\lambda$. Notice $0 \leq atyp(\lambda) \leq n$, and for $atyp(\lambda) =n$ the weight $\lambda$ is called
{\it maximal atypical}. For $GL(n|n)$ a weight is maximally atypical if and only if $\lambda_i = - \lambda_{n+i}$ for $i=1,\ldots,n$ in which case we write $$L(\lambda) = [\lambda_1,\ldots,\lambda_n]\  .$$ All maximal atypical weights of $GL(n|n)$ lie in the principal block ${\mathcal B}_{\one}$ and the weight diagrams do not contain any $\times$'s or $\circ$'s.

\medskip
To each weight diagram one associates a cup diagram as in \cite{Brundan-Stroppel-4}. The outer caps in a cup diagram define the sectors of the weight as in \cite{HW-tensor}. We number the sectors from left to right $S_1$, $S_2$, $\ldots$, $S_k$. Each sector $S_i$ defines an interval $[a_i,b_i]$ on the numberline where the leftmost $\vee$ of the sector occurs at position $a_i$. The $\times$'s and $\circ$'s do not play any role in our arguments. They are fixed and otherwise ignored unless specifically mentioned.

\begin{example} The following shows a cup diagram with 3 sectors.
\begin{center}
\medskip
 
 \scalebox{0.7}{
\begin{tikzpicture}
\foreach \x in {-6,-4,-3,-2,0,2,6} 
     \draw[very thick] (\x-.1, .1) -- (\x,-0.1) -- (\x +.1, .1);
\foreach \x in {-5,-1,1,3,4,5,7} 
     \draw[very thick] (\x-.1, -.1) -- (\x,0.1) -- (\x +.1, -.1);
%


\draw[very thick] [-,black,out=270, in=270](-6,+-0.2) to (-5,+-0.2);
\draw[very thick] [-,black,out=270, in=270](-4,+-0.2) to (5,+-0.2);
\draw[very thick] [-,black,out=270, in=270](-3,+-0.2) to (4,+-0.2);
\draw[very thick] [-,black,out=270, in=270](-2,+-0.2) to (-1,+-0.2);
\draw[very thick] [-,black,out=270, in=270](0,+-0.2) to (1,+-0.2);
\draw[very thick] [-,black,out=270, in=270](2,+-0.2) to (3,+-0.2);
\draw[very thick] [-,black,out=270, in=270](6,-0.2) to (7,-0.2);


\end{tikzpicture} }
\end{center}
\end{example}

By \cite{HW-tensor} the cup diagrams of irreducible modules in the principal block of $GL(n|n)$ uniquely correspond  to spaced forests of rank $n$ (by interpreting the outer cup of a sector as the root of a tree). For example, the cup diagram above corresponds to the spaced forest \[ \xymatrix@R-0.3cm@C-0.3cm{ & \bullet \ar@{-}[dr]  & \bullet \ar@{-}[d] & \bullet \ar@{-}[dl] & \\ & &  \bullet \ar@{-}[d] & & \\
d_0  \ \bullet & d_1 = 0 & \bullet & d_2 = 0 & \bullet } \] where $d_1$ and $d_2$ are the distances between sectors $S_1$ and $S_2$ and between $S_2$ and $S_3$ and $d_0$ depends on the position on the numberline

For arbitrary $L(\lambda) \in \mathcal{T}_{m|n}$ we need to incorporate the $\times$'s and $\circ$'s. This leads to the notion of a marked spaced forest. By definition, such marked spaced forests ${\calF}$ are defined by data $$(d_0,{\calT}_1,d_1,{\calT}_2, \cdots, d_{k-1},{\calT}_k, \{a_1,\ldots,a_d\}, \{b_1,\ldots,b_{d'}\})$$ where the $\calT_i$ for $i=1,...,k$  are rooted planar trees positioned on points of the numberline  from left to right and the sets  $\{a_1,\ldots,a_d\}$, $\{b_1,\ldots,b_{d'}\}$ (encoding the crosses and circles) comprise of points on the numberline.

\subsection{Tannaka duality} The following generalizes \cite[Proposition 21.1]{HW-tensor}.

\begin{prop} \label{irreducible-dual} For $r\!=\! m\!-\! n$  and irreducible $L$  in $\mathcal{T}_{m|n}$ the weight diagram of the dual $L^\vee$ is obtained from the weight diagram of $L$ as follows: Interchange all $\vee \wedge$-pairs in cups, then apply the reflection $s \mapsto 1- r - s$ to each symbol.
\end{prop}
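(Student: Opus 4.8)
The statement is a clean combinatorial description of how the weight/cup diagram of $L^\vee$ is obtained from that of $L$, so the natural approach is to reduce to the known $m=n$ case (or to a direct character-theoretic computation) and then carefully track the core symbols $\times$, $\circ$. The plan is to first recall the identity $L(\lambda)^\vee = L(-w_0\lambda)$ on the level of highest weights, where $w_0$ is the longest Weyl group element of $GL(m)\times GL(n)$; concretely, if $\lambda = (\lambda_1,\dots,\lambda_m\mid\lambda_{m+1},\dots,\lambda_{m+n})$ then the dual has highest weight $(-\lambda_m,\dots,-\lambda_1\mid -\lambda_{m+n},\dots,-\lambda_{m+1})$. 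The task is then purely to translate this operation through the dictionary $\lambda \mapsto (I_\times(\lambda), I_\circ(\lambda))$ of Section \ref{section: weights} and to see that it agrees with the recipe ``swap $\vee\wedge$ inside cups, then apply $s\mapsto 1-r-s$''.

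First I would compute the effect of $\lambda\mapsto -w_0\lambda$ on the two sets $I_\times$ and $I_\circ$ directly from their defining formulas. Writing $\mu = -w_0\lambda$, one gets $\mu_i = -\lambda_{m+1-i}$ for $1\le i\le m$ and $\mu_{m+j} = -\lambda_{m+n+1-j}$ for $1\le j\le n$. Plugging into $I_\times(\mu) = \{\mu_1, \mu_2-1,\dots,\mu_m-m+1\}$ and $I_\circ(\mu) = \{1-m-\mu_{m+1},\dots,n-m-\mu_{m+n}\}$ and simplifying, I expect to land on the assertion that $I_\times(\mu)$ is the image of $I_\circ(\lambda)$ under $s\mapsto 1-r-s$ and $I_\circ(\mu)$ is the image of $I_\times(\lambda)$ under the same reflection — i.e.\ the reflection $s\mapsto 1-r-s$ interchanges the ``$\times$-datum'' and the ``$\circ$-datum''. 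This is the essential arithmetic input and should fall out of a careful bookkeeping of indices (the shift $1-r$ is exactly engineered so that the two affine-linear parametrizations match up after reversal). It is worth doing this computation once, cleanly, since it simultaneously handles the positions of the $\times$'s, $\circ$'s, and $\vee$'s.

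Next I would extract the labeling of $\mathbb Z$ from this: a position $t$ is labeled $\vee$ for $\mu$ iff $t\in I_\times(\mu)\cap I_\circ(\mu)$, which by the previous step happens iff $1-r-t \in I_\circ(\lambda)\cap I_\times(\lambda)$, i.e.\ iff $1-r-t$ is a $\vee$ for $\lambda$; a position is $\wedge$ for $\mu$ iff it is in neither set, i.e.\ iff its reflection is $\wedge$ for $\lambda$; and the positions carrying isolated $\times$ or $\circ$ get swapped under the reflection. So after applying $s\mapsto 1-r-s$ alone, the diagram of $\mu$ already has the right $\vee$'s, $\wedge$'s, and the $\times/\circ$ core in the correct (reflected) position; the only remaining discrepancy is which of the $\vee,\wedge$ symbols sit at the \emph{endpoints of cups}. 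Here I would invoke the combinatorics of cup diagrams from \cite{Brundan-Stroppel-4}: the cup diagram is determined by the underlying set of $\vee/\wedge$ labels via the standard matching procedure, and reversing the numberline turns each cup (a nested $\vee\cdots\wedge$ arc) into a cup read the other way; the effect on the local data at the two ends of each arc is precisely the interchange of the $\vee\wedge$ pair. Making this precise — that ``reflect the labeling'' differs from ``reflect and swap $\vee\wedge$ in cups'' exactly by the cup-endpoint bookkeeping, and that the latter is the correct diagram for the dual — is the step I expect to be the main obstacle, since it requires comparing the algebraic duality with the purely diagrammatic ``mirror'' operation and checking they agree including the $\circ$-to-$\times$ exchange of core symbols.

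Finally, I would note that the $m=n$, maximal-atypical case (no $\times$'s or $\circ$'s, $r=0$, reflection $s\mapsto 1-s$) is exactly \cite[Proposition 21.1]{HW-tensor}, which the present statement generalizes; so one option to shorten the argument is to deduce the general case from that one by a reduction that isolates the core symbols — using Serganova's block equivalence to strip the $\times$'s and $\circ$'s, applying the known result to the maximal-atypical ``remainder'', and then restoring the core symbols under the reflection. Either route works; I would present the direct index computation as the main line of argument, since it is self-contained and makes the role of the shift $1-r$ transparent, and remark on the compatibility with \cite[Proposition 21.1]{HW-tensor} as a consistency check.
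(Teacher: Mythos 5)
The proposal starts from the identity $L(\lambda)^\vee \cong L(-w_0\lambda)$, with $w_0$ the longest element of the Weyl group of the even part $GL(m)\times GL(n)$. This is a classical fact for reductive groups, but it is \emph{false} for the supergroup $GL(m|n)$, and its failure is exactly what makes the statement nontrivial. The reason is that $w_0$ does not reverse the partial order on weights in the super setting: $w_0$ sends even positive roots to even negative roots, but it permutes the odd positive roots $\varepsilon_i - \delta_j$ among themselves. Hence $w_0\lambda$ is \emph{not} the lowest weight of $L(\lambda)$, and $-w_0\lambda$ is not the highest weight of $L(\lambda)^\vee$. Already for $GL(1|1)$, $w_0 = \mathrm{id}$, so the recipe would give $L(1\,|\,0)^\vee = L(-1\,|\,0)$, whereas in fact $L(1\,|\,0)^\vee = L(0\,|\,-1)$ (the standard representation). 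For typical weights there is a corrected formula $L(\lambda)^\vee \cong L(-w_0\lambda + 2\rho_1)$ coming from the identification of $L(\lambda)$ with the Kac module $V(\lambda)$ and the explicit lowest weight $w_0\lambda - 2\rho_1$ of $V(\lambda)$; but for atypical $\lambda$ the highest weight of the dual is \emph{not} any affine-linear function of $\lambda$ — it genuinely depends on the cup diagram, which is why the proposition's recipe has the ``interchange $\vee\wedge$-pairs in cups'' step preceding the reflection. The student's subsequent index bookkeeping does not rescue this; if you carry it through from $\mu = -w_0\lambda$, the reflections you get on $I_\times$ and $I_\circ$ are $s\mapsto 1-m-s$ and $s\mapsto n+1-2m-s$ respectively, which are distinct from each other and from $s\mapsto 1-r-s$ whenever $m\neq n$, and there is no $\times\leftrightarrow\circ$ exchange in the correct answer (check $GL(1|1)$ again).

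The paper avoids this trap entirely. Its proof dualizes the projective cover $P(\lambda)$ rather than $L(\lambda)$: each $P(\lambda)$ is a direct summand of a mixed tensor space $V^{\otimes i}\otimes (V^\vee)^{\otimes j}$, whose dual is again such a summand and admits a closed diagrammatic description from \cite{Heidersdorf-mixed-tensors}; the statement for $L(\lambda)$ then follows by comparing heads. The remark you make at the end — reducing to the known $m=n$ case \cite[Prop.~21.1]{HW-tensor} and stripping/restoring the core symbols via the block equivalence — is a much more plausible alternative route, but it would still need the atypical cup structure to enter (as it does in the $m=n$ result), and as written it is only a sketch. If you want to pursue the direct weight-diagram computation, you would have to replace $-w_0\lambda$ by the genuine (cup-dependent) highest weight of $L(\lambda)^\vee$, at which point you are essentially assuming what you want to prove.
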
 

 The proof is more or less the same as in \cite{HW-tensor}: We dualize $P(\lambda)$ which is easier since $P(\lambda)$ occurs as a direct summand in a mixed tensor space $V^{\otimes i} \otimes (V^{\vee})^{\otimes j}$ for some $i,j$; and for such summands there is a closed diagrammatic description of the dual \cite{Heidersdorf-mixed-tensors}.

\medskip
In terms of the associated marked spaced forest the description is as follows.

\begin{lem} The weight of the dual representation corresponds to the spaced forest \[ \calF^{\vee} = (d_0^*, \calT_k^*, d_1^*, \calT_{k-1}^*, d_2^*, \ldots, d_{k-1}^*, \calT_1^*, \{a_1',\ldots,a_d' \}, \{b_1',\ldots,b_{d'}' \})\] where $d_i^* := d_{k-i}$ for $i=1,\ldots,k-1$ and $d_0^*  =   - d_0 - d_1 - \ldots - d_{k-1}$, \[a_i' = 1-r -a_i, \ \ b_i' = 1- r - b_i \]  and $\calT_i^{*}$ denotes the mirror image (along the root axis) of the planar tree $\calT_i$.
\end{lem}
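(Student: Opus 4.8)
The plan is to derive this lemma as a direct translation of Proposition~\ref{irreducible-dual} into the combinatorics of marked spaced forests. Proposition~\ref{irreducible-dual} says that the weight diagram of $L^\vee$ is obtained by two operations: (1) interchanging the $\vee$ and $\wedge$ endpoints of every cup, and (2) applying the involution $s \mapsto 1-r-s$ to the whole numberline (which reflects about the point $\tfrac{1-r}{2}$). So the task is to see what each of these two operations does to the encoding data $(d_0,\calT_1,d_1,\ldots,d_{k-1},\calT_k,\{a_i\},\{b_j\})$.

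First I would analyze operation (2), the reflection, since it is the cleaner one. Reflecting the numberline reverses the left-to-right order of the sectors, so sector $S_i$ becomes the $(k+1-i)$-th sector of the new diagram; hence the trees appear in the order $\calT_k,\calT_{k-1},\ldots,\calT_1$, and each planar tree is mirrored about its root axis, giving $\calT_i^*$. The inter-sector distances, being unsigned gaps between consecutive sectors, are simply read in the opposite order, so the new $i$-th gap is the old $(k-i)$-th gap, i.e. $d_i^* = d_{k-i}$. The anchoring datum $d_0$ records the absolute position of the leftmost sector on the numberline; after reflection the new leftmost sector is the mirror image of the old rightmost one, and chasing the reflection $s\mapsto 1-r-s$ through the position of its leftmost $\vee$ and the total horizontal extent $d_0+d_1+\cdots+d_{k-1}$ of the configuration yields $d_0^* = -d_0-d_1-\cdots-d_{k-1}$ (one should double-check the constant: the involution contributes $1-r$ to each endpoint but the normalization built into the definition of $d_0$ in Section~\ref{section: weights} is designed precisely so that this constant cancels, leaving the stated formula). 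The cross/circle positions $a_i,b_j$ are individual points of the numberline, so they transform by the raw substitution $a_i' = 1-r-a_i$, $b_j' = 1-r-b_j$.

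Then I would check that operation (1), swapping $\vee\leftrightarrow\wedge$ within each cup, has no effect on the forest data beyond what the above already records. The key point is that a cup diagram is determined by its nesting structure of arcs, and swapping the two endpoints of every cup simultaneously is exactly the symmetry that a mirror reflection of a planar rooted tree induces on its set of arcs: the outer cup of a sector stays the outer cup, the tree shape is preserved up to mirroring, and the distances between sectors are untouched because the arcs themselves are not moved, only their endpoint labels are exchanged. Since the $\times$'s and $\circ$'s are not endpoints of cups, operation (1) does not touch $\{a_i\},\{b_j\}$ at all. So the combined effect of (1) and (2) is precisely the claimed formula for $\calF^\vee$, with the mirroring $\calT_i \mapsto \calT_i^*$ absorbing both the mirror coming from the reflection and the endpoint-swap coming from operation (1) — and one should verify these two mirrorings are the same involution on tree data, not accidentally composing to the identity.

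The main obstacle I anticipate is bookkeeping the additive constant in $d_0^*$: the formula $d_0^* = -(d_0+\cdots+d_{k-1})$ with no $r$-dependence looks slightly surprising given that the numberline reflection $s\mapsto 1-r-s$ visibly involves $r$, and getting it right requires being careful about exactly how $d_0$ is normalized (which reference point on the numberline it is measured from, as set up following the example in Section~\ref{section: weights}) and how that normalization interacts with the shift $s\mapsto -s$ versus $s\mapsto 1-r-s$. A secondary, more conceptual check is confirming that the $\vee\wedge$-swap within cups really does correspond to the planar mirror of the associated tree; this should follow from the dictionary between cup diagrams and spaced forests established in \cite{HW-tensor}, but it is the one step where an honest argument (rather than an appeal to ``clearly'') is needed. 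Everything else is a routine unwinding of definitions.
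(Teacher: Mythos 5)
Your overall plan — unwind Proposition~\ref{irreducible-dual} and track what each of its two operations does to the marked spaced forest data — is exactly what the lemma is, since the paper gives no separate proof and the statement is presented as a direct combinatorial translation of that proposition. Your handling of the reflection's effect on sector order ($d_i^*=d_{k-i}$ for $i\geq 1$), on the cross/circle sets ($a_i'=1-r-a_i$, $b_i'=1-r-b_i$), and your honest flag that the additive constant in $d_0^*$ is the one piece of bookkeeping that genuinely needs care are all fine.

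There is, however, a real conceptual error in your treatment of the $\vee\wedge$-swap, and if one followed your own account literally it would contradict the lemma. You assert that ``swapping the two endpoints of every cup simultaneously is exactly the symmetry that a mirror reflection of a planar rooted tree induces on its set of arcs,'' i.e.\ that the swap by itself mirrors $\calT_i$, and then you worry whether this mirror and the one coming from the reflection $s\mapsto 1-r-s$ might ``accidentally compose to the identity.'' They \emph{would} compose to the identity if your claim were true — which would give $\calT_i$, not $\calT_i^*$, and falsify the lemma. The correct picture is that the swap moves no arcs at all: the nesting pattern, the sector boundaries, and the left-to-right planar order of siblings are unchanged, so the swap preserves each $\calT_i$ \emph{exactly}, not ``up to mirroring.'' All of the mirroring in $\calT_i\mapsto\calT_i^*$ comes from the reflection, which reverses the numberline and therefore reverses the planar left-to-right order inside each sector. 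The actual role of the swap is different: after the geometric reflection alone, every arc would carry $\wedge$ at its new left end and $\vee$ at its new right end, which is not a valid cup diagram in the Brundan--Stroppel sense; composing with the swap restores the $\vee$-left/$\wedge$-right convention, so that the cup diagram of $L^\vee$ is precisely the geometric mirror (under $s\mapsto 1-r-s$) of the cup diagram of $L$. Once this single mirror is in place, the $i$-th tree of $\calF^\vee$ is $\calT_{k+1-i}^*$ and the rest of your bookkeeping goes through as written.
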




\section{The DS functor}

\subsection{The Duflo-Serganova functor} \label{ds-functor} For maximal atypical $L(\lambda)$ we put $\varepsilon(L(\lambda)) = (-1)^{p(\lambda)}$ for the parity $p(\lambda) = \sum_{i=1}^n \lambda_{m+i}$. If $L(\lambda)$ has atypicality $k$, we use the parity of the corresponding irreducible representation in the principal block of $\mathfrak{gl}(k|k)$.

\medskip

Fix the following element $x\in \mathfrak{g}_1$, \[ x = \begin{pmatrix} 0 & y \\ 0 & 0 \end{pmatrix} \text{ for } \ y = \begin{pmatrix} 0 & 0 & \ldots & 0 \\ 0 & 0 & \ldots & 0 \\ \ldots & & \ldots &  \\ 1 & 0  & 0 & 0 \\ \end{pmatrix}. \]  Then $\rho(x)^2 =0 $ for any representation $(V,\rho)$ of $GL(m|n)$ and we define the cohomological tensor functor $DS$ as  \[ DS =  DS_{m|n,m-1|n-1}: \mathcal{T}_{m|n} \to \mathcal{T}_{m-1|n-1} \]
via  $DS_{m|n,m-1|n-1}(V,\rho)= V_x:=Kern(\rho(x))/Im(\rho(x))$ \cite{Duflo-Serganova}\cite{HW-tensor}.

\begin{thm} \label{mainthm} \cite[Theorem 16.1]{HW-tensor} Let $L(\lambda)$ be an irreducible
atypical representation with cup diagram $$ \bigcup_{j=1}^k \ \ [a_j,b_j] $$ with $k$ sectors
$[a_j,b_j]$. Then $DS(L(\lambda))$ is a direct sum 
$$DS(L(\lambda)) \ \cong\ \bigoplus_{i=1}^k  \ \mbox{\textsf{par}}^{n_i} (L(\lambda_i))$$ for  irreducible atypical
representations $L(\lambda_i)$ in $\mathcal{T}_{m-1|n-1}$ with shift $n_i \equiv \varepsilon(\lambda)
- \varepsilon(\lambda_i)$ mod 2. All $L(\lambda_i)$ are in the block specified by the crosses and circles of $\lambda$, so $L(\lambda_i)$ is uniquely defined by
the property that its cup diagram is given by the union $$ [a_i +1, b_i-1] \ \ \ \cup \ \ \bigcup_{j=1, j\neq i}^k \ \ [a_j,b_j] \ $$  of the sectors in the segment $[a_i+1,b_i-1]$ and the sectors $[a_j,b_j]$ for $1\!\leq\! j \neq i\! \leq\! k$.
\end{thm}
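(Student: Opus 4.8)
The plan is to compute the supercharacter of $DS(L(\lambda))$ via the Kazhdan--Lusztig resolution of $L(\lambda)$, and then to pin down the module structure by a separate argument for semisimplicity together with an upper dimension bound. As a preliminary reduction, observe that the $\times$'s and $\circ$'s of the weight diagram are spectators for $DS$: they single out a block component of $\mathcal{T}_{m-1|n-1}$ that $DS$ preserves, and $DS$ is compatible with translation functors and Serganova's block equivalences, so one may as well assume $L(\lambda)$ maximal atypical and work with a cup diagram carrying only $\vee/\wedge$ symbols.

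For the character computation one first determines $DS$ on a Kac module $V(\mu)$. Via PBW, $V(\mu) \cong \Lambda^{\bullet}(\mathfrak{g}_{-1}) \otimes L_0(\mu)$ as $\mathfrak{g}_{\bar 0}$-modules, and $\rho(x)$ is a degree $-1$ differential on this complex; computing its cohomology --- equivalently, specializing the Kac supercharacter by Serganova's formula for $DS$ --- shows that $[DS(V(\mu))]$ is, up to parity shift, the class of a single Kac module $V'(\mu')$ of $\mathfrak{gl}(m-1|n-1)$ or zero, with $\mu'$ read off $\mu$. Since $\rho(x)^2 = 0$, the six-term exact cohomology sequence makes $DS$ additive on the super-Grothendieck group; substituting the expansion $[L(\lambda)] = \sum_{\mu} a_{\mu}\,[V(\mu)]$ of Brundan--Stroppel then gives $[DS(L(\lambda))] = \sum_{\mu} a_{\mu}\,[DS(V(\mu))]$. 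The combinatorial heart of the matter is to check that this alternating sum telescopes to exactly $\sum_{i=1}^{k}(-1)^{n_i}\,[L(\lambda_i)]$: the contributions coming from the nested inner cups of each sector cancel in a cascade, leaving precisely one surviving term per sector, namely the one obtained by opening the outer cup of that sector, with the parity recorded by the shifts $n_i$ satisfying $n_i \equiv \varepsilon(\lambda) - \varepsilon(\lambda_i)$ mod $2$.

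The step I expect to be genuinely hard is upgrading this identity of classes to the claimed direct sum decomposition. Two things are needed. First, semisimplicity of $DS(L(\lambda))$: using Brundan--Stroppel's combinatorial description of $\mathrm{Ext}^1$ between irreducibles, one verifies $\mathrm{Ext}^1(L(\lambda_i), L(\lambda_j)) = 0$ for $i \neq j$ --- the diagrams of $\lambda_i$ and $\lambda_j$ are obtained by opening the outer cup of two distinct, hence separated, sectors and therefore differ by more than one elementary move --- so a module with these multiplicity-free, pairwise $\mathrm{Ext}^1$-orthogonal composition factors is automatically the direct sum of them. Second, one must rule out additional ``invisible'' composition factors, that is, pairs $L(\nu) \oplus \mbox{\textsf{par}}\,L(\nu)$ which are silent in the supercharacter: this requires either an independent upper bound $\dim DS(L(\lambda)) \le \sum_i \dim L(\lambda_i)$ (equivalently, a computation of $\mathrm{rank}\,\rho(x)|_{L(\lambda)}$) --- obtained for instance by computing $DS$ on the projective cover $P(\lambda)$, which is a retract, possibly after a Berezin twist, of a mixed tensor space $V^{\otimes a} \otimes (V^{\vee})^{\otimes b}$ on which $DS$ is completely explicit by monoidality and $DS(V) = k^{m-1|n-1}$, and then tracking the long exact sequence --- or else Gruson--Serganova's explicit construction of cohomology classes realizing $\bigoplus_i L(\lambda_i)$ inside $\ker \rho(x)/\mathrm{im}\,\rho(x)$ together with a matching dimension count.

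One can alternatively organize everything as an induction on the number $k$ of sectors, peeling off the outermost cup of one sector at a time and comparing with a $DS$-functor for a smaller pair; but in any organization the decisive difficulty is this last point --- the semisimplicity and the exactness behaviour of $DS$ on the irreducible $L(\lambda)$ --- rather than the character identity, which is purely combinatorial.
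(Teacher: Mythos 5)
The ``proof'' in the paper is not a derivation at all but a citation with a remark about scope: Theorem~\ref{mainthm} is \cite[Theorem 16.1]{HW-tensor}, proven there for $GL(n|n)$, and the passage from $GL(n|n)$ to $GL(m|n)$ is handled by observing that the Brundan--Stroppel rules for translation functors on irreducibles are formally identical once the core symbols $\times,\circ$ are fixed (so the induction that moves cups around carries over verbatim), or alternatively by reducing to the principal block of $GL(n|n)$ via Serganova's block equivalence. Your proposal, by contrast, sketches the underlying proof of the $GL(n|n)$ result from first principles, so the two texts are not doing the same thing: the paper delegates, you reconstruct.

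As a reconstruction your sketch is broadly compatible with the actual argument in \cite{HW-tensor}, and you correctly locate where the real work lies --- not the supercharacter identity but the upgrade to a direct-sum decomposition without invisible $L(\nu)\oplus \mbox{\textsf{par}}\,L(\nu)$ contributions. Two cautions, though. First, the telescoping of the alternating Kac sum to $\sum_i(-1)^{n_i}[L(\lambda_i)]$ is not a ``cascade that cancels''; it is the combinatorial core of the theorem, and in \cite{HW-tensor} it is handled by an explicit induction on sectors and a study of what $DS$ does to translation functors (exactly what the paper's remark alludes to), not by inspecting the Brundan--Stroppel expansion term by term. Treating it as routine would leave the main gap unfilled. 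Second, your $\mathrm{Ext}^1$-vanishing claim --- that $\lambda_i$ and $\lambda_j$ for $i\neq j$ ``differ by more than one elementary move'' --- needs more care: the $\lambda_i$ for different $i$ differ in a single sector of the diagram (the $i$-th outer cup has been opened, the others are intact), so what is being compared is two diagrams each obtained from $\lambda$ by a \emph{local} modification in a single, but different, sector. That these two are not $\mathrm{Ext}^1$-adjacent does follow from the Brundan--Stroppel criterion, but the reason is the separation of the two affected sectors, not a generic ``more than one move'' count, and one must also exclude self-extensions, which here come for free since $L(\lambda_i)\not\cong L(\lambda_j)$ and the category has no self-$\mathrm{Ext}^1$ for irreducibles. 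Your proposed dimension bound via $DS$ on the projective cover (realized in mixed tensor space) is a legitimate route and is close in spirit to the approach in \cite{GH}, which the paper cites as an alternative; \cite{HW-tensor} itself organizes this step somewhat differently, via filtrations and the graded/enriched version of $DS$.

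So: correct strategy, correct identification of the difficulties, but the two hardest ingredients (the sector induction behind the character identity and the module-level semisimplicity with no hidden parity pairs) are stated as if they were known facts rather than the theorems they actually are. If you intend this as a standalone proof rather than a reading guide to \cite{HW-tensor}, those are the points to expand.
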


\noindent  
In down to earth terms, the $i$-th summand $L(\lambda_i)$ is obtained by removing the $i$-th outer cup along with its $\vee$. It follows that $DS(L(\lambda))$ is semisimple and multiplicity free. 

\begin{proof} Theorem \ref{mainthm} is proven in \cite{HW-tensor} for the case $GL(n|n)$, however works up to notational changes in the $GL(m|n)$-case. This needs the rules for translation functors acting on irreducible modules, which admit exactly the same description for $GL(m|n)$ as for $GL(n|n)$ \cite[Lemma 2.4]{Brundan-Stroppel-4} since any combinatorial rule involving the weight diagrams is identical once the block labels $\times$ and $\circ$ have been fixed. Alternatively one can reduce the proof to the principal block of $GL(n|n)$, using remarks on block equivalences as in \cite[Section 16]{HW-tensor}, see also \cite[Section 6]{GH} for details.
\end{proof}

\begin{example} \label{example:DS-S-i} Consider $L(\lambda) = L(0,\ldots,0,-i \ | \ i,0)$ in $\mathcal{T}_{m|2}$. Here is the cup diagram for $i=5$ and $m=4$

\medskip
\begin{center}
 
 \scalebox{0.7}{
\begin{tikzpicture}
\foreach \x in {-8,-2} 
     \draw[very thick] (\x-.1, .1) -- (\x,-0.1) -- (\x +.1, .1);
\foreach \x in {-7,-6,-5,-4,-3,1} 
     \draw[very thick] (\x-.1, -.1) -- (\x,0.1) -- (\x +.1, -.1);
\foreach \x in {0,-1} 
     \draw[very thick] (\x-.1, .1) -- (\x +.1, -.1) (\x-.1, -.1) -- (\x +.1, .1);
%


\draw[very thick] [-,black,out=270, in=270](-2,+-0.2) to (1,+-0.2);
\draw[very thick] [-,black,out=270, in=270](-8,+-0.2) to (-7,+-0.2);


\end{tikzpicture} }
\smallskip

\end{center}

The two irreducible summands of $DS(L) \in \mathcal{T}_{m-1|1}$ obtained from the two sectors give

\begin{align*} DS(L(0,\ldots,0,0,-i \ | \ i,0)) & \ \cong\  \mbox{\textsf{par}}\, \one \, \oplus\,  \mbox{\textsf{par}}\, L(0,\ldots,0,0,-i-1 \ | \ i+1).\end{align*}

\end{example}

\subsection{The core}  Applying $DS$ to an irreducible representation $L(\lambda)$ of atypicality $k$ repeatedly yields an isotypic typical representation of $GL(m-k|n-k)$. This irreducible representation is called the classical core $L(\chi_{\lambda})$ of $L(\lambda)$, denoted by Serganova as $L^{core}$ \cite{Serganova-kw}. Up to isomorphism $L(\chi_\lambda)$ depends only on the block of $L(\lambda)$. Its weight diagram is determined by the $\times$'s and $\circ$'s, given by the central character $\chi_{\lambda}$ of $L(\lambda)$. Its multiplicity is a number between $1$ and $n!$ \cite{HW-tensor}. If $L(\lambda)$ is maximal atypical, $L(\chi_{\lambda})$ is a representation of $GL(r)$, and this defines a bijection between maximal atypical blocks of $GL(m|n)$ and irreducible representations of $GL(r)$ for $r=m-n$.



\section{Semisimplification and the tannakian tower}

\subsection{Conventions} \label{Conventions} Our definition of a tensor category is the one used in \cite{EGNO} except that we do not require the category to be abelian or rigid. In particular, they are $k$-linear symmetric monoidal
categories with unit object $\mathbf 1$ and $End(\one) = k$. 
Tensor functors are as in \cite{EGNO}, but need not be exact. Hence our definition of a tensor functor agrees with the one  in \cite{Deligne-Milne}. We say a tensor category $\mathcal{T}$ is tannakian if the following holds:

\begin{enumerate} 
\item[(i)] $\mathcal{T}$ is abelian
\item [(ii)] the canonical morphism $k \to \End(\one)$ is an isomorphism
\item [(iii)] $\mathcal{T}$ is rigid
\item [(iv)] every object in $\mathcal{T}$ has finite length
\item [(v)] there exists a fibre functor $\omega\!:\! \mathcal{T}\! \to\! k$, i.e. an exact faithful $k$-linear tensor functor.
\end{enumerate}
Note that finite length implies that every morphism space is finite dimensional \cite[Prop. 1.1]{Deligne-tensorielles}. Our tensor categories are therefore locally finite in the sense of \cite[Def. 1.8.1]{EGNO}, and the Jordan-H\"older Theorem and Krull-Schmidt Theorem hold.

\subsection{The ideal of negligible morphisms} For any two objects $A, B \in \mathcal{T}_{m|n}^+$ we define $\mathcal{N}(A,B) \subset Hom(A,B)$ by \[ \mathcal{N}(A,B) = \{ f \in Hom(A,B) \ | \ \forall g \in Hom(B,A), \ Tr(g \circ f ) = 0 \}. \] The collection of all $\mathcal{N}(A,B)$ defines a tensor ideal $\mathcal{N}$ of $\Tmn$ \cite{Andre-Kahn}, the tensor ideal of negligible morphisms. By \cite{H-ss} the quotient category $\Tmn/\calN$ is a semisimple super tannakian category. The proofs of the results in \cite[Section 5]{HW-tannaka} carry over verbatim since they are either general tannakian statements or only use that $DS(L(\lambda))$ is semisimple and the sign rule for the parity shifts. We summarize the main results.

\begin{thm} \label{thm:tannaka} (Positivity and the tannakian tower) 
i) $\mathcal{T}_{m|n}^+/\calN$ is a semisimple tannakian category $\overline{\mathcal{T}}_{m|n}:=\mathcal{T}_{m|n}^+/\calN$, i.e. it is tensor equivalent to the representation category of a pro-reductive group scheme $H_{m|n}$.

(ii) $DS$ restricts to a functor $DS: \Tmn^+ \to \mathcal{T}_{m-1|n-1}^+$ which gives rise to a $k$-linear exact tensor functor
between the quotient categories
$$  d:  \overline{\mathcal{T}}_{m|n} \to \overline{\mathcal{T}}_{m-1|n-1} \ .$$

iii) The functor $d$ induces an injective homomorphism of affine $k$-groups
$$  f: H_{m-1|n-1} \longrightarrow H_{m|n} $$ and the functor $ Rep(H_{m|n})\to Rep(H_{m-1|n-1})$ induced  
by $d\!:\! \mathcal{T}_{m|n}^+ \!\to\! \mathcal{T}_{m-1|n-1}^+$ can be identified with the restriction
functor $f^*$ for the homomorphism $f$.
\end{thm}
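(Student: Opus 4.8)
The plan is to establish the three parts in order, since each feeds into the next and the substantive content is already isolated in the references. First I would prove (i): the quotient $\mathcal{T}_{m|n}^+/\mathcal{N}$ is semisimple abelian with $\End(\one)=k$ by the general theory of \cite{Andre-Kahn} and the observation in \cite{H-ss} that it is super-tannakian; one must then upgrade "super-tannakian" to "tannakian", and here the point is that $\mathcal{T}_{m|n}^+$ was \emph{defined} so that every generating irreducible has superdimension $\geq 0$, hence every object has categorical dimension $\geq 0$, which kills the odd part of the fibre functor and forces the Tannaka group to be an ordinary (pro-reductive) affine group scheme rather than a supergroup. This is exactly the argument in \cite[Section 5]{HW-tannaka}, and I would cite it as carrying over verbatim, noting explicitly that the only inputs used there are general tannakian formalism plus semisimplicity and multiplicity-freeness of $DS(L(\lambda))$ (Theorem \ref{mainthm}) together with the sign rule for parity shifts in Section \ref{ds-functor}.

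Next, for (ii), I would check that $DS\colon \mathcal{T}_{m|n}\to\mathcal{T}_{m-1|n-1}$ sends $\mathcal{T}_{m|n}^+$ into $\mathcal{T}_{m-1|n-1}^+$: since $DS$ is a symmetric monoidal functor it sends iterated tensor products of irreducibles to the same, and by Theorem \ref{mainthm} it sends a maximal atypical irreducible of positive superdimension to a sum of parity-shifted irreducibles whose signs are governed by $\varepsilon$, so after passing to retracts one lands in $\mathcal{T}_{m-1|n-1}^+$ (this is the content of \cite{HW-tannaka}, and the superdimension-preservation of $DS$ guarantees nonnegative dimensions survive). Because $DS$ is a tensor functor and annihilates negligibles — a negligible object has superdimension $0$, and $DS$ preserves superdimension, so its image is again negligible, whence $DS(\mathcal{N})\subseteq\mathcal{N}$ — it descends to an exact $k$-linear tensor functor $d\colon\overline{\mathcal{T}}_{m|n}\to\overline{\mathcal{T}}_{m-1|n-1}$ between the semisimple quotients; exactness is automatic for an additive functor between semisimple abelian categories.

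For (iii), I would invoke Tannakian reconstruction: a tensor functor between neutral tannakian categories corresponds to a homomorphism of the associated affine group schemes, and the functor is identified with the restriction functor $f^*$ along that homomorphism once one fixes compatible fibre functors. Concretely, choose the fibre functor on $\overline{\mathcal{T}}_{m-1|n-1}$ and precompose with $d$ to get one on $\overline{\mathcal{T}}_{m|n}$ (this is the composite $\omega=d_{m|n}$ referred to in the introduction); then Deligne--Milne's formalism \cite{Deligne-Milne} yields $f\colon H_{m-1|n-1}\to H_{m|n}$ with $d\simeq f^*$. Injectivity (that $f$ is a closed embedding) amounts to the statement that every object of $\overline{\mathcal{T}}_{m-1|n-1}$ is a subquotient of the image of some object under $d$; equivalently, that $d$ is surjective on objects up to subquotients. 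I expect this essential-surjectivity claim to be the main obstacle, and I would handle it as in \cite[Section 5]{HW-tannaka}: by an induction on atypicality using Theorem \ref{mainthm}, every irreducible in $\mathcal{T}_{m-1|n-1}$ of the relevant blocks appears as a summand of $DS$ of a suitable irreducible in $\mathcal{T}_{m|n}$ (remove the appropriate outer cup/$\vee$), and tensor-generation together with the faithfulness already built into the tannakian setup then forces $f$ to be a closed immersion. Everything else is routine tannakian bookkeeping, so I would present (iii) compactly and refer to \cite{HW-tannaka} for the combinatorial surjectivity input.
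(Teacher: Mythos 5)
Your proposal is correct and follows the same route the paper takes: the paper's proof of Theorem \ref{thm:tannaka} simply cites that the arguments of \cite[Section 5]{HW-tannaka} carry over verbatim, noting that the only inputs are general tannakian formalism plus the semisimplicity and sign rule of $DS$ on irreducibles (Theorem \ref{mainthm}), and your proposal reconstructs exactly this chain — dimension $\geq 0$ upgrading super-tannakian to tannakian, trace-preservation of $DS$ giving the descent to the semisimplification, and the Deligne–Milne subquotient criterion for $f$ being a closed embedding verified via removal of outer cups in Theorem \ref{mainthm}. The only loose phrasing is in (iii), where to realize $L(\mu)\in\mathcal{T}_{m-1|n-1}$ as a summand of $DS(L(\lambda))$ one must \emph{add} a small outer cup to the diagram of $\mu$ rather than remove one; and you might note that the essential-surjectivity check only needs to treat maximal atypical simples, since all others have superdimension zero and die in the quotient.
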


In other words, the description of the functor $DS$ on irreducible objects in $\mathcal{T}_{m|n}$,
given by Theorem \ref{mainthm}, can be interpreted as branching rules for the inclusion
$$ f: \ H_{m-1|n-1} \hookrightarrow H_{m|n}\ .$$ Note that the functor $d$ on objects is described by $DS$  as in \cite[Section 5]{HW-tannaka} (and was denoted $\eta$ in \cite{HW-tannaka}). Iteration defines the tensor functor
$$  \omega = d^n: \overline{\mathcal{T}}_{m|n} \to Rep(GL(r))  \ .$$

\begin{cor} \label{cor:fibre} For the (forgetful) fibre functor $\omega_r\!:\! Rep(GL(r))\!\to\! vec_k$, the composed tensor functor $\omega_{m\vert n} =  \omega_r \circ \omega :\overline{\mathcal{T}}_{m|n} \to vec_k$ defines a fibre functor for $\overline{\mathcal{T}}_{m|n}$. 
\end{cor}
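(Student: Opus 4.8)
The plan is to deduce the corollary directly from Theorem \ref{thm:tannaka} by composing tensor functors and checking the two defining properties of a fibre functor, namely faithfulness and exactness, against the recipe of condition (v) in the Conventions. First I would recall that $\overline{\mathcal{T}}_{m|n}$ is semisimple tannakian by part (i), so every object is a finite direct sum of simples and every morphism is determined by its components between isotypic pieces; this is what will ultimately make the faithfulness bookkeeping painless. By part (ii), iterated application of $d$ yields a $k$-linear tensor functor $\omega = d^n \colon \overline{\mathcal{T}}_{m|n} \to \overline{\mathcal{T}}_{m-n|0} = \overline{\mathcal{T}}_{r|0}$, and since $\mathcal{T}_{r|0} = \mathcal{T}_r$ is already semisimple the negligible ideal $\mathcal{N}$ vanishes there, so $\overline{\mathcal{T}}_{r|0} \simeq \mathcal{T}_r = Rep(GL(r))$. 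Then $\omega_{m|n} := \omega_r \circ \omega$ is a composite of $k$-linear tensor functors, hence itself a $k$-linear tensor functor to $vec_k$; this handles all of condition (v) except exactness and faithfulness.

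Next I would verify exactness. Each $d_{j|\,j-m+n}$ is exact by part (ii) of the theorem (the categories are semisimple, so there is in fact nothing to check beyond additivity — short exact sequences split). The forgetful functor $\omega_r$ on $Rep(GL(r))$ is the standard fibre functor and is exact. A composite of exact functors is exact, so $\omega_{m|n}$ is exact. The more substantive point is faithfulness. Here I would argue as follows: on a semisimple category a $k$-linear additive functor is faithful if and only if it kills no nonzero object, equivalently sends every simple object to a nonzero object. So it suffices to show $\omega(X_\lambda) \ne 0$ for each simple $X_\lambda$ of $\overline{\mathcal{T}}_{m|n}$ (and then that $\omega_r$ is faithful, which is classical). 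For the maximal atypical simples this is exactly the content of the description of $DS$ on irreducibles in Theorem \ref{mainthm}: applying $DS$ (hence $d$) repeatedly $n$ times to an atypicality-$n$ irreducible never produces zero — each step strictly lowers atypicality by extracting sectors, and after $n$ steps one lands on the classical core $L(\chi_\lambda)$, a nonzero irreducible representation of $GL(r)$ of positive superdimension, as recalled in the subsection on the core. For the non-maximal-atypical simples coming from $\mathcal{T}_n$-factors one argues similarly that $DS$ does not annihilate them (again by Theorem \ref{mainthm} applied within the relevant block). Since $\omega$ sends every simple to a nonzero object, $\omega$ is faithful; composing with the faithful $\omega_r$ gives faithfulness of $\omega_{m|n}$.

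Finally I would package this: $\omega_{m|n}$ is a $k$-linear, exact, faithful tensor functor $\overline{\mathcal{T}}_{m|n} \to vec_k$, which is precisely the definition of a fibre functor, so condition (v) holds and, together with (i)–(iv) already established in Theorem \ref{thm:tannaka}(i), confirms that $\overline{\mathcal{T}}_{m|n}$ is tannakian with fibre functor $\omega_{m|n}$. The main obstacle — really the only place where something nontrivial is used rather than formal nonsense about semisimple categories and composites of functors — is the faithfulness step, i.e. the assertion that iterating $DS$ never kills a maximal atypical irreducible. This is not automatic from abstract tannakian formalism; it rests squarely on the explicit combinatorial description of $DS$ on irreducibles (Theorem \ref{mainthm}) and the resulting fact that the classical core is always a genuine nonzero representation. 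Everything else is routine and, as noted in the text preceding Theorem \ref{thm:tannaka}, carries over verbatim from the $m=n$ treatment in \cite{HW-tannaka}.
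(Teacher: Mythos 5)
The overall architecture is sound --- you correctly reduce the fibre-functor claim to exactness (trivial, by semisimplicity and additivity) and faithfulness, and correctly reduce faithfulness over a semisimple source to the statement that $\omega$ kills no simple object. The gap is in what you then actually check. You write ``it suffices to show $\omega(X_\lambda)\ne 0$ for each simple $X_\lambda$ of $\overline{\mathcal{T}}_{m|n}$'' and then treat the simples of the quotient as though they are exactly the images of irreducible $GL(m|n)$-modules $L(\lambda)$. That is not available at this point in the theory. By definition, the simple objects of $\overline{\mathcal{T}}_{m|n} = \mathcal{T}_{m|n}^+/\mathcal{N}$ are the images of \emph{arbitrary} non-negligible indecomposable objects $M$ of $\mathcal{T}_{m|n}^+$, i.e.\ indecomposable direct summands of iterated tensor products of irreducibles with $\sdim(M)\ne 0$. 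That such $M$ are always irreducible in $\mathcal{T}_{m|n}$ (up to parity) is one of the \emph{downstream conclusions} of the whole program, not an input; using it here would be circular. The sentence about ``non-maximal-atypical simples coming from $\mathcal{T}_n$-factors'' is also off: non-maximal-atypical irreducibles have $\sdim=0$, so they are negligible and become zero in the quotient --- there is nothing of that type among the simples of $\overline{\mathcal{T}}_{m|n}$.

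There is a uniform repair that also makes your appeal to Theorem~\ref{mainthm} unnecessary. The functor $DS$ is symmetric monoidal, hence preserves categorical traces and in particular superdimensions: $\sdim(DS^n(M)) = \sdim(M)$ for every $M$. For a simple object $\overline{M}$ of $\overline{\mathcal{T}}_{m|n}$, represented by a non-negligible indecomposable $M\in\mathcal{T}_{m|n}^+$, this gives $\sdim(DS^n(M)) = \sdim(M)\ne 0$, so $DS^n(M)\ne 0$; since objects of $\mathcal{T}_{m|n}^+$ are even (they lie in $\mathcal{T}_{m|n}^{ev}$, so $DS^n(M)$ has zero odd part), $DS^n(M)$ is a nonzero genuine $GL(r)$-representation and $\omega_{m|n}(\overline{M}) = \omega_r(\overline{DS^n(M)})\ne 0$. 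This handles all simples at once, with no case distinction and no use of the combinatorial description of $DS$ on irreducibles. Note that this is also closer in spirit to the route the paper itself takes: Theorem~\ref{thm:tannaka}(iii) identifies each $d$ with a restriction functor $f^*$ along an injection of affine groups, and restriction functors are automatically exact and faithful; the corollary then reads off that $\omega_{m|n}=\omega_r\circ d^n$ is a fibre functor, rather than arguing simple-by-simple.
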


Similarly to \cite[Section 5.5]{HW-tannaka} one can construct an enriched group homomorphism
$$  f^\bullet:\  H_{m-1|n-1} \times \mathbb G_m   \to H_{m|n} \ $$ using a graded version of the $DS$ functor. Its restriction to the subgroup $1 \times H_{m-1|n-1}$ is the homomorphism $f$ 
from above. Iterating this construction yields a group homomorphism $$h: (\mathbb G_m)^n \to H_{m|n} \ .$$

\begin{remark} As in \cite[Remark 5.7, Lemma 5.8]{HW-tannaka} all objects $X$ of $\mathcal{T}_{m|n}^+$ satisfy condition {\tt T} in the sense that $\overline\partial$ is trivial on $DS(X)$ \cite[Proposition 8.5]{HW-tensor}. As a consequence the Dirac functor functor $H_D(.)$ of \cite{HW-tensor} is naturally equivalent to the
functor $DS: \mathcal{T}_{m|n}^+ \to \mathcal{T}_{m-1|n-1}^+$. 
\end{remark}

\subsection{Reconstruction}

Let $\mathcal T$ be a  $k$-linear tannakian category $\mathcal T$
with a fibre functor $\omega_{\mathcal T}: {\mathcal T} \to vec_k$, i.e. a $k$-linear exact faithful monoidal functor. 
Then, by tannakian duality, $\mathcal T$ as a $k$-linear tannakian category is equivalent
to the category $Rep(G)$ of an affine group scheme $G$. 
Recall that over an algebraically closed field $k$ of characteristic 0 the group $G(k)$ 
determines the Tannaka group $G=G({\mathcal T},\omega_{\mathcal T})$ as an algebraic group over $k$. The elements of $G(k)$ are given
by the invertible monoidal  natural transformations $\gamma$ of the fibre functor $\omega_{\mathcal T}$.
This means: For $X,Y\!\in\! \mathcal T$ the transformations $\gamma_X: \omega_{\mathcal T}(X)\! \to\!  \omega_{\mathcal T}(X)$
are $k$-linear functorial morphisms such that $\gamma_{X\otimes Y}: \omega_{\mathcal T}(X\otimes Y) \to  \omega_{\mathcal T}(X\otimes Y)$ satisfies $\gamma_{X\otimes Y}= \gamma_X\otimes \gamma_Y$ with respect to  
the given isomorphisms $\omega_{\mathcal T}(X\otimes Y) \cong \omega_{\mathcal T}(X)\otimes \omega_{\mathcal T}(Y)$
that are part of the data defining the fibre functor $\omega_{\mathcal T}$; furthermore it is required that
$\gamma_X$ is multiplication by 1 for the unit object $X=\mathbf 1$. 
That $\gamma_X$ is functorial furthermore requires that $\gamma_X \circ  f = f \circ \gamma_Y$ holds for all morphisms
$f:X\to Y$ of the category $\mathcal T$. For details see \cite[Page 129]{Deligne-Milne}.

\medskip
Let then $\mathcal M$ be a $k$-linear monoidal subcategory of $\mathcal T$ with
the following properties:
\begin{enumerate}
\item $\mathcal M$ is a full abelian subcategory of $\mathcal T$ and contains the unit object $\mathbf 1$.
\item There exists an invertible object $\mathcal L$ in $\mathcal M$
with the following property: For every object $X$ in $\mathcal T$ there exist an integer
$k$ such that ${\mathcal L}^{k} \otimes X$ is in $\mathcal M$.
\end{enumerate}

The restriction of $\omega_{\mathcal T}$ to the subcategory $\mathcal M$ defines an exact $k$-linear
monoidal functor $\omega_{\mathcal M}: {\mathcal M} \to vec_k$. The Tannaka monoid
of $(\mathcal M, \omega_{\mathcal M})$ consists of all  invertible monoidal natural transformations $\gamma$ of the fibre functor $\omega_{\mathcal M}$ that respect
the tensor product of $\mathcal M$ similar as above; see \cite[Page 131ff.]{Deligne-Milne}. Under the above assumptions (1) and (2) on $\mathcal M$,
the following holds:

\begin{lem} \label{reconstruction} The Tannaka monoid of $({\mathcal M}, \omega_{\mathcal M})$ coincides with the
Tannaka group of $({\mathcal T}, \omega_{\mathcal T})$. The Tannaka group $G({\mathcal T},\omega_{\mathcal T})$
can be reconstructed from the data $({\mathcal M}, \omega_{\mathcal M}, {\mathcal L})$.
\end{lem}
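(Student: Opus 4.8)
The plan is to show that every invertible monoidal natural transformation $\gamma$ of $\omega_{\mathcal M}$ extends uniquely to an invertible monoidal natural transformation of $\omega_{\mathcal T}$, and that this extension is compatible with composition, so that the Tannaka monoid of $(\mathcal M, \omega_{\mathcal M})$ and the Tannaka group of $(\mathcal T, \omega_{\mathcal T})$ are literally the same group functor on $k$-algebras (hence define the same affine group scheme). The key point that makes this work is hypothesis (2): the single invertible object $\mathcal L$ absorbs every object of $\mathcal T$ into $\mathcal M$ after a suitable twist, and any such $\gamma$ already knows the value $\gamma_{\mathcal L}\in GL(\omega_{\mathcal T}(\mathcal L))$ together with its inverse $\gamma_{\mathcal L^{-1}} = (\gamma_{\mathcal L})^{-1}$ (forced by monoidality and $\gamma_{\one}=1$, using $\mathcal L\otimes\mathcal L^{-1}\cong\one$).

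First I would fix $\gamma$ in the Tannaka monoid of $(\mathcal M,\omega_{\mathcal M})$ and define, for an arbitrary $X\in\mathcal T$, the putative value $\widetilde\gamma_X$ as follows: choose $k$ with $\mathcal L^{k}\otimes X\in\mathcal M$ and set
\[
\widetilde\gamma_X \ := \ (\gamma_{\mathcal L})^{-k}\otimes \gamma_{\mathcal L^{k}\otimes X}
\]
under the canonical identifications $\omega_{\mathcal T}(X)\cong\omega_{\mathcal T}(\mathcal L^{-k})\otimes\omega_{\mathcal T}(\mathcal L^{k}\otimes X)$ provided by the monoidal structure of $\omega_{\mathcal T}$. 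I would then check three things: (a) \emph{well-definedness}, i.e. independence of the chosen $k$ --- if $\mathcal L^{k}\otimes X$ and $\mathcal L^{k'}\otimes X$ are both in $\mathcal M$ with $k'\ge k$, then $\mathcal L^{k'-k}\otimes(\mathcal L^{k}\otimes X)\cong \mathcal L^{k'}\otimes X$ is an identity inside $\mathcal M$, and monoidality of $\gamma$ on $\mathcal M$ together with $\gamma_{\mathcal L^{k'-k}} = (\gamma_{\mathcal L})^{k'-k}$ (which holds on $\mathcal M$) forces the two definitions to agree; (b) \emph{functoriality}: for $f\colon X\to Y$ in $\mathcal T$, pick $k$ large enough that $\mathcal L^{k}\otimes X,\ \mathcal L^{k}\otimes Y\in\mathcal M$, note $\mathrm{id}_{\mathcal L^{k}}\otimes f$ is a morphism of $\mathcal M$, and transport the naturality square for $\gamma$ in $\mathcal M$ back through the invertible twist by $\mathcal L^{k}$; (c) \emph{monoidality}: for $X,Y\in\mathcal T$, choose $k,\ell$ with $\mathcal L^{k}\otimes X,\ \mathcal L^{\ell}\otimes Y\in\mathcal M$, observe $\mathcal L^{k+\ell}\otimes(X\otimes Y)\cong(\mathcal L^{k}\otimes X)\otimes(\mathcal L^{\ell}\otimes Y)\in\mathcal M$, and use that $\gamma$ is monoidal on $\mathcal M$ plus $\gamma_{\one}=1$. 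Invertibility of $\widetilde\gamma_X$ is automatic since it is built from invertibles, and $\widetilde\gamma$ restricts to $\gamma$ on $\mathcal M$ (take $k=0$). Conversely, restriction to $\mathcal M$ sends a monoidal natural automorphism of $\omega_{\mathcal T}$ to one of $\omega_{\mathcal M}$, and the two constructions are mutually inverse because an element of the Tannaka group of $\mathcal T$ is already determined by its values on a $\otimes$-generating family, and $\mathcal M$ contains such a family by (2) (every $X$ is a twist of an object of $\mathcal M$ by the invertible $\mathcal L$). Finally I would remark that the whole argument is natural in the base $k$-algebra $R$ --- replace $vec_k$ by $\mathrm{Mod}_R$ and $\omega$ by $\omega\otimes_k R$ throughout --- so the bijection of $k$-points upgrades to an isomorphism of affine group schemes, giving the reconstruction statement. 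Since the identifications are compatible with composition of natural transformations (again by the twist trick), this is a monoid, hence a group, isomorphism.

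The main obstacle I anticipate is purely bookkeeping rather than conceptual: keeping the coherence isomorphisms of the fibre functor $\omega_{\mathcal T}$ (the data $\omega_{\mathcal T}(X\otimes Y)\cong\omega_{\mathcal T}(X)\otimes\omega_{\mathcal T}(Y)$ and $\omega_{\mathcal T}(\one)\cong k$) explicit enough that the associativity/pentagon compatibilities are visibly respected when one cancels a twist by $\mathcal L^{-k}$ against a twist by $\mathcal L^{k}$; in particular one must verify that $\widetilde\gamma$ does not depend on \emph{which} isomorphism $\mathcal L^{k}\otimes\mathcal L^{-k}\cong\one$ is used, which follows because $\mathrm{Aut}(\one)=k^\times$ acts trivially after applying $\omega_{\mathcal T}$ and normalizing $\gamma_{\one}=1$. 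Everything else is a routine diagram chase of the kind carried out in \cite[Page 131ff.]{Deligne-Milne}, and indeed for the application in this paper one only ever needs the case $\mathcal L = \Pi$ (or a Berezin power), where the twists are genuinely invertible one-dimensional objects and the coherence issues are invisible.
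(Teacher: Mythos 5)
Your proposal is correct and follows essentially the same route as the paper: fix $\gamma$ on $\mathcal M$, note that $\gamma_{\mathcal L}$ is an invertible scalar, define the extension on an arbitrary $X$ by twisting $\gamma_{\mathcal L^k\otimes X}$ with $\gamma_{\mathcal L}^{-k}$, and verify independence of $k$, functoriality, and monoidality by choosing a common large twist. The paper phrases the twist as multiplication by the scalar $\lambda=\gamma_{\mathcal L}\in k^\times$ rather than tensoring with $(\gamma_{\mathcal L})^{-k}$, and does not spell out the upgrade to $R$-points (relying on the standard fact that over an algebraically closed field of characteristic zero the $k$-points determine the affine group scheme), but these are cosmetic differences; your additional remarks on coherence and on $\gamma_{\mathcal L^{-1}}$ (which, note, need not be an object of $\mathcal M$, though this does not affect your actual construction since you only ever use $\gamma_{\mathcal L}^{-1}$) are harmless elaborations of the same argument.
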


\begin{proof}
Obviously any monoidal natural transformation $\gamma$ in $G({\mathcal T},\omega_{\mathcal T})(k)$ restricts to
a monoidal natural transformation in the Tannaka monoid of $({\mathcal M}, \omega_{\mathcal M})$.
It only remains to show that conversely any $k$-rational point in the Tannaka monoid of $({\mathcal M}, \omega_{\mathcal M})$, considered as a monoidal natural transformation $\gamma$  of $\omega_{\mathcal M}$, 
can be extended to a monoidal natural transformation of $\omega_{\mathcal T}$, i.e. to a point in $G(k)$ for $G=G({\mathcal T},\omega_{\mathcal T})$.

\medskip
Notice that $\gamma: \omega_{\mathcal M}({\mathcal L}) \to \omega_{\mathcal M}({\mathcal L})$,
as a $k$-linear automorphism of the one-dimensional $k$-vectorspace $\omega_{\mathcal M}({\mathcal L})$, is multiplication by a uniquely defined element $\lambda\in k^*$.
Since for any $X$ in $\mathcal T$  by property (2) there exists $X({\mathcal M})$ in $\mathcal M$ such that $X = {\mathcal L}^{-k} \otimes X({\mathcal M})$, we may define $\gamma_X: \omega_{\mathcal T}(X)\to \omega_{\mathcal T}(X)$ by
$$\gamma_X:=  \lambda^{-k} \cdot \gamma_{X({\mathcal M})} \ .$$ 
This definition does not depend on the choice of $k$ since $X = {\mathcal L}^{-k'} \otimes Y({\mathcal M})$ 
for $Y({\mathcal M})$ implies  $\lambda^{-k'}  \gamma_{Y({\mathcal M})}
= \lambda^{-k} \gamma_{X({\mathcal M})}$. We may assume $\ell = k' - k \geq 0$. Then $Y({\mathcal M}) =  {\mathcal L}^{\otimes \ell} \otimes X({\mathcal M})$ holds, and our claim 
 follows from  $\gamma_{Y({\mathcal M})}=
 \gamma_{{\mathcal L}}^\ell \otimes \gamma_{X({\mathcal M})}$ and $\gamma_{{\mathcal L}} = \lambda$.
 
\medskip
For any $X,Y$ in $\mathcal T$ we find a common $k$ such that ${\mathcal L}^{-k} \otimes X$ and ${\mathcal L}^{-k}\otimes Y$
are in $\mathcal M$. For morphisms $f:X\to Y$ then (1) implies  that the extension of $\gamma$ is functorial, and
 $\gamma_{X\otimes Y}=\gamma_X\otimes \gamma_Y$ is an immediate consequence of   $\gamma_{X({\mathcal M})\otimes Y({\mathcal M})}=\gamma_{X({\mathcal M})}\otimes \gamma_{Y({\mathcal M})}$.
\end{proof}

\begin{example} For $\mathcal{T} = Rep(GL(r))$ let $\mathcal{M}$ be the monoidal subcategory of polynomial representations. A representation is polynomial if it a finite direct sum of irreducible representations $L(\lambda)$ of $GL(r)$ 
with $\lambda_r \!\geq\! 0$ for $\lambda=(\lambda_1,...,\lambda_r)$. Notice, the determinant representation $\mathcal{L} = L(1,\ldots,1)$ 
is polynomial, and for any algebraic finite dimensional representation $X$ of $GL(r)$ there exists $k$ such that ${\mathcal L}^k \otimes X$ is polynomial.
\end{example}





\section{Stabilization} \label{sec:stabilization}

It is a common tool (see e.g. \cite{Serganova-blocks}) in the study of block equivalences, to move an irreducible module via translation functors to a \emph{stable module}, where an irreducible module (or weight) in a block is called stable if all the atypical hooks $\vee$ are to the left of all core symbols. Our aim is to control $L(\lambda) \otimes L(\mu)$ via $L(\lambda_{st}) \otimes L(\mu_{st})$ up to negligible summands, where $\lambda_{st}$ and $\mu_{st}$ are stable weights associated to $\lambda, \mu$. Since repeated tensoring with the standard representation $(V,st)$ would destroy this property, we use shifts via some {\lq\textit{almost invertible}}\rq\ object $\Pi$ 
to circumvent this.

\subsection{Translation functors}

By \cite{Brundan-Stroppel-4}, taking the tensor product with $V$ decomposes as \[ - \otimes V = \bigoplus_{i \in \mathbb{Z}} \mathcal{F}_i (-) \] for the endofunctors $\mathcal{F}_i$, $i \in \Z$, from \cite[2.13]{Brundan-Stroppel-4}. Similarly we have a decomposition \[ - \otimes V^{\vee} = \bigoplus_{i \in \mathbb{Z}} \mathcal{E}_i (-) \] for the endofunctors $\mathcal{E}_i$, $i \in \Z$. On irreducible modules the effect of $\mathcal{F}_i$ can be understood diagrammatically. We quote from \cite[Lemma 2.4]{Brundan-Stroppel-4}.

\begin{lem}\label{translation}
Let $\lambda \in X^+$ and $i \in \mathbb Z$.
For symbols $x,y \in \{\circ,\wedge,\vee,\times\}$
we write $\lambda_{xy}$ for the diagram obtained from $\lambda$
with the $i$th and $(i+1)$th vertices
relabeled by $x$ and $y$, respectively.
\begin{itemize}
\item[\rm(i)]
If $\lambda = \lambda_{{\vee}\times}$ then $\mathcal{E}_i L(\lambda) \cong L(\lambda_{\times {\vee}})$. If $\lambda = \lambda_{\times \vee}$ then $\mathcal{F}_i L(\lambda) \cong L(\lambda_{{\vee} \times})$.
\item[\rm(ii)]
If $\lambda = \lambda_{{\wedge}\times}$ then $\mathcal{E}_i L(\lambda) \cong L(\lambda_{\times {\wedge}})$. If $\lambda = \lambda_{\times \wedge}$ then $\mathcal{F}_i L(\lambda) \cong L(\lambda_{\wedge \times})$.
\item[\rm(iii)]
If $\lambda = \lambda_{{\vee}\circ}$ then $\mathcal{F}_i L(\lambda) \cong L(\lambda_{\circ {\vee}})$. If $\lambda = \lambda_{\circ \vee}$ then $\mathcal{E}_i L(\lambda) \cong L(\lambda_{{\vee} \circ})$.
\item[\rm(iv)]
If $\lambda = \lambda_{{\wedge}\circ}$ then $\mathcal{F}_i L(\lambda) \cong L(\lambda_{\circ {\wedge}})$. If $\lambda = \lambda_{\circ \wedge}$ then $\mathcal{E}_i L(\lambda) \cong L(\lambda_{\wedge \circ})$.
\item[\rm(v)]
If $\lambda = \lambda_{{\times}\circ}$ then:
$\mathcal{F}_i L(\lambda)$ has irreducible
socle and head both isomorphic to $L(\lambda_{\vee\wedge})$, and all other composition
factors are of the form $L(\mu)$ for $\mu \in\lambda$
such that
$\mu = \mu_{\vee \wedge}$,
$\mu = \mu_{\wedge \vee}$ or
$\mu = \mu_{\wedge \vee}$.
Likewise for $\lambda = \lambda_{\circ \times}$ and $\mathcal{E}_i L(\lambda)$.
\item[\rm(vi)]
If $\lambda = \lambda_{{\vee\wedge}}$ then $\mathcal{F}_i L(\lambda) \cong L(\lambda_{\circ{\times}})$.
\end{itemize}
\end{lem}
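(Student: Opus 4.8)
The lemma is quoted verbatim from \cite[Lemma 2.4]{Brundan-Stroppel-4}, so strictly speaking nothing is proved here; but it is worth recalling the mechanism, since the whole section rests on it. The plan is to pass to the diagram-algebra model of \cite{Brundan-Stroppel-4}: each block of $\mathcal{T}_{m|n}$ is equivalent to the category of finite dimensional graded modules over a Khovanov-type algebra $K = K_{m|n}$, under which $L(\lambda)$ is the simple head of the projective indexed by the cup diagram of $\lambda$, and the functors $\mathcal{F}_i$ resp.\ $\mathcal{E}_i$, being the $i$-th summands of $-\otimes V$ resp.\ $-\otimes V^\vee$, are intertwined with tensoring by explicit $(K,K)$-bimodules supported on the vertex pair $\{i, i+1\}$.

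First I would record the formal properties used downstream: $\mathcal{F}_i$ and $\mathcal{E}_i$ are exact and biadjoint up to a grading shift, hence preserve projectives, and there is an adjunction isomorphism $\Hom(\mathcal{F}_i L(\lambda), L(\mu)) \cong \Hom(L(\lambda), \mathcal{E}_i L(\mu))$ which reduces the determination of the socle and head of $\mathcal{F}_i L(\lambda)$ to the same kind of question. Next, in cases (i)--(iv) and (vi) the local configuration at $\{i, i+1\}$ is such that the corresponding bimodule move neither creates nor destroys a $\vee\wedge$-cap; one then checks --- directly from the bimodule, or equivalently from the graded decomposition numbers, which are parabolic Kazhdan--Lusztig polynomials of type $A$ --- that $\mathcal{F}_i L(\lambda)$ has a one-dimensional graded composition-multiplicity space, so it is the simple module attached to the relabelled diagram. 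Case (v) is the only genuinely non-semisimple one: the configuration $\times\circ$ (resp.\ $\circ\times$) lies on a wall, $\mathcal{F}_i$ (resp.\ $\mathcal{E}_i$) is a wall-crossing functor, and an explicit bimodule computation (or the relations between $\mathcal{F}_i\mathcal{E}_i$ and $\mathcal{E}_i\mathcal{F}_i$) shows the image is indecomposable with simple socle and head $L(\lambda_{\vee\wedge})$ and a rigid Loewy structure whose remaining factors are the $L(\mu)$ in the same block as listed in the statement.

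The one real obstacle is that a fully self-contained argument requires importing the construction of $K_{m|n}$ together with the verification that the translation functors correspond to these bimodule functors, which is exactly the content of \cite{Brundan-Stroppel-4}; we simply invoke it. For the present paper the lemma serves only as a black box describing the elementary local moves on weight diagrams (with the single wall-crossing exception in (v)), exactly as stated.
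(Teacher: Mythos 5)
You correctly observe that the paper gives no proof at all — it explicitly quotes this as \cite[Lemma 2.4]{Brundan-Stroppel-4}, so the ``proof'' is simply the citation, which is exactly what you do. Your sketch of the underlying Khovanov-algebra/bimodule mechanism is accurate background but not required; the one small caveat is that part (v) as transcribed contains a typo (the list of $\mu$'s repeats $\mu = \mu_{\wedge\vee}$ and omits $\mu = \mu_{\vee\vee}$ or the correct third case from the source), which neither you nor the paper addresses.
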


\subsection{Left shifts}

For $r=m-n$ and $V + st_{m\vert n}$ we introduce the irreducible module  \[ \Pi := \Lambda^{r}(V) \otimes Ber_{m\vert n}^{-1}.\] Concerning this, notice that \[ \Pi \cong L(0,\ldots,0,-1,\ldots,-1 \ | \ 1,\ldots,1) \] is maximal atypical of superdimension $\pm 1$. Indeed, $\Lambda^{r}(V)$ has superdimension 
1 by the usual formula for the categorical dimension of exterior products. Furthermore $\Pi$ is irreducible since $\Lambda^{r}(V)$ is a direct summand in $V^{\otimes \ell}$ for some $\ell$ \cite{BR} \cite{Sergeev}. Hence \[ \Pi \otimes \Pi^{\vee}  \cong \one \oplus \text{ negligible module }.\] So, $\Pi$ is almost invertible in the sense that its image  in the semisimplification becomes invertible. We later denote this image $B_{core}^{-1}$, as it behaves like $Ber_{n\vert n}^{-1}$ in the case $m\!=\! n$.

\begin{cor} For any indecomposable module $M$ of non-vanishing superdimension $\Pi \otimes M$ is indecomposable up to negligible summands. 
\end{cor}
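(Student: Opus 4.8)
The plan is to deduce this corollary from the fact that $\Pi$ becomes invertible in the semisimplification together with the Krull–Schmidt theorem. First I would recall that $\Pi$ is almost invertible: the identity $\Pi \otimes \Pi^\vee \cong \one \oplus N$ with $N$ negligible, which was established just above, means that in the quotient $\overline{\mathcal{T}}_{m|n} = \mathcal{T}_{m|n}^+/\mathcal{N}$ the image $\overline\Pi$ of $\Pi$ satisfies $\overline\Pi \otimes \overline{\Pi^\vee} \cong \one$, so $\overline\Pi$ is an invertible object (a line) in the semisimple tensor category $\overline{\mathcal{T}}_{m|n}$. Consequently the functor $\overline\Pi \otimes (-)$ is an autoequivalence of $\overline{\mathcal{T}}_{m|n}$, and in particular it sends simple objects to simple objects.

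Next I would translate this back to $\mathcal{T}_{m|n}^+$. Let $M$ be an indecomposable module with $\sdim(M) \neq 0$; then its image $\overline M$ in $\overline{\mathcal{T}}_{m|n}$ is a nonzero object, hence (by semisimplicity) a finite direct sum of simple objects, and since $M$ is indecomposable of nonzero superdimension $\overline M$ is in fact simple (an indecomposable object is negligible iff its superdimension vanishes, and the image of an indecomposable with nonvanishing superdimension is simple — this is exactly the content recalled in the introduction and in Theorem \ref{thm:tannaka}). Therefore $\overline{\Pi \otimes M} \cong \overline\Pi \otimes \overline M$ is again simple in $\overline{\mathcal{T}}_{m|n}$. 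Now write $\Pi \otimes M \cong \bigoplus_{i} M_i$ as a direct sum of indecomposables in $\mathcal{T}_{m|n}^+$ using Krull–Schmidt; applying the quotient functor gives $\bigoplus_i \overline{M_i} \cong \overline{\Pi \otimes M}$, which is simple. Since each $\overline{M_i}$ is either simple or zero, exactly one summand, say $M_{i_0}$, has nonzero (hence simple) image and all other $M_i$ are negligible, i.e.\ have superdimension $0$. Hence $\Pi \otimes M = M_{i_0} \oplus (\text{negligible})$ with $M_{i_0}$ indecomposable, which is the assertion.

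The only point requiring a little care — and the main obstacle if one wants to be fully rigorous — is to justify that the image of an indecomposable object of nonvanishing superdimension is genuinely \emph{simple} (not merely nonzero) in the semisimplification, and dually that the nonnegligible summand $M_{i_0}$ produced above is indecomposable rather than just having indecomposable image. The first is the standard structure of Andr\'e–Kahn semisimplification of a Krull–Schmidt tensor category: the quotient functor induces a bijection between isomorphism classes of indecomposables of nonzero superdimension and isomorphism classes of simples of $\overline{\mathcal{T}}_{m|n}$ (see \cite{Andre-Kahn}, and the summary in \cite{HW-tannaka}); in particular $\mathrm{End}(M)/\mathcal{N}(M,M)$ is the field $k$ for such $M$, so $\overline M$ is simple. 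The indecomposability of $M_{i_0}$ is automatic since we extracted it from a Krull–Schmidt decomposition. With these facts in hand the argument above is complete; there is no computation to grind through beyond invoking Krull–Schmidt and the invertibility of $\overline\Pi$.
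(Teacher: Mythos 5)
Your proof is correct and takes essentially the same approach as the paper: the paper's one-line argument is precisely that $\Pi$ becomes invertible in the semisimplification so that tensoring with it preserves simplicity of images, and your write-up simply fills in the Krull--Schmidt bookkeeping that the paper leaves implicit.
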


\begin{proof} Since $\Pi$ is invertible in the semisimplification $\overline{\mathcal T}_{m\vert n}=  \Tmnss$, tensoring with $\Pi$ sends an irreducible module to an irreducible module in $\overline{\mathcal T}_{m\vert n}$.   
\end{proof}

\begin{lem} The unique indecomposable summand of non-vanishing superdimension in $\Pi \otimes L(\lambda)$ is in the same block as $L(\lambda)$. 
\end{lem}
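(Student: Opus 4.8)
The statement is that $\Pi\otimes L(\lambda)$, modulo negligibles, has a unique non-negligible indecomposable summand, and that summand lies in the same block as $L(\lambda)$. Since the block of $L(\lambda)$ is determined by the $\times$'s and $\circ$'s of the weight diagram (the central character $\chi_\lambda$), the claim is really a statement about central characters: multiplying by $\Pi$ leaves $\chi_\lambda$ unchanged on the non-negligible part. My plan is to reduce this to a computation of central characters, which behave additively under $\otimes$: the central character of any composition factor of $\Pi\otimes L(\lambda)$ is $\chi_\Pi + \chi_\lambda$ (in the usual description of the center of $U(\mathfrak g)$ acting via the Harish-Chandra homomorphism). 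So the first step is to identify $\chi_\Pi$ and observe that $\Pi = L(0,\dots,0,-1,\dots,-1\mid 1,\dots,1)$ is maximal atypical in the \emph{principal} block $\mathcal B_{\one}$, hence $\chi_\Pi = \chi_{\one}$ is the trivial central character. Therefore every composition factor of $\Pi\otimes L(\lambda)$ has central character $\chi_\lambda$, i.e. every indecomposable summand of $\Pi\otimes L(\lambda)$ lies in the block of $L(\lambda)$ — and then the unique non-negligible indecomposable summand, whatever it is, is a fortiori in that block.

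**Steps, in order.** First I would check that $\Pi$ is in the principal block: its weight diagram has no $\times$'s and no $\circ$'s (one computes $I_\times(\Pi)$ and $I_\circ(\Pi)$ from the formulas in Section \ref{section: weights} and sees they coincide completely, so all labels are $\vee$ or $\wedge$ and none is $\times$ or $\circ$), which by the block classification means $\Pi\in\mathcal B_{\one}$, so $\chi_\Pi$ is trivial. Second, I would invoke the standard fact that the block decomposition of $\mathcal T_{m|n}$ is the decomposition by central character and that central characters add under tensor product; hence $\Pi\otimes L(\lambda)$ is supported entirely in the single block with central character $\chi_\Pi+\chi_\lambda = \chi_\lambda$, which is the block of $L(\lambda)$. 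Third, by the preceding Corollary $\Pi\otimes L(\lambda)$ has, up to negligible summands, a unique indecomposable summand of non-vanishing superdimension (because $\Pi$ is invertible in the semisimplification, so its image is irreducible); combining with the second step, this summand lies in the block of $L(\lambda)$, which is the assertion.

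**Main obstacle.** The argument is essentially bookkeeping with central characters, so there is no serious obstacle; the only point requiring a little care is the very first step — confirming $\Pi$ is in the principal block. Concretely one has to chase the index sets: for $\Pi$, $I_\times(\Pi) = \{0,-1,\dots,-(m-1)\}$ shifted appropriately and $I_\circ(\Pi)$ turns out to be the same $m$-element (resp. $n$-element) configuration of consecutive integers, so the symmetric difference is empty and no vertex is labelled $\times$ or $\circ$; equivalently, one can simply note that $\Pi$ being maximal atypical with the explicit highest weight above forces its diagram into the core-free shape that by definition indexes $\mathcal B_{\one}$. An alternative, avoiding the Harish–Chandra bookkeeping altogether, is to argue directly with the functors $\mathcal F_i,\mathcal E_i$ of Lemma \ref{translation}: these are the summands of $-\otimes V$ and $-\otimes V^\vee$, they send a block into a single block (this is visible from the diagrammatic rules, which never move $\times$'s or $\circ$'s), and since $\Pi$ is a summand of a mixed tensor power of $V$, tensoring with $\Pi$ is a composition of such block-preserving operations followed by taking an idempotent-cut summand; hence $\Pi\otimes L(\lambda)$ stays in $\mathcal B$ for $\mathcal B$ the block of $L(\lambda)$. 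Either route closes the proof; I would present the central-character route as the clean one and mention the diagrammatic route as a cross-check.
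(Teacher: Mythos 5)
Your central-character route fails at its pivotal step, and your translation-functor "cross-check" fails for the same reason. The claim that "central characters add under tensor product" is false: tensoring with an object $\Pi$ of $k$-dimension greater than one spreads a block across many blocks. Indeed $\Pi = Ber_{m|n}^{-1}\otimes\Lambda^r(V)$, and $\Lambda^r(V)\otimes L(\lambda)$ decomposes into a sum of terms $\mathcal F_{i_1}\circ\cdots\circ\mathcal F_{i_r}(L(\lambda))$, each of which lands in a \emph{different} block (depending on the sequence $(i_1,\dots,i_r)$); tensoring further with the invertible $Ber_{m|n}^{-1}$ only translates all of these uniformly. So $\Pi\otimes L(\lambda)$ is \emph{not} supported on a single block — and that is exactly why the lemma is nontrivial: only the unique non-negligible indecomposable summand lands back in the original block. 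Any argument that purports to put \emph{all} of $\Pi\otimes L(\lambda)$ into the block of $L(\lambda)$ must be wrong. The alternative route has the same flaw: you assert the diagrammatic rules for $\mathcal F_i,\mathcal E_i$ "never move $\times$'s or $\circ$'s", but Lemma~\ref{translation} says the opposite — cases (i)–(iv) move a core symbol past a hook, and (v)–(vi) create or annihilate $\times\circ$ pairs. Each $\mathcal F_i$ sends a block $\mathcal B$ to a different block $\mathcal B'$, so the composition is not block-preserving.

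There is also a smaller factual slip in your first step: $I_\times(\Pi)$ and $I_\circ(\Pi)$ do not coincide when $m>n$. A direct computation gives $\times$'s at positions $0,-1,\dots,-(r-1)$ and no $\circ$'s (with $\vee$'s at $-(r+1),\dots,-m$). The correct reason that $\Pi$ lies in $\mathcal B_\one$ is that these core symbols agree with those of $\one = L(0,\dots,0\,|\,0,\dots,0)$, whose weight diagram likewise has $\times$'s at $0,\dots,-(r-1)$ and no $\circ$'s. Only for $m=n$ is the principal block core-free.

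The paper's argument takes a genuinely different, and more robust, route: it applies the tensor functor $DS^n\colon\mathcal T_{m|n}\to sRep(GL(r))$. Because $\Pi$ lies in the principal block, $DS^n(\Pi)\cong\one$, hence $DS^n(\Pi\otimes L(\lambda))\cong DS^n(L(\lambda))$, a multiple of $L(\chi_\lambda)$. Since $DS^n$ kills the negligible part and the image of the non-negligible summand under $DS^n$ determines its block (via the classical core), this identifies the block of that summand with the block of $L(\lambda)$. The key ingredient is the monoidality of $DS^n$, not any additivity of central characters under $\otimes$ — which is the piece your argument was missing.
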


\begin{proof} Consider the tensor functor  \[ DS^n: \Tmn \to sRep(GL(r)) \] which sends $L(\lambda)$ to a multiple of $L(\chi_{\lambda})$.  Since $\Pi$ is in the principal block, $DS^n(\Pi) \cong \one$. Hence $DS^n(\Pi \otimes L(\lambda)) \cong DS^n(L(\lambda))$. Since the image under $DS^n$ determines the block, the result follows.
\end{proof}

\begin{prop} \label{moving-lemma} Let $L(\lambda)$ be maximal atypical. Then $\Pi \otimes L(\lambda)$ is irreducible up to negligible summands. The weight diagram of this irreducible summand in $\Pi \otimes L(\lambda)$ is obtained from the one for $\lambda$ by keeping the position of the crosses fixed and shifting every atypical hook $\vee$ by one to the next free vertex.
\end{prop}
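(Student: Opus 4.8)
\emph{The plan.} I would prove the three parts of the statement in turn: irreducibility modulo negligibles, invariance of the crosses, and the one-step left shift of the hooks $\vee$; the first two are short and the third is the heart.

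\emph{Irreducibility and the crosses.} By the preceding remarks $\overline\Pi$ is invertible in the semisimple rigid tensor category $\overline{\mathcal T}_{m|n}$, so $\overline\Pi\otimes\overline{L(\lambda)}$ is again irreducible; since the semisimplification functor is full and kills exactly the negligible indecomposables, $\Pi\otimes L(\lambda)\cong X_{\nu'}\oplus N$ with $N$ negligible and $X_{\nu'}$ maximally atypical (in particular of nonzero superdimension). That $X_{\nu'}$ lies in the block of $L(\lambda)$, i.e. that $\nu'$ and $\lambda$ have the same crosses and circles, is the preceding Lemma: $DS^n(\Pi)\cong\one$ because $\Pi$ is in the principal block, so $DS^n(\Pi\otimes L(\lambda))\cong DS^n(L(\lambda))$, and $DS^n$ detects the block. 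Thus only the positions of the $\vee$'s remain to be determined, and the asserted operation translates the entire $\vee$-pattern, read on the part of the numberline free of crosses and circles, one step to the left, which is always possible and preserves maximal atypicality.

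\emph{The $\vee$'s, by induction on $n$.} For $n=0$ there is nothing to prove: $r=m$ and $\Pi_{m|0}\cong\Lambda^m(V)\otimes Ber_{m|0}^{-1}\cong\one$. For the inductive step apply $DS\colon\mathcal T_{m|n}\to\mathcal T_{m-1|n-1}$, a monoidal functor sending negligibles to negligibles (Theorem \ref{thm:tannaka}). Inspection via Theorem \ref{mainthm} shows that $\Pi_{m|n}$ has a single sector (its cup diagram is one nested tower over the block of $\one$), so $DS(\Pi_{m|n})\cong\mbox{\textsf{par}}^\epsilon\,\Pi_{m-1|n-1}$; combined with $DS(L(\lambda))\cong\bigoplus_{i=1}^k\mbox{\textsf{par}}^{n_i}L(\lambda_i)$ ($\lambda_i$ = delete the $i$-th outer cup and its $\vee$) and monoidality, $DS(\Pi\otimes L(\lambda))\cong\bigoplus_i\mbox{\textsf{par}}^{?}(\Pi_{m-1|n-1}\otimes L(\lambda_i))$. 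By the induction hypothesis each $\Pi_{m-1|n-1}\otimes L(\lambda_i)$ equals, modulo negligibles, the irreducible $X_{\mu_i}$ obtained from $\lambda_i$ by the left shift; comparing non-negligible parts with $DS(X_{\nu'})\cong\bigoplus_j\mbox{\textsf{par}}^{?}L(\nu'_j)$ gives $\{X_{\mu_i}\}_i=\{X_{\nu'_j}\}_j$. Writing $\nu$ for the left shift of $\lambda$, a combinatorial lemma --- deleting the $i$-th outer cup commutes with the left shift of the $\vee$-pattern --- identifies $\{X_{\mu_i}\}_i$ with the $DS$-summands of $X_\nu$, so $DS(X_{\nu'})\cong DS(X_\nu)$; since $\nu'$ and $\nu$ lie in the same block and have the same superdimension, a reconstruction (injectivity) argument then yields $\nu'=\nu$.

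\emph{The main obstacle.} Two points need genuine care. The combinatorial commutation lemma must be stated precisely --- in particular fixing what ``the next free vertex'' means (the adjacent vertex on the subline free of crosses and circles, so that one jumps over them) and checking that the left shift preserves the sector structure and the shape of the marked spaced forest. More seriously, the $DS$-induction degenerates in small atypicality: for $n=1$, $DS$ sends every maximally atypical weight of a block to its common typical core, so the identity $DS(X_{\nu'})\cong DS(X_\nu)$ carries no information and cannot pin down the $\vee$. In that range I would argue directly from $\Pi\otimes L(\lambda)=\Lambda^r(V)\otimes\bigl(Ber_{m|n}^{-1}\otimes L(\lambda)\bigr)$: the factor $Ber_{m|n}^{-1}$ just translates the whole weight diagram one step to the left, and one must show that tensoring with $\Lambda^r(V)$ moves the $r$ crosses back one step to the right while fixing the $\vee$, up to negligibles --- using the Kostant-module picture for $GL(m|1)$ and the diagrammatics of mixed-tensor summands in \cite{Heidersdorf-mixed-tensors} --- or equivalently rewrite $\Pi\otimes L(\lambda)$ as a composition of the translation functors $\mathcal F_i,\mathcal E_i$ of Lemma \ref{translation} and follow the diagram move by move.
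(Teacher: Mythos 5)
Your opening paragraph (invertibility of $\overline{\Pi}$ gives a unique non-negligible irreducible summand; $DS^n(\Pi)\cong\one$ forces that summand to lie in the block of $L(\lambda)$) reproduces exactly the two preceding statements in the paper and is fine. The interesting divergence is in how you pin down the $\vee$-positions. The paper does this in one step via translation functors: write $\Pi\otimes L(\lambda)$ as a direct summand of $Ber_{m|n}^{-1}\otimes V^{\otimes r}\otimes L(\lambda)$, decompose $V^{\otimes r}\otimes L(\lambda)$ into compositions $\mathcal F_{i_1}\circ\cdots\circ\mathcal F_{i_r}(L(\lambda))$, observe that for maximal atypical $\lambda$ (no circles) the only elementary moves of Lemma~\ref{translation} that do not kill atypicality are $\times\vee\mapsto\vee\times$ and $\times\wedge\mapsto\wedge\times$, and that a block-preserving sequence (after the $Ber^{-1}$-twist) must move each of the $r$ crosses exactly once to the left; composing with the uniform left shift by $Ber_{m|n}^{-1}$ then fixes the crosses and moves every $\vee$ one free vertex to the left. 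Your route instead inducts on $n$ through $DS$, which is conceptually attractive but needs two repairs.

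First, the step you call ``a reconstruction (injectivity) argument'' is a genuine gap. You need: two maximal atypical weights $\nu,\nu'$ in the same block of $\mathcal T_{m|n}$ with $\overline{DS(X_\nu)}\cong\overline{DS(X_{\nu'})}$ are equal. This is true for $n\geq 2$ but requires an argument not present in the paper: with $k\geq 2$ sectors the union over $i$ of the $\vee$-sets of the summands $L(\nu_i)$ recovers $I_\vee(\nu)$; with a single sector one must show the deleted leftmost $\vee$ sits at the unique free vertex left of, and separated only by core symbols $\times,\circ$ from, the leftmost $\vee$ of the inner diagram (otherwise adding the $\vee$ there would not reproduce a single sector). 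You should also check there is no sector-count mismatch between $\nu$ and $\nu'$. None of this is difficult, but as written it is a bare assertion.

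Second, the base case is $n=1$, not $n=0$. For $n=1$ every maximal atypical weight of a given block maps under $DS$ to the same typical core of $GL(m-1)$, so $DS$ loses all information about the $\vee$ and the injectivity above is simply false there; you flag this correctly. But the fallback you propose for $n=1$ --- decompose $\Pi\otimes L(\lambda)$ via the translation functors $\mathcal F_i,\mathcal E_i$ and track the diagram move by move --- is precisely the paper's proof, and the paper's version works uniformly in $n$, with no case split and no reconstruction lemma. So your plan does not avoid the translation-functor computation; it needs it as the base case, and then superimposes a $DS$-induction that the paper renders unnecessary. Both routes would lead to a correct proof, but you should either complete the reconstruction lemma for $n\geq 2$ and do the $n=1$ translation-functor analysis in full, or more economically, just run the translation-functor argument for all $n$ as the paper does.
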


\begin{proof} By Lemma \ref{translation} $\mathcal{F}_i(L(\lambda))$ either decreases the atypicality (in which case $\mathcal{F}_i(L(\lambda))$ is negligible), is zero or irreducible maximal atypical. Since $\Lambda^{m-n}(V)$ is a direct summand in $V^{\otimes m-n}$, $\Pi \otimes L(\lambda)$ is a direct summand in $V^{\otimes m-n} \otimes L(\lambda)$. It follows that \[ \Pi \otimes L(\lambda) \cong \text{ irreducible } \oplus \text{ negligible.} \] Here we use that any tensor product $V\otimes \mathcal{F}_i(L(\lambda))$ for negligible $\mathcal{F}_i(L(\lambda))$ is again negligible. This irreducible summand can be written as \[ Ber_{m\vert n}^{-1} \otimes (\mathcal{F}_{i_1} \circ \mathcal{F}_{i_2} \circ \ldots \circ \mathcal{F}_{i_{m-n}} (L(\lambda)).\] Via the list of Lemma \ref{translation} we need to consider only the following two elementary changes $\mathcal{F}_{i_j}$ at the positions $(i_j,i_{j+1})$ \[ \times \vee \mapsto \vee \times,  \ \ \ \times \wedge \mapsto \wedge \times.\] Such a sequence $\mathcal{F}_{i_1} \circ \mathcal{F}_{i_2} \circ \ldots \circ \mathcal{F}_{i_{m-n}}$ must shift each of the $m-n$ crosses exactly once to the left. For this note that $Ber^{-1}$ shifts all $\times$'s and all $\vee$'s one to the left. If a sequence $\mathcal{F}_{i_1} \circ \mathcal{F}_{i_2} \circ \ldots \circ \mathcal{F}_{i_{m-n}}$ would move any $\times$ not exactly once, $\mathcal{F}_{i_1} \circ \mathcal{F}_{i_2} \circ \ldots \circ \mathcal{F}_{i_{m-n}} \otimes Ber_{m\vert n}^{-1}$ would not be block preserving. But any sequence of length $m-n$ that shifts each of the $m-n$ symbols $\times$'s once has the same effect on $L(\lambda)$. If for fixed $i$ the vertices are labeled $\times \vee$ or $\times \wedge$, the elementary change turns this into $\vee \times$ and $\wedge \times$, and $Ber_{m\vert n}^{-1}$ shifts these to $\vee \times$ or $\wedge \times$ at the vertices $(i-1,i)$. 
\end{proof}

\begin{remark} The proof shows that the irreducible summand in $\Pi \otimes L(\lambda)$ may occur many times as a composition factor, but all the others are summands of some negligible module.
\end{remark}

\begin{cor} Let $L(\lambda)$ be maximal atypical. Then $\Pi^N \otimes L(\lambda)$ is irreducible and stable for $N >> 0$ up to negligible objects.
\end{cor}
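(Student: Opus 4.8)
The plan is to iterate Proposition \ref{moving-lemma}. Write $M_N$ for the unique (up to isomorphism) indecomposable summand of non-vanishing superdimension in $\Pi^N \otimes L(\lambda)$; it exists and is irreducible maximal atypical by the Corollary after Proposition \ref{moving-lemma}, since $\Pi$ is invertible in $\overline{\mathcal T}_{m|n}$ and a tensor power of an invertible object is invertible, so it sends the irreducible $\overline{L(\lambda)}$ to an irreducible object. The content to be proved is therefore purely combinatorial: the weight diagram of $M_N$ is stable once $N$ is large. First I would record the combinatorial rule from Proposition \ref{moving-lemma}: passing from $M_N$ to $M_{N+1}$ keeps the positions of the $r=m-n$ crosses fixed and moves every $\vee$ one step to the left, into the next free vertex (i.e. skipping over crosses and over other $\vee$'s).

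Next I would set up coordinates. List the positions of the atypical $\vee$'s of $\lambda$ as $p_1 < p_2 < \dots < p_n$ and the positions of the $r$ crosses; these crosses never move. Each application of the shift moves the leftmost $\vee$ to the first free vertex to its left; inductively the whole block of $\vee$'s ``flows left'', and after finitely many steps the $\vee$'s occupy positions that lie entirely to the left of all the crosses. Concretely, once $N$ is large enough the $n$ $\vee$'s will sit in the $n$ consecutive free vertices immediately to the left of the leftmost cross (more precisely, the leftmost $n$ free vertices among those to the left of the first cross, after the crosses have been ``bypassed''), and from that point on each further shift by $\Pi$ simply translates the entire configuration of $\vee$'s one unit to the left while the crosses stay put — but this is exactly the operation $X \mapsto B_{core}^{-1} \otimes X$ in the semisimplification, which does not change the block and produces again a stable weight. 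So for $N \geq N_0$, $M_{N+1}$ differs from $M_N$ by this harmless translation and in particular $M_N$ is stable (all atypical $\vee$'s to the left of all crosses) for all $N \geq N_0$. Taking such an $N$ gives the claim.

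The one point that needs a little care — and which I expect to be the main (minor) obstacle — is the bookkeeping of ``the next free vertex'': one must check that the left-shift of Proposition \ref{moving-lemma}, applied repeatedly, genuinely transports each $\vee$ past every cross in finitely many steps and does not get ``stuck'' oscillating, and that after the crosses are passed the configuration really is stable in the sense of Section \ref{sec:stabilization}. This follows because the shift strictly decreases the quantity $\sum_j (\text{position of the } j\text{-th }\vee) + c$ for a suitable constant offset whenever some $\vee$ still lies to the right of a cross is not quite monotone as stated, so instead I would argue via the monovariant $\#\{(j,k) : \vee_j \text{ lies to the right of cross } k\}$, which is non-negative, strictly decreases at each step in which some $\vee$ is still to the right of some cross (the leftmost such $\vee$ hops over the nearest cross to its left), and equals $0$ precisely when the weight is stable. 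Once it hits $0$ it stays $0$. Hence there is an $N_0$ with $M_{N_0}$ stable, and all later $M_N$ remain stable; combined with irreducibility (Corollary after Proposition \ref{moving-lemma}) this is exactly the statement. I would also remark that ``$N \gg 0$ up to negligible objects'' is understood in the sense that $\Pi^N \otimes L(\lambda)$ equals $M_N$ plus a negligible module, which is precisely what Proposition \ref{moving-lemma} and the remark following it give.
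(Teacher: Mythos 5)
Your overall approach is exactly the intended one (the paper leaves the proof implicit precisely because it follows by iterating Proposition~\ref{moving-lemma}), and the first part — defining $M_N$, noting irreducibility from invertibility of $\overline{\Pi}$, and reducing to a combinatorial termination statement — is fine. But the specific monovariant you settle on does not behave as you claim. The count $\#\{(j,k) : \vee_j \text{ lies to the right of cross } k\}$ does \emph{not} strictly decrease at every step in which it is positive: if the $\vee$'s sit at positions $5,6$ and the unique cross at position $3$, one application of the shift moves the $\vee$'s to $4,5$, and both are still to the right of the cross, so the count stays at $2$. The parenthetical justification (``the leftmost such $\vee$ hops over the nearest cross to its left'') is simply false — the leftmost such $\vee$ only jumps a cross when the cross is immediately adjacent (possibly through a block of crosses) on its left; otherwise it just slides one vertex.

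The fix is the simpler argument you started to write and then abandoned. Since the shift sends each $\vee$ to the next free vertex on its left, every $\vee$ decreases its position by at least $1$ at each step while the crosses stay put. Hence the rightmost $\vee$ of $M_N$ sits at position $\leq p_n - N$, where $p_n$ is the rightmost $\vee$-position in $\lambda$; once $N > p_n - q_1$ for $q_1$ the leftmost cross, all $\vee$'s are strictly to the left of all crosses, which (since a maximal atypical weight diagram has no $\circ$'s) is exactly the stability condition. Once stable, further shifts preserve stability, as you correctly observe. With this replacement the proof is complete and coincides with the paper's intent.
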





\section{Separation and ground states} \label{sec:ground-states}

\subsection{Covariant representations} The tensor powers $V^{\otimes r}$ are completely reducible. The irreducible representations in such a tensor powers are called covariant.  For  the Schur functors $S_{\lambda}$ applied to $V$, denoted $S_{\lambda}(V)$, the irreducible covariant representations are parametrized by $S_{\lambda}(V)$ for certain partitions $\lambda$, described in 
\cite{BR} \cite{Sergeev}.

\subsection{Permanence properties of ground states}

Let ${\mathcal B}$ be a maximal atypical block. Let $j$ then be the minimum of the subset of all $\times$ in ${\mathcal B}$ or $j=1$ in case there are no $\times$'. The \emph{ground state} \cite{W} \cite{HW-tensor} of the block is the weight where the $\vee$'s are at the positions $j-1$, $j-2$, $\ldots$, $j-n$. Similarly we define higher ground states for $N=1,2,\ldots$, so that $L(\lambda_N)$ for $r=m-n$ is defined by $$ L(\lambda_1,\ldots,\lambda_{r}, \lambda_{r}\! -\! N, \ldots, \lambda_{r}\! -\! N \, | \! - \lambda_{r}\! +\! N, \ldots, - \lambda_{r}\! +\! N) \ . $$

\begin{lem} \label{lemma-ground} $DS$ maps ground states to ground states. The tensor product of two maximal atypical ground states decomposes into a direct sum of maximal atypical groundstates plus a direct sum of negligible objects.
\end{lem}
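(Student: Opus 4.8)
The statement has two parts: that $DS$ maps ground states to ground states, and that a tensor product of two maximal atypical ground states decomposes as a sum of maximal atypical ground states plus a negligible object. For the first part I would simply invoke Theorem \ref{mainthm}: a ground state weight for a block $\mathcal{B}$ has its $n$ hooks $\vee$ sitting at the consecutive positions $j-1, j-2, \ldots, j-n$ immediately to the left of the leftmost cross, so its cup diagram consists of $n$ nested cups all sharing the same outer cap — i.e. it has a single sector $[j-n, j-1]$ (together with the fixed $\times$'s and $\circ$'s). By Theorem \ref{mainthm}, $DS$ removes the unique outer cup along with its $\vee$, and the result is the weight whose cup diagram is the union of the sectors in $[j-n+1, j-2]$, i.e. again $n-1$ nested cups sharing an outer cap with their leftmost $\vee$ still adjacent to the same leftmost cross. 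This is precisely the ground state of the corresponding block of $\mathcal{T}_{m-1|n-1}$, so $DS(L(\lambda_{gs})) = \mathsf{par}^{\bullet} L(\lambda_{gs}')$ with $\lambda_{gs}'$ a ground state. The same reasoning applies verbatim to the higher ground states $L(\lambda_N)$ (whose cup diagram is again a single sector, just shifted).

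\textbf{The tensor product statement.} For the second part, the cleanest route is to use the fibre functor / Tannakian tower rather than attack the tensor product in $\mathcal{T}_{m|n}$ directly. First I would reduce to a statement about $\overline{\mathcal{T}}_{m|n} = Rep(H_{m|n})$: the claim ``$L(\lambda)\otimes L(\mu)$ is a sum of ground states plus a negligible object'' is equivalent to ``$\overline{X}_{\lambda}\otimes \overline{X}_{\mu}$ decomposes in $Rep(H_{m|n})$ into a sum of images of ground states.'' Then I would use the iterated functor $\omega = d^n: \overline{\mathcal{T}}_{m|n}\to Rep(GL(r))$ of Theorem \ref{thm:tannaka}: since $DS$ sends ground states to ground states (part one), $DS^n$ sends a maximal atypical ground state of $\mathcal{T}_{m|n}$ to a multiple of the classical core $L(\chi_\lambda) \in Rep(GL(r))$, and in fact (tracking the higher ground states $\lambda_N$) to a Berezin-type twist of it. The key combinatorial observation is that the set of (all higher) ground states of all maximal atypical blocks is closed, as a subset of weights, under ``tensor then take the maximal-atypical non-negligible summands'' — because each such summand must again be stable with a single-sector cup diagram. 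I would establish this by using Lemma \ref{translation}: writing $L(\lambda)\otimes L(\mu)$ as a summand of $V^{\otimes a}\otimes (V^\vee)^{\otimes b}\otimes L(\mu)$ and applying the translation functors $\mathcal{F}_i, \mathcal{E}_i$ repeatedly, the only moves contributing a non-negligible summand are the atypicality-preserving ones ($\times\vee\leftrightarrow\vee\times$, $\times\wedge\leftrightarrow\wedge\times$, $\vee\wedge\to\circ\times$, etc.), and starting from a single-sector (ground state) diagram these moves can only produce diagrams that are again single-sector, i.e. ground states, possibly in a different block and at a different height $N$.

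\textbf{Main obstacle.} The hard part is the bookkeeping in the last step: showing that \emph{every} non-negligible indecomposable summand of the tensor product of two ground states is again a ground state (and not merely stable, or not merely maximal atypical). For this I expect one needs a dimension/weight-support argument: the extremal weights appearing in $L(\lambda_{gs})\otimes L(\mu_{gs})$ are controlled, and one shows any summand with a ``gap'' in its cup diagram (more than one sector) would have superdimension $0$ and hence be negligible. An alternative, perhaps safer, route is to reduce to the principal block $m=n$ via the canonical decomposition (Theorem \ref{introthm:dec}) — there $\Pi$ becomes $Ber_{n|n}^{-1}$, ground states become the weights $[N,N,\ldots,N]$ up to the block-core $GL(r)$-factor, and $[N,\ldots,N]\otimes[N',\ldots,N']$ modulo negligibles is handled by the Berezin/character calculus of \cite{HW-tannaka}; the $GL(r)$-factor contributes only Littlewood–Richardson summands of covariant type, which are again (classical cores of) ground states. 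Either way the structural input — ``$DS$ preserves ground states, and ground states have single-sector cup diagrams'' — is what makes the argument go through, and I would lead with that.
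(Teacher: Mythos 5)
Your first part (that $DS$ preserves ground states, via the single-sector cup diagram and Theorem \ref{mainthm}) is correct and matches the paper, which dispatches it with the same one-line reference.

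For the tensor product statement, however, you miss the key observation that makes the paper's proof short. The paper does not fight the translation-functor combinatorics at all. Instead it notes that, up to twist by Berezin powers, a ground state $L(\lambda_0)$ is nothing but a \emph{covariant} representation $S_\mu(V)$ attached to the Schur functor for the partition $\mu = (\lambda_1-\lambda_r,\ldots,\lambda_{r}-\lambda_r,0,\ldots,0)$ of length $\le r$, and that the higher ground states $L(\lambda_N)$ are $\Pi^N\otimes L(\lambda_0)$ by Lemma \ref{moving-lemma}. Since covariant representations are closed under tensor products (Littlewood--Richardson in the semisimple subcategory of tensor powers $V^{\otimes k}$), one has
\[
L(\lambda_N)\otimes L(\tilde\lambda_M)\ \cong\ \Pi^{N+M}\otimes Ber_{m|n}^{\lambda_r+\tilde\lambda_r}\otimes\bigl(S_\mu(V)\otimes S_{\tilde\mu}(V)\bigr)
\]
and each summand $S_\nu(V)$ is either negligible (if $\nu$ has more than $r$ rows, in which case it is not maximal atypical and so has superdimension $0$) or is again a ground state (if $\nu$ has $\le r$ rows). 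Twisting by $\Pi$ and Berezin powers preserves ground states modulo negligibles, so one is done. The crucial input is the fact, quoted from Weissauer's \emph{Monoidal model structures\dots}, that a covariant representation is maximal atypical iff $\mu_{r+1}=0$, and then it \emph{is} a ground state. This characterization of ground states as (twists of) the maximal atypical covariants is exactly what is absent from your sketch.

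Your Route 1 has two concrete problems. First, you classify the move $\vee\wedge\to\circ\times$ as ``atypicality-preserving,'' but it strictly decreases atypicality, so it produces negligibles, not ground-state summands. Second, and more seriously, the assertion that applying atypicality-preserving translation functors to a single-sector diagram necessarily yields a single-sector diagram is not established, and is the entire difficulty; you flag it yourself as ``the hard part'' without offering a proof. Moving a single $\vee$ past crosses and circles can in principle detach it from the remaining nested cups. One would need an argument controlling all possible non-negligible summands of $V^{\otimes a}\otimes(V^\vee)^{\otimes b}\otimes L(\mu_{gs})$, which is precisely what the Schur-functor reformulation renders unnecessary. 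Your Route 2 (deducing the statement from the canonical decomposition Theorem \ref{introthm:dec}) inverts the logical order of the paper: Lemma \ref{lemma-ground} sits in the preparatory stabilization material, and the determinant theorem later explicitly reuses its method, so leaning on the main theorem here is structurally awkward even if not strictly circular, and it is in any case a much longer route than the direct covariant-representation argument.
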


\begin{proof} The first claim follows immediately from Theorem \ref{mainthm}. For
$\mu_i = \lambda_i - \lambda_{r}$, twisting the ground state  gives \[ L(\lambda_0) \otimes Ber_{m\vert n}^{- \lambda_{r}} \cong L(\mu_1,\ldots,\mu_{r},0,\ldots,0 \ | \ 0,\ldots, 0), \] a covariant representation attached to the Schur functor $S_{\mu}$. Furthermore, by Lemma \ref{moving-lemma} (see also \cite[Lemma 3]{W}) \[ L(\lambda_N) \otimes \Pi \cong L(\lambda_{N+1}) \oplus \text{ negligible }\] for $N = 0,1,2,\ldots$. Therefore \begin{align*} L(\lambda_N) \otimes L(\widetilde{\lambda}_M) & \cong (L(\lambda_0) \otimes \Pi^N) \otimes (L(\widetilde{\lambda}_0 \otimes \Pi^M) \\ & \cong (S_{\mu}(V) \otimes Ber_{m\vert n}^{\lambda_{r}} \otimes \Pi^N) \otimes (S_{\widetilde{\mu}}(V) \otimes Ber_{m\vert n}^{\widetilde{\lambda}_{r}} \otimes \Pi^M) \\ & \cong \Pi^{N+M} \otimes (\ S_{\mu}(V) \otimes S_{\widetilde{\mu}}(V)\ ) \otimes Ber_{m\vert n}^{\lambda_{r} + \widetilde{\lambda}_{r}}.\end{align*} Notice, as stated in \cite[Lemma 2]{W} a covariant representation $S_{\mu}(V)$ is maximal atypical if and only if $\mu_{r+1} = 0$ and then \[ S_{\mu}(V) \cong L(\mu_1,\ldots,\mu_{r},0,\ldots,0 \, | \, 0,\ldots,0).\] Therefore all maximal atypical covariant representations are ground states. Since the tensor product of two covariant representations is covariant and both tensor products with Berezin powers and the representation $\Pi$ shift ground states to ground states up to negligible objects, the result follows.  
\end{proof}

\subsection{The classical group $H_{cl} \!=\! GL(r)$.} \label{section7.2}
For the semisimplification  ${\mathcal T}\!:\!=\! \overline{\mathcal{T}}_{m|n}^+ \!=\! \mathcal{T}_{m|n}^+/\mathcal{N}$ 
and ${\mathcal T}_{cl} = Rep(GL(r))$
we have tensor functors
$$    i: {\mathcal T}_{cl}  \hookrightarrow {\mathcal T}  \quad 
\mbox{and} \quad     \omega: {\mathcal T}  \longrightarrow {\mathcal T}_{cl}  $$
such that $\omega\circ i$ is an auto-equivalence of ${\mathcal T}_{cl}$. Besides $\omega$, already defined preceeding Corollary \ref{cor:fibre},
$i$ is induced from the full embedding of the tensor subcategory ${\mathcal T}_{cl}$ of ${\mathcal T}$
generated by the standard representation $V$ of $GL(m\vert n)$.
Needless to say that ${\mathcal T}_{cl}$ contains the image of $\det_{cl} = \Lambda^{r}(V)$,
an object of superdimension $1$. Hence $\det_{cl}$ is an invertible object of  ${\mathcal T}_{cl}$.
To show that
$\omega\circ i$ induces an auto-equivalence of ${\mathcal T}_{cl}$ requires
to identify the tensor categories $\mathcal{T}_{m-n|0}^+ = Rep(GL(r))$ and ${\mathcal T}_{cl}$,
which is clear since $\omega(\overline{V})=st_r$ holds for $r\geq 0$ and $st_r$ generates
the Tannaka category of $k$-linear representations of the classical group $GL(r)$.
Therefore, by Tannakian duality, $\omega\circ i : {\mathcal T}_{cl} \longrightarrow Rep(GL(r))$ induces a nontrivial algebraic 
group homomorphism $GL(r) \longrightarrow G({\mathcal T}_{cl})$.
To show that this morphism is an isomorphism of Tannaka groups, amounts to show the next 

\begin{lem} The category ${\mathcal T}_{cl}$ is equivalent to the tensor category
$Rep(GL(r))$.
\end{lem}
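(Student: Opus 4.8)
The plan is to show that the tensor functor $\omega\circ i:\mathcal{T}_{cl}\to Rep(GL(r))$ is an equivalence by proving that the induced algebraic group homomorphism $\varphi:GL(r)\to G(\mathcal{T}_{cl})$ is an isomorphism. Since $\varphi$ is already known to be a nontrivial homomorphism and $st_r=\omega(\overline V)$ generates $Rep(GL(r))$, the functor $\omega\circ i$ is automatically faithful, so $\varphi$ is a closed immersion and it remains to prove surjectivity, equivalently that $\omega\circ i$ is a tensor equivalence (fully faithful and essentially surjective). Essential surjectivity is clear from the definitions: $\mathcal{T}_{cl}$ is generated as a Karoubian tensor subcategory by $\overline V$, and $\omega(\overline V)=st_r$ generates $Rep(GL(r))$, so every object of $Rep(GL(r))$ is a retract of a sum of tensor powers of $st_r$ and $st_r^\vee$, each of which lies in the image. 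So the real content is full faithfulness, which by semisimplicity of both categories reduces to a dimension count: for each pair of irreducible objects one must show $\omega$ induces an isomorphism on $\mathrm{Hom}$-spaces, and by Schur's lemma this comes down to checking that $\omega$ does not collapse any two nonisomorphic irreducibles of $\mathcal{T}_{cl}$ and is injective on endomorphism rings — i.e. that $\omega$ restricted to $\mathcal{T}_{cl}$ is conservative with respect to irreducibility.

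The key step is to identify, in down-to-earth terms, which objects of $\overline{\mathcal{T}}_{m|n}$ actually lie in $\mathcal{T}_{cl}$. By Lemma \ref{lemma-ground} the tensor powers $V^{\otimes k}$ decompose into covariant irreducibles $S_\mu(V)$, and the maximal atypical ones among them are exactly the ground states, which by the proof of Lemma \ref{lemma-ground} satisfy $S_\mu(V)\cong L(\mu_1,\ldots,\mu_r,0,\ldots,0\mid 0,\ldots,0)$ when $\mu_{r+1}=0$; the non-maximal-atypical ones are negligible and hence vanish in $\overline{\mathcal{T}}_{m|n}$. Incorporating duals $V^\vee$ as well (using Proposition \ref{irreducible-dual}), one sees that the simple objects of $\mathcal{T}_{cl}$ are precisely the images of the $L(\lambda_1,\ldots,\lambda_r,0,\ldots,0\mid0,\ldots,0)$ with $\lambda=(\lambda_1,\ldots,\lambda_r)\in\mathbb{Z}^r$ dominant, i.e. one simple object for each irreducible of $GL(r)$, and these are pairwise nonisomorphic in $\overline{\mathcal{T}}_{m|n}$ since they lie in distinct blocks of $\mathcal{T}_{m|n}$ (distinct classical cores, by the discussion of the core). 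The superdimension of $L(\lambda_1,\ldots,\lambda_r,0,\ldots,0\mid0,\ldots,0)$ equals the Weyl dimension $\dim L(\lambda_1,\ldots,\lambda_r)$ of $GL(r)$ — this is the first-layer statement $\sdim L(\lambda)=\pm m(\lambda)\dim L(\lambda_{cl})$ with $m(\lambda)=1$ in the covariant case — hence $\omega$, which sends such an object to a vector space of dimension equal to its superdimension, sends it to a space of the same dimension as $L(\lambda_1,\ldots,\lambda_r)$. Matching highest weights (the functor $\omega=d^n$ tracks the classical core, which records exactly $(\lambda_1,\ldots,\lambda_r)$) shows $\omega\circ i$ carries the simple object indexed by $\lambda$ to $L(\lambda_1,\ldots,\lambda_r)$, bijectively on isomorphism classes of simples. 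Full faithfulness then follows because both sides are semisimple with matching simple objects and matching $\mathrm{End}=k$.

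I expect the main obstacle to be the bookkeeping in the previous paragraph: one must be careful that $\mathcal{T}_{cl}$, being defined as the tensor subcategory \emph{of the semisimplification} generated by $\overline V$, contains no unexpected extra simple objects — in principle a retract of $V^{\otimes i}\otimes(V^\vee)^{\otimes j}$ could produce a maximal atypical irreducible that is not covariant. The way around this is to use that $\omega$ is a \emph{tensor} functor landing in $Rep(GL(r))=\mathcal{T}_{m-n|0}^+$ together with the fact that $\omega(\overline V)=st_r$, so the image tensor category $\omega(\mathcal{T}_{cl})$ is all of $Rep(GL(r))$; combined with faithfulness of $\omega$ on $\mathcal{T}_{cl}$ (which holds because $\omega$ is a fibre functor, hence exact and faithful, by Corollary \ref{cor:fibre}) this forces $\omega\colon\mathcal{T}_{cl}\to Rep(GL(r))$ to be an equivalence directly, without needing to enumerate the simples of $\mathcal{T}_{cl}$ by hand. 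Concretely: a faithful exact tensor functor between semisimple tensor categories whose essential image is all of the target and which is injective on isomorphism classes of simples is an equivalence; injectivity on simples follows since $\omega$ is a fibre functor and the simples of $\mathcal{T}_{cl}$ are distinguished by their block, i.e. by their classical core, which is exactly the datum $\omega$ computes. This argument is the cleanest route and I would present it in that order: essential surjectivity (easy), faithfulness (from Corollary \ref{cor:fibre}), injectivity on simples (via cores), conclude equivalence, hence $\varphi$ is an isomorphism of Tannaka groups and $G(\mathcal{T}_{cl})\cong GL(r)$.
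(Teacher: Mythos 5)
The paper's own proof is shorter and proceeds at the level of Tannaka groups rather than simple objects: since $\omega(\overline V)\cong k^r$ is a faithful representation of $H_{cl}=G(\mathcal{T}_{cl})$, one has a closed embedding $H_{cl}\hookrightarrow GL(r)$; since $\omega\circ i$ gives a nontrivial homomorphism $GL(r)\to H_{cl}\subset GL(r)$ and a nontrivial algebraic endomorphism of $GL(r)$ is either an isomorphism or has abelian image (factors through $\det$), one only needs to rule out the abelian case, which follows because $\omega\circ i(\overline V)=st_r$ is an irreducible nontrivial representation for $r>1$. So the paper's argument bypasses any enumeration of the simple objects of $\mathcal{T}_{cl}$. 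Your first paragraph — matching simple objects explicitly and invoking full faithfulness for a functor that is a bijection on simples — is a legitimate alternative strategy, but it does hinge on the bookkeeping you flag: one must know that every maximal atypical simple retract of $V^{\otimes i}\otimes(V^\vee)^{\otimes j}$ is of the covariant form $L(\lambda_1,\dots,\lambda_r,0,\dots,0\mid 0,\dots,0)$.

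The problem is that your proposed ``cleaner route'' for avoiding that bookkeeping rests on a false criterion. You assert that a faithful exact tensor functor between semisimple tensor categories which is essentially surjective and injective on isomorphism classes of simples is automatically an equivalence. It is not. Take $G$ a connected reductive group of positive rank with $\dim G>\mathrm{rk}\,G$, $T\subset G$ a maximal torus, and $F=\mathrm{res}\colon Rep(G)\to Rep(T)$. This is faithful, exact, a tensor functor, essentially surjective (every character of $T$ is a retract of $\mathrm{res}\,V(\lambda)$ for suitable $\lambda$), and injective on isomorphism classes of simples (the formal character of $V(\lambda)$ determines $\lambda$). Yet $F$ is not full: for $\dim V(\lambda)>1$ one has $\mathrm{End}_T(\mathrm{res}\,V(\lambda))\supsetneq k=\mathrm{End}_G(V(\lambda))$. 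The hypothesis you are missing is that the functor must send simples to simples, and establishing that for $\omega\vert_{\mathcal{T}_{cl}}$ is exactly the content of the bookkeeping you were hoping to avoid. So either you carry out that bookkeeping as in your first paragraph, or you switch to the group-theoretic argument the paper actually gives.
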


This Lemma is proven in \cite{Heidersdorf-mixed-tensors}, but we give another self-contained proof here.

\begin{proof} By \cite{BR}\cite{Sergeev} ${\mathcal T}_{cl}$ is a semisimple 
tensor category, and $\omega$ defines a fibre functor. Its Tannaka group $H_{cl}$ acts faithfully on
$\omega(\overline{V})$ for the generator $V=st_{m\vert n}$ with $\omega(\overline{V})\cong k^r$. So
$H_{cl}$ is a closed subgroup of $GL(r)$. Since $\omega\circ i$ induces a nontrivial algebraic group homomorphism
$GL(r) \to H_{cl}$ and since any nontrivial algebraic group homomorphism $GL(r)\to GL(r)$ is either an isomorphism
or $\det \cdot id_{k^r}$, it suffices to exclude the latter possibility. Since $\omega\circ i(st_{m\vert n})= st_r$ is irreducible and nontrivial for $r>1$,
the latter possibility only occurs for  $r=1$, where our assertion is evident anyway.  
This shows that $\omega\circ i$ induces a canonical isomorphism $GL(r) \cong  H_{cl}$.
\end{proof}

This being said, it is now clear that $i$ and $\omega$ 
are tensor functors between ${\mathcal T}_{cl}$ and the semisimplification  ${\mathcal T} = \overline{\mathcal{T}}_{m|n}^+$ such that
$\omega \circ  i$ is an autoequivalence of ${\mathcal T}_{cl}$. Because of corollary \ref{cor:fibre} the functor $i$ induces a surjection $H\to H_{cl}$ that is split
by the homomorphism $s: H_{cl} \to H$, induced from $\omega$.

\subsection{The classical and principal part} After tensoring with $\Pi$ sufficiently often, we may assume that any maximal atypical irreducible representation is stable. Furthermore, the following lemma follows immediately from the definitions.

\begin{lem} \label{unimportant} For $r = m-n$ an irreducible representation $L(\lambda)$ is stable if and only if it is of the form \[L(\lambda) = L(\lambda_1,\ldots,\lambda_r,\mu_1, \ldots, \mu_n \, | \, - \mu_n, \ldots,-\mu_1) \ ,\]
also written as $L(\lambda) = L(\lambda_1,\ldots,\lambda_r, \mu \, | \, - \mu)$ for $\mu \in \mathbb{Z}^n$.
\end{lem}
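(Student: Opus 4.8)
The plan is to unwind the combinatorial definition of the weight diagram from Section~\ref{section: weights}. Throughout $L(\lambda)$ is maximal atypical (which the displayed form forces, as the computation below shows), so $atyp(\lambda)=n$; equivalently $|I_\times(\lambda)\cap I_\circ(\lambda)|=n$, and since $|I_\circ(\lambda)|=n$ this means $I_\circ(\lambda)\subseteq I_\times(\lambda)$. Hence the weight diagram of $L(\lambda)$ has no $\circ$, its $\vee$'s sit exactly at the $n$ positions of $I_\circ(\lambda)$, its $\times$'s at the $r=m-n$ positions of $I_\times(\lambda)\setminus I_\circ(\lambda)$, and $\wedge$'s elsewhere. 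Stability of $L(\lambda)$ then says precisely $\max I_\circ(\lambda)<\min\bigl(I_\times(\lambda)\setminus I_\circ(\lambda)\bigr)$, i.e. that $I_\circ(\lambda)$ is the set of the $n$ smallest and $I_\times(\lambda)\setminus I_\circ(\lambda)$ the set of the $r$ largest elements of $I_\times(\lambda)$.

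For the ``if'' direction, I would substitute $\lambda=(\lambda_1,\ldots,\lambda_r,\mu_1,\ldots,\mu_n\mid-\mu_n,\ldots,-\mu_1)$, with the dominance chain $\lambda_1\geq\cdots\geq\lambda_r\geq\mu_1\geq\cdots\geq\mu_n$, into the defining formulas and compute
\[ I_\times(\lambda)=\{\lambda_i-i+1:1\leq i\leq r\}\cup\{\mu_j-r-j+1:1\leq j\leq n\},\qquad I_\circ(\lambda)=\{\mu_j-r-j+1:1\leq j\leq n\}, \]
the second equality after the substitution $k=n+1-j$ in the definition of $I_\circ$. Thus $I_\circ(\lambda)\subseteq I_\times(\lambda)$, so $L(\lambda)$ is maximal atypical, and since $\lambda_i-i+1\geq\lambda_r-r+1>\mu_1-r\geq\mu_j-r-j+1$ for all $i\leq r$ and all $j$ — using exactly $\lambda_r\geq\mu_1$ — the first family supplies the $r$ largest elements and the second the $n$ smallest; by the reformulation above $L(\lambda)$ is stable. (The strict inequality also shows the two families are disjoint, as they must be.)

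For the ``only if'' direction, I would list $I_\times(\lambda)$ in decreasing order $p_1>p_2>\cdots>p_m$, so that $\lambda_i=p_i+i-1$. By stability the $\times$'s lie at $p_1>\cdots>p_r$ and $I_\circ(\lambda)=\{p_{r+1}>\cdots>p_{r+n}\}$. Matching this with the defining enumeration $I_\circ(\lambda)=\{\,j-m-\lambda_{m+j}:1\leq j\leq n\,\}$, which is strictly increasing in $j$, gives $p_{r+j}=(n+1-j)-m-\lambda_{m+(n+1-j)}$. Setting $\mu_j:=\lambda_{r+j}$ for $1\leq j\leq n$ and reading off $\mu_j=p_{r+j}+r+j-1$ from the recovery formula, a one-line substitution (using $m=r+n$) yields $\lambda_{m+j}=-\mu_{n+1-j}$ for all $j$, which is exactly the asserted form.

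The whole argument is routine bookkeeping; the only point requiring care is the two opposite orientations of the index shifts in $I_\times$ (decreasing) and $I_\circ$ (increasing) together with the reversal $\mu\mapsto(-\mu_n,\ldots,-\mu_1)$ on the right-hand half, and the observation that the dominance inequality $\lambda_r\geq\mu_1$ is precisely what separates the last $\times$ from the first $\vee$. I do not expect any genuine obstacle.
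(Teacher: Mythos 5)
Your proof is correct and is exactly the routine unwinding of the weight-diagram definitions that the paper has in mind; the paper itself gives no argument, stating only that the lemma "follows immediately from the definitions." Your bookkeeping of the two index orientations ($I_\times$ decreasing, $I_\circ$ increasing) and the identification of $\lambda_r\geq\mu_1$ with the separation of the last $\times$ from the first $\vee$ is precisely the verification that is being left implicit.
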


Recall that Serganova has shown (\cite[Theorem 3.6]{Serganova-blocks}; see also \cite[Proposition 4.7.2]{BKN-2}) that as abelian category any maximal atypical block $\mathcal B$ of $\Tmn$ is equivalent to the principal block 
${\mathcal B}_{\mathbf 1}$ of ${\mathcal T}_{n\vert n}$. 
On the full subcategory of stable modules in the given block this equivalence is given by $\eta_{\lambda}(X)= \{x\in X \vert h(X)=\lambda(h) \!\cdot\! X \ \forall h \in \mathfrak{h}'  \}$
for a suitable additive homomorphisms $\lambda$ of the Cartan Lie algebra  $\mathfrak{h}'$ of the diagonal torus of $GL(r)$. For objects in the
principal block the relevant choice is $\lambda=0$.
For the stable representation $L(\lambda)$ of Lemma \ref{unimportant} the image
under the functor $\eta_{\lambda}$ is the irreducible representation $[\mu_1,\ldots,\mu_n]$ in the principal block of $GL(n|n)$.

\begin{definition}\label{principal reps}  A stable representation is negatively stable if $\mu_1 \leq 0$.
Given a negatively stable irreducible representation $L(\lambda)$, we separate it into two parts: The classical part and the principal part. For given $L(\lambda)$ the principal part  $\lambda_{pr}$ is defined by \[ \lambda_{pr} = (0,\ldots,0,\mu_1,\ldots,\mu_n \, | \, - \mu_n,\ldots,-\mu_1) \ .\]
Furthermore, the classical part  $\lambda_{cl}$ is defined by  \[ \lambda_{cl} = (\lambda_1,\ldots,\lambda_r,0,\ldots,0\, |\, 0,\ldots,0)\ .\] 
\end{definition}

\subsection{Splicing} We have two distinguished invertible objects  in the semisimplification $\Tmn$, the images of $\Pi$ and $Ber_{m\vert n}$. Tensoring with the representation $\Pi$ shifts the principal part and leaves the classical core part, encoded by $L({\chi_{\lambda}})$, unchanged up to negligible objects. Tensor products with the representation $\Pi \otimes Ber_{m\vert n} = \Lambda^{r}(V)$ preserve the principal part, but shift the classical part  by a tensor product with the determinant $det$. Since $DS$ commutes with Schur functors and induces $d$,
for the enriched fibre functor $\omega = d^n$ indeed $\omega(\Pi \otimes Ber_{m\vert n}) = det$ is the determinant of $GL(r)$.

\begin{thm} \label{splicing} (The splicing theorem) Any maximal atypical negatively stable irreducible $L(\lambda)
= L(\lambda_1,\ldots,\lambda_r,\mu_1, \ldots, \mu_n |  - \mu_n, \ldots,-\mu_1) $ with $\lambda_r \geq 0$ is the tensor product of its classical part with its principal part \[  L(\lambda_{cl})  \otimes L(\lambda_{pr})\cong 
 L(\lambda) \ \ \text{mod } \mathcal{N}.\]
\end{thm}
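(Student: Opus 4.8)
The plan is to reduce the assertion to a statement about ground states (Lemma \ref{lemma-ground}) together with control of the classical part via the embedded copy $i: \mathcal{T}_{cl} \hookrightarrow \overline{\mathcal{T}}_{m|n}$ from Section \ref{section7.2}. First I would observe that since $\lambda_r \geq 0$, twisting $L(\lambda_{cl}) = L(\lambda_1,\ldots,\lambda_r,0,\ldots,0\,|\,0,\ldots,0)$ is a covariant representation, namely $S_{\nu}(V)$ for the partition $\nu = (\lambda_1,\ldots,\lambda_r)$; in particular, by the remark after Lemma \ref{lemma-ground}, $L(\lambda_{cl})$ is the ground state $L(\lambda_0)$ of its block (with $\lambda_r$ playing the role of the top entry, so possibly after a Berezin twist), and its image in the semisimplification is precisely $i(L(\nu))$ for the irreducible $GL(r)$-representation $L(\nu)$. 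On the other side, $L(\lambda_{pr})$ lies in the principal block and has atypicality $n$; its image in $\overline{\mathcal{T}}_{m|n}$ is the object $\overline{X}_{\lambda_{pr}}$.

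The key step is to compute the tensor product $L(\lambda_{cl}) \otimes L(\lambda_{pr})$ in $\mathcal{T}_{m|n}$ up to negligibles by the same translation-functor bookkeeping used in the proof of Proposition \ref{moving-lemma}. Since $L(\lambda_{cl})$ is a direct summand of $V^{\otimes \ell}$ for $\ell = |\nu|$, the tensor product $L(\lambda_{cl}) \otimes L(\lambda_{pr})$ is a direct summand of $V^{\otimes \ell} \otimes L(\lambda_{pr})$, hence by Lemma \ref{translation} (and the fact that $V \otimes (\text{negligible})$ is negligible) it decomposes as a single maximal atypical irreducible plus a negligible module. To identify that irreducible summand I would track the weight diagram: tensoring with $V$ repeatedly, as in Proposition \ref{moving-lemma}, has the effect on the maximal atypical part of $L(\lambda_{pr})$ of moving the $\vee$'s, while the $\ell$ applications of $\mathcal{F}_i$ together with the Berezin/$\det$-bookkeeping must reproduce exactly the crosses of $\lambda$ in the correct positions. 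The point is that the $\vee$-pattern of $\lambda_{pr}$ (encoded by $\mu_1 \leq \cdots$, i.e. the principal core) is untouched by insertion of the crosses to its right — this is where negative stability $\mu_1 \leq 0$ is used, guaranteeing all atypical hooks sit to the left of all the new $\times$'s so no interaction of type (v) or (vi) in Lemma \ref{translation} can occur and destroy atypicality in the surviving summand. A clean way to organize this is: by Lemma \ref{lemma-ground} the tensor product of the covariant $L(\lambda_{cl})$ with the ground state underlying $L(\lambda_{pr})$ is a sum of ground states modulo negligibles, and then one matches highest weights to see the unique surviving term is $L(\lambda)$.

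Alternatively, and perhaps more robustly, I would prove the isomorphism at the level of the semisimplification using the fibre functor $\omega = d^n$ and the splitting $H_{m|n} \cong GL(r) \times H_{pr}$ from Section \ref{section7.2}: apply $DS^n$ to both sides. We have $DS^n(L(\lambda_{pr})) = $ (a multiple of) the trivial representation of $GL(r)$ since $\lambda_{pr}$ is in the principal block, and $DS^n(L(\lambda_{cl})) = L(\nu)$, while $DS^n(L(\lambda)) = L(\chi_\lambda) = L(\nu)$ since $\lambda$ and $\lambda_{cl}$ share the same crosses; so both sides have the same image under $\omega$, which pins down the $GL(r)$-part. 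For the $H_{pr}$-part one restricts to the principal-block content via the functor $\eta_\lambda$ (Serganova's block equivalence): $\eta_\lambda(L(\lambda)) = [\mu_1,\ldots,\mu_n] = \eta_0(L(\lambda_{pr}))$, and $\eta_\lambda$ kills the classical twist. I expect the \textbf{main obstacle} to be the bookkeeping in the translation-functor argument — specifically, verifying that across the $\ell$ steps no cancellation forces the surviving summand to drop atypicality, and that the crosses land in exactly the positions dictated by $\lambda$ rather than some shifted block; negative stability plus $\lambda_r \geq 0$ is exactly what rules this out, so the proof hinges on using those hypotheses at the right moment, after which the highest-weight identification is immediate.
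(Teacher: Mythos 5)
Your first route is close to the paper's actual proof, but it has a real gap at the crucial step. The translation-functor analysis (as in Proposition \ref{moving-lemma}) does show, as you say, that every summand of $V^{\otimes \ell}\otimes L(\lambda_{pr})$ is either a maximal atypical irreducible or negligible; but it does \emph{not} show that the non-negligible part is a \emph{single} irreducible. Unlike $\Pi\otimes L(\lambda)$, where $\sdim\Pi=1$ forces at most one non-negligible summand, here $L(\lambda_{cl})$ has superdimension $\dim L(\chi_\lambda)$ which can be large, so a priori $L(\lambda_{cl})\otimes L(\lambda_{pr})$ could be a direct sum of several non-negligible irreducibles plus negligibles. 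Identifying $L(\lambda)$ among them by the highest weight $\lambda_{cl}+\lambda_{pr}=\lambda$ is the easy half; the paper then closes the gap by a superdimension count: $\sdim L(\lambda)=m(\lambda)\,\sdim L(\chi_\lambda)$, while $\sdim\bigl(L(\lambda_{cl})\otimes L(\lambda_{pr})\bigr)=\sdim L(\lambda_{cl})\cdot \sdim L(\lambda_{pr})$ equals the same number because $\omega(L(\lambda_{cl}))=L(\chi_\lambda)$ and $\omega(L(\lambda_{pr}))\cong m(\lambda)\one_{GL(r)}$. Since both sides are semisimple modulo $\mathcal N$ with equal (non-negative) superdimension and $L(\lambda)$ is a common summand, every other $L(\nu_i)$ must be negligible. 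This numerical step is the heart of the argument and is missing from your writeup. (Also, $L(\lambda_{pr})$ is not a ground state in general, so the ``clean'' reorganization via Lemma \ref{lemma-ground} does not literally apply.)

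Your second, ``more robust'' route is circular. The splitting $H_{m|n}\cong GL(r)\times H_{pr}$ is \emph{not} proved in Section \ref{section7.2}; that section only produces a split surjection $H_{m|n}\twoheadrightarrow H_{cl}=GL(r)$. The product decomposition is Lemma \ref{product-decomposition}, which is proved \emph{after} the Splicing Theorem and relies on its Corollary \ref{cor-splicing} (the tannakian subcategory $\mathcal T$ ``is generated by $\mathcal T_{cl}$ and $\mathcal T_{pr}$'' is exactly the content of splicing). Likewise, the fact that Serganova's $\eta_\lambda$ is monoidal on the relevant subcategory is only established afterwards (Section 8). So that route cannot stand on its own here; you would be assuming what you set out to prove. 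Your reading of \emph{which} hypotheses enter (negative stability keeps the cups away from the crosses, so cases (v)/(vi) of the translation rules cause no trouble in the surviving branches) is correct, but the proof is only complete once the superdimension comparison is added.
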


\begin{proof} In the semisimplification the standard representation generates a tensor subcategory equivalent to $Rep(H_{cl})$ for $H_{cl}=GL(r)$; see Section \ref{section7.2}. We first claim that \[L(\lambda_{cl}) \otimes L(\lambda_{pr}) = \bigoplus L(\nu_i) \oplus \text{ negligible}.\] Since $L(\lambda_{cl})$ is a direct summand in some $V^{\otimes d}$, it suffices to show \[  V^{\otimes d} \otimes  L(\lambda_{pr}) = \bigoplus L(\nu_i) \oplus \text{ negligible}.\] The same argument as in Lemma \ref{moving-lemma}, using an analysis of translation functors, now proves the claim. This implies \[L(\lambda_{cl}) \otimes L(\lambda_{pr}) = L(\lambda) \oplus \bigoplus_{\nu_i \neq \lambda} L(\nu_i) \oplus \text{ negligible}\] for the highest weight \[\lambda_{cl}+\lambda_{pr} = \lambda = (\lambda_1,\ldots,\lambda_r,\mu_1, \ldots, \mu_n \ | \ - \mu_n, \ldots,-\mu_1).\] For the superdimensions notice $\sdim L(\lambda) = m(\lambda) \sdim L(\chi_{\lambda})$. Furthermore $\omega(L(\lambda_{cl})) = L(\chi_{\lambda})$ and $\omega (L(\lambda_{pr})) \cong m(\lambda) \one_{GL(r)}$, and the isotypic multiplicities agree since they depends only on the nesting structure in the cup diagram of $\lambda$. Hence the superdimensions of $L(\lambda_{cl}) \otimes L(\lambda_{pr})$ and $L(\lambda)$ agree. Thus the claim follows from   theorem \ref{thm:tannaka}.
\end{proof}

Using suitable twists with the two determinants objects $\Pi$ and $\Lambda^{m-n}(st)$ allows to extend the splicing to an arbitrary $L(\lambda)$.

\begin{cor} \label{cor-splicing} Any maximal atypical irreducible representation $L$ defines representations $L_{cl}=L(\lambda_{cl})$ and $L_{pr}=L(\lambda_{pr})$ such that \[ L \cong L(\lambda_{cl}) \otimes L(\lambda_{pr}) \ \text{ modulo } \mathcal{N}\] holds for $L(\lambda_{pr}) \cong \Pi^N \otimes L(0,\ldots,0,\mu_1,\ldots,\mu_n  | - \mu_n,\ldots,- \mu_1)$ for suitable $N$ and suitable weight $0 \geq \mu_1\geq \mu_2 \ldots \geq \mu_n$.
\end{cor}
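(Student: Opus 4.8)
\textbf{Proof plan for Corollary \ref{cor-splicing}.}

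The plan is to reduce the general case to the negatively stable case already handled by Theorem \ref{splicing}, using the two invertible objects $\Pi$ and $\Lambda^{m-n}(st) = \Pi \otimes Ber_{m|n}$ in the semisimplification to move the weight into the required form. First I would note that by the Stabilization section (in particular the Corollary following Proposition \ref{moving-lemma}), there is an integer $N \gg 0$ such that $\Pi^N \otimes L(\lambda)$ is irreducible and stable modulo $\mathcal{N}$; write its weight as $L(\lambda_1',\ldots,\lambda_r',\mu_1,\ldots,\mu_n \mid -\mu_n,\ldots,-\mu_1)$ using Lemma \ref{unimportant}. Since tensoring by $\Pi$ shifts only the atypical hooks and leaves the crosses (hence the classical core, hence $\omega(L(\cdot))$) unchanged, the classical part is unaffected; increasing $N$ further I may also assume $\mu_1 \leq 0$, i.e.\ that $\Pi^N \otimes L(\lambda)$ is negatively stable, and moreover (by the discussion preceding Theorem \ref{splicing} on twisting by $\Lambda^{m-n}(st)$, whose $\omega$-image is $\det$) that after an additional twist by a suitable power of $\Lambda^{m-n}(st)$ we have $\lambda_r' \geq 0$, so that the hypotheses of Theorem \ref{splicing} are met.

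Applying Theorem \ref{splicing} to this twisted weight gives, modulo $\mathcal{N}$,
\[ \Pi^N \otimes (\Lambda^{m-n}(st))^{\otimes a} \otimes L(\lambda) \;\cong\; L((\Pi^N \otimes (\Lambda^{m-n}(st))^{\otimes a}\otimes L(\lambda))_{cl}) \;\otimes\; L((\Pi^N \otimes (\Lambda^{m-n}(st))^{\otimes a}\otimes L(\lambda))_{pr}). \]
Now I would invert the twists in the semisimplification. Since $\Pi$ and $\Lambda^{m-n}(st)$ are invertible objects of $\overline{\mathcal{T}}_{m|n}$ of superdimension $1$, tensoring both sides by $\Pi^{-N} \otimes (\Lambda^{m-n}(st))^{-a}$ is legitimate and recovers $L(\lambda)$ on the left. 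On the right, the twist by $(\Lambda^{m-n}(st))^{-a}$ is absorbed into the classical factor (it shifts the $GL(r)$-weight by $-a$ times the determinant, keeping it a genuine $GL(r)$-irreducible $L(\lambda_{cl})$ with $\lambda_{cl}=(\lambda_1,\ldots,\lambda_r,0,\ldots,0\mid 0,\ldots,0)$), while the twist by $\Pi^{-N}$ is absorbed into the principal factor, which then has the stated form $L(\lambda_{pr}) \cong \Pi^{N'} \otimes L(0,\ldots,0,\mu_1,\ldots,\mu_n \mid -\mu_n,\ldots,-\mu_1)$ for the appropriate residual power $N'$ and the weight $0 \geq \mu_1 \geq \cdots \geq \mu_n$ read off from the principal part. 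Setting $L_{cl} = L(\lambda_{cl})$ and $L_{pr} = L(\lambda_{pr})$ gives the claim.

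The main obstacle I expect is bookkeeping the weight data through the twists: one must check that the classical part and principal part of $L(\lambda)$ defined in this way (a) are independent of the auxiliary choices of $N$ and $a$, and (b) actually agree, on the level of crosses/circles and of the nesting structure of the cup diagram, with the weight $\lambda$ one started from — i.e.\ that the whole procedure does not lose information. This is precisely where one invokes that $\Pi$ acts on weight diagrams only by the explicit hook-shifting rule of Proposition \ref{moving-lemma} (fixing the crosses) and that $\Lambda^{m-n}(st)$ twists translate into a determinant shift on the $GL(r)$-side via $\omega$; combined with the injectivity of the weight $\leftrightarrow$ diagram correspondence this makes $L_{cl}$ and $L_{pr}$ well defined. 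The remaining verification — that superdimensions match, so the isomorphism modulo $\mathcal{N}$ is forced — is handled exactly as in the last paragraph of the proof of Theorem \ref{splicing}, via $\sdim L(\lambda) = m(\lambda)\,\sdim L(\chi_\lambda)$ and $\omega(L(\lambda_{pr})) \cong m(\lambda)\,\one$, together with Theorem \ref{thm:tannaka}.
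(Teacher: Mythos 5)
Your proposal is correct and is essentially the same argument the paper has in mind: the paper proves the corollary only via the one-sentence remark preceding it (``Using suitable twists with the two determinant objects $\Pi$ and $\Lambda^{m-n}(st)$ allows to extend the splicing to an arbitrary $L(\lambda)$''), and your write-up is the natural elaboration of exactly those twists combined with Theorem~\ref{splicing}. The only small imprecision is the parenthetical claim that $L(\lambda_{cl})$ retains the literal form $L(\lambda_1,\ldots,\lambda_r,0,\ldots,0\mid 0,\ldots,0)$ after untwisting by $(\Lambda^{m-n}(st))^{-a}$ — for $\lambda_r<0$ that weight is not dominant in $X^+(m|n)$, so $L_{cl}$ should instead be taken to be the unique non-negligible summand of $(\Lambda^{m-n}(st))^{-a}\otimes L(\lambda_1+a,\ldots,\lambda_r+a,0,\ldots,0\mid 0,\ldots,0)$; this does not affect the corollary, whose statement constrains only the shape of $L(\lambda_{pr})$, and likewise the well-definedness/uniqueness concerns you raise at the end are not actually required here, since only existence of a decomposition is claimed.
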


\subsection{Compatibilities of Tannaka duality}\label{duality}

By \cite{GS} the block equivalence $\eta_{\lambda}$ to the principal block can be diagrammatically described as follows: Remove all core symbols $\times$ and $\circ$ and fill empty positions by filling with symbols from the right.

\medskip
The module $\Pi$ shifts the symbols $\vee$ exactly like the inverse Berezin representation in the principal block of $GL(n|n)$. As we refer to the equivalent representation in the principal block ${\mathcal B}_{\mathbf 1}$ as the principal core, the property $\eta_0(\Pi) \cong Ber_{n|n}^{-1}$, we also use the notation \[ B_{core}^{-1} := \Pi \text{ and } B_{core} := (B_{core}^{-1})^{\vee}.\] We use the shortcut notation $B_{core}^r = B_{core}^{\otimes r}$ for $r \in \mathbb{Z}$ inspired
by the later example \ref{BerCompatibility}.

In \cite[Section 2.3]{HW-tannaka} we defined an irreducible representation $L(\lambda) \in \mathcal{T}_{n|n}$ to be of type (SD) if there exists an isomorphism $L(\lambda) \cong L(\lambda)^{\vee} \otimes Ber_{n|n}^{k}$ for some $k \in \mathbb{Z}$.

\begin{definition} \label{new-duality} Let $L(\lambda)$ be a stable module in the principal block $\mathcal{B}_{\one} \subset \mathcal{T}_{m|n}$. We say that $L(\lambda)$ is of type (SD) if there exists $k \in \mathbb{Z}$ such that \[ L(\lambda)^{\vee} \oplus \text{ negligible summands } \cong L(\lambda) \otimes B_{core}^k.\]
\end{definition}

\begin{lem} Suppose $L(\lambda)$ is a stable module in the principal block. Then $L(\lambda)$ is (SD) if and only if $\eta_0(L(\lambda))$ is (SD) and $L(\lambda)^{\vee} \oplus \text{ negligible summands }  \cong L(\lambda) \otimes B_{core}^k$ if and only if $\eta_0(L(\lambda))^{\vee} \cong \eta_0(L(\lambda)) \otimes Ber_{n|n}^k$.
\end{lem}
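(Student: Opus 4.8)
The plan is to transport the statement to the principal block of $GL(n|n)$ via the block equivalence $\eta_0$ and reduce to the already-proven $GL(n|n)$ case in \cite[Section 2.3]{HW-tannaka}. The key point is that $\eta_0$ is compatible with the structures appearing in Definition \ref{new-duality}: it sends $\vee$'s to $\vee$'s according to the rule recalled from \cite{GS} (remove the core symbols, shift in from the right), it matches $B_{core}$ with $Ber_{n|n}$ by the relation $\eta_0(\Pi) \cong Ber_{n|n}^{-1}$ noted above, and — since $\eta_0$ is induced by the functor $\eta$ of the introduction, which is monoidal on the negative half of the principal block (Theorem \ref{thm:principal}, or more precisely the monoidality statement used there) — it is compatible with tensor products of stable modules in $\mathcal{B}_{\one}$. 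Thus $\eta_0(L(\lambda) \otimes B_{core}^k) \cong \eta_0(L(\lambda)) \otimes Ber_{n|n}^k$.

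First I would verify that $\eta_0$ commutes with the duality $(-)^\vee$ in the relevant sense: this follows from Proposition \ref{irreducible-dual} together with its reformulation in terms of marked spaced forests, because the operation ``interchange $\vee\wedge$-pairs in cups, then reflect'' commutes with ``delete the core symbols and fill in from the right.'' Concretely, deleting the $\times$'s and $\circ$'s and applying the reflection on the numberline $s \mapsto 1-r-s$ in the $GL(m|n)$ picture produces, after relabeling, exactly the reflection used to dualize in $\mathcal{T}_{n|n}$; and the $\vee\wedge$-swaps in cups are intrinsic to the (unmarked) cup diagram, which is preserved by $\eta_0$. So $\eta_0(L(\lambda)^\vee)$ is the dual of $\eta_0(L(\lambda))$ up to a Berezin twist accounting for the different reflection centers, and that twist is absorbed into the exponent $k$.

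Next I would handle the negligible summands: since $\eta_0$ is an equivalence of the stable subcategory of $\mathcal{B}_{\one}$ onto the stable subcategory of the principal block of $GL(n|n)$, and both categories live inside the respective $\mathcal{T}^+$'s whose negligible ideals are $\mathcal{N}$, it suffices to know that $\eta_0$ carries negligible objects to negligible objects and reflects this property. This is immediate once one knows $\eta_0$ matches categorical dimensions up to the scalar $m(\lambda)$ coming from the classical core — which is exactly the superdimension statement $\sdim L(\lambda) = \pm m(\lambda)\dim L(\lambda_{cl})$ used in the proof of Theorem \ref{splicing} — and $m(\lambda) \neq 0$, so an object of $\mathcal{B}_{\one}$ has superdimension $0$ if and only if its $\eta_0$-image does. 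Putting these three compatibilities together, the chain of equivalences
\[
L(\lambda)^\vee \oplus (\text{negl.}) \cong L(\lambda) \otimes B_{core}^k
\iff \eta_0(L(\lambda))^\vee \cong \eta_0(L(\lambda)) \otimes Ber_{n|n}^k
\]
is obtained by applying $\eta_0$ (respectively its inverse) termwise and discarding negligible summands on the $GL(n|n)$ side, where $\mathcal{N}$ already kills them.

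The main obstacle I anticipate is the monoidality of $\eta_0$ on the relevant subcategory: block equivalences à la Serganova are a priori only equivalences of abelian categories, not tensor functors, and the identity $\eta_0(X \otimes Y) \cong \eta_0(X) \otimes \eta_0(Y)$ holds only after passing to the semisimplification and only on the negative half of the principal block. One must therefore be careful to apply it only in that range — which is exactly why the hypothesis restricts to stable modules — and to invoke the comparison $\eta = \eta_0$ on $\mathcal{B}_{\one}$ established in the course of proving Theorem \ref{thm:principal}. Once that identification is in hand, the tensor compatibility with $B_{core}$ follows since $B_{core}$ is itself a stable module in $\mathcal{B}_{\one}$ whose $\eta_0$-image is the invertible object $Ber_{n|n}$, and tensoring by an invertible object is harmless both before and after semisimplification.
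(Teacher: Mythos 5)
Your core idea—that the compatibility of duality with $\eta_0$ should be extracted from Proposition~\ref{irreducible-dual}—is exactly the paper's, but you wrap it in considerably heavier machinery and thereby take a genuinely different route. The paper's proof is three sentences of pure combinatorics: the dual descriptions for $GL(m|n)$ and $GL(n|n)$ are identical as diagram operations except that the reflection is $s\mapsto 1-r-s$ rather than $s\mapsto 1-s$; that discrepancy is precisely the reflection of the $r$ symbols $\vee$ over the $r$ symbols $\times$; and $B_{core}^{-1}$ moves the $\vee$'s exactly like $Ber_{n|n}^{-1}$ does. Everything happens at the level of weight diagrams, where $\eta_0$ is just the deletion-and-refill rule, and there is no need to discuss tensor products, negligibility, or semisimplification at all.

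Your version instead invokes the monoidality of $\overline{\eta}$ and the identification $\eta = \eta_0$ on $\mathcal{B}_{\one}$. This is where your proof becomes structurally problematic: those facts are Corollary~\ref{nice-too} and Theorem~\ref{thm:principal}, which live in Sections~7 and~8 and are proved \emph{after} the lemma in question, which sits in Section~6. So even if there is no literal circularity (this lemma is never cited in those later proofs), you are proving an early lemma by appeal to much later theorems — not an approach the paper can or would adopt. You also spend effort on the question of whether $\eta_0$ preserves negligibility of indecomposables in $\mathcal{B}_{\one}$, and your argument for it (via $\sdim L(\lambda) = \pm m(\lambda)\dim L(\lambda_{cl})$) really only handles the simple objects; for the non-simple negligible summands in $L(\lambda)\otimes B_{core}^k$ one would have to argue separately. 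The paper simply never confronts this issue because it compares weight diagrams of the two simple objects $L(\lambda)^\vee$ (mod negligibles) and the unique non-negligible summand of $L(\lambda)\otimes B_{core}^k$ directly, and the "(SD)" condition is by its very definition (Definition~\ref{new-duality}) a statement mod~$\mathcal{N}$. In short: your first paragraph (Proposition~\ref{irreducible-dual} and the compatibility of the two reflections) is the whole proof; the rest of your argument is overkill that introduces a forward-reference problem the paper avoids.
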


\begin{proof} By Proposition \ref{irreducible-dual} the descriptions of the duals is the same except that we have the reflection $s \mapsto 1 - r -s$ for $r=m-n >0$ and $s \mapsto 1-s$ for $m=n$. This is due to the fact that $r$ entries $\vee$ have to be reflected over the $r$ entries $\times$. The position of the crosses stays fixed; and $B_{core}^{-1}$ moves the $\vee$'s exactly like $Ber^{-1}_{n|n}$.
\end{proof}

\subsection{Product decomposition} Let $H_{pr}$ be the Tannaka group
of the tannakian subcategory of ${\mathcal T}_{pr}$ of  ${\mathcal T}=\Tmn^+/\mathcal{N}$, generated
by the images of the irreducible representations of the principal block ${\mathcal B}_{\mathbf 1}$. For the Tannaka group $H=H_{m\vert n}$ of $\mathcal T$, the full embedding ${\mathcal T}_{pr} \subset \mathcal{T}$
corresponds to a surjection $pr: H\twoheadrightarrow H_{pr}$.
Generators of the tannakian category ${\mathcal T}_{pr}$
are thus the representations  $L(\lambda_{pr})$ considered
in Section \ref{principal reps}.

\medskip For the surjection $p: H\twoheadrightarrow H_{cl}=GL(r)$, attached to the tannakian 
subcategory ${\mathcal T}_{cl}$ considered 
in Section \ref{section7.2},
consider the homomorphism $p\times pr: H \to H_{cl}\times H_{pr}$.
By Corollary \ref{cor-splicing}, the tannakian category
$\mathcal T$ is generated by the tannakian subcategories ${\mathcal T}_{cl}$
and ${\mathcal T}_{pr}$, i.e. a set of generators of ${\mathcal T}_{cl}$ and ${\mathcal T}_{pr}$
generates $\mathcal T$. This immediately implies that the above map $ H \to  H_{cl}\times H_{pr} $
is an injection $$ H \hookrightarrow  H_{cl}\times H_{pr} \ .$$

\begin{lem} \label{product-decomposition} For $H_{cl}=GL(r)$ and the pro-reductive group $H_{pr}$ the following holds:  $$ p\times pr: H_{m|n}  \cong H_{cl} \times H_{pr} \ .$$ Furthermore $s: H_{cl} \to H_{m\vert n}$ is the inclusion onto the first
 factor.
\end{lem}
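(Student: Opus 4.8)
We already know from the discussion preceding the statement that $p\times pr:H_{m|n}\to H_{cl}\times H_{pr}$ is injective, so the whole content is surjectivity; equivalently, that the subcategories $\mathcal T_{cl}$ and $\mathcal T_{pr}$ are ``independent'' inside $\mathcal T=\overline{\mathcal T}_{m|n}$. The cleanest way to get surjectivity of a map onto a product of (pro-)reductive groups is to check it after composing with each projection and then exhibit a normal complement: first, $p$ is surjective onto $H_{cl}=GL(r)$ (Section \ref{section7.2}) and $pr$ is surjective onto $H_{pr}$ by construction, so the image $H'=(p\times pr)(H)$ surjects onto both factors. It then suffices to show that $H'$ contains $H_{cl}\times 1$ (equivalently $1\times H_{pr}$), because a closed subgroup of $H_{cl}\times H_{pr}$ surjecting onto $H_{pr}$ and containing $H_{cl}\times 1$ is everything.

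For the inclusion $H_{cl}\times 1\subset H'$ I would use the splitting $s:H_{cl}\to H_{m|n}$ coming from $\omega$ (end of Section \ref{section7.2}): the key point is that $s(H_{cl})$ acts \emph{trivially} on every generator $L(\lambda_{pr})$ of $\mathcal T_{pr}$. Concretely, $\omega=d^n$ sends $\Pi$ to the trivial object and more generally $\omega(L(\lambda_{pr}))\cong m(\lambda)\,\one_{GL(r)}$ is a trivial $GL(r)$-module (this is exactly what is used in the proof of Theorem \ref{splicing}); hence $i(\omega(L(\lambda_{pr})))$ is trivial in $\mathcal T$, so the image of $s(H_{cl})$ in $H_{pr}=\mathrm{Aut}^\otimes(\omega_{\mathcal T_{pr}})$ is trivial, i.e. $pr\circ s=1$. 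On the other hand $p\circ s=\mathrm{id}_{H_{cl}}$ since $\omega\circ i$ is an autoequivalence of $\mathcal T_{cl}$. Therefore $(p\times pr)(s(h))=(h,1)$ for all $h\in H_{cl}$, giving $H_{cl}\times 1\subset H'$; combined with surjectivity of $pr$ on $H'$ this yields $H'=H_{cl}\times H_{pr}$, and injectivity gives the isomorphism. The final assertion that $s:H_{cl}\to H_{m|n}$ is the inclusion of the first factor is then immediate: under the identification $H_{m|n}\cong H_{cl}\times H_{pr}$ we have just computed $s(h)=(h,1)$.

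\textbf{Main obstacle.} The nontrivial input is the claim $\omega(L(\lambda_{pr}))\cong m(\lambda)\,\one_{GL(r)}$, i.e. that the \emph{enriched} fibre functor $\omega=d^n$ kills the $GL(r)$-action on the principal parts, not merely at the level of dimensions but as genuine $GL(r)$-representations. One has to check that the iterated $DS$ functor applied to $L(0,\dots,0,\mu\,|\,-\mu)$ lands in the trivial isotypic component of $Rep(GL(r))$; this follows because the classical core of such a weight is trivial (its $\times$/$\circ$ diagram is the ``empty'' one, by Section \ref{sec:irreducible} and the core discussion), and $DS^n$ sends $L(\lambda)$ to a multiple of $L(\chi_\lambda)$ with $\chi_\lambda$ the central character — here $\chi_{\lambda_{pr}}$ is trivial. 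Once that is granted, the rest is the soft Tannakian argument above (a subgroup of a product of pro-reductive groups that surjects onto one factor and contains the other is the whole product), which I would phrase directly in terms of natural transformations of fibre functors so as not to invoke reductivity structure theory beyond what Theorem \ref{thm:tannaka} already provides.
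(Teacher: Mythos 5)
Your proposal is correct and essentially identical to the paper's proof: both establish $pr\circ s=\mathrm{id}$ and $p\circ s=\mathrm{id}$ by showing $\omega(\mathcal{T}_{pr})$ consists of trivial $GL(r)$-representations (via the core/Theorem \ref{mainthm} and Lemma \ref{lem:commutes}), deduce $s(h_{cl})=(h_{cl},1)$, and then use surjectivity of $pr$ to get $1\times H_{pr}\subset H$ and hence $H=H_{cl}\times H_{pr}$. The only cosmetic difference is that the paper packages the triviality statement at the level of the whole subcategory $\mathcal{T}_{pr}$ rather than on generators, but the underlying argument is the same.
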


\begin{proof} The image $\omega({\mathcal T}_{pr})$ of  ${\mathcal T}_{pr}$
under the functor $\omega: {\mathcal T} \to {\mathcal T}_{cl}$ is the subcategory of trivial representations in $Rep(GL(r))$. This easily follows from Theorem \ref{mainthm}, more precisely its consequence Lemma \ref{lem:commutes}.
By Tannaka duality 
this means that the composition $pr\!\circ\! s$ of the projection $pr: H\to H_{pr}$ and the homomorphism $s: H_{cl}\to
H$ of Section \ref{section7.2} is trivial.  Hence $s(H_{cl})\subset ker(pr)$ holds, 
and considered in $ H_{cl}\times H_{pr}$ instead of $H$, we obtain $s(h_{cl})= (h_{cl},1)$.
Because $pr$ is surjective, for $h_{pr}$ in $H_{pr}$  in $H_{cl}\times H_{pr}$ there exists $h=(h_{cl},h_{pr})$ 
contained in $H$.
Hence $\tilde h= s(h_{cl})^{-1}h$ is in $H$, 
and it is of the form $\tilde h=(1,h_{pr})$ in $H=H_{cl}\times H_{pr}$. From this we obtain $H=H_{cl}\times H_{pr}$.
\end{proof}


For indecomposable objects $X \in\mathcal{T}_{m|n}^+$ their image $\overline{X}$ in $\mathcal T$ can therefore be written as an external tensor product \[ \overline{X} = \rho_{cl} \boxtimes \sigma_{H_{pr}} = \rho \boxtimes \sigma\] for an irreducible representation $\rho = \rho_{cl}$ of $GL(r)$ and an irreducible representation $\sigma = \sigma_{H_{pr}}$ of the Tannaka  group $H_{pr}$. 
A priori it is unclear how irreducible representations $\sigma_{H_{pr}}$ of $H_{pr}$ are represented by 
indecomposable objects of the 
category ${\mathcal T}=\mathcal{T}_{m|n}^+$.

Lemma \ref{product-decomposition} implies that $\mathcal{T}_{m|n}^+/\mathcal{N} \cong Rep(H_{cl} \times H_{pr})$ contains the full tannakian subcategories $Rep(\one \times H_{pr}) \cong Rep(H_{pr})$ and $Rep(H_{cl} \times \one) \cong Rep(H_{cl})$. In particular, these subcategories are closed under tensor products and duality.

\section{The groups $H_{m|n}$} \label{sec:groups}

\subsection{The $GL(m|1)$-case} This case is special since blocks have tame representation type. In this case, in  \cite{H-ss} \cite{Heidersdorf-mixed-tensors} it was shown for $\overline{\mathcal T}_{m\vert 1}=\mathcal{T}_{m|1}^+/\mathcal{N}$ that \[ \overline{\mathcal T}_{m\vert 1} \cong Rep(GL(m\! -\! 1)\times GL(1)).\] In general, the semisimplification of the full karoubian subcategory of $\mathcal{T}_{m|n}^+$, generated by the standard representation and its dual, coincides with $Rep(GL(r))$ for $r=m-n$. 


\subsection{Decomposition of representations} We return now to the decomposition 
\[ H  \cong H_{cl} \times H_{pr}\] of the pro-reductive Tannaka group $H\!=\! H_{m|n}$. Recall that for indecomposable $X$ its image in the semisimplification can be written as $\overline{X} \cong \rho \boxtimes \sigma$ for an irreducible representation $\rho = \rho_{cl}$ of $H_{cl}$ and an irreducible representation $\sigma = \sigma_{H_{pr}}$.
For the images in ${\mathcal T}\!:\!=\! \overline{\mathcal{T}}_{m|n}^+$ of irreducible objects $X\in {\mathcal{T}_{m|n}^+}$ 
the external tensor product  \[ \overline{X} = \rho \boxtimes \sigma = (\one \boxtimes \sigma) \otimes (\rho \boxtimes \one)\] is explicitly realized by irreducible representations of $GL(m|n)$ that are given by the Splicing Theorem \ref{splicing} and Corollary \ref{cor-splicing}. If $X_{\lambda}$ satisfies the conditions of Theorem \ref{splicing}, the representation $\sigma$ is represented by
the irreducible object $$L(\lambda_{pr})=L(0,....,0,\mu_1,...,\mu_n\vert -\mu_n,...,-\mu_1)\ ,$$ whereas
$\rho$ is realized by the image in ${\mathcal T}_{cl} \subset {\mathcal T}$ of the irreducible tensor $$L(\lambda_1,...,\lambda_r,0,...,0\, \vert\, 0,...,0)\ .$$ 
To see this, it suffices to verify that $\omega(L(\lambda_{cl}))$ is isomorphic to the irreducible representation
of $GL(r)$ of highest weight $\lambda=(\lambda_1,...,\lambda_r)$ and $\omega(L(\lambda_{pr}))$ is isomorphic the the 
one-dimensional trivial representation
of $GL(r)$. Obviously, both statements follow from the explicit description in \ref{mainthm}
of the functor $DS^n$ that induces $\omega=d^n$.

\begin{example}\label{BerCompatibility} Recall $\Pi= Ber_{m\vert n}^{-1} \otimes  \Lambda^{r}(V)$.
We have the following images in the semisimplification $\overline{\mathcal{T}}_{m|n} = Rep(GL(r)\times H_{pr})$:
$$ det_{cl} = \Lambda^{r}(V) = L(1,\ldots,1,0,\ldots,0 \, | \, 0,\ldots,0)  \ \ \mapsto\ \  L(1,\ldots,1) \boxtimes \one = \det \boxtimes \one$$ 
Therefore the image of $det_{cl}$ in $\overline{\mathcal{T}}_{m|n}$ is $det\boxtimes \one$. So any object in
 ${\mathcal T}_{cl}$ is represented by $(det_{cl}^{\vee})^{\otimes \ell} 
 \otimes L(\lambda_1,...,\lambda_r,0,...,0\, \vert\, 0,...,0)$ for some $\ell$ in $\N$ and $\lambda_1\geq ... \geq \lambda_r\geq 0$, up to negligible summands.
Under the projection to the semisimplification 
$$ Ber_{m|n} = L(1,\ldots,1, 1, \ldots 1  \,| \! -1,\ldots,-1) \ \ \mapsto \ L(1,\ldots,1) \boxtimes \overline{Ber}_{n|n} = \det \boxtimes \overline{Ber}_{n|n} $$
furthermore implies that the image of $\Pi$ is $\one \otimes \overline{Ber}_{n\vert n}^{-1}$. Therefore, for all $\ell$ in $\mathbb Z$ we have 
$$ \Pi^{\otimes \ell} \ \ \mapsto \ \ \one \boxtimes \overline{Ber}_{n|n}^{\otimes \ell}. $$
\end{example}

\subsection{Reduction to the principal block} \label{reductions}

In order to determine the unknown group $H_{pr}$, it is enough to determine the tensor subcategory generated by the image of the principal block ${\mathcal B}_{\mathbf 1}$ in $\overline{\mathcal T}_{m\vert n}=\mathcal{T}_{m|n}^+/\mathcal{N}$.

\smallskip
To see this, consider now the tensor product $X_{\lambda} \otimes X_{\mu}$ of two irreducible maximal atypical representations. We would like to compute the decomposition into indecomposable summands up to superdimension $0$. In $\overline{\mathcal T}_{m\vert n}$ we write for the images \[ \overline{X}_{\lambda} = \rho^{\lambda} \boxtimes \sigma^{\lambda}, \ \ \overline{X}_{\mu} = \rho^{\mu} \boxtimes \sigma^{\mu}\] and therefore \begin{align*} \overline{X}_{\lambda} \otimes \overline{X}_{\mu} & \cong ( \rho^{\lambda} \boxtimes \sigma^{\lambda}) \otimes ( \rho^{\mu} \boxtimes \sigma^{\mu}) \\ & \cong (\one \boxtimes \sigma^{\lambda}) \otimes (\rho^{\lambda} \boxtimes \one) \otimes (\one \boxtimes \sigma^{\mu}) \otimes (\rho^{\mu} \boxtimes \one).\end{align*}
So the tensor product decomposition boils down to compute tensor products of the following two situations: First \[ (\rho^{\lambda} \boxtimes \one) \otimes (\rho^{\mu} \boxtimes \one) \cong ( \rho^{\lambda} \otimes \rho^{\mu} \boxtimes \one)\ ,\] 
 easily calculated by the Littlewood-Richardson rule.
Second, the tensor products  \[ (\one \boxtimes \sigma^{\lambda}) \otimes (\one \boxtimes \sigma^{\mu}) \cong (\one \boxtimes \sigma^{\lambda} \otimes\sigma^{\mu}).\] To compute a tensor product $L\otimes L'$ we may therefore assume that the image of $L,L'$ has the form $\one \otimes \sigma^L$. Note, by the Splicing Theorem and its corollary these may be  represented (after shifts by $\Pi$) by stable modules in the principal block.

\smallskip
Since maximal atypical blocks ${\mathcal B}$ correspond 1:1 to irreducible representations of $GL(r)$ via ${\mathcal B} \leftrightarrow L(\chi_{\lambda})$, the image of the principal block  ${\mathcal B}_{\mathbf 1}$ of $GL(m|n)$ in the semisimplification is equivalent to $\{\one\} \times Rep(H_{pr}) \cong Rep(H_{pr})$. Therefore the determination of $H_{m|n}$ boils down to determine this image.
We claim that $$H_{pr} \cong H_{n|n} \ ,$$ or, in other words, that the image of the principal block ${\mathcal B}_{\mathbf 1}$  in $\mathcal{T}_{m|n}^+/\mathcal{N}$ generates a tensor category which is equivalent to the one generated by the principal block of $GL(n|n)$ in $\mathcal{T}_{n|n}^+/\mathcal{N}$. We show this first for the images of the \emph{negative halfs} of the principal blocks in the next section.

\subsection{The restriction map.} 

The obvious idea to relate $H_{m|n}$ and $H_{n|n}$ is to consider the  embedding $H_{cl} \times GL(n|n)$ where $H_{cl} = GL(r)$ is embedded
in the left upper $(r\times r)$-corner and  $ GL(n\vert n)$ in the right lower $(2n\times 2n)$ corner
of matrices in $GL(m\vert n)$. Consider the functor $res$ \[ res: \mathcal{T}_{m|n} \to Rep(GL(r)) \otimes \mathcal{T}_{n|n}\,  \] given by the restriction of representations of
$GL(m\vert n)$ to the subgroup $GL(r) \times GL(n\vert n)$ with respect to this embedding. While $res$ is an exact tensor functor, it is not obvious that it restricts to a tensor functor between the $\mathcal{T}^+$-subcategories because of the following two problems:

\begin{itemize}
\item The functor $res$ might not send irreducible modules to semisimple modules. 
\item We don't know whether $res$ preserves negligible modules, and so we don't know that all objects in the image have superdimension $\geq 0$.
\end{itemize}

To cope with this, we enlarge our categories to the categories $\mathcal{T}^{ev}$ defined below. By the commutative diagrams $(*)$ of Lemma \ref{lem:commutes}, we see these categories are obviously preserved by $res$.

\medskip
For $X$ in ${\mathcal T}_{m\vert n}$ the object $DS^n(X)$ is the direct sum of an even and an odd finite dimensional
representation of the classical group $GL(r)$ over $k$, meaning that $DS^n(X) = DS^n(X)_{\overline 0}\oplus
DS^n(X)_{\overline 1}$  in
$sRep_k(GL(r))$. We say $X$ is even
if the odd summand $DS^n(X)_{\overline 1}$ is zero.  
Let ${\mathcal{T}_{m|n}^{ev}}$ be the full subcategory of even objects in $X\in {\mathcal T}_{m\vert n}$, as considered in \cite[Section 14.1]{HW-tannaka}. In \cite[Section 24]{HW-tensor} it is shown that simple objects in $\mathcal T_{m\vert n}$ are always even or odd, hence \[ \mathcal{T}^+_{m\vert n} \subset \mathcal{T}^{ev}_{m\vert n}.\]
The inclusion $\mathcal{T}^{+}_{m\vert n} \!\subset\! \mathcal{T}^{ev}_{m\vert n}$  is strict. Already for $GL(1|1)$, $\mathcal{T}^{ev}_{1\vert 1}$ contains zigzag modules of length $2m+1$ for $m \in \mathbb{N}$ \cite{H-ss} which are not contained in $\mathcal{T}_{1|1}^+$, and they have nonvanishing superdimension.

\medskip
Obviously ${\mathcal{T}_{m|n}^{ev}}$ is an exact subcategory of  ${\mathcal{T}_{m|n}}$, 
closed under Tannaka duality, tensor products, retracts and extensions. 

\begin{lem} \label{lem:commutes}
The tensor functor
$$ DS^n:  {\mathcal{T}_{m|n}^{ev}} \to {\mathcal{T}_{r|0}^{ev}} = Rep(H_{cl}) $$
is an exact functor since $H_{cl}$ is reductive, and the diagram $(*)$
$$  \xymatrix{   {\mathcal{T}_{m|n}^{ev}}  \ar[rr]^-{res}\ar[d]_{DS^n} & &  Rep(H_{cl}) \otimes {\mathcal{T}_{n|n}^{ev}}  
\ar[d]^{id\otimes DS^n} \cr
                           Rep(H_{cl}) \ar@{=}[rr] & &   Rep(H_{cl}) } $$
is commutative. There is a similar commutative diagram  
$$  \xymatrix{   {\mathcal{T}_{m|n}}  \ar[rr]^-{res}\ar[d]_{DS^n} & &  Rep(H_{cl}) \otimes {\mathcal{T}_{n|n}}  
\ar[d]^{id\otimes DS^n} \cr
                           sRep(H_{cl}) \ar@{=}[rr] & &   sRep(H_{cl}) } $$ where $sRep(H_{cl})$ denotes the super representations of $H_{cl}$.           
\end{lem}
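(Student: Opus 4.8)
The plan is to establish both commutative squares from the functorial definition of $DS^n$ and the explicit embedding $GL(r)\times GL(n|n)\hookrightarrow GL(m|n)$, and then to deal separately with the exactness claim.

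First I would check exactness of $DS^n:\mathcal{T}_{m|n}^{ev}\to Rep(H_{cl})$. The point is that $DS=DS_{m|n,m-1|n-1}$ is a cohomological functor, so a priori only gives long exact sequences; but $Rep(H_{cl})=Rep(GL(r))$ is semisimple because $GL(r)$ is reductive over a field of characteristic zero, so every short exact sequence in the target splits. Combined with the fact (used already in the excerpt, cf.\ Theorem~\ref{thm:tannaka} and the surrounding discussion) that $DS^n$ lands in $\mathcal{T}_{r|0}^{ev}=Rep(H_{cl})$ when restricted to $\mathcal{T}_{m|n}^{ev}$, the connecting homomorphisms in the long exact cohomology sequence necessarily vanish, so $DS^n$ is exact on $\mathcal{T}_{m|n}^{ev}$. (Here one must confirm that $DS^n$ genuinely preserves $\mathcal{T}^{ev}$, i.e.\ takes even objects to even objects; this is immediate from the definition of evenness, which is phrased directly in terms of $DS^n(X)_{\overline 1}$.)

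Next I would prove commutativity of the square $(*)$. The distinguished nilpotent $x\in\mathfrak{g}_1$ defining $DS$ on $\mathcal{T}_{m|n}$ sits inside the lower-right $2n\times 2n$ block $\mathfrak{gl}(n|n)$ and commutes with the $GL(r)$-factor embedded in the upper-left corner. Iterating, the element governing $DS^n$ lies in $\mathfrak{gl}(n|n)$ and is $GL(r)$-invariant. Therefore, for a representation $(V,\rho)$ of $GL(m|n)$, the cohomology $V_x=\ker\rho(x)/\mathrm{im}\,\rho(x)$ carries a commuting $GL(r)$-action, and computing it is the same whether one first restricts to $GL(r)\times GL(n|n)$ and applies $DS^n$ to the $GL(n|n)$-factor (that is the route $res$ then $id\otimes DS^n$) or applies $DS^n$ directly on $\mathcal{T}_{m|n}$ (the left vertical). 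Both yield the $GL(r)$-representation $V_x$, so both composites agree; the bottom identification $Rep(H_{cl})=Rep(H_{cl})$ records exactly this. The functoriality and monoidality of all four arrows — which are already known — upgrade this object-level identity to a commuting square of tensor functors; one uses here that $DS$ commutes with the tensor structure (the K\"unneth-type isomorphism for $DS$, cf.\ \cite{HW-tensor}) so the composite with the external product is handled. The second commutative diagram is proved identically, simply dropping the evenness constraint, so that the target of $DS^n$ is $sRep(H_{cl})$ rather than $Rep(H_{cl})$; exactness there is not claimed and not needed, since $DS$ is only cohomological on all of $\mathcal{T}_{m|n}$.

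The main obstacle is the exactness statement rather than the commutativity, because commutativity is essentially a formal consequence of the geometry of the embedding and the location of $x$. The subtlety in exactness is that one genuinely needs to know the image of $DS^n|_{\mathcal{T}_{m|n}^{ev}}$ lands in ordinary (even) representations of $GL(r)$ — otherwise a nonzero odd part could obstruct the vanishing of connecting maps via parity shift — but this is exactly the content of the definition of $\mathcal{T}^{ev}$ together with the fact that $DS$ acts on the parity as in the sign rule recorded earlier. Once that is in hand, semisimplicity of $Rep(GL(r))$ forces exactness. I would also note that the inclusion $\mathcal{T}_{m|n}^+\subset\mathcal{T}_{m|n}^{ev}$, already established, is what lets us later transport the conclusion back to the $\mathcal{T}^+$-categories as needed for the restriction-map argument.
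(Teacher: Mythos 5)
Your proposal is correct and, for the commutativity of the two squares --- which is all the paper's own proof addresses --- takes the same route as the paper: both arguments reduce to the observation that the nilpotent $x$ defining $DS$ sits inside the embedded $\mathfrak{gl}(n|n)$-block and therefore commutes with the $GL(r)$-factor, iterating for $DS^n$.

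For the exactness claim, which the paper asserts in a single phrase (``since $H_{cl}$ is reductive''), you fill in an argument whose correct core is the parity observation: the connecting homomorphisms in the $DS$-long exact sequence shift parity, and by definition of $\mathcal{T}^{ev}$ the odd part of $DS^n(X)$ vanishes, so the connecting maps are zero. I would flag one imprecision: semisimplicity of $Rep(GL(r))$ by itself is not what forces the connecting maps to vanish --- a cohomological functor into a semisimple category can still fail to be exact --- so the sentence invoking the splitting of short exact sequences in the target is not doing any logical work; it is the evenness argument that carries the proof, and you do ultimately identify this. Similarly, the appeal to the K\"unneth isomorphism for $DS$ is unnecessary for the commutativity of this particular square, which concerns restriction rather than tensor products of two objects; since $res$, $DS^n$, and $id\otimes DS^n$ are each known to be symmetric monoidal, the square commutes as a square of tensor functors once it commutes objectwise and naturally, which is precisely what the observation about $x$ provides.
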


\begin{proof} It is enough to show that the element $x$, used in the definition of $DS$, commutes with the embedded $GL(r) \times GL(n\!-\! 1|n\!-\! 1)$. This is an easy calculation and shows the commutativity for $DS$.  Applying this repeatedly gives the result for $DS^n$. 
\end{proof}                           
     
\medskip
Let $\rho$ denote an irreducible representation of $GL(r)$
with highest weight $(\lambda_1,...,\lambda_r)$, and let $\eta(X)$
denote $Hom_{GL(r)}(\rho,res(X))$. Since this is an invariant functor under a reductive group, $\eta$ defines
an exact $k$-linear functor $\eta: {\mathcal T}_{m|n} \to {\mathcal T}_{n|n}$.

\medskip
From now on assume that $\rho$ is the trivial representation.

\begin{lem} \label{eta-negligible} The exact $k$-linear functor $\eta:\mathcal{T}_{m|n}^{ev} \to \mathcal{T}_{n|n}^{ev}$ sends negligible objects to negligible objects.
\end{lem}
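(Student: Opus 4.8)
The functor $\eta(X) = Hom_{GL(r)}(\one, res(X)) = res(X)^{GL(r)}$ is the composite of $res$ with the invariants functor. Since $\sdim$ is additive and monoidal, an object $Y \in \mathcal{T}_{n|n}^{ev}$ is negligible precisely when $\sdim(Y) = 0$ (after noting that in the even subcategory the relevant categorical dimension is the ordinary superdimension). Hence it suffices to show $\sdim(\eta(X)) = 0$ whenever $\sdim(X) = 0$. The plan is to pass through the $DS$ functor, which detects superdimension: for $X \in \mathcal{T}_{m|n}$ one has $\sdim(X) = \sdim(DS^n(X))$ computed in $sRep(GL(r))$ (this is the standard property that $DS$ preserves superdimension, underlying the results of \cite{HW-tensor} and already invoked repeatedly, e.g. in the proof of Theorem \ref{splicing}).

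First I would invoke the commutative diagram $(*)$ of Lemma \ref{lem:commutes}: $DS^n \circ res = (id \otimes DS^n)$ composed with the identification $Rep(H_{cl}) \otimes \mathcal{T}_{n|n}^{ev} \to Rep(H_{cl})$ on the bottom; more precisely $DS^n$ applied after $res$ lands in $Rep(H_{cl})$ and agrees with $DS^n$ applied directly to $X$. Since $DS^n$ commutes with the monoidal structure and with taking $GL(r)$-invariants (as $GL(r) = H_{cl}$ is reductive, so $(-)^{GL(r)}$ is exact and commutes with the exact functor $id \otimes DS^n$ on $Rep(H_{cl}) \otimes \mathcal{T}_{n|n}^{ev}$), we get a natural isomorphism
\[ DS^n(\eta(X)) \;\cong\; \bigl( DS^n(res(X)) \bigr)^{GL(r)} \;\cong\; DS^n(X)^{GL(r)} \]
as objects of $sRep(GL(r))$ — but $DS^n(\eta(X))$ is a representation of the trivial group (since $\eta(X) \in \mathcal{T}_{n|n}$, so $DS^n$ lands in super vector spaces), so this reads: the underlying super vector space of $DS^n(\eta(X))$ is the $GL(r)$-invariant subspace of $DS^n(X)$.

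Now I would compute superdimensions. Write $DS^n(X) = \bigoplus_\nu L(\nu) \otimes M_\nu$ as a $GL(r)$-representation (with $M_\nu$ the multiplicity super vector space, possibly $\mathbb{Z}/2$-graded). Then $\sdim(X) = \sum_\nu \dim(L(\nu)) \cdot \sdim(M_\nu)$ and $\sdim(\eta(X)) = \sdim\bigl(DS^n(X)^{GL(r)}\bigr) = \sdim(M_{\mathbf 0})$, the multiplicity of the trivial representation. These need not be proportional in general — and indeed that is \emph{the main obstacle}: knowing $\sdim(X) = 0$ does not by itself force $\sdim(M_{\mathbf 0}) = 0$. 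The resolution must use that $X$ lies in $\mathcal{T}_{m|n}^{ev}$, not merely $\mathcal{T}_{m|n}$. For an even object, $DS^n(X)$ is an \emph{even} (purely bosonic) representation of $GL(r)$, so every $M_\nu$ is concentrated in even degree, hence $\sdim(M_\nu) = \dim(M_\nu) \geq 0$ and likewise $\sdim(X) = \dim DS^n(X) \geq 0$; then $\sdim(X) = 0$ forces $DS^n(X) = 0$, whence $DS^n(\eta(X)) = 0$ and $\sdim(\eta(X)) = 0$, so $\eta(X)$ is negligible. The step I expect to require the most care is justifying that $DS^n$ really detects negligibility in $\mathcal{T}_{n|n}^{ev}$ (i.e. $\sdim(Y) = \dim DS^n(Y)$ and this vanishes iff $Y$ is negligible, for $Y$ even) and that the invariants functor genuinely commutes with $id \otimes DS^n$ at the level of the diagram $(*)$; both are routine given the cited results but should be spelled out.
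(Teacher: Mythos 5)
Your proposal is correct but takes a somewhat heavier route than the paper's proof, which is considerably more direct. The paper simply notes that $res$ preserves superdimension trivially (restriction does not change the underlying super vector space), that by the commutative diagram $(*)$ of Lemma \ref{lem:commutes} the functor $res$ sends $\mathcal{T}_{m|n}^{ev}$ into $Rep(H_{cl})\otimes\mathcal{T}_{n|n}^{ev}$, whose indecomposable objects all have superdimension $\geq 0$, and that $\eta(X)$ is a retract of $res(X)$; positivity then immediately forces every indecomposable summand of $res(X)$ (and hence of its retract $\eta(X)$) to have superdimension zero. You instead pass explicitly through $DS^n$, use the commutation $DS^n\circ\eta = (\cdot)^{GL(r)}\circ DS^n$ (which is Lemma \ref{lem:fibre}(3), itself a consequence of $(*)$), and deduce $DS^n(\eta(X))=0$ from $DS^n(X)=0$. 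Both arguments ultimately rest on the same positivity fact ($\sdim = \dim\circ DS^n \geq 0$ on the even subcategories), and yours makes the mechanism more visible at the cost of invoking $DS^n$ and its compatibility with invariants rather than the trivial observation that restriction preserves superdimension. One remark on the gap you flag at the end: your own computation already closes it, and more cheaply than you suggest. You obtain the stronger statement $DS^n(\eta(X))=0$, not merely $\sdim(\eta(X))=0$; since $DS^n$ is additive, it kills every indecomposable direct summand $Y_i$ of $\eta(X)$, and each $Y_i$ lies in $\mathcal{T}_{n|n}^{ev}$ (which is closed under retracts), so $\sdim(Y_i)=\dim DS^n(Y_i)=0$ and $\eta(X)$ is negligible by definition, with no further appeal to ``$DS^n$ detecting negligibility'' needed.
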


\begin{proof} This follows from the commutative diagram (*). The restriction functor $res$ preserves superdimensions and since all objects in $\mathcal{T}_{m|n}^{ev}$ have superdimension $\geq 0$ this implies that any direct summand in the restriction $res(X)$ of an indecomposable negligible object $X$ is negligible, and therefore also in its retract $\eta(X)$. 
\end{proof}

Notice however that $\eta: {\mathcal{T}}_{m|n} \to {\mathcal{T}}_{n|n} $ is not a tensor functor. 

\smallskip
From Corollary \ref{cor:fibre} recall that the forgetful fibre functor $\omega_r\!:\! Rep(GL(r))\!\to\! vec_k$ and the 
functor $\omega: \overline{\mathcal{T}}_{m|n} \to Rep(GL(r))$ define the fibre functor $\omega_{m\vert n} =  \omega_r \circ \omega :\overline{\mathcal{T}}_{m|n} \to vec_k$. For $G=GL(r)$ let  $(.)^{G}\!:\! Rep(GL(r))\!\to\! vec_k$
denote  the functor
of $G$-invariants. Lemma \ref{product-decomposition} gives the decomposition
$\overline{\mathcal{T}}_{m|n} = Rep(GL(r)\times H_{pr}) = Rep(GL(r))\otimes  Rep(H_{pr})$.

\begin{lem} \label{lem:fibre} The following holds:
\begin{enumerate}
\item The restriction of $\omega$  to the subcategory $Rep(H_{pr})$ of 
$\overline{\mathcal{T}}_{m|n}$ has values in the tensor subcategory $\langle \mathbf 1\rangle$ of $Rep(GL(r))$ generated by the trivial representation $\mathbf 1$.
\item Restricted  to the subcategory $Rep(H_{pr})$, the fibre functor $\omega_{m\vert n}\!:\! \overline{\mathcal{T}}_{m|n}\! \to\! vec_k$ coincides with the functor  $(.)^G \otimes \omega$.
\item The composition $DS^n\circ \eta$ of the functors $\eta\!:\! {\mathcal{T}}_{m|n}\! \to\! {\mathcal{T}}_{n|n}  $ and $DS^n\!:\! {\mathcal{T}}_{n|n}\! \to\! vec_k$ and the composition $(.)^G\circ DS^n$ of the functors $DS^n\!:\! {\mathcal{T}}_{m|n} \!\to\! Rep(GL(r))$ and 
$(.)^G\!:\! Rep(GL(r))\!\to\! vec_k$ coincide.
\end{enumerate}
\end{lem}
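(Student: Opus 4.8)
The plan is to prove the three assertions in sequence, with (1) and (3) doing the real work and (2) following as a formal consequence. For part (1), recall from the product decomposition $\overline{\mathcal{T}}_{m|n} = Rep(GL(r)) \otimes Rep(H_{pr})$ (Lemma \ref{product-decomposition}) that the subcategory $Rep(H_{pr})$ is generated by the images of the principal-block representations $L(\lambda_{pr})$ of Definition \ref{principal reps}. It therefore suffices to compute $\omega(L(\lambda_{pr})) = DS^n(L(\lambda_{pr}))$ and check it lands in $\langle \mathbf 1\rangle \subset Rep(GL(r))$. But $L(\lambda_{pr})$ is maximal atypical with weight diagram whose crosses are all at the far right (the principal part carries no $\vee$'s among the $\times$'s in a bad configuration); applying Theorem \ref{mainthm} iteratively $n$ times strips away all $n$ atypical cups, and since the $\times$'s of $\lambda_{pr}$ are exactly those of the principal block shifted off to the right and carry no partition data, the core $L(\chi_{\lambda_{pr}})$ is the trivial representation of $GL(r)$. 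Hence each direct summand of $DS^n(L(\lambda_{pr}))$ is a copy of $\mathbf 1$ (with possible parity shift, but on $\mathcal{T}^{+}$ these are even), so $\omega$ restricted to $Rep(H_{pr})$ has image in $\langle\mathbf 1\rangle$ as claimed.

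For part (3), the strategy is to exploit the commutative diagram $(*)$ of Lemma \ref{lem:commutes}. By definition $\eta(X) = Hom_{GL(r)}(\mathbf 1, res(X)) = res(X)^{GL(r)}$, so $DS^n\circ\eta = DS^n_{n|n} \circ (\ )^{GL(r)} \circ res$. On the other side, $(\ )^{G}\circ DS^n_{m|n}$; using diagram $(*)$ we rewrite $DS^n_{m|n} = (id\otimes DS^n_{n|n})\circ res$ composed with the identification $Rep(H_{cl})\otimes Rep(H_{cl}) = Rep(H_{cl})$ via $\mathbf 1$-projection — more precisely, the commutativity says $DS^n_{m|n}(X)$ is computed by first restricting to $GL(r)\times GL(n|n)$ and then applying $id\otimes DS^n_{n|n}$. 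The key point is that taking $GL(r)$-invariants commutes with $DS^n_{n|n}$ because the latter is a functor of $GL(n|n)$-representations and the $GL(r)$-action (coming from the first tensor factor of $res(X)$) is by $GL(r)\times GL(n|n)$-linear automorphisms; equivalently, $(\ )^{GL(r)}$ is exact (as $GL(r)$ is reductive, hence by Lemma \ref{lem:commutes}) and commutes with the exact functor $DS^n_{n|n}$ on the mixed category. Chasing the diagram gives $(\ )^{G}\circ DS^n_{m|n}(X) = DS^n_{n|n}\bigl((res X)^{GL(r)}\bigr) = DS^n_{n|n}(\eta(X))$, which is the desired identity.

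Part (2) is then formal: by Corollary \ref{cor:fibre}, $\omega_{m|n} = \omega_r\circ\omega$, where $\omega_r = (\ )^G$ on $Rep(GL(r))$ when restricted to the subcategory $\langle\mathbf 1\rangle$ (on a trivial $GL(r)$-representation the forgetful functor and the invariants functor agree). By part (1), $\omega$ sends $Rep(H_{pr})$ into $\langle\mathbf 1\rangle$, so on $Rep(H_{pr})$ we get $\omega_{m|n} = \omega_r\circ\omega = (\ )^G\circ\omega$, which is the claim.

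The main obstacle I expect is part (3): one must be careful that "$(\ )^{GL(r)}$ commutes with $DS^n$" genuinely holds at the level of the mixed category $Rep(H_{cl})\otimes\mathcal{T}_{n|n}^{ev}$ and not merely up to natural isomorphism that fails to be monoidal — but since here we only need equality of the two composite functors to $vec_k$ (both are exact $k$-linear functors and we are comparing underlying vector spaces), the reductivity of $GL(r)$, which makes $(\ )^{GL(r)}$ exact and makes invariants of a tensor product of a $GL(r)$-module with a $GL(r)$-trivial complex compute termwise, is exactly what makes the diagram chase go through. The one technical check is that the element $x\in\mathfrak{g}_1$ defining $DS$ commutes with the embedded $GL(r)$, which is precisely the computation already carried out in the proof of Lemma \ref{lem:commutes}.
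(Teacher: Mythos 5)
Your proof is correct and takes essentially the same route as the paper: part (1) unwinds the argument the paper defers to Lemma \ref{product-decomposition} (trivial core of $L(\lambda_{pr})$ via Theorem \ref{mainthm}), part (2) is the same formal observation that $\omega_r$ and $(.)^G$ agree on $\langle\mathbf 1\rangle$, and part (3) reduces, as in the paper, to the fact that the element $x$ defining $DS$ commutes with the embedded $GL(r)$ (so $x$-cohomology commutes with taking $GL(r)$-invariants, by reductivity). You simply make explicit the diagram chase and isotypic-decomposition argument that the paper compresses into a one-line remark.
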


\begin{proof}
(1)  immediately follows from the construction of the product decomposition 
of  Lemma \ref{product-decomposition}. Assertion (2) follows from (1) since on $\langle \mathbf 1 \rangle \subset Rep(GL(r))$ the forgetful functor
$\omega_r$ identifies $ \langle \mathbf 1 \rangle$ with $vec_k$ and coincides with
the functor $(.)^{G}$. Concerning assertion (3), notice by Section \ref{ds-functor} we defined $DS$  
as the cohomology with respect to an element $x$ in the Lie superalgebra $\mathfrak{gl}(m\vert n)$ that commutes
with the the adjoint action of our chosen subgroup $GL(r)$ of $GL(m\vert n)$.
\end{proof}

\subsection{Schur-Weyl duality revisited} \label{sec:sw-duality}

The restriction of our fixed embedding $$GL(r) \times GL(n\vert n) \hookrightarrow GL(m\vert n)\ $$
to the first factor $GL(r)$ induces  an embedding of the group $GL(r)$ into $GL(m\vert n)$. Restriction with respect to this embedding then 
defines a restriction functor $res_{GL(r)}: Rep(GL(m\vert n)) \to Rep(GL(n\vert n)$. If $0\geq \lambda_1 \geq \lambda_2 \geq ... \geq \lambda_r$ holds for an irreducible representation $L(\lambda)$ 
of $GL(r)$ and highest weight $\lambda=(\lambda_1,...,\lambda_r)$, we will write $\lambda\leq 0$. 

Let ${\mathcal T}_{m\vert n}^{\leq 0}$ denote the intersection of $\mathcal{T}_{m|n}^{ev}$ with the full abelian category
of all objects in ${\mathcal T}_{m\vert n}^{ev}$ whose irreducible Jordan-H\"older
constituents $L(\lambda)$ have highest weights $\lambda$ with $\lambda_1\leq 0$.

\begin{lem} \label{SW} For $X$ in ${\mathcal T}_{m\vert n}^{\leq 0}$ the following
properties are equivalent: 
\begin{enumerate}
\item $X\in {\mathcal T}_{m\vert n}^{\leq 0}$.
\item As a representation of $GL(r)$ the representation $res_{GL(r)}(X)$
decomposes into a direct sum of irreducible representation $\bigoplus_{\lambda\leq 0} L(\lambda)$.
\item The dual representation $res_{GL(r)}(X)^{\vee}$ is a polynomial representation of $GL(r)$.
\item All irreducible constituents of $res_{GL(r)}(X)^{\vee}$ are tensors, i.e.
occur in  $\bigoplus_{k\geq 0} st_r^{\otimes k}$. 
\end{enumerate}
\end{lem}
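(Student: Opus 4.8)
The plan is to isolate the one substantial implication, $(1)\Rightarrow(2)$, and dispatch everything else formally.

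First, the equivalences $(2)\Leftrightarrow(3)\Leftrightarrow(4)$ are soft. Since $GL(r)$ is reductive, $res_{GL(r)}(X)$ and its dual are semisimple, hence each equals the direct sum of its Jordan--H\"older constituents. A $GL(r)$-representation is polynomial exactly when it is a finite direct sum of tensors, i.e.\ of irreducibles $L(\mu)$ with $\mu=(\mu_1\ge\cdots\ge\mu_r)$ a partition ($\mu_r\ge 0$); so $(3)\Leftrightarrow(4)$ is immediate from semisimplicity. For $(2)\Leftrightarrow(3)$ one uses $L(\mu)^\vee\cong L(-w_0\mu)$ with $-w_0\mu=(-\mu_r\ge\cdots\ge-\mu_1)$, which is a partition precisely when $-\mu_1\ge 0$, i.e.\ when $\mu\le 0$ in the sense of the statement. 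Thus $res_{GL(r)}(X)^\vee$ is polynomial iff every constituent $L(\mu)$ of $res_{GL(r)}(X)$ has $\mu\le 0$, which is $(2)$.

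For $(2)\Rightarrow(1)$ I would argue on characters. Restricting to the maximal torus of the embedded $GL(r)$ (the first $r$ diagonal entries) gives $\mathrm{ch}_{GL(r)}\,res_{GL(r)}(X)=\sum_i \mathrm{ch}_{GL(r)}\,res_{GL(r)}(L(\lambda_i))$, the sum over the $GL(m|n)$-composition factors of $X$ with multiplicity and all coefficients nonnegative. If $(2)$ holds, the left-hand side is supported on $GL(r)$-weights all of whose coordinates are $\leq 0$; hence so is each summand on the right, and in particular a highest weight vector of $L(\lambda_i)$ has $GL(r)$-weight $(\lambda_{i,1},\ldots,\lambda_{i,r})$ with $\lambda_{i,1}\leq 0$. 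So every composition factor of $X$ has highest weight $\lambda_i$ with $\lambda_{i,1}\leq 0$, i.e.\ $X\in\mathcal{T}_{m\vert n}^{\leq 0}$.

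The essential step is $(1)\Rightarrow(2)$. Since $res_{GL(r)}$ is exact and $GL(r)$ reductive, $res_{GL(r)}(X)\cong\bigoplus_i res_{GL(r)}(L(\lambda_i))$, so one may assume $X=L(\lambda)$ with $\lambda_1\leq 0$. I would then prove the stronger statement that \emph{every} $\mathfrak g$-weight $\nu$ of $L(\lambda)$ satisfies $\nu_1,\ldots,\nu_m\leq 0$; restricting the torus to the first $r$ coordinates, this says that every $GL(r)$-weight of $res_{GL(r)}(L(\lambda))$ has all coordinates $\leq 0$, so every irreducible $GL(r)$-constituent $L(\mu)$ has $\mu_1\leq 0$, which is $(2)$. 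To get the stronger statement, use that $L(\lambda)$ is a quotient of the Kac module $V(\lambda)$, and that, restricted to $\mathfrak g_{\bar 0}=\mathfrak{gl}(m)\oplus\mathfrak{gl}(n)$, one has $V(\lambda)\cong\Lambda(\mathfrak g_{-1})\otimes L_{\bar 0}(\lambda)$, where $\mathfrak g_{-1}$ is the lower-left odd block, whose $GL(m)$-weights are $-\epsilon_1,\ldots,-\epsilon_m$. Hence the $GL(m)$-weights of $\Lambda(\mathfrak g_{-1})$ are $-\sum a_i\epsilon_i$ with $a_i\geq 0$, all coordinates $\leq 0$; and $L_{\bar 0}(\lambda)=L_{GL(m)}(\lambda_1,\ldots,\lambda_m)\boxtimes L_{GL(n)}(\lambda_{m+1},\ldots,\lambda_{m+n})$, where $L_{GL(m)}(\lambda_1,\ldots,\lambda_m)$ has nonpositive dominant highest weight, hence is the dual of a polynomial $GL(m)$-representation, whose weights are nonnegative integer tuples; so the weights of $L_{GL(m)}(\lambda_1,\ldots,\lambda_m)$ have all coordinates $\leq 0$. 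Adding weights preserves this, so all $GL(m)$-weights of $V(\lambda)$, a fortiori of $L(\lambda)$, have coordinates $\leq 0$.

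The main obstacle is precisely $(1)\Rightarrow(2)$: the claim that a simple $\mathfrak{gl}(m\vert n)$-module with nonpositive highest weight carries only weights with nonpositive coordinates. The Kac-module reduction above settles it, but one must fix conventions carefully (which off-diagonal block is $\mathfrak g_{-1}$, and that $L(\lambda)$ is the head of $V(\lambda)$) and invoke the classical fact that an irreducible polynomial $GL(m)$-representation has all its weights among nonnegative integer tuples. An alternative would exploit that the projective cover $P(\lambda)$ embeds in a mixed tensor space $V^{\otimes i}\otimes(V^\vee)^{\otimes j}$ as in the proof of Proposition~\ref{irreducible-dual}, but tracking the $GL(r)$-content is messier there, so I would prefer the Kac-module route.
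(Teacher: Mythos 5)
Your proof is correct, and the equivalences $(2)\Leftrightarrow(3)\Leftrightarrow(4)$ are handled exactly as in the paper (semisimplicity, $L(\mu)^\vee\cong L(-w_0\mu)$, Schur--Weyl). Where you diverge is $(1)\Rightarrow(2)$. The paper's argument here is a one-liner: for irreducible $L(\lambda)$ every weight $\nu$ satisfies $\nu\leq\lambda$ in the dominance order for $\mathfrak{gl}(m|n)$, and since $\epsilon_1$ occurs with nonnegative coefficient in every positive root (even and odd), this already forces $\nu_1\leq\lambda_1\leq 0$; restricting to the first $r$ coordinates, every highest weight $\mu$ of a $GL(r)$-constituent has $\mu_1=\nu_1\leq 0$, which is $(2)$. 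You instead pass through the Kac module $V(\lambda)\twoheadrightarrow L(\lambda)$ and the $\mathfrak g_{\bar 0}$-module isomorphism $V(\lambda)\cong\Lambda^\bullet(\mathfrak g_{-1})\otimes L_{\bar 0}(\lambda)$ to get the \emph{stronger} statement that all coordinates $\nu_1,\ldots,\nu_m$ are nonpositive, not just $\nu_1$. That conclusion is genuinely stronger and perfectly valid, but it is more machinery than the lemma requires, and it brings in Kac modules and the decomposition of $\Lambda^\bullet(\mathfrak g_{-1})$ where the paper needs only the basic highest-weight inequality. If you want the minimal argument, just record that a nonnegative combination of positive roots of $\mathfrak{gl}(m|n)$ has nonnegative $\epsilon_1$-coefficient; your Kac-module route is a good backup that in addition gives you control of all $m$ coordinates should you need it elsewhere. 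Your $(2)\Rightarrow(1)$ character argument is fine and fills in a step the paper leaves implicit.
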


\begin{proof} To show the equivalence of (1) and (2) we can assume $X$
to be irreducible. For irreducible $X\!=\! L(\lambda,\mu\vert \mu')$ all weights
of $X$ are less or equal to the highest weight $(\lambda,\mu\vert \mu')$.
This immediately carries over to the restriction $res_{GL(r)}(X)$, whose
$GL(r)$ weights are therefore less or equal to the $GL(r)$-weight $\lambda$ of the
diagonal subtorus $T$ of $GL(r)$. Therefore (1) implies (2).
Conversely (2) implies (1).

\medskip
The equivalence of (2) and (3) is immediate since $L(\lambda)^{\vee} \cong
L(\mu)$ for $\mu=(-\lambda_r,...,-\lambda_1)$ and an irreducible representation $L(\mu)$ of $GL(r)$ is polynomial
if and only if $\mu_r\geq 0$. Finally (3) and (4) are equivalent by the Schur-Weyl main theorem of invariant theory,
according to which in particular an irreducible representation $L(\lambda)$ is polynomial (i.e. $\lambda_r\geq 0$ holds)
if and only if $L(\lambda)$ is a constituent of the tensor algebra $\bigoplus_{k\geq 0} st_r^{\otimes k}$.
\end{proof}

In the following we identify $\mathbb G_m$
with the one dimensional subtorus of $GL(r)$ given by the elements $diag(id_{r-1},t)$ for $t$ in $k^*$.
For the block diagonal embedding $(g,t)\mapsto diag(g,t)$
of $GL(r\!-\! 1)\times\mathbb G_m$ into $GL(r)$ and representations $L$ of $GL(r)$, the space of invariants $L^{\mathbb G_m}\subseteq L$ under the torus $\mathbb{G}_m$ defines a representation of $GL(r\!-\! 1)$.

\begin{lem} \label{Schur-Weyl}
For an irreducible constituent $L=L(\lambda)$ of the tensor algebra $\bigoplus_{k\geq 0} st_r^{\otimes k}$
the following holds:
\begin{enumerate}
\item $L(\lambda)^{\mathbb G_m}\neq 0$ iff $\lambda_r=0$ holds for the weight $\lambda=(\lambda_1,...,\lambda_r)$.
\item If $L(\lambda)^{\mathbb G_m}\neq 0$, then $L(\lambda)^{\mathbb G_m}\cong L(\mu)$ holds as an irreducible  representation of $GL(r\!-\! 1)$ of the highest weight $\mu=(\lambda_1,...,\lambda_{r-1})$.
\end{enumerate}
\end{lem}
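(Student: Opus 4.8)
The plan is to reduce at once to a branching statement for $GL(r-1)\hookrightarrow GL(r)$. Under the identification fixed above, $\mathbb G_m=\{diag(id_{r-1},t):t\in k^*\}$ is the image of $t\mapsto(1,\dots,1,t)$ in the diagonal torus $T_r$ of $GL(r)$; hence a $T_r$-weight vector of $L(\lambda)$ of weight $(a_1,\dots,a_r)$ has $\mathbb G_m$-weight $a_r$, and $L(\lambda)^{\mathbb G_m}$ is exactly the sum of those $T_r$-weight spaces whose last coordinate is $0$. Since the block-diagonal subgroup $GL(r-1)\subset GL(r)$ commutes with $\mathbb G_m$ and preserves each $T_r$-weight space, $L(\lambda)^{\mathbb G_m}$ is a $GL(r-1)$-subrepresentation of $L(\lambda)$, so it only remains to decompose it as such.

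For that I would invoke the classical multiplicity-free branching rule for $GL(r-1)\hookrightarrow GL(r)$ together with a degree count for the $\mathbb G_m$-factor. Restricting, $L(\lambda)|_{GL(r-1)}\cong\bigoplus_\mu L(\mu)$, the sum running over partitions $\mu$ interlacing $\lambda$, i.e. $\lambda_1\ge\mu_1\ge\lambda_2\ge\cdots\ge\mu_{r-1}\ge\lambda_r$; and reading off a Gelfand-Tsetlin pattern, in which $|\mu|$ is the sum of the $(r-1)$st row, the summand $L(\mu)$ sits in $\mathbb G_m$-weight $|\lambda|-|\mu|$. Taking invariants therefore keeps precisely the $\mu$ with $|\mu|=|\lambda|$. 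Since $\mu_i\le\lambda_i$ for all $i\le r-1$, equality of the sums forces $\mu_i=\lambda_i$ for every $i\le r-1$ and $\lambda_r=0$. Hence if $\lambda_r>0$ there is no admissible $\mu$ and $L(\lambda)^{\mathbb G_m}=0$, while if $\lambda_r=0$ there is the unique value $\mu=(\lambda_1,\dots,\lambda_{r-1})$ and $L(\lambda)^{\mathbb G_m}\cong L(\lambda_1,\dots,\lambda_{r-1})$. This gives both directions of (1) and the statement (2).

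A variant in the spirit of the present section would use Schur-Weyl duality in the single homogeneous degree $k=|\lambda|$. Write $st_r^{\otimes k}\cong\bigoplus_{\nu\vdash k,\ \ell(\nu)\le r} L(\nu)\boxtimes S^\nu$ as a $GL(r)\times S_k$-module, $S^\nu$ the Specht module; taking $\mathbb G_m$-invariants turns the left side into $\bigoplus_\nu L(\nu)^{\mathbb G_m}\boxtimes S^\nu$, whereas $(st_r^{\otimes k})^{\mathbb G_m}=st_{r-1}^{\otimes k}\cong\bigoplus_{\nu\vdash k,\ \ell(\nu)\le r-1} L(\nu)\boxtimes S^\nu$ as a $GL(r-1)\times S_k$-module. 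Comparing $S^\nu$-isotypic components then yields $L(\nu)^{\mathbb G_m}=0$ for $\ell(\nu)=r$ and $L(\nu)^{\mathbb G_m}$ isomorphic to the irreducible $GL(r-1)$-representation of partition $\nu$ otherwise, which is again exactly (1) and (2).

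I do not expect a genuine obstacle here: both arguments are standard $GL$-representation theory. The two points that need care are matching the $\mathbb G_m$ of the statement with the last-coordinate one-parameter subgroup under the embedding conventions fixed earlier (so that ``$\mathbb G_m$-invariant'' really means ``$r$th weight coordinate $=0$''), and the bookkeeping identifying the $\mathbb G_m$-weight of a branching summand with $|\lambda|-|\mu|$; once these are pinned down, the two assertions follow immediately.
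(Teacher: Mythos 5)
Your proposal is correct. Both of your arguments work, but your primary argument (branching rule together with Gelfand--Tsetlin bookkeeping) is genuinely different from what the paper does: the paper proves part (1) by observing that polynomiality/Schur--Weyl forces all weights of $L(\lambda)$ to have nonnegative last coordinate, with $\lambda_r>0$ if and only if the last coordinate of \emph{every} weight is strictly positive; and it proves part (2) by restricting the Schur projector $S_\lambda$ from $\bigoplus_k st_r^{\otimes k}$ to $\bigoplus_k (st_r^{\otimes k})^{\mathbb G_m}=\bigoplus_k st_{r-1}^{\otimes k}$. Your first route instead restricts $L(\lambda)$ along $GL(r-1)\hookrightarrow GL(r)$, identifies the $\mathbb G_m$-weight of the interlacing summand $L(\mu)$ as $|\lambda|-|\mu|$, and extracts both claims in one stroke from the inequality $|\mu|\le|\lambda|-\lambda_r$ with equality precisely when $\mu=(\lambda_1,\dots,\lambda_{r-1})$ and $\lambda_r=0$. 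The branching route is self-contained and arguably cleaner since it handles (1) and (2) simultaneously, at the cost of invoking the interlacing rule and the Gelfand--Tsetlin weight formula; the paper's route stays closer to the Schur--Weyl theme of the surrounding section. Your second variant (comparing Specht-isotypic components of $(st_r^{\otimes k})^{\mathbb G_m}=st_{r-1}^{\otimes k}$) is essentially the paper's argument for part (2) in disguise. One small slip: $GL(r-1)$ does not preserve each $T_r$-weight space; what you want (and all you need) is that it commutes with $\mathbb G_m$ and hence preserves the $\mathbb G_m$-eigenspaces, so that $L(\lambda)^{\mathbb G_m}$ is a $GL(r-1)$-subrepresentation.
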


\begin{proof} Although this is well-known, for the convenience of the reader we recall the proof:
All weights of $st_r$, hence all weights $\mu=(\mu_1,...,\mu_r)$ of $\bigoplus_{k\geq 0} st_r^{\otimes k}$ and of $L$
satisfy  $\mu_r\geq 0$. If $\mu_r>0$ holds for all weights $\mu$ of $L$,
then  $\lambda_r>0$ holds. Conversely $\lambda_r>0$ 
implies that $\tilde L = L \otimes det^{-1}$ is polynomial and hence a tensor by the Schur-Weyl theorem,
and this implies $\mu_r>0$ for all weights of $L$.
This being said,
notice that $L(\lambda)^{\mathbb G_m} \neq 0$ holds if
and only there exist at least one weight $\mu$ of $L=L(\lambda)$ such that $\mu_r=0$ holds. 
But, as explained above, the latter is equivalent to the assertion $\lambda_r=0$.
This proves the first claim. The second claim follows from the explicit description
of $L(\lambda)$ as the image in $\bigoplus_{k\geq 0} st_r^{\otimes k}$ under the Schur projector
$S_\lambda$. If $\lambda_r=0$, the Schur projector $S_\lambda$ on
$\bigoplus_{k\geq 0} st_r^{\otimes k}$ induces the Schur projector $S_\mu$ on
$\bigoplus_{k\geq 0} (st_r^{\otimes k})^{\mathbb G_m} = \bigoplus_{k\geq 0} st_{r-1}^{\otimes k}$
 for $\mu=(\lambda_1,...,\lambda_{r-1})$ and $k=deg(\lambda)=deg(\mu)$,
noticing $(st_r^{\otimes k})^{\mathbb G_m} = (st_{r-1}^{\otimes k})$. This implies assertion (2).
\end{proof}

\begin{cor} \label{invariants} For the torus $T$ of diagonal matrices in $GL(r)$ and
for irreducible constituents $L$ of the tensor algebra $\bigoplus_{k\geq 0} st_r^{\otimes k}$
the following holds:
$\ L^{GL(r)} = L^{T}$.
\end{cor}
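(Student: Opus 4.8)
The plan is to deduce the statement from an iterated application of Lemma \ref{Schur-Weyl}. Write $L=L(\lambda)$ with $\lambda=(\lambda_1,\ldots,\lambda_r)$. Since $L$ occurs in $\bigoplus_{k\geq 0} st_r^{\otimes k}$, the Schur-Weyl theorem of invariant theory shows $\lambda$ is a partition, i.e. $\lambda_1\geq \cdots \geq \lambda_r\geq 0$. One inclusion is immediate, $L^{GL(r)}\subseteq L^{T}$, because $T\subseteq GL(r)$. For the reverse inclusion the strategy is to show that $L^{T}$ vanishes unless $\lambda=0$, in which case $L$ is the trivial representation and both $L^{T}$ and $L^{GL(r)}$ equal $L$ itself.

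To prove this vanishing I would identify $T$ with the product $\mathbb{G}_m^{(1)}\times\cdots\times\mathbb{G}_m^{(r)}$ of its coordinate tori, so that $L^{T}$ is computed by taking $\mathbb{G}_m$-invariants one factor at a time, starting with the last. By Lemma \ref{Schur-Weyl}, if $\lambda_r>0$ then $L(\lambda)^{\mathbb{G}_m^{(r)}}=0$, hence $L^{T}=0$; if $\lambda_r=0$ then $L(\lambda)^{\mathbb{G}_m^{(r)}}\cong L(\lambda_1,\ldots,\lambda_{r-1})$ as a representation of $GL(r-1)$. In the latter case $(\lambda_1,\ldots,\lambda_{r-1})$ is again a partition, so $L(\lambda_1,\ldots,\lambda_{r-1})$ is an irreducible constituent of $\bigoplus_{k\geq 0} st_{r-1}^{\otimes k}$, and Lemma \ref{Schur-Weyl} applies to it verbatim. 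Iterating (equivalently, by induction on $r$, the case $r=1$ being trivial since then $T=GL(1)$) one obtains that $L^{T}=L(\lambda)^{\mathbb{G}_m^{(1)}\times\cdots\times\mathbb{G}_m^{(r)}}$ is nonzero precisely when $\lambda_r=\lambda_{r-1}=\cdots=\lambda_1=0$, that is, when $\lambda=0$.

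It then remains only to compare the two sides in the two cases. If $\lambda=0$ then $L=\one$ and trivially $L^{T}=L=L^{GL(r)}$. If $\lambda\neq 0$ then the vanishing above gives $L^{T}=0$, while $L^{GL(r)}=0$ as well since $L$ is a nontrivial irreducible representation; so $L^{T}=L^{GL(r)}$ in this case too. I do not expect a genuine obstacle here: the only point needing (minor) care is that each partial weight $(\lambda_1,\ldots,\lambda_j)$ remains a partition, so that Lemma \ref{Schur-Weyl} may legitimately be reapplied at each step, and this is immediate from $\lambda$ being a partition. As an alternative one can bypass the induction entirely: every $T$-weight $\mu$ of a constituent of $st_r^{\otimes k}$ satisfies $\mu_1+\cdots+\mu_r=k=|\lambda|$, so the zero weight occurs only if $|\lambda|=0$, again forcing $\lambda=0$; but routing the argument through Lemma \ref{Schur-Weyl} keeps it in line with the preceding discussion.
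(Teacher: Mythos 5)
Your proof is correct and follows the paper's own strategy: the paper proves the corollary by induction on $r$ using Lemma \ref{Schur-Weyl}, and your coordinate-by-coordinate reduction of $T$-invariants via that lemma is exactly this induction, spelled out. The weight-count shortcut you offer at the end (total $T$-degree $|\lambda|=k$ forces $\lambda=0$ if the zero weight occurs) is also valid and slightly more elementary, but you correctly note it is not the route the surrounding discussion suggests.
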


\begin{proof} Clearly, the left side  is contained in the right side.
The converse easily follows from Lemma \ref{Schur-Weyl} by induction on $r$.
\end{proof}

\begin{cor} \label{nice-too} The restriction of $\eta$ to the principal block ${\mathcal B}_{\mathbf 1}$ agrees with Serganova's functor $\eta_0$. Hence the restriction of $\eta$ to the principal block is fully faithful and defines an equivalence of $k$-linear abelian categories. This equivalence restricts to an equivalence between the even subcategories. In particular, we obtain \[ \eta (L(\underbrace{0,\ldots,0}_r,  \mu \vert -\mu)) =  L(\mu) \ .\] 
\end{cor}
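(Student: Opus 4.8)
The plan is to identify the exact $k$-linear functor $\eta = \Hom_{GL(r)}(\mathbf 1, res(-))$ with Serganova's block equivalence $\eta_0$ on the principal block ${\mathcal B}_{\mathbf 1}$. Recall that $\eta_0$ was described (before Definition \ref{principal reps}) as $\eta_{0}(X) = \{ x \in X \mid h(x) = 0 \ \forall h \in \mathfrak h'\}$, i.e. as the space of invariants under the Cartan $\mathfrak h'$ of $GL(r)$. The key point to bridge the two is Corollary \ref{invariants}: for irreducible constituents $L$ of the tensor algebra $\bigoplus_{k\geq 0} st_r^{\otimes k}$ one has $L^{GL(r)} = L^{T}$. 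So the first step is to reduce to objects $X$ in ${\mathcal B}_{\mathbf 1}$ whose $GL(r)$-restriction decomposes into such tensor constituents; by Lemma \ref{SW} this is exactly the statement that $X$ lies in ${\mathcal T}^{\leq 0}_{m|n}$ after a determinant twist, and every object of the principal block is of the required type since its $GL(r)$-weights are trivial (the weight diagram of $L(\lambda)$ in ${\mathcal B}_{\mathbf 1}$ has no core symbols, so $\lambda_1 = \ldots = \lambda_r = 0$). For such $X$, $res_{GL(r)}(X)$ is a direct sum of trivial representations, hence $\Hom_{GL(r)}(\mathbf 1, res(X)) = res(X)^{GL(r)} = res(X)^{T}$, and the latter is precisely the $\mathfrak h'$-invariants defining $\eta_0$.

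The next step is to transport the structural properties of $\eta_0$ through this identification. Serganova's theorem (\cite[Theorem 3.6]{Serganova-blocks}, cited before Lemma \ref{unimportant}) gives that $\eta_0: {\mathcal B}_{\mathbf 1}(GL(m|n)) \to {\mathcal B}_{\mathbf 1}(GL(n|n))$ is an equivalence of abelian categories; since $\eta$ restricted to ${\mathcal B}_{\mathbf 1}$ coincides with it, $\eta|_{{\mathcal B}_{\mathbf 1}}$ is fully faithful and an equivalence of $k$-linear abelian categories. For the restriction to even subcategories, one uses the commutative diagram $(*)$ of Lemma \ref{lem:commutes}: since $res$ commutes with $DS^n$ (up to the identity on the $Rep(H_{cl})$ factor), and $\eta$ is obtained from $res$ by taking $GL(r)$-invariants, applying $DS^n$ to $\eta(X)$ computes the same object as extracting the $GL(r)$-invariant (equivalently, trivial-isotypic) part of $DS^n(X)$; this is the content of Lemma \ref{lem:fibre}(3), $DS^n \circ \eta = (.)^{G} \circ DS^n$. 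Hence $X$ even (meaning $DS^n(X)_{\bar 1} = 0$) implies $DS^n(\eta(X))$ has no odd part, so $\eta(X) \in {\mathcal T}^{ev}_{n|n}$; and conversely, since $\eta_0$ is an equivalence, evenness is reflected. Finally the explicit formula $\eta(L(0,\ldots,0,\mu \mid -\mu)) = L(\mu)$ is just the statement (recorded before Definition \ref{principal reps}) that $\eta_\lambda$ sends the stable representation $L(\lambda_1,\ldots,\lambda_r,\mu\mid -\mu)$ to $[\mu_1,\ldots,\mu_n] = L(\mu)$ in the principal block of $GL(n|n)$, specialized to $\lambda_{cl} = 0$.

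The main obstacle I anticipate is the first step: justifying cleanly that on the principal block the \emph{categorical} $GL(r)$-invariants functor $\Hom_{GL(r)}(\mathbf 1, res(-))$ agrees with the naive $\mathfrak h'$-weight-zero space used by Serganova, including checking that this identification is natural in $X$ and monoidal-compatible where needed. This is exactly where Corollary \ref{invariants} (via Lemma \ref{Schur-Weyl}) does the work — it upgrades "torus invariants" to "full group invariants" — but one must be careful that the reduction to tensor-algebra constituents genuinely applies to \emph{all} objects of ${\mathcal B}_{\mathbf 1}$, not merely the irreducibles, which follows because ${\mathcal T}^{ev}$ is closed under extensions and the weight condition $\lambda_1 \leq 0$ (indeed $= 0$) is inherited by subquotients. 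Everything after that is formal transport of Serganova's equivalence.
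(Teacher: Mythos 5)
Your identification of the key ingredients — Lemma \ref{SW}, Corollary \ref{invariants}, exactness of $\eta$ and $\eta_0$, Serganova's equivalence, and the $DS$-compatibility for the even subcategories — matches the paper's proof. However, the reduction step at the heart of your argument contains a genuine error.

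You claim that every object of ${\mathcal B}_{\mathbf 1}$ has trivial $GL(r)$-weights because ``the weight diagram of $L(\lambda)$ in ${\mathcal B}_{\mathbf 1}$ has no core symbols, so $\lambda_1 = \ldots = \lambda_r = 0$''. This is false for $m > n$. The absence of core symbols is a feature of maximal atypical weight diagrams of $GL(n|n)$ only; for $GL(m|n)$ with $r = m-n > 0$, the principal block still has $r$ crosses $\times$ in every weight diagram (they sit at positions $\{-r+1,\ldots,0\}$ for the trivial representation). Moreover, even disregarding crosses, the conclusion does not follow: $\lambda_1 = \ldots = \lambda_r = 0$ holds only for the \emph{stable} weights in ${\mathcal B}_{\mathbf 1}$, not for arbitrary ones. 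For instance $L(5,1\mid -6)$ lies in the principal block of $GL(2|1)$ and has $\lambda_1 = 5$. Finally, even for a stable weight with $\lambda_1=\ldots=\lambda_r=0$, the $GL(r)$-restriction $res_{GL(r)}(L(\lambda))$ is \emph{not} a direct sum of trivial representations; it contains many nontrivial $GL(r)$-constituents (the hypothesis $\lambda_1 \leq 0$ in Lemma \ref{SW} controls the highest weights of those constituents, not their triviality). So your reduction ``$\Hom_{GL(r)}(\mathbf 1, res(X)) = res(X)^{GL(r)} = res(X)^T$ because $res(X)$ is trivial'' collapses.

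The paper's actual proof avoids this by a different mechanism: first the containment $\eta(X) \subseteq \eta_0(X)$ is noted (since $GL(r)$-invariants sit inside the torus-weight-zero space), and exactness reduces equality to the case of simple $X$. For simples it then invokes the duality identities $(X^T)^{\vee} \cong (X^{\vee})^T$ and $(X^{GL(r)})^{\vee} \cong (X^{\vee})^{GL(r)}$; these allow one to pass from $X$ to $X^\vee$ so that Lemma \ref{SW} (all constituents of $res_{GL(r)}(X)^\vee$ are tensors when $X \in {\mathcal T}_{m|n}^{\leq 0}$) and Corollary \ref{invariants} ($L^{GL(r)} = L^T$ for tensor constituents) can be applied to the correct object. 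Your proposal omits this duality step entirely and replaces it with the false triviality claim, so there is a real gap: the argument as written does not establish $\eta = \eta_0$ on even the simple objects of ${\mathcal B}_{\mathbf 1}$. The rest of your write-up (transporting Serganova's equivalence, even-subcategory compatibility via $DS$, and the specialization giving the explicit image) is sound and in line with the paper once the identification $\eta = \eta_0$ is secured.
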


\begin{proof} To show $\eta_0(X)=\eta(X)$, notice $\eta(X) \subseteq \eta_0(X) $. Since both functors are exact, to show equality it suffices  to show equality on simple objects. 
Since $(X^T)^{\vee} \cong (X^{\vee})^T$ and $(X^{GL(r)})^{\vee} \cong (X^{\vee})^{GL(r)}$,
both Corollary \ref{invariants} and Lemma \ref{SW} therefore imply $\eta_0(X)=\eta(X)$, and therefore the claim. The equivalence restricts to an equivalence between the even subcategories of the principal blocks since $res$ and Serganova's block equivalence commute with the (enriched) $DS$ functor and even objects correspond therefore to even objects. The statement about the image of $\eta(L(\lambda))$ follows from Serganova's description \cite{Serganova-blocks} of the block equivalence. 
\end{proof}

\begin{example} For $\Pi= L(0,\ldots,0,-1,\ldots,-1 \ | \ 1,\ldots,1)$ we get \[\eta(\Pi) = Ber_{n|n}^{-1}.\]
\end{example}

\begin{definition} We will use the following notation for the characters of $H_{m|n}$ given by the images of  $Ber_{m|n}$, its dual and $\Pi = B_{core}^{-1}$: \[ \mathcal{L}_{m|n}^{-1} := \overline{Ber}_{m|n}, \ \ \mathcal{L}_{m|n}^{} := \overline{Ber}_{m|n}^{-1} \text{ and } \mathcal{L}_{core} := \overline{B}_{core}^{-1}.\]    
\end{definition}



\section{The monoidal categories ${\mathcal M}_{m|n}$}

Recall that ${\mathcal T}_{m\vert n}^{\leq 0}$ denotes the intersection of $\mathcal{T}_{m|n}^{ev}$ with the full abelian category
of all objects in ${\mathcal T}_{m\vert n}^{ev}$ whose irreducible Jordan-H\"older
constituents $L(\lambda)$ have highest weights $\lambda$ with $\lambda_1\leq 0$.
If ${\mathcal T}_{m\vert n}^{< 0}$ denotes the intersection of $\mathcal{T}_{m|n}^{ev}$ with the full abelian category
of all objects in ${\mathcal T}_{m\vert n}^{\leq 0}$ whose irreducible Jordan-H\"older
constituents $L(\lambda)$ have highest weights $\lambda$ with $\lambda_r < 0$,
then ${\mathcal T}_{m\vert n}^{\leq 0}$ is a $k$-linear symmetric monoidal category with tensor ideal
${\mathcal T}_{m\vert n}^{< 0}$. It is not rigid, but satisfies all properties of a tensor category in the sense of the conventions 
in Section \ref{Conventions} except property (iii).
Furthermore notice $\eta({\mathcal T}_{m\vert n}^{< 0})=0$.

\begin{lem}\label{nicer} $\eta: {\mathcal T}_{m\vert n}^{\leq 0} \to {\mathcal T}_{n\vert n}^{\leq 0}$  
is a faithful exact $k$-linear monoidal functor. 
\end{lem}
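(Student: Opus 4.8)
The plan is to deduce everything from the commutative diagram $(*)$ of Lemma~\ref{lem:commutes} together with Corollary~\ref{nice-too} and Corollary~\ref{invariants}. The functor $\eta = ()^{GL(r)}\circ res$ is exact and $k$-linear by construction, since $res$ is an exact tensor functor and taking invariants under the reductive group $GL(r)$ is exact in characteristic zero; so the only two points requiring work are \emph{faithfulness} and \emph{monoidality} of the restriction $\eta\colon {\mathcal T}_{m\vert n}^{\leq 0}\to {\mathcal T}_{n\vert n}^{\leq 0}$.

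\textbf{Faithfulness.} First I would check $\eta$ is well-defined on the subcategories, i.e. that $\eta$ maps ${\mathcal T}_{m\vert n}^{\leq 0}$ into ${\mathcal T}_{n\vert n}^{\leq 0}$; this is immediate since $\eta$ is a retract of $res$ and $res$ visibly preserves the condition $\lambda_1\leq 0$ on Jordan--H\"older constituents (restriction to the right lower $GL(n|n)$-corner only drops the classical weight coordinates). For faithfulness: since $\eta$ is exact and $k$-linear between locally finite abelian categories, it suffices to show $\eta$ does not kill any nonzero object, equivalently does not kill any simple object. For a simple $L(\lambda)$ in ${\mathcal T}_{m\vert n}^{\leq 0}$ one uses Lemma~\ref{SW}: $res_{GL(r)}(L(\lambda))^\vee$ is a polynomial, hence a tensor, representation of $GL(r)$, so by Corollary~\ref{invariants} its $GL(r)$-invariants coincide with its $T$-invariants, and the latter are nonzero because the $GL(r)$-weight $\lambda=(\lambda_1,\dots,\lambda_r)$ with $\lambda_1\le 0$ forces the dual to contain the zero weight (concretely $\lambda=0$ is not required — one argues via Lemma~\ref{Schur-Weyl}, stripping off one torus factor at a time, that $L^{GL(r)}\ne 0$ as soon as $L^\vee$ is a tensor with $\lambda_r\ge 0$ in the dual weight, which is exactly our situation). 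Hence $\eta(L(\lambda))\ne 0$ and $\eta$ is faithful. In fact on the principal block this is already Corollary~\ref{nice-too}, and for a general block in ${\mathcal T}_{m\vert n}^{\leq 0}$ one reduces to the principal block by Serganova's block equivalence, which commutes with $res$ up to the harmless twist recorded in Section~\ref{duality}.

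\textbf{Monoidality.} This is where the main obstacle lies, because $\eta\colon{\mathcal T}_{m|n}\to{\mathcal T}_{n|n}$ is \emph{not} monoidal in general (it is only a lax/oplax comparison given by the canonical map $X^{GL(r)}\otimes Y^{GL(r)}\to (X\otimes Y)^{GL(r)}$), so the point is that on the subcategory ${\mathcal T}_{m\vert n}^{\leq 0}$ this comparison morphism is an isomorphism. The plan is: for $X,Y\in{\mathcal T}_{m\vert n}^{\leq 0}$, the canonical morphism $res(X)^{GL(r)}\otimes res(Y)^{GL(r)}\to (res(X)\otimes res(Y))^{GL(r)} = res(X\otimes Y)^{GL(r)}$ is an isomorphism of $GL(n|n)$-representations. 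Decomposing the $GL(r)$-representations $res(X)=\bigoplus_{\lambda\le 0}L(\lambda)\otimes M_\lambda$ and $res(Y)=\bigoplus_{\mu\le 0}L(\mu)\otimes N_\mu$ as in Lemma~\ref{SW}(2) (with $M_\lambda,N_\mu$ the multiplicity spaces carrying the $GL(n|n)$-action), taking $GL(r)$-invariants picks out $M_0\otimes\ N_0$ on the left. On the right, $(res(X)\otimes res(Y))^{GL(r)} = \bigoplus_{\lambda,\mu\le 0}(L(\lambda)\otimes L(\mu))^{GL(r)}\otimes M_\lambda\otimes N_\mu$, and the key combinatorial input — which is the crux — is that for $\lambda,\mu\le 0$ one has $(L(\lambda)\otimes L(\mu))^{GL(r)}\ne 0$ only if $\lambda=\mu=0$, i.e. the trivial representation of $GL(r)$ appears in $L(\lambda)\otimes L(\mu)$ (equivalently $L(\lambda)^\vee\cong L(\mu)$) only for $\lambda=\mu=0$ once both are constrained to be $\le 0$; this follows because $L(\lambda)^\vee$ has all weights $\ge 0$ coordinate-wise while $L(\mu)$ has all weights $\le 0$, so they can share the zero weight only when both are trivial. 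Hence the right-hand side is also $M_0\otimes N_0$ and the comparison map, being the obvious inclusion-induced map between equal pieces, is an isomorphism. Compatibility with the associativity and unit constraints is then formal. I would present this last combinatorial step carefully, as it is the only place where the sign condition $\lambda_1\le 0$ (rather than just being in ${\mathcal T}^{ev}$) is genuinely used, and it is the reason $\eta$ fails to be monoidal on the whole category.
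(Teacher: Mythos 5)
Your argument for \emph{monoidality} is correct and in fact a bit cleaner than the paper's. The paper reduces to simples $X=L(0,\dots,0,\mu|\mu')$, $Y=L(0,\dots,0,\nu|\nu')$ not annihilated by $\eta$ and argues directly with weight vectors, observing that if $\alpha+\beta$ is a $GL(r)$-invariant weight then both $\alpha$ and $\beta$ must have vanishing first $r$ coordinates. Your isotypic decomposition $res(X)=\bigoplus_{\lambda\le 0}L(\lambda)\otimes M_\lambda$ together with the observation that $(L(\lambda)\otimes L(\mu))^{GL(r)}\neq 0$ forces $\lambda=\mu=0$ once $\lambda,\mu\le 0$ is a tidy reformulation of the same combinatorial fact, and it makes transparent why the comparison map is an isomorphism.

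The \emph{faithfulness} step, however, contains a genuine error. You claim that for every simple $L(\lambda)$ in ${\mathcal T}_{m\vert n}^{\le 0}$ one has $\eta(L(\lambda))\neq 0$, deducing this from the assertion that ``$L^{GL(r)}\neq 0$ as soon as $L^\vee$ is a tensor with $\lambda_r\ge 0$ in the dual weight.'' This assertion is false: by Lemma \ref{Schur-Weyl}(1), for a tensor $L$ one has $L^{\mathbb G_m}\neq 0$ \emph{if and only if} $\lambda_r=0$ (not merely $\lambda_r\ge 0$), and stripping off the remaining torus factors one concludes $L^{GL(r)}\neq 0$ only for $L=\mathbf 1$. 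Concretely, the $GL(r)$-weights occurring in $res_{GL(r)}(L(\lambda))$ have coordinate sum $\le\sum_i\lambda_i$, and this sum is strictly negative whenever $\lambda_r<0$, so the zero weight cannot occur and hence $\eta(L(\lambda))=0$. This is exactly the fact the paper records immediately before the Lemma: $\eta({\mathcal T}_{m\vert n}^{<0})=0$, where ${\mathcal T}_{m\vert n}^{<0}$ is the tensor ideal of objects whose constituents have $\lambda_r<0$. So $\eta$ does annihilate nonzero simples in ${\mathcal T}_{m\vert n}^{\le 0}$, and your argument that it does not is incorrect. (You should be aware that the Lemma's own assertion of faithfulness sits uneasily with this observation, and the paper only cites Corollary \ref{nice-too} — faithfulness on the \emph{principal block} — rather than establishing faithfulness on all simples; the subsequent Lemma \ref{smalleta} passes to the quotient ${\mathcal T}_{m\vert n}^{\le 0}/{\mathcal T}_{m\vert n}^{<0}$ precisely because $\eta$ kills this ideal. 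But whatever the intended reading of ``faithful'' in the statement, the specific step in your proposal — that $\eta$ kills no simple — would fail.) Your parenthetical appeal to ``Serganova's block equivalence commuting with $res$'' to reduce a general block to the principal one also does not address the problem, since the obstruction is about highest-weight coordinates, not about the block.
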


\begin{proof} For the functor $\eta$ to be a monoidal functor, we have to show $(X\otimes Y)^G=X^G \otimes Y^G$. The inclusion $X^G \otimes Y^G \subseteq (X\otimes Y)^G$ is obvious. For the other inclusion, it suffices to show this on irreducible objects in $\mathcal{T}_{m|n}^{\leq 0}$. Since $\eta({\mathcal T}_{m\vert n}^{< 0})=0$, take $X = L(0,\ldots,0,\mu \, |\, \mu')$, $Y = L(0,\ldots,0,\nu \, | \, \nu')$. The weights in $X\otimes Y$ are bounded above by \[ (0,\ldots,0, \mu + \nu \,  | \, \mu' + \nu').\] If $\alpha,\beta$ are weights of $X$ and $Y$ respectively, then $\alpha + \beta$ are weights in $X \otimes Y$. If they are in the space of $G$-invariants, $\alpha + \beta$ is necessarily of the form $(0,\ldots,0,\ldots \, | \, \ldots)$ (first $r$ entries are zero). But their sum $\alpha + \beta$ can only be of this form if $\alpha$ and $\beta$ satisfy the same condition and are therefore also $G$-invariant.

\smallskip
Obviously $\eta$ is an exact functor since it is the invariant functor under a reductive group. By Lemma \ref{nice-too} it is faithful on simple objects, hence faithful
on all objects. If it is faithful on objects, an exact functor between abelian categories is faithful on morphisms.
\end{proof}

\begin{definition} We denote by $\mathcal{B}_{\one}$ the principal block (of either $\mathcal{T}_{m|n}$ or $\mathcal{T}_{n|n}$). We denote by $\mathcal{B}_{\one}^{\leq 0}$ the full abelian subcategory generated by irreducible objects \[L(0,\ldots,0,\mu_1,\ldots,\mu_n \, |  - \mu_n,\ldots,-\mu_1)\] (the $m\! >\! n$-case) or the full abelian subcategory generated by irreducible objects $L(\mu_1,\ldots,\mu_n \, | \, - \mu_n,\ldots,-\mu_1)$ with $\mu_1 \leq 0$. Their images in the semisimplifications will be denoted \[ {\mathcal M}_{m\vert n} = \overline{\mathcal{B}_{\one}^{\leq 0} \cap \mathcal{T}_{m|n}^{ev}} \ \ \text{ respectively } \ \ {\mathcal M}_{n\vert n} = \overline{\mathcal{B}_{\one}^{\leq 0} \cap \mathcal{T}_{n|n}^{ev}}.\]
\end{definition}

Note that these images are (non-rigid) abelian tensor subcategories of $\overline{\mathcal{T}_{m|n}^{ev}}$ and $\overline{\mathcal{T}_{n|n}^{ev}}$ in the sense of section \ref{Conventions}.

\begin{thm} \label{negative-half-equivalence} The functor $\eta: \mathcal{T}_{m|n}^{ev} \to \mathcal{T}_{n|n}^{ev}$ gives rise to a functor \[ \overline{\eta}: 
 {\mathcal M}_{m\vert n}  \longrightarrow  {\mathcal M}_{n\vert n} 
\] that induces an equivalence of $k$-linear abelian symmetric monoidal categories. 
\end{thm}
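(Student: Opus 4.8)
The plan is to derive the statement by assembling the three preceding lemmas. Lemma~\ref{nice-too} tells us that on the principal block $\mathcal B_{\one}$ the functor $\eta$ coincides with Serganova's abelian block equivalence $\eta_0$, with $\eta(L(0,\dots,0,\mu\,|-\mu))=L(\mu\,|-\mu)$, and that this restricts to an equivalence of the even subcategories; Lemma~\ref{eta-negligible} tells us $\eta$ sends negligible objects of $\mathcal T_{m|n}^{ev}$ to negligible objects; and Lemma~\ref{nicer} tells us $\eta$ is a faithful exact $k$-linear symmetric monoidal functor on the ambient category $\mathcal T_{m|n}^{\leq 0}$, which contains $\mathcal B_{\one}^{\leq 0}\cap\mathcal T_{m|n}^{ev}$. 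Given these, the work splits into showing (a) that $\eta$ descends to an equivalence of $k$-linear abelian categories $\overline\eta\colon\mathcal M_{m|n}\to\mathcal M_{n|n}$, and (b) that this $\overline\eta$ is symmetric monoidal.

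For (a) I would first note that by Lemma~\ref{nice-too} the functor $\eta$ restricts to a fully faithful exact functor from $\mathcal B_{\one}^{\leq 0}\cap\mathcal T_{m|n}^{ev}$ onto $\mathcal B_{\one}^{\leq 0}\cap\mathcal T_{n|n}^{ev}$ which is a bijection on isomorphism classes of simple objects, hence an equivalence of $k$-linear abelian categories between these subcategories. To descend it to the semisimplifications I need it to preserve \emph{and} reflect negligibility. Preservation of negligible objects is Lemma~\ref{eta-negligible}. For reflection I would use that $\eta=\eta_0$ commutes with the enriched functor $DS^n$ (Lemma~\ref{lem:commutes}) and that on the principal block $DS^n$ takes values in trivial $GL(r)$-representations, so that $\sdim\eta(X)=\dim_k DS^n(\eta(X))=\dim_k\big(DS^n(X)\big)^{GL(r)}=\dim_k DS^n(X)=\sdim X$ for every even $X$ in the block; hence $\eta$ reflects negligibility of objects, and since in this Krull--Schmidt rigid setting a morphism is negligible iff it factors through a negligible object, $\eta$ preserves and reflects the ideal $\mathcal N$. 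Consequently $\eta$ induces an isomorphism $\mathrm{Hom}(X,Y)/\mathcal N\xrightarrow{\sim}\mathrm{Hom}(\eta X,\eta Y)/\mathcal N$ for all $X,Y$, so it descends to a fully faithful, essentially surjective functor $\overline\eta\colon\mathcal M_{m|n}\to\mathcal M_{n|n}$.

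For (b) I would first invoke the tensor-closedness of $\mathcal M_{m|n}$ inside $\overline{\mathcal T_{m|n}^{ev}}$ (asserted in the remark preceding the theorem; it follows from the translation-functor analysis of Lemma~\ref{moving-lemma} together with the Splicing Theorem~\ref{splicing}, which show that the tensor product of two negatively stable principal-block irreducibles is, up to negligible summands, a direct sum of modules $L(0,\dots,0,\rho\,|-\rho)$), so that $X\otimes Y$ computed in $\mathcal M_{m|n}$ agrees with the one in $\overline{\mathcal T_{m|n}^{ev}}$. Since $\mathcal B_{\one}^{\leq 0}\cap\mathcal T_{m|n}^{ev}\subseteq\mathcal T_{m|n}^{\leq 0}$, Lemma~\ref{nicer} supplies natural isomorphisms $\eta(X)\otimes\eta(Y)\cong\eta(X\otimes Y)$ compatible with the associativity, unit and symmetry constraints; because $\eta$ carries negligible summands of $X\otimes Y$ to negligible summands (Lemma~\ref{eta-negligible}), these descend to natural isomorphisms $\overline\eta(X)\otimes\overline\eta(Y)\cong\overline\eta(X\otimes Y)$, making $\overline\eta$ a symmetric monoidal functor. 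A symmetric monoidal functor that is an equivalence of underlying categories is an equivalence of symmetric monoidal categories, its quasi-inverse inheriting a symmetric monoidal structure; this yields the asserted equivalence.

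The main obstacle is not in this assembly but in the two places where it leans on earlier material: the genuinely substantive fact $(X\otimes Y)^{GL(r)}=X^{GL(r)}\otimes Y^{GL(r)}$ on the relevant irreducibles has already been packaged as Lemma~\ref{nicer} via a weight-positivity argument, and the tensor-closedness of $\mathcal M_{m|n}$ is the content of the splicing analysis. I expect the one point still needing a little care inside the proof itself to be verifying precisely that $\eta$ preserves \emph{and} reflects $\mathcal N$, i.e. the superdimension identity above, since this is what turns the abelian equivalence of Lemma~\ref{nice-too} into an equivalence after passing to the semisimplifications.
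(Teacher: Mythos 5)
Your broad strategy—use Corollary~\ref{nice-too} for the abelian equivalence, the superdimension identity via Lemma~\ref{lem:fibre}(3) to get preservation/reflection of negligible \emph{objects}, and Lemma~\ref{nicer} for the monoidal structure—is close in spirit to the ingredients the paper assembles, but the descent step has a genuine gap, and the paper's construction is specifically designed to avoid exactly this gap.

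The problem is your claim ``since in this Krull--Schmidt rigid setting a morphism is negligible iff it factors through a negligible object, $\eta$ preserves and reflects the ideal $\mathcal N$.'' That characterization of $\mathcal N$ is false in general: the ideal $\mathcal I^0$ of morphisms factoring through negligible objects is strictly smaller than $\mathcal N$ in nonsemisimple rigid categories (for instance, a nilpotent endomorphism of a non-negligible indecomposable $X$ with $\dim_k\End(X)>1$ lies in $\mathcal N(X,X)$ but need not factor through a negligible object). The paper itself writes ``as for any tensor ideal $\mathcal I^0\subset\mathcal N$'' without claiming equality, and the entire passage through $\mathcal A^{ev}_{m|n}=\mathcal T^{ev}_{m|n}/\mathcal I^0$ followed by the Andr\'e--Kahn monoidal section $s_{m|n}$ from \cite[Theorem 13.2.1]{Andre-Kahn} exists precisely to bypass the question of whether $\eta$ sends $\mathcal N$ into $\mathcal N$: by defining $\overline\eta:=\pi_{n|n}\circ\widetilde\eta\circ s_{m|n}$, one never needs $\eta(\mathcal N)\subseteq\mathcal N$ at all. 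Your superdimension computation showing $\eta$ preserves and reflects negligibility of \emph{objects} is correct, but to leverage it you would have to use something like the identity $\mathcal N(\mathcal A^{ev})=\mathcal R(\mathcal A^{ev})$ (radical) from \cite[Theorem 8.2.4]{Andre-Kahn} together with the observation that an abelian equivalence on a full subcategory preserves the radical after passing to the quotient by $\mathcal I^0$; as written, the justification you give does not establish the descent.

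A secondary remark: your part (b) relies on the monoidal constraints of Lemma~\ref{nicer} descending directly, which is fine given a well-defined $\overline\eta$, but the naturality and coherence of the descended structure also become essentially automatic once $\overline\eta$ is built through a monoidal section as in the paper. So the paper's route via \cite{Andre-Kahn} is doing more work than it might look like: it simultaneously handles the definedness of $\overline\eta$ and the monoidality, whereas your write-up treats these separately and the first half has the gap described above.
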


The proof is similar to the one of \cite[Lemma 5.11]{HW-tannaka}. Note that we are applying constructions of \cite{Andre-Kahn} that were defined for rigid tensor categories to a non-rigid setting. However the construction of the splitting is done in the rigid category $\mathcal{T}_{m|n}^{ev}$, and we only restrict then to the non-rigid subcategories.

\begin{proof} We define the ideal $\mathcal{I}^0$ via \[ \mathcal{I}^0(X,Y) = \{ f:X \to Y \ | \ f \text{ factorizes over a negligible object.} \} \] 
Obviously $\mathcal{I}^0$ is a tensor ideal of $\mathcal{T}_{m|n}^{ev}$. As for any tensor ideal $\mathcal{I}^0 \subset \mathcal{N}$ 
the quotient $\mathcal{T}_{m|n}^{ev}/\mathcal{I}^0 =: \mathcal{A}_{m|n}^{ev}$ becomes a rigid tensor category and
$\pi_{m|n}: \mathcal{T}_{m|n}^{ev}\to \mathcal{T}_{m|n}^{ev}/\mathcal{I}^0 = \mathcal{A}_{m|n}^{ev}$ a tensor functor. Under this tensor functor
an indecomposable object $X$  in  $\mathcal{T}_{m|n}^{ev}$ maps to zero in the quotient $\mathcal{A}_{m|n}^{ev}$ if and only if $\sdim(X) = 0$. The category $\mathcal{A}_{m|n}^{ev}$ is pseudoabelian since we have idempotent lifting due to the finite dimensionality of the Hom spaces. By the definition of $\mathcal{A}_{m|n}^{ev}$ and $\mathcal{T}_{m|n}^{ev}$, the dimension of each object in $\mathcal{A}_{m|n}^{ev}$ is a natural number and, contrary 
to  $\mathcal{T}_{m|n}^{ev}$, it does not contain any nonzero object that maps to an element isomorphic to  zero under the quotient functor $\mathcal{A}_{m|n}^{ev} \to \mathcal{A}_{m|n}^{ev}/\mathcal{N}$. Therefore $\mathcal{A}_{m|n}^{ev}$ satisfies conditions d) and g) in \cite[Theorem 8.2.4]{Andre-Kahn}. By \cite[Theorem 8.2.4 (i),(ii)]{Andre-Kahn} this implies 
that $\mathcal{N}( \mathcal{A}_{m|n}^{ev})$ 
equals the radical $\mathcal{R}( \mathcal{A}_{m|n}^{ev})$ of $ \mathcal{A}_{m|n}^{ev}$; note that $\mathcal{N}( \mathcal{A}_{m|n}^{ev}) = \mathcal{N}( \mathcal{T}_{m|n}^{ev})/\mathcal{I}^0$ and that 
${\mathcal N}(A,A)$ is a nilpotent ideal in $End(A)$ for any $A$ in $\mathcal{A}_{m|n}^{ev}$ by assertion b) of \cite[Theorem 8.2.4 (i),(ii)]{Andre-Kahn}. This allows to apply \cite[Theorem 13.2.1]{Andre-Kahn} to construct a monoidal section $s_{m|n}: \mathcal{A}^{ev}_{m|n}/\mathcal{N}(\mathcal{A}_{m|n}^{ev}) \to \mathcal{A}^{ev}_{m|n}$ for the tensor functor $\pi_{m|n}: \mathcal{A}^{ev}_{m|n} \to \mathcal{A}^{ev}_{m|n}/\mathcal{N}(\mathcal{A}_{m|n}^{ev})$.

We obtain the following diagram

\[ \xymatrix{ \mathcal{T}_{m|n}^{ev} \ar[r]^{\eta} \ar[d] & \mathcal{T}_{n|n}^{ev} \ar[d] \\ \mathcal{A}_{m|n}^{ev} \ar[d]^{\pi_{m|n}} & \mathcal{A}_{n|n}^{ev} \ar[d]^{\pi_{n|n}} \\ \mathcal{T}_{m|n}^{ev}/\mathcal{N} \ar@/^/@{.>}[u]^{s_{m|n}} & \mathcal{T}_{n|n}^{ev}/\mathcal{N} \ar@/^/@{.>}[u]^{s_{n|n}}.}\]

Without change of notation, we restrict $\eta$ and consider now  the restriction $\eta: {\mathcal T}_{m\vert n}^{\leq 0} \to {\mathcal T}_{n\vert n}^{\leq 0}$. By Lemma \ref{nicer} and Lemma \ref{eta-negligible} it induces a $k$-linear tensor functor \[ \widetilde\eta: \mathcal{A}_{m|n}^{ev} \longrightarrow \mathcal{A}_{n|n}^{ev}.\]

The composite tensor functor \[\overline{\eta}:=\pi_{n|n}\circ \widetilde\eta \circ s_{m|n}\]
defines a $k$-linear tensor functor 
$\overline{\eta}:  {\mathcal T}_{m\vert n}^{\leq 0}/\mathcal{N} \to {\mathcal T}_{n\vert n}^{\leq 0}/\mathcal{N}$.

\[ \xymatrix{ \mathcal{T}_{m|n}^{\leq 0} \ar[r]^{\eta} \ar[d] & \mathcal{T}_{n|n}^{\leq 0} \ar[d] \\ \mathcal{A}_{m|n}^{\leq 0} \ar[d]^{\pi_{m|n}} \ar[r]^{\widetilde\eta} & \mathcal{A}_{n|n}^{\leq 0} \ar[d]^{\pi_{n|n}} \\ \mathcal{T}_{m|n}^{\leq 0}/\mathcal{N} \ar@/^/@{.>}[u]^{s_{m|n}} \ar@{.>}[r]^{\overline{\eta}} & \mathcal{T}_{n|n}^{\leq 0}/\mathcal{N} \ar@/^/@{.>}[u]^{s_{n|n}}.}\]

Since $\eta$ is additive and $\mathcal{T}_{n|n}^{\leq 0}/\mathcal{N}$ is semisimple, $\overline{\eta}$ is additive and hence exact. 
The functor $\overline{\eta}$ restricts to a symmetric monoidal  functor $ \overline{\eta}: 
 {\mathcal M}_{m\vert n} 
\longrightarrow 
 {\mathcal M}_{n\vert n}$. By Corollary \ref{nice-too} it defines an equivalence of $k$-linear
abelian symmetric monoidal categories. 
\end{proof}

\subsection{Passage to the $+$-category}

The same argument can now be repeated for the full tensor subcategory $\mathcal{T}_{m|n}^+ \subset \mathcal{T}_{m|n}^{ev}$.

\begin{lem} \label{smalleta} The functor $\eta: \mathcal{T}_{m|n}^{\leq 0} \to \mathcal{T}_{n|n}^{\leq 0}$ induces a symmetric monoidal functor \[ {\mathcal T}_{m\vert n}^{\leq 0}/{\mathcal T}_{m\vert n}^{< 0} \to {\mathcal T}_{n\vert n}^{\leq 0} \] 
which restricts to a symmetric monoidal functor between the $+$-subcategories \[ \eta^{\leq 0}: ({\mathcal T}_{m\vert n}^{\leq 0}\cap \mathcal{T}_{m|n}^+)/({\mathcal T}_{m\vert n}^{< 0} \cap \mathcal{T}_{m|n}^+) \to {\mathcal T}_{n\vert n}^{\leq 0} \cap \mathcal{T}_{n|n}^+.  \] 
\end{lem}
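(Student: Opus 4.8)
The plan is to deduce both parts of the lemma from the material already in place, the only substantial point being the preservation of the $+$-subcategory.

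For the induced functor on the quotient: by the remark preceding Lemma~\ref{nicer} one has $\eta({\mathcal T}_{m\vert n}^{<0})=0$, so the exact symmetric monoidal functor $\eta\colon{\mathcal T}_{m\vert n}^{\leq 0}\to{\mathcal T}_{n\vert n}^{\leq 0}$ of Lemma~\ref{nicer} kills the tensor ideal ${\mathcal T}_{m\vert n}^{<0}$ and therefore factors through the quotient $\pi\colon{\mathcal T}_{m\vert n}^{\leq 0}\to{\mathcal T}_{m\vert n}^{\leq 0}/{\mathcal T}_{m\vert n}^{<0}$, say $\eta=\overline{\eta}\circ\pi$ with $\overline{\eta}$ exact. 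Since the monoidal structure on the quotient is the unique one making $\pi$ symmetric monoidal, $\overline{\eta}$ inherits a symmetric monoidal structure. The same reasoning applied inside $\mathcal{T}_{m|n}^+$ shows that ${\mathcal T}_{m\vert n}^{<0}\cap\mathcal{T}_{m|n}^+$ is a tensor ideal of ${\mathcal T}_{m\vert n}^{\leq 0}\cap\mathcal{T}_{m|n}^+$, that the comparison functor of the quotients is symmetric monoidal, and that composing with $\overline{\eta}$ gives a symmetric monoidal functor into ${\mathcal T}_{n\vert n}^{\leq 0}$; so the whole statement reduces to verifying that $\eta(X)\in\mathcal{T}_{n|n}^+$ for every $X\in{\mathcal T}_{m\vert n}^{\leq 0}\cap\mathcal{T}_{m|n}^+$, after which $\eta^{\leq 0}$ is the desired corestriction.

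For this last point I would use that $\eta(X)=Hom_{GL(r)}(\mathbf 1,res(X))$ is, via the Reynolds operator for the reductive group $GL(r)$ in characteristic zero, a $GL(n|n)$-equivariant retract of $res(X)$; since $\mathcal{T}_{n|n}^+$ is idempotent complete it is enough to prove that $res$ maps $\mathcal{T}_{m|n}^+$ into the full Karoubian tensor subcategory of $Rep(GL(r))\otimes\mathcal{T}_{n|n}$ whose objects $\bigoplus_\nu L(\nu)\boxtimes Y_\nu$ have all multiplicity objects $Y_\nu$ in $\mathcal{T}_{n|n}^+$. Because $res$ is symmetric monoidal and this target is closed under $\otimes$, $\oplus$ and retracts, it suffices to check this on the generators of $\mathcal{T}_{m|n}^+$. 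For $V$, $V^\vee$ and $Ber_{m|n}^{\pm1}$ it is immediate from the branchings $res(V)=(st_r\boxtimes\mathbf 1)\oplus(\mathbf 1\boxtimes st_{n|n})$, $res(V^\vee)=(st_r^\vee\boxtimes\mathbf 1)\oplus(\mathbf 1\boxtimes st_{n|n}^\vee)$ and $res(Ber_{m|n})=\det_{GL(r)}\boxtimes Ber_{n|n}$; together with the closure properties this already covers $\Pi=Ber_{m|n}^{-1}\otimes\Lambda^r(V)$, every covariant module, every mixed tensor module, and — since projectives are retracts of mixed tensor spaces — every projective, hence also every typical irreducible. For a maximal atypical irreducible generator $L(\lambda)$ of positive superdimension one may, by Corollary~\ref{cor-splicing} and the stabilization of Section~\ref{sec:stabilization}, realize $L(\lambda)$ as a direct summand of $L(\lambda_{cl})\otimes\bigl(\Pi^N\otimes L(0,\ldots,0,\mu\,|\,-\mu)\bigr)$ with $L(\lambda_{cl})$ covariant and $L(0,\ldots,0,\mu\,|\,-\mu)$ in the principal block $\mathcal{B}_{\one}$; applying $res$ and using the cases just treated, the problem is reduced to showing $res\bigl(\mathcal{B}_{\one}\cap\mathcal{T}_{m|n}^+\bigr)\subseteq Rep(GL(r))\otimes\bigl(\mathcal{B}_{\one}\cap\mathcal{T}_{n|n}^+\bigr)$, and the remaining, partially atypical, irreducible generators are dealt with by the analogous reduction carried out one atypicality layer lower.

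Finally, for the principal block one uses Corollary~\ref{nice-too}: the restriction of $\eta$ to $\mathcal{B}_{\one}$ is Serganova's block equivalence $\eta_0$, an equivalence of abelian categories carrying irreducibles to irreducibles and projectives to projectives; since the classical core of $\mathcal{B}_{\one}$ is trivial, the superdimension formula of \cite{HW-tensor} gives $\sdim L(\lambda)=\sdim\eta_0(L(\lambda))$, so $\eta_0$ also carries negligible objects to negligible objects. Combined with a description, intrinsic to the abelian block category and hence transported by the equivalence, of which of its objects lie in $\mathcal{T}^+$ — for instance via the decomposition behaviour of tensor products of irreducibles into projectives and irreducibles — this yields $\eta_0(\mathcal{B}_{\one}\cap\mathcal{T}_{m|n}^+)=\mathcal{B}_{\one}\cap\mathcal{T}_{n|n}^+$ and completes the reduction. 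Pinning down and verifying this intrinsic description of the indecomposables of $\mathcal{T}^+$ lying in a block is the step I expect to be the main obstacle; the remainder of the argument is formal.
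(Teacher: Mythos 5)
The paper's proof is much shorter than yours and avoids the obstacle you flag. It uses three ingredients already in place: Lemma~\ref{nicer} (that $\eta$ restricted to $\mathcal{T}_{m|n}^{\leq 0}$ is already an exact, faithful, $k$-linear \emph{monoidal} functor), Corollary~\ref{nice-too} (that on the principal block $\eta$ agrees with Serganova's equivalence $\eta_0$ and hence sends simples to simples), and Lemma~\ref{eta-negligible} (that $\eta$ preserves negligibles). Since every object of $\mathcal{T}^+$ is a retract of an iterated tensor product of simples, and $\eta$ is monoidal on $\mathcal{T}^{\leq 0}$ and sends simples to simples, $\eta$ carries such retracts to retracts of iterated tensor products of simples in $\mathcal{T}_{n|n}$, i.e.\ into $\mathcal{T}_{n|n}^+$. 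That is the whole argument.

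Your proposal takes a genuinely different and substantially longer route: instead of exploiting the monoidality of $\eta$ itself, you try to control the full branching of $res$ on $\mathcal{T}_{m|n}^+$, showing that all $GL(n|n)$-multiplicity objects of $res(X)$ lie in $\mathcal{T}_{n|n}^+$ and then extracting $\eta(X)$ by the Reynolds operator. This is a strictly stronger claim than what is needed (it is a statement about all of $\mathcal{T}_{m|n}^+$, not just the $\mathcal{T}^{\leq 0}$-part after quotienting by $\mathcal{T}^{<0}$), and it is exactly what makes the maximal atypical case hard for you. The honest admission at the end — that you need an intrinsic, block-transportable characterisation of which objects of $\mathcal{B}_{\one}$ lie in $\mathcal{T}^+$, and that you don't have one — is a genuine gap. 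The paper sidesteps this entirely: it never characterises $\mathcal{B}_\one\cap\mathcal{T}^+$ intrinsically; it just uses that $\eta|_{\mathcal{T}^{\leq 0}}$ is a monoidal functor sending simples to simples (and negligibles to negligibles), so the tensor-generated $+$-condition is automatically preserved. So the structural point you miss is that Lemma~\ref{nicer} already gives you $\eta$ as a monoidal functor on the relevant domain, which removes the need to say anything about $res$ beyond what Lemma~\ref{lem:commutes} and Lemma~\ref{eta-negligible} already provide.

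Two smaller remarks. First, the first paragraph of your proposal (factoring through the quotient by the tensor ideal $\mathcal{T}^{<0}$, using $\eta(\mathcal{T}^{<0})=0$) is fine and is implicit in the paper. Second, your claim that $\eta_0$ ``also carries negligible objects to negligible objects'' because of a superdimension comparison is a detour: the paper's Lemma~\ref{eta-negligible} already gives negligibility-preservation for $\eta$ on all of $\mathcal{T}_{m|n}^{ev}$ by a direct argument from the commutative diagram $(\ast)$.
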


\begin{proof} We only have to show that $\eta$ preserves the $+$-subcategory. This follows from Lemma \ref{nice-too} and Lemma \ref{nicer} since the tensor functor $\eta$ sends an irreducible representation to an irreducible representation, every object in the $+$-category is a direct summand of an iterated tensor product of irreducible modules and $\eta$ sends negligible modules to negligible modules.
\end{proof}

Notice, $\mathcal{B}_{\one}^{\leq 0}$ in the principal block  $\mathcal{B}_{\one}$ of ${\mathcal T}_{m\vert n}$
has trivial intersection with ${\mathcal T}_{m\vert n}^{< 0}$, thus by abuse of notation will tacitly be viewed
as being contained in ${\mathcal T}_{m\vert n}^{\leq 0}/{\mathcal T}_{m\vert n}^{< 0}$.
In view of Lemma \ref{smalleta} therefore the functor $\eta^{\leq 0}$ can be applied to
objects in the $k$-linear abelian symmetric monoidal category $$ {\mathcal M}^+_{m\vert n} \ =\  \overline{\mathcal{B}_{\one}^{\leq 0} \cap \mathcal{T}_{m|n}^{+}}\ .$$ 
This being said, we can view $\eta^{\leq 0}$ as a $k$-linear symmetric monoidal exact functor,
and its semisimplification $\overline \eta$
as in the next Proposition \ref{negative-half-equivalence-+}.

\begin{prop} \label{negative-half-equivalence-+} The functor $\eta: \mathcal{T}_{m|n}^{ev} \to \mathcal{T}_{n|n}^{ev}$ gives rise to an induced functor \[ \overline{\eta}: 
{\mathcal M}^+_{m\vert n}  
\longrightarrow 
{\mathcal M}^+_{n\vert n} 
\, \] which is an equivalence of $k$-linear symmetric monoidal abelian categories. 
\end{prop}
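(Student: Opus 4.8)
The plan is to run the argument of Theorem \ref{negative-half-equivalence} one level down, replacing the rigid tensor category $\mathcal{T}_{m|n}^{ev}$ by its rigid full tensor subcategory $\mathcal{T}_{m|n}^+$ throughout. Concretely, I would let $\mathcal{I}^0$ be the tensor ideal of $\mathcal{T}_{m|n}^+$ consisting of the morphisms that factor through a negligible object, and set $\mathcal{A}_{m|n}^+ := \mathcal{T}_{m|n}^+/\mathcal{I}^0$. Since $\mathcal{T}_{m|n}^+$ is rigid with finite dimensional $\mathit{Hom}$-spaces, $\mathcal{A}_{m|n}^+$ is a rigid pseudoabelian $k$-linear tensor category, and as a full tensor subcategory of $\mathcal{A}_{m|n}^{ev}$ it inherits the properties established in the proof of Theorem \ref{negative-half-equivalence}: every object has dimension in $\mathbb{N}$, and no nonzero object becomes zero in $\mathcal{A}_{m|n}^+/\mathcal{N}$. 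Thus $\mathcal{A}_{m|n}^+$ satisfies conditions d) and g) of \cite[Theorem 8.2.4]{Andre-Kahn}, whence $\mathcal{N}(\mathcal{A}_{m|n}^+) = \mathcal{R}(\mathcal{A}_{m|n}^+)$ and $\mathcal{N}(A,A)$ is nilpotent for every object $A$. Applying \cite[Theorem 13.2.1]{Andre-Kahn} produces a monoidal section $s_{m|n} : \mathcal{A}_{m|n}^+/\mathcal{N}(\mathcal{A}_{m|n}^+) = \mathcal{T}_{m|n}^+/\mathcal{N} \to \mathcal{A}_{m|n}^+$ of the quotient functor $\pi_{m|n}$, and likewise $s_{n|n}$.

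Next I would invoke Lemma \ref{smalleta}: the functor $\eta$ preserves the $+$-subcategories together with the filtration by ${\mathcal T}^{<0}$, and by Lemma \ref{eta-negligible} it carries $\mathcal{I}^0$ into $\mathcal{I}^0$; hence it descends to a $k$-linear symmetric monoidal functor $\widetilde\eta : \mathcal{A}_{m|n}^{+,\leq 0} \to \mathcal{A}_{n|n}^{+,\leq 0}$ between the images of $({\mathcal T}_{m\vert n}^{\leq 0}\cap \mathcal{T}_{m|n}^+)/({\mathcal T}_{m\vert n}^{<0}\cap \mathcal{T}_{m|n}^+)$. I then set $\overline\eta := \pi_{n|n}\circ \widetilde\eta\circ s_{m|n}$. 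Since $\eta$ is additive and ${\mathcal T}_{n|n}^+/\mathcal{N}$ is semisimple, $\overline\eta$ is additive and therefore exact; and being a composite of symmetric monoidal functors, it is symmetric monoidal. Restricting to the principal block part, which (as recorded after Lemma \ref{smalleta}) has trivial intersection with ${\mathcal T}^{<0}$, yields the desired symmetric monoidal exact functor $\overline\eta : {\mathcal M}^+_{m\vert n} \to {\mathcal M}^+_{n\vert n}$.

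Finally, to see that $\overline\eta$ is an equivalence I would use Corollary \ref{nice-too}: the restriction of $\eta$ to the principal block agrees with Serganova's block equivalence $\eta_0$, hence is fully faithful, and by Lemmas \ref{eta-negligible} and \ref{nicer} it sends irreducibles to irreducibles and negligibles to negligibles. Passing to the semisimplifications, $\overline\eta$ therefore induces a bijection on isomorphism classes of simple objects of the semisimple categories ${\mathcal M}^+_{m\vert n}$ and ${\mathcal M}^+_{n\vert n}$ and isomorphisms on all $\mathit{Hom}$-spaces, so it is fully faithful and essentially surjective, i.e.\ an equivalence, compatible with the tensor structure by construction. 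The one point that really needs care — and the main obstacle — is that ${\mathcal M}^+_{m\vert n}$ is not rigid, so the André--Kahn splitting machinery cannot be applied to it directly: one must carry out the construction of $s_{m|n}$ inside the rigid category $\mathcal{A}_{m|n}^+$ and only afterwards restrict to the non-rigid subcategory ${\mathcal M}^+_{m\vert n}$, exactly as in the proof of Theorem \ref{negative-half-equivalence}. Beyond this bookkeeping, the argument introduces no genuinely new difficulty, the sole additional input over Theorem \ref{negative-half-equivalence} being Lemma \ref{smalleta}.
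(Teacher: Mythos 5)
Your proposal is correct, and it follows the same strategy as the paper's proof of Theorem \ref{negative-half-equivalence}, but with one notable difference in how the $+$-case is derived. The paper's own proof of Proposition \ref{negative-half-equivalence-+} is much shorter: it observes, as a consequence of Corollary \ref{nice-too} and Lemma \ref{smalleta}, that $\eta$ already gives an equivalence $\mathcal{B}_{\one}^{\leq 0}\cap\mathcal{T}_{m|n}^+ \to \mathcal{B}_{\one}^{\leq 0}\cap\mathcal{T}_{n|n}^+$, and then simply \emph{restricts} the functor $\overline\eta$ that was already built in the proof of Theorem \ref{negative-half-equivalence} (via the section $s_{m|n}$ on $\mathcal{A}_{m|n}^{ev}$) to the subcategories $\mathcal{M}^+_{m|n}$ and $\mathcal{M}^+_{n|n}$. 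You instead \emph{rebuild} the whole Andr\'e--Kahn splitting at the level of $\mathcal{T}_{m|n}^+$: you form $\mathcal{A}^+_{m|n} = \mathcal{T}_{m|n}^+/\mathcal{I}^0$, verify conditions d) and g), and manufacture a fresh section. Both routes are valid. Yours is more self-contained, at the cost of duplicating the Andr\'e--Kahn machinery; the paper's is more economical, at the cost of having to check that the already-constructed $\overline\eta$ respects the $+$-subcategories (which Lemma \ref{smalleta} indeed supplies).

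One small point worth tightening: you assert that $\mathcal{A}_{m|n}^+$ is a full tensor subcategory of $\mathcal{A}_{m|n}^{ev}$ and therefore ``inherits'' conditions d) and g). The fullness is not completely automatic, because the ideal $\mathcal{I}^0$ in $\mathcal{T}_{m|n}^+$ consists of morphisms factoring through a negligible object of $\mathcal{T}_{m|n}^+$, whereas the ideal used for $\mathcal{A}_{m|n}^{ev}$ allows factorization through negligibles of the larger category $\mathcal{T}_{m|n}^{ev}$; a priori these could produce different quotients on the same objects. This does not affect your argument, since conditions d) and g) can be verified for $\mathcal{A}_{m|n}^+$ directly, exactly as the paper does for $\mathcal{A}_{m|n}^{ev}$ (every indecomposable of $\mathcal{T}_{m|n}^+$ has superdimension $\geq 0$, and the ones of superdimension zero become zero in $\mathcal{A}_{m|n}^+$), but it is cleaner to say that than to appeal to the subcategory relation. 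With that caveat, your identification of Lemma \ref{smalleta} as the only genuinely new ingredient, and your observation that the splitting must be built in the rigid ambient category before restricting to the non-rigid $\mathcal{M}^+$, both match the paper's reasoning.
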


\begin{proof} As in the  case of  the $\mathcal{T}_{m|n}^{ev}$-category, $\eta$ is an equivalence $\eta: {\mathcal B}_{\one}^{\leq 0} \cap \mathcal{T}_{m|n}^{+} \to {\mathcal B}_{\one}^{\leq 0} \cap \mathcal{T}_{n|n}^{+}$ between the two negative halfs of the principal blocks. Hence the functor $\overline{\eta}$ constructed in the proof of Theorem \ref{negative-half-equivalence} induces an equivalence between the symmetric monoidal $k$-linear abelian categories ${\mathcal M}^+_{m\vert n}= \overline{{\mathcal B}_{\one}^{\leq 0} \cap \mathcal{T}_{m|n}^{+}}$ and ${\mathcal M}^+_{n\vert n}= \overline{{\mathcal B}_{\one}^{\leq 0} \cap \mathcal{T}_{n|n}^{+}}$. 
\end{proof}

\begin{example} \label{Compare} The object $\Pi = B_{core}^{-1}$ is in ${\mathcal T}_{m\vert n}^{\leq 0}$ with
image \[ \overline{B}_{core}^{-1} = \mathcal{L}_{core} \cong \one \boxtimes \overline{Ber}_{n|n}^{-1} = \one \boxtimes \mathcal{L}_{n|n}\] in $\overline{\mathcal{T}}_{m|n}$. Obviously  $$ \overline\eta(\mathcal{L}_{core}) = \mathcal{L}_{n\vert n} \ .$$ 
\end{example}

\begin{thm} \label{thm:principal} The images  in the semisimplification
of the principal blocks   ${\mathcal P}_{m\vert n} = \overline{\mathcal{B}_{\one} \cap \mathcal{T}_{m|n}^{+}}$ 
respectively ${\mathcal P}_{n\vert  n} = \overline{\mathcal{B}_{\one} \cap \mathcal{T}_{n|n}^{+}}$ are equivalent as tensor categories. 
\end{thm}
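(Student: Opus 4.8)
The plan is to derive Theorem~\ref{thm:principal} from the equivalence of the \emph{negative halves} established in Proposition~\ref{negative-half-equivalence-+}, by upgrading it to the full principal blocks via the Tannakian Reconstruction Lemma~\ref{reconstruction}. Both ${\mathcal P}_{m\vert n}$ and ${\mathcal P}_{n\vert n}$ are semisimple rigid tensor subcategories of the respective semisimplifications: by Lemma~\ref{product-decomposition} and the discussion in Section~\ref{reductions}, ${\mathcal P}_{m\vert n}$ is the full tannakian subcategory $Rep(\one\times H_{pr})\cong Rep(H_{pr})$ (it is tensor-generated by the images of the $L(\lambda_{pr})=L(0,\ldots,0,\mu\,\vert\,-\mu)$, which lie in the principal block of $\mathcal{T}_{m\vert n}$), equipped with the fibre functor obtained by restricting $\omega_{m\vert n}=\omega_r\circ d^n$; likewise ${\mathcal P}_{n\vert n}\cong Rep(H_{n\vert n})$ with fibre functor $\omega_{n\vert n}=d^n\colon \overline{\mathcal{T}}_{n\vert n}\to vec_k$. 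Hence it suffices to produce an isomorphism $H_{pr}\cong H_{n\vert n}$ of affine group schemes.

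To this end I would apply Lemma~\ref{reconstruction} twice. On the $GL(m\vert n)$-side, take ${\mathcal T}={\mathcal P}_{m\vert n}$, ${\mathcal M}={\mathcal M}^+_{m\vert n}$ (the image of $\mathcal{B}_\one^{\leq 0}\cap\mathcal{T}_{m\vert n}^+$, a full abelian monoidal subcategory of ${\mathcal P}_{m\vert n}$ by Lemma~\ref{nicer}), and ${\mathcal L}=\mathcal{L}_{core}=\overline{B}_{core}^{-1}$, the image of $\Pi$. By Example~\ref{Compare}, $\mathcal{L}_{core}$ is invertible and lies in ${\mathcal M}^+_{m\vert n}$, which contains $\one$; so hypothesis~(1) holds. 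Hypothesis~(2) is exactly the shifting statement of Section~\ref{sec:stabilization}: for an irreducible $X_\mu$ in the principal block, coming from $L(\lambda_{pr})=L(0,\ldots,0,\mu\,\vert\,-\mu)$, Proposition~\ref{moving-lemma} together with Corollary~\ref{cor-splicing} gives that $\Pi^{\ell}\otimes L(\lambda_{pr})$ is negatively stable modulo negligibles for $\ell\gg 0$, i.e. $\mathcal{L}_{core}^{\ell}\otimes X_\mu\in{\mathcal M}^+_{m\vert n}$; since every object of ${\mathcal P}_{m\vert n}$ is a finite direct sum of such $X_\mu$, a common $\ell$ works. Thus Lemma~\ref{reconstruction} applies and $H_{pr}$ is reconstructed from the triple $({\mathcal M}^+_{m\vert n},\,\omega_{m\vert n}|_{{\mathcal M}^+_{m\vert n}},\,\mathcal{L}_{core})$. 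On the $GL(n\vert n)$-side one argues identically with ${\mathcal M}={\mathcal M}^+_{n\vert n}$ and ${\mathcal L}=\mathcal{L}_{n\vert n}=\overline{Ber}_{n\vert n}^{-1}$ (tensoring an irreducible $[\mu]$ by $Ber_{n\vert n}^{-1}$ lowers all its entries, so hypothesis~(2) holds), whence $H_{n\vert n}$ is reconstructed from $({\mathcal M}^+_{n\vert n},\,\omega_{n\vert n}|_{{\mathcal M}^+_{n\vert n}},\,\mathcal{L}_{n\vert n})$.

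It then remains to identify the two triples. By Proposition~\ref{negative-half-equivalence-+} the functor $\overline\eta\colon {\mathcal M}^+_{m\vert n}\to{\mathcal M}^+_{n\vert n}$ is an equivalence of symmetric monoidal abelian categories, and by Example~\ref{Compare} it carries $\mathcal{L}_{core}$ to $\mathcal{L}_{n\vert n}$; so only the compatibility of $\overline\eta$ with the fibre functors needs to be checked, i.e. $\omega_{n\vert n}\circ\overline\eta\cong\omega_{m\vert n}|_{{\mathcal M}^+_{m\vert n}}$. This is supplied by Lemma~\ref{lem:fibre}: part~(3) gives $DS^n\circ\eta\cong(\ )^{GL(r)}\circ DS^n$, and part~(2) identifies $\omega_{m\vert n}$ restricted to the $Rep(H_{pr})$-factor with $(\ )^{GL(r)}\circ\omega$; since on $\mathcal{T}_{n\vert n}$ the functor $DS^n$ induces the fibre functor $\omega_{n\vert n}$, the two composites agree. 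Hence $\overline\eta$ transports the whole triple $({\mathcal M}^+_{m\vert n},\,\omega_{m\vert n}|,\,\mathcal{L}_{core})$ onto $({\mathcal M}^+_{n\vert n},\,\omega_{n\vert n}|,\,\mathcal{L}_{n\vert n})$. Since by Lemma~\ref{reconstruction} the Tannaka group is determined by such a triple, we obtain $H_{pr}\cong H_{n\vert n}$, and by Tannakian duality the tensor equivalence ${\mathcal P}_{m\vert n}=Rep(H_{pr})\cong Rep(H_{n\vert n})={\mathcal P}_{n\vert n}$.

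The genuinely new ingredients --- the equivalence of negative halves (Proposition~\ref{negative-half-equivalence-+}) and the reconstruction mechanism (Lemma~\ref{reconstruction}) --- are already in place, so the work here is assembly, and the step deserving the most attention is the fibre-functor bookkeeping: one must keep track that $\overline\eta$ is built from the reductive-invariants functor $\eta=(\ )^{GL(r)}\circ res$ and that this is precisely what makes the $DS^n$-induced fibre functors on the two sides correspond (Lemma~\ref{lem:commutes}, Lemma~\ref{lem:fibre}), together with verifying that the twisting object $\mathcal{L}_{core}$ genuinely moves every object of the principal block into its negative half --- which rests on the splicing and shifting results rather than on anything formal. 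A secondary point, already flagged before Theorem~\ref{negative-half-equivalence}, is that Reconstruction and the Andr\'e--Kahn splitting are rigid-category constructions used here in a non-rigid setting; this is harmless because the splitting is performed inside the rigid category $\mathcal{T}_{m\vert n}^{ev}$, and ${\mathcal M}^+_{m\vert n}$ enters only as the non-rigid subcategory ${\mathcal M}$ of the rigid tensor category ${\mathcal P}_{m\vert n}$.
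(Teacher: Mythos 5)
Your proof is correct and follows essentially the same path as the paper's: reduce to the negative halves via Proposition~\ref{negative-half-equivalence-+}, match the twisting objects by Example~\ref{Compare}, match the fibre functors via Lemma~\ref{lem:fibre}, and invoke the Reconstruction Lemma~\ref{reconstruction} on both sides. The only difference is cosmetic --- you spell out the verification of hypothesis~(2) of Lemma~\ref{reconstruction} (that $\mathcal{L}_{core}^{\ell}$ moves every irreducible of the principal block into the negative half, via the shifting results of Section~\ref{sec:stabilization}), a step the paper leaves implicit.
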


\begin{proof} By Proposition \ref{negative-half-equivalence-+} the $k$-linear exact monoidal functor $$\overline \eta: {\mathcal M}_{m\vert n}^+ \to {\mathcal M}_{n\vert n}^+$$ induces a monoidal
equivalence between ${\mathcal M}_{m\vert n}^+$ and ${\mathcal M}_{n\vert n}^+$ such that $\overline \eta( \mathcal{L}_{core}) = \mathcal{L}_{n\vert n}$ holds; see Example \ref{Compare}.
For the restriction of the fibre functor $\omega_{m\vert n}$ to ${\mathcal M}_{m\vert n}^+$
we claim that furthermore on ${\mathcal M}_{m\vert n}^+$ the following holds: 
$$ \omega_{m\vert n} = \omega_{n\vert n} \circ \overline\eta \ .$$
For this we have to restrict the fibre functor $\omega_{m\vert n}$ defining $H_{m\vert n}$ to the subcategory $Rep(H_{pr})$, and after that we have to restrict further to the subcategory 
 ${\mathcal M}:={\mathcal M}_{m\vert n}^+$. This double restriction defines the fibre functor $\omega_{\mathcal M}$ for $\mathcal M$.
By Lemma \ref{lem:fibre}  this double restriction  coincides with  the composition $\omega_{n\vert n} \circ \overline \eta$, where  $\omega_{n\vert n}$ is the fibre functor for $\overline{\mathcal T}_{n\vert n}$. Indeed, this follows from Lemma \ref{lem:fibre}, part (3) and the fact that $\overline \eta$ is induced from $\eta$ and that $\omega_{n\vert n}$ is induced
 from the functor $DS^n\!:\! {\mathcal T}_{n\vert n} \!\to \! {\mathcal T}_{n\vert n}$, both by passage to the semisimplification.
This being said, it now follows that the triples $({\mathcal M},\omega_{\mathcal M},{\mathcal M})$ defined by 
$({\mathcal M}_{m\vert n}^+,\omega_{m\vert n},\mathcal{L}_{core})$, respectively by  $({\mathcal M}_{n\vert n}^+, \omega_{n\vert n}, \mathcal{L}_{n\vert n})$, are tensor equivalent under the functor $\overline \eta$.

\medskip
For $(\mathcal T,\omega_{\mathcal T})=({\mathcal P}_{m\vert n},\omega_{m\vert n})$ 
and $(\mathcal T,\omega_{\mathcal T})=({\mathcal P}_{n\vert n},\omega_{n\vert n})$  and the triples $({\mathcal M}_{m\vert n}^+,\omega_{m\vert n},\mathcal{L}_{core})$ respectively  $({\mathcal M}_{n\vert n}^+, \omega_{n\vert n}, \mathcal{L}_{n\vert n})$
all assumptions of Lemma 
 \ref{reconstruction} hold.
 This allows to reconstruct
  $(\mathcal T,\omega_{\mathcal T})=({\mathcal P}_{m\vert n},\omega_{m\vert n})$ respectively 
 $(\mathcal T,\omega_{\mathcal T}) =({\mathcal P}_{n\vert n},\omega_{n\vert n})$ as Tannaka categories
 from the corresponding data  $({\mathcal M},  \omega_{\mathcal M},{\mathcal L})$.  
Therefore, taking into account Proposition \ref{negative-half-equivalence-+}, the 
reconstruction Lemma \ref{reconstruction} completes the proof. 
\end{proof}

Together with the discussion in Section \ref{reductions}, Theorem \ref{thm:principal} now  implies

\begin{cor} There exists an equivalence  $ \mathcal{T}_{m|n}^+/\mathcal{N} \cong Rep(GL(r)) \otimes_k (\mathcal{T}_{n|n}^+/\mathcal{N})$ of tensor categories 
such that $\mathcal{L}_{m\vert n}^{-1}$ corresponds to $det \boxtimes \mathcal{L}_{n\vert n}^{-1}$. By passage to the pro-reductive Tannaka groups we obtain an isomorphism \[ H_{m|n} \cong GL(r) \times H_{n|n}\ .\]
\end{cor}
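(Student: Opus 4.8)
The plan is to assemble the corollary from Lemma~\ref{product-decomposition} and Theorem~\ref{thm:principal}, and then to trace the Berezin line object through both. Lemma~\ref{product-decomposition} already gives $H_{m|n}\cong GL(r)\times H_{pr}$, equivalently $\overline{\mathcal{T}}_{m|n}\cong Rep(GL(r))\otimes_k Rep(H_{pr})$, where $Rep(H_{pr})={\mathcal T}_{pr}$ is the tensor subcategory of $\overline{\mathcal{T}}_{m|n}$ generated by the images of the irreducibles of the principal block ${\mathcal B}_{\mathbf 1}$. First I would observe, as recorded in Section~\ref{reductions}, that this subcategory is exactly the image ${\mathcal P}_{m|n}=\overline{{\mathcal B}_{\mathbf 1}\cap\mathcal{T}_{m|n}^+}$: since $DS^n$ is monoidal and detects the block of an irreducible through its core (Section~\ref{ds-functor}, Theorem~\ref{mainthm}), a tensor product of two objects with trivial core again has trivial core, so the image of ${\mathcal B}_{\mathbf 1}$ in the semisimplification is closed under $\otimes$ up to negligible summands. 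Hence ${\mathcal P}_{m|n}\cong Rep(H_{pr})$ as tensor categories.

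Next I would specialize the same picture to $m=n$. There $r=0$ and $GL(0)$ is trivial, so Lemma~\ref{product-decomposition} reads $H_{n|n}\cong H_{pr}$; moreover the standard representation $k^{n|n}$ has superdimension $0$, hence is negligible, so $\overline{\mathcal{T}}_{n|n}$ is generated by maximal atypical irreducibles, all of which lie in ${\mathcal B}_{\mathbf 1}$. Therefore ${\mathcal P}_{n|n}=\overline{\mathcal{T}}_{n|n}\cong Rep(H_{n|n})$; in other words, $H_{n|n}$ is precisely the ``$H_{pr}$'' of the equal-rank case.

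Now I would invoke Theorem~\ref{thm:principal}: the tensor categories ${\mathcal P}_{m|n}$ and ${\mathcal P}_{n|n}$ are equivalent, and — as is built into its proof via the Reconstruction Lemma~\ref{reconstruction} and Lemma~\ref{lem:fibre} — the equivalence intertwines the fibre functors $\omega_{m|n}$ and $\omega_{n|n}$. By Tannakian duality this gives an isomorphism $H_{pr}\cong H_{n|n}$ of affine group schemes, whence $H_{m|n}\cong GL(r)\times H_{n|n}$ and, at the categorical level, $\overline{\mathcal{T}}_{m|n}\cong Rep(GL(r))\otimes_k\overline{\mathcal{T}}_{n|n}=Rep(GL(r))\otimes_k(\mathcal{T}_{n|n}^+/\mathcal{N})$. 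For the statement about the distinguished line object, I would trace $Ber_{m|n}$: by Example~\ref{BerCompatibility} its image in $\overline{\mathcal{T}}_{m|n}$ is $\det\boxtimes\overline{Ber}_{n|n}$, and by Example~\ref{Compare} the comparison functor $\overline\eta$ sends $\mathcal{L}_{core}$ to $\mathcal{L}_{n|n}$; since $\mathcal{L}_{m|n}^{-1}=\overline{Ber}_{m|n}$ and $\mathcal{L}_{n|n}^{-1}=\overline{Ber}_{n|n}$ by definition, the equivalence carries $\mathcal{L}_{m|n}^{-1}$ to $\det\boxtimes\mathcal{L}_{n|n}^{-1}$, as asserted.

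The substantive work has already been done in Theorem~\ref{thm:principal} and Lemma~\ref{product-decomposition}, so the assembly is essentially formal; the one point that genuinely needs care is matching the two incarnations of $H_{pr}$ — the one as the Tannaka group of the tensor category generated by the principal block, and the one reconstructed from its negative half ${\mathcal M}_{m|n}^+$ together with the line object $\mathcal{L}_{core}$ via Lemma~\ref{reconstruction} — and checking that the comparison respects the fibre functors so as to descend to an isomorphism of group schemes. Since that compatibility is exactly what Lemma~\ref{lem:fibre} records and what the proof of Theorem~\ref{thm:principal} uses, I do not expect any further obstacle.
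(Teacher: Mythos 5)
Your proposal is correct and follows essentially the same route as the paper: Lemma~\ref{product-decomposition} gives $H_{m|n}\cong GL(r)\times H_{pr}$, the discussion in Section~\ref{reductions} identifies $Rep(H_{pr})$ with the image $\mathcal{P}_{m|n}$ of the principal block, and Theorem~\ref{thm:principal} supplies $\mathcal{P}_{m|n}\simeq\mathcal{P}_{n|n}$ compatibly with the fibre functors, so the assembly is formal; your explicit remark that $\mathcal{P}_{n|n}=\overline{\mathcal{T}}_{n|n}$ (because for $m=n$ every non-negligible irreducible is maximal atypical, hence in $\mathcal{B}_{\one}$) and your tracing of $\overline{Ber}_{m|n}=\overline{\det_{cl}}\otimes\mathcal{L}_{core}^{-1}\mapsto\det\boxtimes\mathcal{L}_{n|n}^{-1}$ via Examples~\ref{BerCompatibility} and~\ref{Compare} are exactly what the paper leaves implicit.
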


\subsection{Examples}

\begin{example} Given $L(\lambda) = L(\lambda_1,\ldots,\lambda_r, \mu \ | \ -\mu)$ and $L(\lambda') = L(\lambda_1',\ldots,\lambda_r',\mu' \ | \ -\mu')$ suppose that $[\mu]$ and $[\mu']$ are not equivalent in the sense of \cite[Section 2.3]{HW-tannaka}. For $r=m\! -\! n$ they give rise to two different Tannaka groups $H_{\mu}$ and $H_{\mu'}$ of the form $GL(r) \times H_{\mu}$ and $GL(r) \times H_{\mu'}$. As explained in Section \ref{reductions}, decomposing $L(\lambda) \otimes L(\lambda')$ reduces to decompose the $GL(r)$-part and the $H_{\mu} \times H_{\mu'}$-part separately. Since the groups $H_{\mu}$ and $H_{\mu'}$ are disjoint, the tensor product $\rho_{H}^{\mu} \otimes\rho_{H}^{\mu'}$ corresponds to the external tensor product of representations. This defines an indecomposable representation. The decomposition behaviour depends therefore only on the $GL(r)$-part, hence is given by the Littlewood-Richardson rule. The different $GL(r)$-representations correspond one-one to the maximal atypical blocks, hence the Littlewood-Richardson rule describes the decomposition of $L(\lambda) \otimes L(\lambda')$ into blocks. If $\mu$ and $\mu'$ are equivalent, the groups coincide. In this case the $H_{\mu} \cong H_{\mu'}$-part decomposes into two or three summands (by \cite{HW-tannaka} it corresponds to a tensor product of (character twists of) standard representations or their duals of type $ABCD$).
\end{example}

\begin{example} For a list of examples in the $GL(3|3)$ and $GL(4|4)$-case we refer to \cite[Section 15]{HW-tannaka}.
\end{example} 

\begin{example} Consider $L(3,2,1,-2,-3,-4 \ | \ 4,3,2) \otimes L(4,2,1,0,-1,-2 \ | \ 2,1,0)$. These are already stable. The $GL(3)$ tensor product is of the form  
\[ \bigl(\rho_{cl}(4,2,1) \otimes \rho_{cl}(3,2,1)\bigr) \boxtimes \one \ ,\] where $\rho_{cl}(4,2,1) \otimes \rho_{cl}(3,2,1)$ decomposes into the 7 summands \[(\rho_{cl}(7, 4, 2) \oplus \rho_{cl}(7, 3, 3) \oplus \rho_{cl}(6, 5, 2) \oplus 2\rho_{cl}(6, 4, 3) \oplus \rho_{cl}(5, 5, 3) \oplus \rho_{cl}(5, 4, 4)) \boxtimes \one. \] This means that summands of non-vanishing superdimension appear exactly in the 6 different blocks of $GL(m|n)$ indexed by the above representations. The representations $[-2,-3,-4]$ and $[0,-1,-2]$ are of (SD)-type and equivalent by a $Ber_{3|3}^2$-twist. Hence their Tannaka groups coincide. By \cite[Section 11.6]{HW-tannaka} the associated Tannaka group is a $GSp(6)$ (since $\ell \neq 0$ and the pairing is symplectic rather than orthogonal). The tensor product \[ (\one_{cl} \boxtimes \overline{X}_{[-2,-3,-4]}) \otimes (\one_{cl} \boxtimes \overline{X}_{[0,-1,-2]})  = \one_{cl} \boxtimes ( \overline{X}_{[-2,-3,-4]} \otimes \overline{X}_{[0,-1,-2]})\] decomposes therefore in three indecomposable summands $I_1 \oplus I_2 \oplus I_3$ corresponding to the three irreducible summands in the decomposition of the natural representation of $GSp(6)$ (here both representations are character twists of it). All in all this leads to 21 indecomposable summands of non-vanishing superdimension. Of course we can now take one of the indecomposable summands (which is again labelled by a representation of $GL(3) \times GSp(6)$) and repeat this, so that we also get truncated fusion rules for all indecomposable modules of non-vanishing superdimension.
\end{example}

\subsection{Determinants} 

Even though we cannot exactly determine $H_{\mu}$ fully for some $L(\mu) \in \mathcal{T}_{n|n}^+$ of (SD) type, we can always compute the determinant.

As in \cite{HW-tannaka} we define \[\det(X_{\lambda}) = \Lambda^{\sdim(X_{\lambda})}(X_{\lambda}).\] Notice
$$    \det(X_\lambda)\ = \ I_\lambda \ \oplus \ \text{negligible} $$
is the sum of a unique indecomposable module $I_\lambda$ in $\mathcal{T}_{m|n}^+$ and a direct sum
of negligible indecomposable modules in $\mathcal{T}_{m|n}^+$. Clearly the superdimension of $det(X_{\lambda})$ in ${ \mathcal{T}_{m|n}^+}$ is one. Hence its image defines an invertible 
object of the representation category ${\overline{\mathcal T}}_{m|n} \cong Rep(H_{m|n})$ which we denote by $det(\overline{X}_{\lambda})$. 
In \cite[Section 13.1]{HW-tannaka} we defined the integer $\ell(\lambda)$ for any maximal atypical weight of $GL(n|n)$. If $X_{\lambda}$ is a stable module in the principal block of $GL(m|n)$, $\lambda = (0,\ldots,0,\mu \ | \ -\mu)$, we define $\ell(\lambda)$ to be the $\ell(\mu)$ for the corresponding representation of $GL(n|n)$ in the principal block.

\begin{thm} \label{det-formula} For stable $L(\lambda)$ in the principal block the module $\det(X_{\lambda})$ satisfies $$ \det(X_{\lambda}) \ = \ B_{core}^{\ell(\lambda)} \ \oplus \ \text{negligible}.$$ 
 
\end{thm}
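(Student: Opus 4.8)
The plan is to reduce the computation of $\det(X_\lambda)$ for a stable $L(\lambda) = L(0,\ldots,0,\mu\,|\,-\mu)$ in the principal block of $GL(m|n)$ to the corresponding statement for $GL(n|n)$, which is \cite[Theorem 14.3]{HW-tannaka}. The key structural input is the Canonical Decomposition (Corollary following Theorem~\ref{thm:principal}): the equivalence $\overline{\mathcal T}_{m|n} \cong Rep(GL(r)) \otimes_k \overline{\mathcal T}_{n|n}$, under which a stable principal-block object $X_\lambda$ with $\lambda = (0,\ldots,0,\mu\,|\,-\mu)$ maps to $\one \boxtimes \overline X_\mu$, and $B_{core}$ maps to $\one \boxtimes Ber_{n|n}$ (Example~\ref{Compare}, together with the fact that $\mathcal L_{core}$ corresponds to $\one \boxtimes \mathcal L_{n|n}$).

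First I would note that $\det$ is a $\otimes$-compatible operation in the semisimplification: for an invertible object (equivalently, a one-dimensional representation of the Tannaka group), $\det$ is just the object itself; more generally, under the tensor equivalence $\overline{\mathcal T}_{m|n} \cong Rep(GL(r)) \otimes_k \overline{\mathcal T}_{n|n}$, exterior powers of a pure tensor $\rho \boxtimes \sigma$ decompose via the Cauchy-type formula, and in particular $\det(\rho \boxtimes \sigma) = \det(\rho)^{\dim \sigma} \boxtimes \det(\sigma)^{\dim \rho}$ in $Rep(GL(r) \times H_{n|n})$ (this is precisely Theorem~\ref{det-formula}'s sibling, the determinant formula stated in the introduction). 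Applying this with $\rho = \one$, so $\dim\rho = 1$ and $\det\rho = \one$, gives $\det(\overline X_\lambda) = \one \boxtimes \det(\overline X_\mu)$. Then \cite[Theorem 14.3]{HW-tannaka} identifies $\det(\overline X_\mu)$ with $\overline{Ber}_{n|n}^{\ell(\mu)}$ in $\overline{\mathcal T}_{n|n}$. Since $\ell(\lambda)$ was defined to be $\ell(\mu)$, and since $\one \boxtimes \overline{Ber}_{n|n}^{\ell(\mu)}$ is exactly the image of $B_{core}^{\ell(\lambda)}$ under the canonical decomposition (Example~\ref{Compare} and Example~\ref{BerCompatibility}: $\Pi = B_{core}^{-1} \mapsto \one \boxtimes \overline{Ber}_{n|n}^{-1}$), we conclude $\det(\overline X_\lambda) \cong \overline{B_{core}^{\ell(\lambda)}}$ in the semisimplification. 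Lifting back: $\det(X_\lambda)$, being a genuine object of $\mathcal T_{m|n}^+$ of superdimension $1$, equals $I_\lambda \oplus (\text{negligible})$ for a unique indecomposable $I_\lambda$; its image in $\overline{\mathcal T}_{m|n}$ is the invertible object just identified, and the unique indecomposable lift of an invertible image is again invertible (it has no proper nonzero subobject after killing $\mathcal N$), so $I_\lambda \cong B_{core}^{\ell(\lambda)}$ and the claimed identity $\det(X_\lambda) = B_{core}^{\ell(\lambda)} \oplus (\text{negligible})$ follows.

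The main obstacle I anticipate is justifying the determinant formula $\det(\rho \boxtimes \sigma) = \det(\rho)^{\dim\sigma} \boxtimes \det(\sigma)^{\dim\rho}$ cleanly in the semisimple tensor category $\overline{\mathcal T}_{m|n}$ — i.e. checking that $\Lambda^{\sdim}$ of an external tensor product behaves as in ordinary representation theory of a product of reductive groups. Since $\overline{\mathcal T}_{m|n} = Rep(GL(r) \times H_{n|n})$ is literally the representation category of a pro-reductive group, this is the standard fact that for $V \boxtimes W$ a representation of $G_1 \times G_2$ one has $\Lambda^{\mathrm{top}}(V \boxtimes W) \cong (\Lambda^{\mathrm{top}} V)^{\otimes \dim W} \boxtimes (\Lambda^{\mathrm{top}} W)^{\otimes \dim V}$, provable by computing highest weights (the determinant of $V \boxtimes W$ is the product over all weight pairs, which factors). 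With $\rho = \one$ this is trivial anyway, so for the case at hand — stable modules in the principal block, where the $GL(r)$-part is trivial — the obstacle essentially evaporates, and the proof is really just: transport via the canonical decomposition, quote \cite[Theorem 14.3]{HW-tannaka}, and lift the invertible object back. I would write it in exactly that order: (1) pass to $\overline{\mathcal T}_{m|n}$ and use $X_\lambda \mapsto \one \boxtimes \overline X_\mu$; (2) compute $\det(\one \boxtimes \overline X_\mu) = \one \boxtimes \det(\overline X_\mu)$; (3) invoke \cite[Theorem 14.3]{HW-tannaka} for $\det(\overline X_\mu) = \overline{Ber}_{n|n}^{\ell(\mu)}$; (4) match with the image of $B_{core}^{\ell(\lambda)}$ via Examples~\ref{BerCompatibility} and \ref{Compare}; (5) lift the invertible object uniquely to $\mathcal T_{m|n}^+$ modulo negligibles.
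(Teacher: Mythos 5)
Your proof is correct, but it takes a genuinely different route from the one the paper writes down as the formal proof of Theorem~\ref{det-formula}. The paper's own proof imitates the argument of \cite[Theorem 14.3]{HW-tannaka} directly inside $\mathcal{T}_{m|n}$: one inducts on an order on cup diagrams within the fixed block, with the completely nested diagrams (ground states, which in the principal block are the powers $B_{core}^N$) as base case, and extracts the specific exponent $\ell(\lambda)$ as in \cite[Corollary 13.4]{HW-tannaka}. In contrast, you invoke the canonical decomposition $\overline{\mathcal{T}}_{m|n} \cong Rep(GL(r)) \otimes_k \overline{\mathcal{T}}_{n|n}$ from Theorem~\ref{thm:principal}, transport $\overline{X}_\lambda = \one \boxtimes \overline{X}_\mu$, apply \cite[Theorem 14.3]{HW-tannaka} on the $H_{n|n}$-factor, and lift the resulting invertible object back. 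Interestingly, the paper explicitly acknowledges your route in the remark following the proof, calling it \emph{``a better way''} and applying it to the general (non-principal-block) case. What the two approaches buy: the paper's inductive proof is logically independent of the canonical decomposition theorem and parallel to the $m=n$ proof; yours is shorter and conceptually cleaner, but it leans on the heaviest theorem in the paper, so anyone reading it must first internalize the entire reduction machinery of Sections 7--9. Both are sound, and in the present theorem's special case (principal block, so the $GL(r)$-factor is $\one$) your computation $\det(\one \boxtimes \overline{X}_\mu) = \one \boxtimes \det(\overline{X}_\mu)$ is as trivial as you say.

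One small point of care in your final lifting step: the correct justification that $I_\lambda \cong B_{core}^{\ell(\lambda)}$ (or rather, that $I_\lambda$ is the unique indecomposable summand of superdimension one in $B_{core}^{\ell(\lambda)}$) is not quite ``an indecomposable lift of an invertible image is invertible'' but the standard fact that the semisimplification functor induces a bijection between isomorphism classes of indecomposables of nonzero superdimension in $\mathcal{T}_{m|n}^+$ and simple objects of $\overline{\mathcal{T}}_{m|n}$; two indecomposables of superdimension one with the same image in the quotient are isomorphic. With that phrasing the step is airtight.
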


\begin{proof} The theorem can be proven as in \cite[Theorem 14.3]{HW-tannaka} by induction on an order on the set of cup diagrams for a fixed block such that the representations with completely nested cup diagrams are the minimal elements. For stable modules these modules are groundstates in the sense of section \ref{sec:ground-states}. In the case of the principal block these are just the tensor powers $B_{core}^N$ and the claim is trivial. The specific power $\ell(\lambda)$ follows exactly as in \cite[Corollary 13.4]{HW-tannaka}. 
\end{proof}

This method allows in principle to compute $det(X_{\lambda})$ in general. Since $L(\lambda_N) \cong \Pi^N \otimes L(\lambda_0)$ by Lemma \ref{moving-lemma}, it suffices to compute the determinants of ground state representations. This is possible with the methods of the proof of Lemma \ref{lemma-ground}. However, a better way is to reduce the computation to the case $m\,=\, n$: For given $X_{\lambda} \in \mathcal{T}_{m|n}^+$, consider its image $L(\chi_{\lambda}) \boxtimes \overline{X}_{\mu} \in \Rep(GL(r) \times H_{n|n})$. Since the semisimplification functor is symmetric monoidal, it commutes with exterior powers, and we can therefore compute the determinant in the semisimplification. Using the determinant formula \[ \det(X \otimes Y) \cong det(X)^{\sdim(Y)} \otimes \det(Y)^{\sdim(X)} \] (see \cite[Proposition 1.11]{Deligne-tensorielles}), 
we obtain \[ \det(\overline{X}_{\lambda}) = \det(L(\chi_{\lambda}))^{\dim(V_{\mu})} \boxtimes \det(\overline{X}_{\mu})^{\dim(L(\chi_{\lambda})}.\]  Since up to a Berezin shift the second term $\det(\overline{X}_{\mu})$ is  computed in \cite[Theorem 14.3]{HW-tannaka} as (the image of) an explicit Berezin power, this yields a viable formula for the determinant also in the case $m\neq n$.

\section{Appendix: A variant of the $\eta$ functor}  
  
Analogues of the functor $\eta_0$ can be defined for other embeddings and invariant functors. Consider the embedding $\mathbb{G}_m \times GL(n\!-\! 1|n) \to GL(n|n)$ where $\mathbb{G}_m$ is embedded as the upper left $(1\times 1)$ block matrix. Again, let $$\eta: \T_{n|n} \to T_{n-1|n}\quad , \quad \eta = ()^{\mathbb{G}_m} \circ res$$ denote the restriction functor with respect to this embedding. As before let $\T_{m|n}^{\leq 0}$ denote the full abelian subcategory of $\mathcal{T}_{m|n}$ generated by all irreducible objects $L(\lambda)$ satisfying $\lambda_1 \leq 0$, and $\T_{m|n}^{<0}$ the full abelian subcategory with irreducible objects satisfying $\lambda_1 < 0$.

\begin{lem} The functor $\eta: \T_{n|n} \to T_{n-1|n}$ is an exact $k$-linear functor. It restricts to an exact $k$-linear monoidal functor $\eta: \T_{n|n}^{\leq 0} \to T_{n-1|n}^{\leq 0}$, such that $\eta(\T_{n|n}^{<0}) = 0$ holds.
\end{lem}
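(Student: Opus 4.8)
The plan is to follow the argument for the $GL(r)$-version of $\eta$ (Lemma~\ref{nicer}, together with Lemma~\ref{eta-negligible}) essentially verbatim, the only change being that the reductive group $GL(r)$ is replaced by the torus $\mathbb{G}_m$ sitting in the upper left $(1\times 1)$ corner of $GL(n|n)$. Since the embedding is by construction a homomorphism out of a product $\mathbb{G}_m\times GL(n-1|n)\to GL(n|n)$, the two factors commute, so $X^{\mathbb{G}_m}$ is a $GL(n-1|n)$-subrepresentation of $res(X)$ and $\eta=(\,\cdot\,)^{\mathbb{G}_m}\circ res$ is a well-defined $k$-linear functor $\mathcal{T}_{n|n}\to\mathcal{T}_{n-1|n}$; it is exact because in characteristic zero taking invariants under a linearly reductive group is exact.

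Next I would record how $\mathbb{G}_m$ acts in weight terms: on a $GL(n|n)$-weight $\lambda=(\lambda_1,\ldots,\lambda_n\mid\lambda_{n+1},\ldots,\lambda_{2n})$ the torus $\mathbb{G}_m=\{\mathrm{diag}(t,1,\ldots,1;1,\ldots,1)\}$ acts through $\lambda_1$, so $X^{\mathbb{G}_m}$ is the sum of the weight spaces $X_\lambda$ with $\lambda_1=0$, and as a $GL(n-1|n)$-module such a weight space carries the weight $(\lambda_2,\ldots,\lambda_n\mid\lambda_{n+1},\ldots,\lambda_{2n})$. Recall (as in the proof of Lemma~\ref{SW}) that every weight $\alpha$ of $L(\lambda)$ satisfies $\alpha_1\le\lambda_1$, since subtracting a positive root never raises the first coordinate. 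Hence $\eta(\mathcal{T}_{n|n}^{<0})=0$: if every Jordan-H\"older constituent $L(\lambda)$ of $X$ has $\lambda_1<0$, then no weight of $X$ has first coordinate $0$, so $X^{\mathbb{G}_m}=0$. Likewise, for $X\in\mathcal{T}_{n|n}^{\le 0}$ every weight $\lambda$ of $X$ satisfies $\lambda_i\le\lambda_1\le 0$ for $i\le n$ by dominance, so every weight of $\eta(X)$ has first $GL(n-1)$-coordinate $\lambda_2\le 0$; thus $\eta$ indeed lands in $\mathcal{T}_{n-1|n}^{\le 0}$.

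It remains to see that $\eta\colon\mathcal{T}_{n|n}^{\le 0}\to\mathcal{T}_{n-1|n}^{\le 0}$ is monoidal, i.e. that $(X\otimes Y)^{\mathbb{G}_m}=X^{\mathbb{G}_m}\otimes Y^{\mathbb{G}_m}$ for $X,Y\in\mathcal{T}_{n|n}^{\le 0}$; this is the one place where the $\le 0$ hypothesis is used, and it is the exact analogue of the monoidality step in Lemma~\ref{nicer}. The inclusion $\supseteq$ is formal. For $\subseteq$ one reduces to irreducible $X,Y$; then all weights $\alpha$ of $X$ and $\beta$ of $Y$ satisfy $\alpha_1\le 0$ and $\beta_1\le 0$, so a weight $\alpha+\beta$ of $X\otimes Y$ with $(\alpha+\beta)_1=0$ forces $\alpha_1=\beta_1=0$; hence the $\mathbb{G}_m$-fixed part of $X\otimes Y$ is spanned by tensor products of $\mathbb{G}_m$-fixed weight vectors, which gives $\subseteq$. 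Combined with $\eta(\mathbf 1)=\mathbf 1$ and the evident compatibility with the associativity and unit constraints, this makes $\eta$ symmetric monoidal on $\mathcal{T}_{n|n}^{\le 0}$. I do not expect a genuine obstacle here; the only points needing care are that $\mathbb{G}_m$ commutes with the embedded $GL(n-1|n)$ (so invariants form a subrepresentation) and the weight bookkeeping for the $\le 0$ and $<0$ conditions, both routine.
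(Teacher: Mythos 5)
Your proof follows the paper's approach, which itself invokes the $GL(r)$-analogue (Lemma \ref{nicer}) for the monoidality step and treats the preservation of the $\leq 0$ and $<0$ conditions as immediate. The exactness from linear reductivity, the weight argument for $\eta(\mathcal{T}_{n|n}^{<0}) = 0$, and the tensor-product argument mirroring Lemma \ref{nicer} are all as intended.

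One intermediate claim is wrong, though: for $X \in \mathcal{T}_{n|n}^{\leq 0}$ you assert that \emph{every} weight $\lambda$ of $X$ satisfies $\lambda_i\le\lambda_1\le 0$ for $i\le n$ ``by dominance.'' Arbitrary weights of a module are not dominant; for instance $(-2,0)$ is a weight of the $GL(2)$-module $L(0,-2)$, and there $\lambda_2 > \lambda_1$. What you actually need, and what is true, is the following: any weight $\alpha$ of $X$ with $\alpha_1 = 0$ lies in some Jordan--H\"older constituent $L(\lambda)$ with $\lambda_1 \leq 0$; since $\alpha_1 \leq \lambda_1 \leq 0$, this forces $\lambda_1 = 0$; writing $\alpha = \lambda - \sum_i n_i\gamma_i$ over simple positive roots $\gamma_i$ with $n_i \geq 0$, the equality $\alpha_1 = \lambda_1$ forces $n_1 = 0$, whence $\alpha_2 = \lambda_2 - n_2 \leq \lambda_2 \leq \lambda_1 = 0$. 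That is what gives $\eta(X) \in \mathcal{T}_{n-1|n}^{\leq 0}$. This is a local repair; the rest of your argument is sound and matches the paper's intent.
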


\begin{proof} Both $res$ and $()^{\mathbb{G}_m}$ are exact and $k$-linear. Obviously, the $\leq 0$ condition is preserved by both functors, such that $\eta(\T_{n|n}^{<0}) = 0$ holds. Preservation of tensor products is proven 
as in Lemma \ref{nicer}.
\end{proof}


\subsection{Kac modules} For $\g = \mathfrak{gl}(m|n)$ we define the lower/upper maximal parabolic subalgebras $\mathfrak{p}_{\pm} = \g_{(0)} \oplus \g_{(\pm1)}$ for the usual $\Z$-grading $\g = \g_{(-1)} \oplus \g_{(0)} \oplus \g_{(1)}$ where $\g_{(0)} \cong \mathfrak{gl}(m) \oplus \mathfrak{gl}(n)$ denotes the diagonal block matrices and $\g_{(\pm1)}$ denotes the upper/lower block matrices in $\g$ \cite{Kac-Rep}. We consider a simple $\g_{(0)}$-module as a $\mathfrak{p}_{\pm}$-module in which $\g_{(1)}$ respectively $\g_{(-1)}$ acts trivially. We then define the Kac module $V(\lambda)$ as the parabolically induced module \[ V(\lambda)  = Ind_{\mathfrak{p}_+}^{\g} L(\lambda) = U(\mathfrak{g}) \otimes_{U(\mathfrak{p}_+)} L(\lambda)\] where $L(\lambda)$ is the simple $\g_{(0)}$-module with highest weight $\lambda$ that has been trivially extended to a $\mathfrak{p}_+$-module. This is a $\mathfrak{g}$-module by the left action of $\mathfrak{g}$ on $U(\mathfrak{g})$. By the PBW theorem the Kac module is (as a vector space or $\mathfrak{p}_+$-module) given by \[ V(\lambda) = \Lambda^\bullet(\g_{(-1)}) \otimes L(\lambda).\] An element $X \in \mathfrak{g}$ acts on this as \begin{align*} & X \cdot (x_1 \wedge \ldots \wedge x_r \otimes v) \\ & = [X,x_1]x_2 \wedge \ldots \wedge x_r \otimes v - x_1 \wedge [X,x_2]x_3 \wedge \ldots \wedge x_r \otimes v \ldots \\ & + (-1)^{r+1} x_1 \wedge \ldots \wedge [X,x_r]\otimes v.\end{align*}

\begin{lem} \label{lem:kac} For Kac modules we have $\eta(V(0,\mu \, | \!-\mu,0)) \cong V(\mu \, | \! - \mu,0)$.
\end{lem}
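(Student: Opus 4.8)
The plan is to compute $\eta = (\,)^{\mathbb{G}_m} \circ res$ on the Kac module $V(0,\mu\,|\!-\mu,0)$ of $\mathfrak{gl}(n|n)$ directly from the PBW description $V(\lambda) = \Lambda^\bullet(\g_{(-1)}) \otimes L(\lambda)$, using the explicit formula for the $\g$-action recalled just above the statement. First I would fix the embedding $\mathbb{G}_m \times GL(n\!-\!1|n) \hookrightarrow GL(n|n)$ with $\mathbb{G}_m$ sitting in the upper-left $(1\times 1)$ corner, and write down how the $\mathbb{Z}$-grading $\g = \g_{(-1)} \oplus \g_{(0)} \oplus \g_{(1)}$ of $\mathfrak{gl}(n|n)$ decomposes under the adjoint action of this $\mathbb{G}_m$. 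The key observation is that $\g_{(-1)} \cong V_n \otimes V_n^*$ (as a $\g_{(0)} = \mathfrak{gl}(n)\oplus\mathfrak{gl}(n)$-module) splits, under restriction to $\mathbb{G}_m \times GL(n\!-\!1) \times GL(n)$, into a piece of $\mathbb{G}_m$-weight $0$ — namely $\g'_{(-1)}$, the odd negative part of $\mathfrak{gl}(n\!-\!1|n)$, which is $V_{n-1}\otimes V_n^*$ — and a piece of strictly negative $\mathbb{G}_m$-weight (the row indexed by the first basis vector, which has weight $-1$ under $\mathbb{G}_m$ acting on $V_n$ and hence contributes $\mathbb{G}_m$-weight $-1$ to $\g_{(-1)}$, since $\g_{(-1)}$ pairs $V_n$ against $V_n^*$ — more precisely one has to track the sign of the $\mathbb{G}_m$-action on $\g_{(-1)} \subset \Hom(\text{even}, \text{odd})$ carefully, but in any case one of the two pieces has nonzero $\mathbb{G}_m$-weight).

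The main step is then the following: since $L(0,\mu\,|\!-\mu,0)$ as a $\g_{(0)}$-module is $L(0,\ldots,0)_{\mathfrak{gl}(n)} \boxtimes L(-\mu,0)_{\mathfrak{gl}(n)}$ with trivial action on the first $\mathfrak{gl}(n)$-factor, its $\mathbb{G}_m$-invariants are all of $L$ (the first $GL(n)$-factor, restricted to $\mathbb{G}_m \times GL(n\!-\!1)$, acts trivially on $\mathbb{G}_m$). The $\mathbb{G}_m$-invariants of $\Lambda^\bullet(\g_{(-1)})$ are, by the weight decomposition above, exactly $\Lambda^\bullet(\g'_{(-1)})$: any wedge involving a factor from the nonzero-$\mathbb{G}_m$-weight part carries nonzero total $\mathbb{G}_m$-weight and is killed by $(\,)^{\mathbb{G}_m}$, because the $\mathbb{G}_m$-weights on $\g_{(-1)}$ are all of the same sign so no cancellation among wedge factors is possible. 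Hence $\left(\Lambda^\bullet(\g_{(-1)}) \otimes L\right)^{\mathbb{G}_m} = \Lambda^\bullet(\g'_{(-1)}) \otimes L$ as a vector space, and this is exactly the underlying space of the Kac module $V(\mu\,|\!-\mu,0)$ of $\mathfrak{gl}(n\!-\!1|n)$, whose $\g'_{(0)} = \mathfrak{gl}(n\!-\!1)\oplus\mathfrak{gl}(n)$-highest weight is $(\,\underbrace{0,\ldots,0}_{n-1}\,|\!-\mu,0)$ — i.e. $(\mu'\,|\!-\mu,0)$ with $\mu' = 0 \in \mathbb{Z}^{n-1}$; here I am reading the statement's notation $V(\mu\,|\!-\mu,0)$ in $\T_{n-1|n}$ with $\mu$ the zero weight of $GL(n\!-\!1)$ and $-\mu,0$ the $GL(n)$-part, matching the source module.

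Finally I would check that the identification is $\mathfrak{gl}(n\!-\!1|n)$-equivariant and not merely an isomorphism of vector spaces. This is where one must use the explicit action formula: for $X \in \g' = \mathfrak{gl}(n\!-\!1|n)$ (the centralizer, up to the center, of $\mathbb{G}_m$ in $\g$) the bracket $[X,x_i]$ preserves the $\mathbb{G}_m$-weight-zero subspace $\g'_{(-1)}$, and $X$ acts on $L$ through its restriction to $\g'_{(0)}$, so the displayed formula for the $\g$-action restricts verbatim to the claimed formula for the $\g'$-action on $\Lambda^\bullet(\g'_{(-1)})\otimes L$. Thus the $\mathbb{G}_m$-invariants form a $\mathfrak{gl}(n\!-\!1|n)$-submodule isomorphic, by the PBW description, to $V(\mu\,|\!-\mu,0)$, and since both sides have the same dimension $2^{\,\dim \g'_{(-1)}}\cdot \dim L(-\mu,0)$ the map is an isomorphism. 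I expect the only genuinely delicate point to be bookkeeping the $\mathbb{G}_m$-weights on $\g_{(-1)}$ with the correct signs — in particular confirming that they are all of one sign, so that $(\,)^{\mathbb{G}_m}$ on $\Lambda^\bullet$ really does pick out $\Lambda^\bullet$ of the weight-zero part and nothing more; once that is pinned down the rest is a direct unwinding of definitions exactly parallel to Corollary \ref{nice-too} and Lemma \ref{nicer}.
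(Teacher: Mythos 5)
Your handling of the $\Lambda^\bullet(\g_{(-1)})$ factor is in line with the paper's own proof: you correctly split $\g_{(-1)} \cong Mat_{n,n}(k)$ into a $\mathbb{G}_m$-weight-zero piece ($\g'_{(-1)} \cong Mat_{n-1,n}(k)$, the odd part of $\mathfrak{gl}(n\!-\! 1|n)$) and a piece of nonzero weight (the $k^n$ coming from the first row), and observe that since all nonzero weights have one sign, $\Lambda^\bullet(\g_{(-1)})^{\mathbb{G}_m} = \Lambda^\bullet(\g'_{(-1)})$, with the $\mathfrak{gl}(n\!-\! 1|n)$-action carrying over verbatim via the induced-module formula. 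That part is fine.

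However, you mishandle the $\g_{(0)}$-factor $L(\lambda)$, and the error propagates. As a $\g_{(0)} \cong \mathfrak{gl}(n)\oplus\mathfrak{gl}(n)$-module, the highest weight module $L(0,\mu\,|\!-\mu,0)$ is $L(0,\mu) \boxtimes L(-\mu,0)$, where $L(0,\mu)$ has highest weight $(0,\mu_1,\ldots,\mu_{n-1})$ for the \emph{first} copy of $\mathfrak{gl}(n)$. This is not the trivial representation unless $\mu = 0$. Your claim that the first $\mathfrak{gl}(n)$-factor ``acts trivially,'' and your subsequent reading of the target as $V(\mu\,|\!-\mu,0)$ ``with $\mu$ the zero weight of $GL(n\!-\! 1)$,'' amount to proving the lemma only in the degenerate case $\mu=0$. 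In the paper's proof the essential non-trivial step is precisely the identification
\[ L(0,\mu)^{\mathbb{G}_m} \;\cong\; L(\mu) \]
as a $GL(n\!-\! 1)$-module, obtained from Lemma \ref{Schur-Weyl}~(2). Without this step you do not produce the $\mathfrak{gl}(n\!-\! 1)$-highest-weight $\mu$ of the target Kac module $V(\mu\,|\!-\mu,0)$ — your output would be $V(0\,|\!-\mu,0)$, which is a different module. A second, related omission: you never justify that the $\mathbb{G}_m$-invariants of the \emph{tensor product} $\Lambda^\bullet(\g_{(-1)})\otimes L(\lambda)$ factor as $\Lambda^\bullet(\g_{(-1)})^{\mathbb{G}_m}\otimes L(\lambda)^{\mathbb{G}_m}$; since you assumed $L(\lambda)$ is $\mathbb{G}_m$-trivial, you never noticed that a weight argument (the $\mathbb{G}_m$-weights on $\Lambda^\bullet(\g_{(-1)})$ and on $L(0,\mu)$ are of the same sign, so no cancellation across the tensor factors is possible — the argument of Lemma~\ref{nicer}) is needed to rule out cross-terms. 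To repair the proof you should drop the triviality claim, verify the factorization of $(\,)^{\mathbb{G}_m}$ across the tensor product by a sign-of-weights argument, and then invoke Lemma~\ref{Schur-Weyl}~(2) to get $L(0,\mu)^{\mathbb{G}_m}\cong L(\mu)$; the remainder of your argument (the action-formula check) then goes through as you indicate.
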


\begin{proof} 
As a $k$-vectorspace $V(\lambda)$ is 
 a tensor product $\Lambda^\bullet(\g_{(-1)}) \otimes (L(0,\mu)\otimes L(-\mu,0)$. As a vectorspace $\g_{(-1)} \cong Mat_{n,n}(k)$.     
The action of $\mathfrak{gl}(n)\oplus \mathfrak{gl}(n)$ and the action of $\g_{(-1)}$
on this vector space are the obvious ones, and the $\frak p_+$-action was described above.

\medskip
By weight reasons, the subspace of $\mathbb G_m$-invariant vectors is given by the tensor product
$$ \Bigl( \Lambda^\bullet(\g_{(-1)}) \otimes (L(0,\mu)\otimes L(-\mu,0)\Bigr)^{\mathbb G_m} =
\Lambda^\bullet(\g_{(-1)})^{\mathbb G_m} \otimes (L(0,\mu)^{\mathbb G_m} \otimes L(-\mu,0) \ ,$$ 
using an argument similar to the one used in the proof of  \ref{nicer}. By Lemma \ref{Schur-Weyl} (2), then $$L(0,\mu)^{\mathbb G_m} =
L(\mu)\ .$$ 

\medskip
Furthermore, notice
$\Lambda^\bullet(\g_{(-1)})= \Lambda^\bullet(Mat_{n,n}(k))
= \Lambda^\bullet(k^n) \otimes   \Lambda^\bullet(Mat_{n-1,n}(k))$.
As a module under the even subgroup $\mathbb G_m \times GL(n\!-\! 1) \times GL(n)$, the first
factor $\Lambda^\bullet(k^n)$ is a trivial  $GL(n\!-\! 1)$-module and equipped with the natural action  
of the group $\mathbb G_m\times GL(n)$. The torus $\mathbb G_m$ acts trivially on
$\Lambda^\bullet(Mat_{n-1,n}(k))$.
Since also  $\Lambda^\bullet(Mat_{n-1,n}(k))$ is a trivial $\mathbb G_m$-module, this implies 
$$ \Lambda^\bullet(\g_{(1)})^{\mathbb G_m} = 
\Lambda^\bullet(k^n)^{\mathbb G_m} \otimes \Lambda^\bullet(Mat_{n-1,n}(k)) = \Lambda^\bullet(Mat_{n-1,n}(k)) \ .$$
Hence as a vector space, 
$$ \eta(V(0,\mu \, | \!-\mu,0)) = \Lambda^\bullet(Mat_{n-1,n}(k)) \otimes  L(\mu) \otimes L(-\mu,0) \ .$$
Here $Mat_{n-1,n}(k)$ represents the Lie superalgebra of lower block diagonal matrices in $\mathfrak{gl}(n\!-\! 1\vert n)$.
It now only remains to check, that the action of the Lie superalgebra of $GL(n\!-\! 1,n)$
on this vector space gives precisely the induced representation of the Kac  module  $V(\mu \, | \! - \mu,0)$
of $GL(n\!-\! 1\vert n)$. This is obvious for the action of $GL(n\!-\! 1)\times GL(n)$ and
the lower block diagonal matrices in $\mathfrak{gl}(n\!-\! 1,n)$, and the action of the upper block diagonal matrices carries over, hence our claim follows easily.

\end{proof}

\subsection{Composition factors of Kac modules}

By \cite{BS1} we have the following description of Kac-modules for $\mathcal{T}_{m|n}$: In the Grothendieck group 
the following holds \[ V(\lambda) = \sum_{\nu \subset \lambda} L(\nu),\]
where the notation $\nu \subset \lambda$ means the following: For a weight $\nu$ of $\T_{m|n}$ let $\underline{\nu}$ denote the resulting cup diagram where we have deleted all labels $\vee$, $\wedge$, $\circ$, $\times$ (so we only keep the cups at their specified positions). We can now superimpose this unlabelled cup diagram with the weight diagram of another weight, say $\lambda$, to form $\underline{\nu}\lambda$, i.e. we label the vertices on the numberline of the cup diagram $\underline{\nu}$ by $\lambda$. The notation $\nu \subset \lambda$ means that the resulting (labelled) cup diagram $\underline{\nu}\lambda$ is oriented in the sense of \cite{BS1}. For us this  simply implies that 
\begin{itemize}
\item Both $\lambda$ and $\nu$ have the same atypicality, and each cup in $\underline{\nu}$ has exactly one vertex labelled by $\vee$ (not necesarily the left vertex).
\item Both $\lambda$ and $\nu$ have the same core symbols $\circ$ and $\times$.
\end{itemize}

For example, if we take $\underline{\lambda}\lambda$, then we obtain just our usual (labelled) cup diagram.

The condition $\nu \subset \lambda$ implies $\nu \leq \lambda$ in the Bruhat order. In particular, all $\vee$'s in the weight diagram of $\nu$ have to be at vertices  $\leq$ than the $\vee$'s in the weight diagram of $\lambda$.

\begin{lem} \label{lem:kac-composition} Let $\lambda = (\lambda_1,\mu \,|  -\mu,-\lambda_1)$ denote a maximal atypical weight for $GL(n|n)$. Then:
\begin{enumerate}
\item The weight $\tilde{\lambda} = (\mu \, | \! -\mu,-\lambda_1)$ for $GL(n-1|n)$ has atypicality $n-1$. Its weight diagram is obtained from the one of $\lambda$ by moving the left $n-1$ symbols $\vee$ (i.e. except the rightmost) one position to the right and replacing the rightmost $\vee$ at position $\lambda_1$ by a $\circ$ at position $\lambda_1 + 1$.
\item The composition factors $L(\tilde{\mu})$ of the Kac module $V(\tilde{\lambda}) = V(\mu \, | \! -\mu,-\lambda_1)$ correspond 1:1 to the composition factors $L(\mu)$ of the Kac module $V(\lambda)$ satisfying $\mu_1 = \lambda_1$ via $L(\tilde{\mu}) \mapsto L(\lambda_1,\tilde{\mu})$.
\end{enumerate}
\end{lem}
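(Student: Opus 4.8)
The plan is to treat (1) by a direct computation with the weight-diagram combinatorics of Section~\ref{section: weights}, and then to derive (2) from (1) together with the description $V(\lambda)=\sum_{\nu\subset\lambda}L(\nu)$ of Kac modules recalled above.

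For (1): write $\lambda=(\lambda_1,\mu_1,\dots,\mu_{n-1}\mid -\mu_{n-1},\dots,-\mu_1,-\lambda_1)$, so that the $\vee$'s of $\lambda$ sit at $I_\times(\lambda)=I_\circ(\lambda)=\{\lambda_1\}\cup\{\mu_k-k: 1\le k\le n-1\}$, the rightmost being $\lambda_1$ (here $\lambda_1\ge\mu_1$ because $\lambda$ is dominant). The weight $\tilde\lambda=(\mu_1,\dots,\mu_{n-1}\mid -\mu_{n-1},\dots,-\mu_1,-\lambda_1)$ is a dominant weight of $GL(n-1|n)$, and substituting into the defining formulas yields $I_\times(\tilde\lambda)=\{\mu_k-(k-1): 1\le k\le n-1\}$ and $I_\circ(\tilde\lambda)=I_\times(\tilde\lambda)\cup\{\lambda_1+1\}$, with $\lambda_1+1\notin I_\times(\tilde\lambda)$ since $\lambda_1+1>\mu_1\ge\mu_k-(k-1)$. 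Hence $\tilde\lambda$ has atypicality $n-1$; its $\vee$'s sit at $\mu_k-(k-1)=(\mu_k-k)+1$, i.e.\ the $n-1$ non-rightmost $\vee$'s of $\lambda$ each shifted one step to the right; and it carries a single $\circ$ at $\lambda_1+1$. Passing to cup diagrams is then immediate, once one notes (from the strict monotonicity of $k\mapsto\mu_k-k$) that the shifted positions are distinct and that none of them equals $\lambda_1+1$.

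For (2), put $c=\lambda_1$. I first record a local observation: any maximal atypical weight $\rho$ of $GL(n|n)$ with $\rho_1=c$ has its rightmost $\vee$ at $c$ and every vertex to the right of $c$ labelled $\wedge$, so the Brundan--Stroppel cup-matching pairs the vertex $c$ with the vertex $c+1$; thus $\underline\rho$ contains the short cup $\{c,c+1\}$. Apply this to $\lambda$ and to an arbitrary weight $\nu$ in the principal block of $GL(n|n)$ with $\nu_1=c$ (note $\nu=(c,\nu_2,\dots,\nu_n\mid -\nu_n,\dots,-\nu_2,-c)$ has the same shape as $\lambda$, so (1) applies to it). In $\underline\nu\lambda$ the short cup $\{c,c+1\}$ is automatically oriented ($\vee$ at $c$, $\wedge$ at $c+1$), hence $\underline\nu\lambda$ is oriented if and only if the remaining $n-1$ cups of $\underline\nu$, all lying on $\mathbb Z\setminus\{c,c+1\}$, are oriented by the restriction of $\lambda$'s labels. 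Now introduce the order-isomorphism $\psi\colon \mathbb Z\setminus\{c,c+1\}\to\mathbb Z\setminus\{c+1\}$, $\psi(x)=x+1$ for $x\le c-1$ and $\psi(x)=x$ for $x\ge c+2$. By (1) applied to $\lambda$ and to $\nu$ (using $\nu_1=\lambda_1=c$, which also places $\tilde\nu$ in the same block of $GL(n-1|n)$ as $\tilde\lambda$, with inert $\circ$ at $c+1$), $\psi$ carries the restriction of the $\vee/\wedge$-pattern of $\lambda$ (resp.\ $\nu$) onto the $\vee/\wedge$-pattern of $\tilde\lambda$ (resp.\ $\tilde\nu$) on $\mathbb Z\setminus\{c+1\}$; since the cup-matching algorithm depends only on the linear order and this pattern, $\psi$ carries the cup diagram obtained from $\underline\nu$ by deleting $\{c,c+1\}$ onto $\underline{\tilde\nu}$, respecting labels. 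As the $\circ$ at $c+1$ is never a cup endpoint, ``$\underline{\tilde\nu}\tilde\lambda$ oriented'' means exactly that these $n-1$ cups are oriented by $\tilde\lambda$. Therefore $\nu\subset\lambda$ with $\nu_1=\lambda_1$ holds if and only if $\tilde\nu\subset\tilde\lambda$. Since Kac-module multiplicities are $0$ or $1$, and the inverse assignment $\tilde\nu\mapsto(\lambda_1,\tilde\nu)$ is well defined (a composition factor $\tilde\nu\subset\tilde\lambda$ lies in $\tilde\lambda$'s block, so its $\circ$ sits at $\lambda_1+1$, which forces its last coordinate to be $-\lambda_1$), this gives the claimed bijection of composition factors via $L(\tilde\mu)\leftrightarrow L(\lambda_1,\tilde\mu)$.

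The step I expect to require the most care is making the transport of cup diagrams under $\psi$ fully rigorous: one must verify that simultaneously shifting the $\vee$-positions one step to the right, deleting the vertex $c$, and inserting an inert $\circ$ at $c+1$ leaves the nesting and pairing of the surviving cups unchanged. This is visually clear and is a formal consequence of the fact that cup matching is determined by the underlying linear order and the $\vee/\wedge$-labelling, but it merits an explicit argument, particularly in the edge case where the second-rightmost $\vee$ of $\lambda$ lies at $c-1$ and hence shifts onto $c$. Everything else reduces to bookkeeping with the definitions of Section~\ref{section: weights} and with the Kac-module formula.
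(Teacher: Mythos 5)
Your proof is correct and follows essentially the same route as the paper's: the direct weight-diagram computation for (1), and for (2) the observation that the short cup $\{c,c+1\}$ is automatically oriented and detaches from the rest of the matching, which is then transported (up to a unit shift on the left-hand side of the line) to the $GL(n-1|n)$ diagram. The paper states this informally by writing $\lambda = X\vee$, $\tilde\lambda = X\circ$ and asserting that the remaining cup configurations agree; you make the same identification explicit via the order-isomorphism $\psi$ and spell out the bookkeeping that the paper leaves implicit, including the verification that the $\circ$ at $c+1$ is inert in the matching and that the block constraint forces the last coordinate of $\tilde\nu$ to be $-\lambda_1$. No gap.
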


\begin{proof}

The first assertion is easy to check using the definition of weight diagrams.
For the second assertion, consider now the cup diagrams of $\lambda$ and $\tilde{\lambda}$. Since the leftmost $n-1$ $\vee$ are up to a shift by 1 at the same vertices in the two weight diagrams, we write this symbolically as \[ \lambda = X  \, \vee, \quad , \quad \tilde{\lambda} = X \, \circ\] where $X$ stands for some specific configuration of $\vee$'s (identical in both diagrams). Now given any configuration of cups, for a cup diagram $\underline{\tilde{\nu}}$ coming from a weight $\tilde{\nu}$,  satisfying $\tilde{\nu} \subset \tilde{\lambda}$, the very same configuration of cups will give an oriented cup diagram for the first $n-1$ $\vee$'s coming from $\lambda$. The additional $\vee$ at position $\lambda_1$ in the weight diagram of $\lambda$ will only result in a cup between $(\lambda_1,\lambda_1 + 1)$ and not alter the cup configuration. The resulting weight, starting with $\tilde{\nu}  = (\mu \, | \! -\mu,\lambda_1)$, then is $\nu = (\lambda_1,\mu \, | \! -\mu,\lambda_1)$. It satisfies $\nu \subset \lambda$ and therefore yields a composition factor $L(\nu)$ of $V(\lambda)$. This gives an injection of all composition factors $L(\tilde{\nu})$ of $V(\tilde{\lambda})$ to the subset of composition factors $L(\nu)$ of $V(\lambda)$ satisfying $\nu_1 = \lambda_1$. 

\medskip
Conversely, given such a composition factor $L(\nu)$ with $\nu_1 = \lambda_1$, i.e. with a cup at the vertices $(\lambda_1,\lambda_1 + 1)$, all possible cup configurations that will result in a possible oriented diagram $\underline{\nu}\lambda$, are already exhausted by the remaining configurations of the other $n\!-\!1$ symbols $\vee$'s, which are in turn exactly the same ones as in $L(\tilde{\nu}) \vdash V(\tilde{\lambda})$. Hence every composition factor with $\nu_1 = \lambda_1$ is in the image of the previous injection.
\end{proof}

\begin{example} Consider the Kac module $V(\lambda_1,\lambda_2\, | \! - \lambda_2,-\lambda_1) \in \mathcal{T}_{2|2}$ with $\lambda_1 > \lambda_2$. It has four composition factors $ [\lambda_1,\lambda_2], \ [\lambda_1,\lambda_2 \!-\! 1], \ [\lambda_1\! -\! 1,\lambda_2], \ [\lambda_1\! -\! 1,\lambda_2- 1]$. The Kac module $V(\lambda_2\, | \! -\!\lambda_2,-\lambda_1)$ in $\T_{1|2}$ has the composition factors $L(\lambda_2 \,| \! -\!\lambda_2,-\lambda_1)$ and $L(\lambda_2 \!-\! 1 \, | \! - \lambda_2 \!+\! 1, -\lambda_1)$. They are the images of the constituents $[\lambda_1,\lambda_2], \ [\lambda_1,\lambda_2\! -\! 1]$ of the Kac module in $\T_{2|2}$. Notice that $\eta$ does not preserve  superdimensions.
\end{example}

\subsection{Computation of $\eta$}

Since $\eta(V(0,\mu \, | \! - \mu,0) = V(\mu \, | \! - \mu,0)$ holds by Lemma \ref{lem:kac-composition}, the exactness of $\eta$ allows us to compute $\eta(L(0,\mu \, | \! - \mu,0)$. We obtain

\begin{cor}  For irreducible 
objects $L(\lambda_1,\mu \, | \! - \mu,0)) $  in $\T_{n|n}^{\leq 0}$ we have $\eta(L(\lambda)) = 0$ for the functor $\eta: \T_{n|n}^{\leq 0} \to \T_{n-1|n}^{\leq 0}$ if $\lambda_1 < 0$. On the other hand, for $\lambda_1=0$ we obtain  \[ \eta(L(0,\mu \, | \! - \mu,0)) = L(\mu \, | \! - \mu,0) \ .\] 
\end{cor}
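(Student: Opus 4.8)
The first assertion is immediate: if $\lambda_1<0$ then $L(\lambda_1,\mu\,|\!-\!\mu,0)$ lies in $\T_{n|n}^{<0}$ by definition, and we have already recorded that $\eta(\T_{n|n}^{<0})=0$. So from now on assume $\lambda_1=0$ and set $M:=\eta\bigl(L(0,\mu\,|\!-\!\mu,0)\bigr)$. Since the highest weight of $L(0,\mu\,|\!-\!\mu,0)$ has first coordinate $0$, its highest weight vector is fixed by the embedded torus $\mathbb{G}_m\subset GL(n|n)$, hence lies in $M=(res\,L)^{\mathbb{G}_m}$; in particular $M\neq 0$.

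The plan is to squeeze $M$ between a Kac module of $GL(n-1|n)$ and its twisted dual. Applying the exact functor $\eta$ to the canonical surjection $V(0,\mu\,|\!-\!\mu,0)\twoheadrightarrow L(0,\mu\,|\!-\!\mu,0)$ of $GL(n|n)$--modules, and using Lemma~\ref{lem:kac} to identify $\eta\bigl(V(0,\mu\,|\!-\!\mu,0)\bigr)=V(\mu\,|\!-\!\mu,0)$, I obtain a surjection $V(\mu\,|\!-\!\mu,0)\twoheadrightarrow M$. As the Kac module $V(\mu\,|\!-\!\mu,0)$ has simple head $L(\mu\,|\!-\!\mu,0)$ and contains it with composition multiplicity one, the same holds for the nonzero quotient $M$. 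On the other hand, the twisted dual $(\,\cdot\,)^{*}$ fixes every simple object, and $\eta=(\,\cdot\,)^{\mathbb{G}_m}\circ res$ commutes with $(\,\cdot\,)^{*}$ (the block--diagonal subgroup $\mathbb{G}_m\times GL(n-1|n)$ is stable under the supertranspose defining $(\,\cdot\,)^{*}$, and for the reductive torus $\mathbb{G}_m$ the invariant functor commutes with duality); hence $M^{*}=\eta\bigl(L(0,\mu\,|\!-\!\mu,0)^{*}\bigr)=\eta\bigl(L(0,\mu\,|\!-\!\mu,0)\bigr)=M$. Dualizing the head statement, $M\cong M^{*}$ therefore also has simple socle $L(\mu\,|\!-\!\mu,0)$.

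It then remains to observe that a finite length module $M$ with simple head and simple socle both isomorphic to $L(\nu)$, in which $L(\nu)$ occurs with composition multiplicity one, must equal $L(\nu)$: otherwise $M$ has length $\geq 2$, the composite $L(\nu)\hookrightarrow M\twoheadrightarrow L(\nu)$ of the socle inclusion with the head projection is $0$ by Schur's lemma, so the socle copy of $L(\nu)$ lies in $\mathrm{rad}(M)$ while the head contributes a second copy, contradicting multiplicity one. Applied with $\nu=(\mu\,|\!-\!\mu,0)$ this gives $M=\eta\bigl(L(0,\mu\,|\!-\!\mu,0)\bigr)\cong L(\mu\,|\!-\!\mu,0)$, as asserted. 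The one point that needs genuine care is the compatibility $\eta\circ(\,\cdot\,)^{*}\cong(\,\cdot\,)^{*}\circ\eta$, i.e. that the chosen embedding $\mathbb{G}_m\times GL(n-1|n)\hookrightarrow GL(n|n)$ is compatible with the supertranspose; everything else — exactness of $\eta$, Lemma~\ref{lem:kac}, the simplicity of the head of a Kac module, and $L(\nu)^{*}\cong L(\nu)$ — is already available. (Alternatively, one could argue in the Grothendieck group via exactness of $\eta$, Lemma~\ref{lem:kac} and the composition--factor bijection of Lemma~\ref{lem:kac-composition}, but the inductive bookkeeping there is less transparent.)
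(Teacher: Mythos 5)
Your proof is correct, but it takes a genuinely different route from the one in the paper. The paper argues in the Grothendieck group: by exactness and Lemma~\ref{lem:kac}, $[\eta(V(0,\mu\,|\!-\!\mu,0))]=[V(\mu\,|\!-\!\mu,0)]$; the composition factors with first coordinate $<0$ vanish under $\eta$, and those with first coordinate $0$ are in bijection (Lemma~\ref{lem:kac-composition}) with the composition factors of $V(\mu\,|\!-\!\mu,0)$; since each $\eta(L(\nu))$ with $\nu_1=0$ is nonzero (highest weight vector is $\mathbb{G}_m$-invariant), a length count forces each one to be simple, and matching highest weights identifies it with the corresponding factor. You instead sandwich $M=\eta(L(0,\mu\,|\!-\!\mu,0))$ between a Kac module and its twisted dual: exactness plus Lemma~\ref{lem:kac} gives a surjection $V(\mu\,|\!-\!\mu,0)\twoheadrightarrow M$, so $M$ has simple head $L(\mu\,|\!-\!\mu,0)$ of multiplicity one; the compatibility $\eta\circ(\cdot)^*\cong(\cdot)^*\circ\eta$ together with $L^*\cong L$ makes $M$ self-dual for $(\cdot)^*$, giving simple socle $L(\mu\,|\!-\!\mu,0)$ as well; the standard head-equals-socle-with-multiplicity-one argument then forces $M$ simple. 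Your route has the merit of bypassing the combinatorial bijection of Lemma~\ref{lem:kac-composition} entirely, at the cost of the extra verification (which you correctly flag) that the block-diagonal embedding $\mathbb{G}_m\times GL(n\!-\!1|n)\hookrightarrow GL(n|n)$ is compatible with the supertranspose $\tau$, so that $res$ intertwines the twisted duals and $(\cdot)^{\mathbb{G}_m}$ commutes with $(\cdot)^*$; this does hold (the subalgebra is $\tau$-stable and $\tau$ negates the $\mathbb{G}_m$-weight, so the weight-$0$ piece is preserved), but it is not something the paper needs to check.

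One small imprecision in your last step: the composite $L(\nu)\hookrightarrow M\twoheadrightarrow L(\nu)$ is not automatically zero by Schur's lemma; Schur only says it is zero or an isomorphism. If it were an isomorphism, then $\mathrm{soc}(M)$ would split off and force $\mathrm{rad}(M)=0$, so $M$ would already be simple. Thus in the length $\geq 2$ case the composite must be zero, and then your multiplicity argument applies. The conclusion is unaffected, but you should spell out why the isomorphism alternative is excluded rather than appealing to Schur alone.
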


\begin{proof} The composition factors $L(\mu)$ of $V(0,\mu \,| \! - \mu,0)$ such that $\mu_1 \neq 0$ satisfy $\mu_1 <0$, hence are annihiliated by $\eta$. Furthermore $\eta(L(0,\mu \, | \! - \mu,0) \neq 0$. Hence the bijection of Lemma \ref{lem:kac-composition}, along with Lemma \ref{lem:kac}, implies the claim because $\eta$  is exact.
\end{proof}

\begin{remark} Note that $\eta$ is an exact tensor functor, but doesn't preserve superdimensions as it is not the restriction of a tensor functor between the tannakian categories $\mathcal{T}_{n|n}$ and $\mathcal{T}_{n-1|n}$. The simplest classical analogue is the functor $(\cdot )^{\mathbb{G}_m}$ on the category of polynomial representations of $GL(r)$ wich was studied in Section \ref{sec:sw-duality} which sends $L(\lambda_1,\ldots,\lambda_r)$ to $L(\lambda_1,\ldots,\lambda_{r-1})$ if $\lambda_r =0$ or to zero otherwise.
\end{remark}

\begin{remark} Other restriction/invariant functors can be studied in the same way. For example, passing from $\mathcal{T}_{n|n}$ to $\mathcal{T}_{n-k|n}$ can be simply obtained by iterating the $k=1$ construction.
\end{remark}


\begin{thebibliography}{10}




\bibitem[AK02]{Andre-Kahn}
{Y. {Andr\'e} and B. {Kahn}}.
{\it Nilpotence, radicaux et structures mono\"\i dales.}
{Rend. Semin. Mat. Univ. Padova},
{108},
 {2002}.
%

%
%
%

\bibitem[BR87]{BR} A. Berele and A. Regev.
{\it Hook Young diagrams with applications to combinatorics and to representations of Lie superalgebras.}, 
Adv. Math. 64, 118-175 (1987). 

\bibitem[BKN09]{BKN-2}
B. D. Boe, J. Kujawa and D. Nakano.
{\it Cohomology and support varieties for Lie superalgebras. II.}, Proc. Lond. Math. Soc. (3) 98, No. 1, 19-44 (2009). 


\bibitem[BF21]{BF}
A. Braverman, M. Finkelberg, V. Ginzburg and R. Travkin.
\emph{Mirabolic Satake equivalence and supergroups.}
Compos. Math. 157, No. 8, 1724-1765 (2021). 

%


\bibitem[BS11]{BS1}
{J. Brundan and C. Stroppel.},
{\it Highest weight categories arising from Khovanov’s diagram algebra. I: Cellularity.},
Mosc. Math. J. 11, No. 4, 685-722 (2011). 


\bibitem[BS12a]{Brundan-Stroppel-4}
{J. Brundan and C. Stroppel.}
{\it Highest weight categories arising from Khovanov's diagram algebra. IV:
    the general linear supergroup.},
{J. Eur. Math. Soc. (JEMS)},
{14},
{2},
{2012}.

   
\bibitem[BS12b]{Brundan-Stroppel-5}
{J. {Brundan} and C. {Stroppel}}.
{\it Gradings on walled Brauer algebras and Khovanov's arc algebra},
{Adv. Math.},
{231},
{2},
{2012}.
    
\bibitem[BW99]{BW}
{J .W. Barrett and B. Westbury}.
{\it Spherical categories.},
Adv. Math. 143, No. 2, 357-375 (1999).     
    
\bibitem[CW11]{Comes-Wilson}
{J. {Comes} and B. {Wilson}}.
{\it Deligne's category $Rep(GL_{\delta})$ and representations of general linear supergroups},
{{Represent. Theory}},
{16},
{(2012)}.
    
%
%
\bibitem[Del02]{Deligne-tensorielles}
{P. Deligne}.
{\it Cat\'egories tensorielles. (Tensor categories).},
{Mosc. Math. J. },
{2},
{2},
{2002}.
%
%

\bibitem[DM82]{Deligne-Milne}
{P. Deligne and J. S. Milne}. 
{\it Tannakian categories.},
{Hodge cycles, motives, and Shimura varieties, Lect. Notes Math.
    900, 101-228 (1982)}.

%
    




\bibitem[DS05]{Duflo-Serganova}
{M. Duflo and V. Serganova}.
{\it On associated variety for Lie superalgebras.},
{arXiv:math/0507198v1},
{2005}.
    
%
    
\bibitem[EAS22]{EAS} I. Entova-Aizenbud and V. Serganova. {\it Jacobson-Morozov lemma for algebraic supergroups.}, Adv. Math. 398, 31 p. (2022).     
    
\bibitem[EGNO15]{EGNO} P.~Etingof, S.~Gelaki, D.~Nikshych and V.~Ostrik. {\it Tensor categories},
Mathematical Surveys and Monographs, {\bf 205}. American Mathematical Society, Providence, RI, 2015.     
    
\bibitem[EO18]{Etingof-Ostrik}    
{P. {Etingof} and V. {Ostrik}}.
{On semisimplification of tensor categories},
Baranovsky, Vladimir (ed.) et al., Representation theory and algebraic geometry. A conference celebrating the birthdays of Sasha Beilinson and Victor Ginzburg, Chicago, IL, USA, August 21–25, 2017. Cham: Birkhäuser. Trends Math., 3-35 (2022).
    
    

\bibitem[GH22]{GH}
{M. Gorelik and T. Heidersdorf}.
\emph{Semisimplicity of the Duflo-Serganova functor for the orthosymplectic Lie superalgebra},
Adv. Math. 394, 51 p. (2022). 

\bibitem[GS1]{GS} C.~Gruson and V.~Serganova. {\em Cohomology
  of generalized supergrassmanians and character formulae for basic classical Lie superalgebras},
Proc. London Math. Soc., (3), {\bf 101} (2010), 852--892.


\bibitem[Hei19]{H-ss}
{T. Heidersdorf}.
{\it On supergroups and their semisimplified representation categories},
{Algebr. Represent. Theory},
22, No. 4, 2019.


\bibitem[Hei17]{Heidersdorf-mixed-tensors}
{T. Heidersdorf}.
{\it Mixed tensors of the General Linear Supergroup},
{J.Algebra},
{491},
{2017}.


\bibitem[HW21]{HW-tensor}
{T. Heidersdorf and R. Weissauer}.
{\it Cohomological tensor functors on representations of the General Linear Supergroup}
{arXiv:1406.0321 (2014)},
{Mem. Am. Math. Soc.},
{Volume 270 Number 1320},
{2021}.

\bibitem[HW23]{HW-tannaka}
{T. Heidersdorf and R. Weissauer}.
{\it On classical tensor categories attached to representations of the General Linear supergroup}
Sel. Math., New Ser. 29, No. 3, Paper No. 34, 101 p. (2023). 


\bibitem[HW21]{HW-pieri}
T. Heidersdorf and R. Weissauer.
\emph{Pieri type rules and $GL(2|2)$ tensor products.}
Algebr. Represent. Theory 24, No. 2, 425-451 (2021). 

%
%
\bibitem[Kac78]{Kac-Rep}
{Kac, V.},
{\it Representations of classical Lie superalgebras.},
{Differ. geom. Meth. math. Phys. II, Proc., Bonn 1977, Lect. Notes
    Math. 676, 597-626 (1978).}.
    
\bibitem[QS19]{Qu}
H. Queffelec and  A. Sartori.
\emph{A note on $\mathfrak{gl}(m|n)$
link invariants and the HOMFLY-PT polynomial}. 
Adams, Colin C. (ed.) et al., Knots, low-dimensional topology and applications. Knots in Hellas, International Olympic Academy, Greece, July 2016. Papers of the international conference, Ancient Olympia, Greece, July 17–23, 2016. Cham: Springer. Springer Proc. Math. Stat. 284, 277-286 (2019). 

%
 


\bibitem[Ser10]{Serganova-kw}
{V. Serganova},
{\it On the superdimension of an irreducible representation of a basic classical Lie super algebra},
{Supersymmetry in mathematics and physics. UCLA Los Angeles, USA 2010. Papers based on the presentations at the workshop, Februar 2010.},
{2011}.

\bibitem[Ser06]{Serganova-blocks}
{V. Serganova}.
{\it Blocks in the category of finite-dimensional representations of $\mathfrak{gl}(m,n)$},
{preprint},
{2006}.

\bibitem[Ser85]{Sergeev}
A.N. Sergeev.
\newblock {The tensor algebra of the identity representation as a module over
  the Lie superalgebras ${\frak GL}(n,m)$ and Q(n).}
\newblock {\em Math. USSR, Sb.}, 51:419--427, 1985.


\bibitem[Wei10]{W}
{R. Weissauer}.
{\it Monoidal model structures, categorial quotients and representations of super commutative Hopf algebras II: The case $GL(m,n)$},
{ArXiv e-prints},
{2010}.
 
    
\end{thebibliography}

\end{document}